\documentclass{amsart}

\usepackage[margin=1.1in]{geometry}
\usepackage{comment}
\usepackage{mathrsfs}
\usepackage{amsmath}
\usepackage{amsthm}
\usepackage{amsfonts}
\usepackage{amssymb}
\usepackage{enumitem}
\usepackage{graphicx}
\usepackage{overpic}
\usepackage{cancel}
\graphicspath{ {images/} }
\usepackage{url}
\usepackage{hyperref}
\hypersetup{
    colorlinks=true,
    linkcolor=blue,
}
\usepackage[dvipsnames]{xcolor}

\usepackage{tikz}

\usetikzlibrary{decorations.markings}
\tikzset{
  set arrow inside/.code={\pgfqkeys{/tikz/arrow inside}{#1}},
  set arrow inside={end/.initial=>, opt/.initial=},
  /pgf/decoration/Mark/.style={
    mark/.expanded=at position #1 with
    {
      \noexpand\arrow[\pgfkeysvalueof{/tikz/arrow inside/opt}]{\pgfkeysvalueof{/tikz/arrow inside/end}}
    }
  },
  arrow inside/.style 2 args={
    set arrow inside={#1},
    postaction={
      decorate,decoration={
        markings,Mark/.list={#2}
      }
    }
  },
}
\newsavebox{\largestimage}
\usetikzlibrary{shapes}
\usepackage{subcaption}

\newtheorem{theorem}{Theorem}[section]
\newtheorem{proposition}[theorem]{Proposition}
\newtheorem{lemma}[theorem]{Lemma}
\newtheorem{corollary}[theorem]{Corollary}

\theoremstyle{definition}
\newtheorem{definition}[theorem]{Definition}
\newtheorem{example}[theorem]{Example}

\newtheorem{question}[theorem]{Question}
\newtheorem{assumption}[theorem]{Assumption}

\theoremstyle{remark}
\newtheorem{remark}[theorem]{Remark}

\numberwithin{equation}{section}

\newcommand{\norm}[1]{\left\lVert#1\right\rVert}
\newcommand{\set}[2]{\left\{\,#1~:~#2\,\right\}}

\newcommand{\R}{\mathbb{R}}

\newcommand{\D}{\mathbb{D}}

\newcommand\rotreflPi{\rotatebox[origin=c]{180}{\reflectbox{$\Pi$}}}
\DeclareMathOperator{\skel}{Skel}

\setcounter{tocdepth}{1}

\usepackage{palatino}

\begin{document}

\title{Torus bundle Liouville domains are stably Weinstein}
\author{Joseph Breen and Austin Christian}

\begin{abstract}
We develop explicit local operations that may be applied to Liouville domains, with the goal of simplifying the dynamics of the Liouville vector field.  These local operations, which are Liouville homotopies, are inspired by the techniques used by Honda and Huang in \cite{honda2019convex} to show that convex hypersurfaces are $C^0$-generic in contact manifolds.  As an application, we use our operations to show that certain Liouville-but-not-Weinstein domains constructed by Huang in \cite{huang2019dynamical} are stably Weinstein.
\end{abstract}

\maketitle

\tableofcontents

\section{Introduction}\label{section:introduction}
\subsection{Liouville-but-not-Weinstein domains}
In this paper we will concern ourselves with compact, exact symplectic manifolds-with-boundary $(W,\omega)$.  A \emph{Liouville structure} on such a manifold is a choice of symplectic potential $\lambda$ with the property that $X_\lambda$ points transversely out of $\partial W$, where $X_\lambda$ is the vector field on $W$ defined by $\iota_{X_\lambda}\omega=\lambda$.  We call $X_\lambda$ the \emph{Liouville vector field} associated to $\lambda$, and refer to the pair $(W,\lambda)$ as a \emph{Liouville domain}.

Throughout, we will consider Liouville structures up to \emph{Liouville homotopy}.  For Liouville domains, a Liouville homotopy is simply a smooth family $\lambda_t$ of Liouville forms.  For Liouville \emph{manifolds} --- a noncompact analogue of Liouville domains --- the notion of Liouville homotopy is a bit more subtle, as is the notion of Liouville structure itself.

As with any geometric structure on a manifold, it is natural to seek a handlebody calculus which is compatible with Liouville structures.  Weinstein showed in \cite{weinstein1991contact} that such a calculus can be obtained for a restricted class of Liouville domains.  We will call a Liouville structure a \emph{Weinstein structure} if there exists a Morse function $\phi\colon W\to\mathbb{R}$ which is locally constant on $\partial W$ and for which $X_{\lambda}$ is gradient-like.  That is, there is a constant $c>0$ and some Riemannian metric on $W$ for which
\[
d\phi(X_\lambda) \geq c(\|X_{\lambda}\|^2+\|d\phi\|^2).
\]
Weinstein domains may be built up using \emph{Weinstein handles}, and thus their study has a distinctly topological flavor.

With these two definitions in hand, the obvious question is then whether or not a given Liouville structure is in fact a Weinstein structure, at least up to Liouville homotopy.  In general, the answer is no, and there is a well-known topological obstruction which a Liouville domain must avoid if it is to admit a Weinstein structure.

\begin{proposition}\label{prop:weinstein-topological-condition}
Let $(W,\lambda)$ be a Weinstein domain of dimension $2n$, and let $\phi$ be a Morse function for which $X_\lambda$ is gradient-like.  Then $\phi$ admits no critical points of index greater than $n$.
\end{proposition}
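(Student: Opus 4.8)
The plan is to work locally near a critical point of $\phi$ and derive a contradiction from the expansion identity satisfied by $X_\lambda$.

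First I would record two preliminary facts. The zeros of $X_\lambda$ are exactly the critical points of $\phi$: at a critical point $p$ we have $d\phi_p=0$, so the gradient-like inequality (read in the limit as we approach $p$, where its left side is bounded by $\norm{d\phi}\,\norm{X_\lambda}$) forces $\norm{X_\lambda(p)}^2=0$. And since $\iota_{X_\lambda}\omega=\lambda$, $d\lambda=\omega$, and $d\omega=0$, Cartan's formula gives
\[
\mathcal{L}_{X_\lambda}\omega \;=\; d\iota_{X_\lambda}\omega \;=\; d\lambda \;=\; \omega .
\]
Hence the local flow $\psi_t$ of $X_\lambda$ satisfies $\psi_t^*\omega = e^t\omega$ wherever defined; in particular $X_\lambda$ strictly expands symplectic area.

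Now fix a critical point $p$ of Morse index $k$ and suppose toward a contradiction that $k>n$. Because $X_\lambda$ is gradient-like for the \emph{Morse} function $\phi$, the standard local structure of such flows applies: the stable set $W^s(p)=\set{q}{\psi_t(q)\to p \text{ as } t\to+\infty}$ is an embedded submanifold through $p$ of dimension exactly $k$, and the stable manifold theorem furnishes a uniform bound $\norm{D\psi_t|_{TW^s(p)}}\le Ce^{-\alpha t}$ with $\alpha>0$. (Equivalently: $A:=DX_\lambda|_p$ is hyperbolic with $k$-dimensional stable subspace, hyperbolicity being forced by combining the gradient-like inequality with nondegeneracy of $\mathrm{Hess}_p\phi$.) Writing $\iota\colon W^s(p)\hookrightarrow W$ for the inclusion and using $\psi_t$-invariance of $W^s(p)$, for $q$ near $p$ in $W^s(p)$ and $v,w\in T_qW^s(p)$ we obtain
\[
e^t\,(\iota^*\omega)_q(v,w)\;=\;\bigl(\psi_t^*\iota^*\omega\bigr)_q(v,w)\;=\;(\iota^*\omega)_{\psi_t(q)}\bigl(D\psi_t(v),\,D\psi_t(w)\bigr)\;\longrightarrow\;0
\]
as $t\to+\infty$, since $\psi_t(q)\to p$ while $D\psi_t(v),D\psi_t(w)\to 0$. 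Therefore $(\iota^*\omega)_q=0$, so $W^s(p)$ is isotropic in $(W,\omega)$. But an isotropic submanifold has dimension at most $n$, contradicting $\dim W^s(p)=k>n$. Hence no critical point of $\phi$ has index greater than $n$.

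The step I expect to require the most care is the local normal form for the gradient-like flow near $p$ — that $W^s(p)$ is genuinely a $k$-dimensional submanifold with uniformly contracting differential — which rests on the (un)stable manifold theorem together with hyperbolicity of $DX_\lambda|_p$, and this is exactly where the Morse hypothesis enters. An alternative that bypasses the flow is to argue purely with $A=DX_\lambda|_p$: linearizing $\mathcal{L}_{X_\lambda}\omega=\omega$ at the zero $p$ gives $\omega_p(Av,w)+\omega_p(v,Aw)=\omega_p(v,w)$, so $A-\tfrac12\,\mathrm{Id}\in\mathfrak{sp}(\omega_p)$; the eigenvalues of $A$ then occur in pairs $\{\tfrac12+\mu,\ \tfrac12-\mu\}$ whose real parts sum to $1$, so at least one member of each pair has positive real part, whence $A$ has at most $n$ eigenvalues with negative real part — which, once hyperbolicity is in hand, equals the Morse index of $p$.
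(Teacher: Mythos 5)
The paper does not include a proof of this proposition; it is stated as a well-known topological obstruction and the argument is deferred to the literature. Your argument is the standard one and it is correct: the Cartan computation gives $\mathcal{L}_{X_\lambda}\omega=\omega$, hence $\psi_t^*\omega=e^t\omega$; evaluating on tangent vectors to the stable manifold $W^s(p)$ of a zero $p$, the left side blows up unless $\omega$ already vanishes, while the right side tends to zero because $D\psi_t|_{TW^s(p)}$ contracts, so $W^s(p)$ is isotropic and therefore has dimension at most $n$. The two ingredients you invoke without full detail are both genuine consequences of the gradient-like-for-Morse hypothesis: hyperbolicity of $A=DX_\lambda|_p$ (an eigenvalue with zero real part would produce an orbit along which $\phi$ is bounded yet the gradient-like inequality forces $\mathrm{Hess}_p\phi$ to vanish on it, contradicting nondegeneracy), and the equality $\dim W^s(p)=k$ (the gradient-like condition forces $W^s(p)$ into the sublevel cone $\{\phi\le\phi(p)\}$ and $W^u(p)$ into the superlevel cone, so the stable dimension is at most $k$ and the unstable at most $2n-k$, hence equality). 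Your closing linear-algebra alternative, using $A-\tfrac12\,\mathrm{Id}\in\mathfrak{sp}(\omega_p)$ to pair the eigenvalues of $A$ as $\{\tfrac12+\mu,\tfrac12-\mu\}$, is also a correct and commonly used shortcut to the same dimension bound.
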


This means that a Weinstein domain has the homotopy type of a half-dimensional CW-complex.  In particular, as is the case for compact complex manifolds with pseudo-convex boundary, Weinstein domains of dimension at least $4$ must have connected boundaries.  The first examples of Liouville-but-not-Weinstein domains were given by McDuff in \cite{mcduff1991symplectic}, and further examples of Liouville-but-not-Weinstein domains have been constructed by Geiges \cite{geiges1994symplectic}, \cite{geiges1995examples}, Mitsumatsu \cite{mitsumatsu1995anosov}, Massot-Niederkr\"{u}ger-Wendl \cite{massot2013weak}, and Huang \cite{huang2019dynamical}, with such examples now existing in all even dimensions.

To the best of the authors' knowledge, all existing constructions of Liouville-but-not-Weinstein domains rule out the existence of a Weinstein structure by appealing to Proposition~\ref{prop:weinstein-topological-condition}.  According to the \emph{Weinstein existence theorem} (c.f. \cite[Theorem 13.1]{cieliebak2012stein}), a Liouville domain which has the homotopy type of a half-dimensional CW-complex must admit \emph{some} Weinstein structure, and thus we are left with the following question.

\begin{question}\label{question:homotopy-type}
Let $(W,\lambda)$ be a Liouville domain which has the homotopy type of a half-dimensional CW-complex.  Must $\lambda$ admit a Liouville homotopy to a Weinstein structure?
\end{question}

Wrapped up in this question are very delicate dynamical considerations.  In the absence of a Lyapunov function, there is very limited control over  $X_\lambda$ and thus the problem of homotoping $X_\lambda$ into a vector field with severely restricted dynamical properties --- all while maintaining the vector field's status as Liouville --- seems to be enormously difficult.  The overarching goal of this paper is to better understand the extent to which wild Liouville vector fields may be tamed, provided their underlying domains have sufficiently simple topology.

\subsection{Stabilizations and skeleta}
One way to achieve the topological criterion of Proposition~\ref{prop:weinstein-topological-condition} is to increase the dimension of our domain.  The \emph{stabilization} of a Liouville domain $(W,\lambda)$ is given by the product
\[
(W,\lambda) \times (r_0\,\mathbb{D}^2,\lambda_{\mathrm{stab}}) = (W\times r_0\,\mathbb{D}^2,\lambda + \lambda_{\mathrm{stab}}),
\]
for some $r_0>0$, where $\lambda_{\mathrm{stab}}$ is the standard Liouville form on $r_0\,\mathbb{D}^2$, defined by
\[
\lambda_{\mathrm{stab}} = \tfrac{1}{2}(p\,dq-q\,dp).
\]
The Liouville vector field $X_{\lambda_{\mathrm{stab}}} = \frac{1}{2}p\, \partial_p + \frac{1}{2}q\, \partial_q$ expands radially from the origin in $r_0\,\mathbb{D}^2$, and the Liouville vector field of the stabilization is given by $X_\lambda+X_{\lambda_{\mathrm{stab}}}$.  It follows that stabilization does not fundamentally change the dynamics of a Liouville domain.

To make this last statement more precise, let us recall the \emph{skeleton} of a Liouville domain, defined by
\[
\mathrm{Skel}(W,\lambda) := \bigcap_{t<0} \psi^t(W),
\]
where $\psi^t\colon W\to W$ is the time-$t$ flow of the Liouville vector field $X_\lambda$.  The interesting dynamics of $(W,\lambda)$ are encoded on $\mathrm{Skel}(W,\lambda)$, and it is clear that the skeleton of the stabilization is given by $\mathrm{Skel}(W,\lambda)\times\{(0,0)\}$.

We now arrive at a weaker version of Question~\ref{question:homotopy-type}.

\begin{question}\label{question:stably-weinstein}
Is every Liouville domain \emph{stably Weinstein}?
\end{question}

While allowing stabilizations weakens the hypothesis of Question~\ref{question:homotopy-type}, a clear answer to Question~\ref{question:stably-weinstein} would still shed a great deal of light on the difference between Liouville and Weinstein dynamics.  A Liouville domain which is not stably Weinstein would necessarily feature dynamics far more complicated than any witnessed on a Weinstein domain.

We point out that in the case of Liouville manifolds, an affirmative answer to Question~\ref{question:stably-weinstein} follows from work of Eliashberg-Gromov \cite{eliashberg1991convex}.  In fact, Eliashberg-Ogawa-Yoshiyasu showed in \cite{eliashberg2021stabilized} that if a Liouville manifold of dimension $2n$ has Morse type at most $n+1$, then a single stabilization suffices.  However, the relevant Liouville homotopies are not known to be compactly supported, and thus Question~\ref{question:stably-weinstein} remains. 

\subsection{Hands-on homotopies}
Our strategy for taming wild Liouville dynamics is to deal with the structures directly.  Inspired by the techniques used by Honda and Huang in \cite{honda2019convex} to study hypersurfaces in contact manifolds, we construct explicit Liouville homotopies which can be installed as local operations.  These local operations are meant to simplify the dynamics of our Liouville domain, with the goal of producing a Weinstein domain following their installation.

\begin{figure}
\centering
\begin{tikzpicture}[scale=0.75]
\begin{scope}[rotate=270, transform shape]
\draw (-3,-3) -- (3,-3) -- (3,3) -- (-3,3) -- (-3,-3);

\draw[domain=-3:3,variable=\x] plot ({0},{\x}) [arrow inside={end=latex}{0.5}];
\draw[domain=-3:3,variable=\x] plot ({1},{\x}) [arrow inside={end=latex}{0.5}];
\draw[domain=-3:3,variable=\x] plot ({2},{\x}) [arrow inside={end=latex}{0.5}];
\draw[domain=-3:3,variable=\x] plot ({-1},{\x}) [arrow inside={end=latex}{0.5}];
\draw[domain=-3:3,variable=\x] plot ({-2},{\x}) [arrow inside={end=latex}{0.5}];
\draw[domain=-3:3,variable=\x] plot ({2.75},{\x}) [arrow inside={end=latex}{0.5}];
\draw[domain=-3:3,variable=\x] plot ({-2.75},{\x}) [arrow inside={end=latex}{0.5}];
\end{scope}

\begin{scope}[xshift=11cm, rotate=270, transform shape]
\draw (-3,-3) -- (3,-3) -- (3,3) -- (-3,3) -- (-3,-3);

\draw[domain=-3:-1.5,variable=\x] plot ({0},{\x}) [arrow inside={end=latex}{0.5}];
\draw[domain=-1.5:1.5,variable=\x] plot ({0},{-\x}) [arrow inside={end=latex}{0.5}];
\draw[domain=1.5:3,variable=\x] plot ({0},{\x}) [arrow inside={end=latex}{0.5}];

\draw[domain=0:1.5,variable=\x] plot ({\x},{2*\x*\x-1.5}) [arrow inside={end=latex}{0.5}];
\draw[domain=0:1.5,variable=\x] plot ({-\x},{2*\x*\x-1.5}) [arrow inside={end=latex}{0.5}];
\draw[domain=0:0.75,variable=\x] plot ({\x},{(8/3)*\x*\x+1.5}) [arrow inside={end=latex}{0.5}];
\draw[domain=0:0.75,variable=\x] plot ({-\x},{(8/3)*\x*\x+1.5}) [arrow inside={end=latex}{0.5}];

\draw[domain=1.5:2.25,variable=\x] plot ({\x},{tan(deg((\x-1.875)*1.24905/0.375))}) [arrow inside={end=latex}{0.5}];
\draw[domain=1.5:2.25,variable=\x] plot ({-\x},{tan(deg((\x-1.875)*1.24905/0.375))}) [arrow inside={end=latex}{0.5}];
\draw[domain=-3:3,variable=\x] plot ({2.75},{\x}) [arrow inside={end=latex}{0.5}];
\draw[domain=-3:3,variable=\x] plot ({-2.75},{\x}) [arrow inside={end=latex}{0.5}];

\fill (0,-1.5) circle[radius=2pt];
\fill (0,1.5) circle[radius=2pt];
\end{scope}
\end{tikzpicture}
\caption{A box fold is installed in 2D by identifying a region as on the left and replacing it (via a Liouville homotopy) with a region as on the right.}
\label{fig:2d-box-fold}
\end{figure}

As a first example, let us consider the \emph{two-dimensional box fold}, to be carefully defined in Sections~\ref{section:box_folds_pl} and~\ref{section:box_folds_smooth}.  To install this operation on a two-dimensional Liouville domain $(W,\lambda)$, we first identify a region $U\subset W$ of the form
\[
(U,\lambda|_U) \cong ([0,s_0]\times[0,t_0],e^s\,dt).
\]
On $U$, the Liouville vector field $X_\lambda$ may be identified with $\partial_s$, meaning that flowlines of $X_\lambda$ simply pass through $U$ unperturbed.  With the goal of interrupting chaotic behavior, the box fold uses a Liouville homotopy to replace $X_\lambda$ on $U$ with a vector field which has two critical points --- one of index 0 and another of index 1 --- and is topologically equivalent to the vector field depicted in Figure~\ref{fig:2d-box-fold}.

The box fold will be far from sufficient as a general purpose operation on Liouville domains, but we can use it to demonstrate the spirit of our techniques.  Consider a Liouville structure on $S^1\times[-1,1]$ whose Liouville vector field behaves as seen in Figure~\ref{fig:liouville-cylinder}.  Namely, $S^1\times\{0\}$ is a closed orbit of the Liouville vector field, repelling other orbits.  The presence of a closed orbit precludes the existence of a Lyapunov function for our Liouville vector field, and thus this structure is not Weinstein.  After installing a box fold on a region which intersects the skeleton $S^1\times\{0\}$, we obtain a Liouville domain of the type depicted in Figure~\ref{fig:weinstein-cylinder}.  By interrupting the backward flow of our Liouville vector field\footnote{In this toy case, the Liouville vector field is admittedly not so much chaotic as simply \emph{not gradient-like}.} we are able to produce a Weinstein domain.

\begin{figure}
  \centering
  \savebox{\largestimage}{\begin{tikzpicture}[scale=0.75]
\draw (-3,-3) -- (3,-3) -- (3,3) -- (-3,3) -- (-3,-3);

\draw[thick,domain=-3:3,smooth,variable=\x,blue] plot ({\x},  {0}) [arrow inside={end=latex}{0.5}];

\draw[domain=-3:3,variable=\x,black] plot ({\x},  {0.5*exp(\x/10)}) [arrow inside={end=latex}{0.5}];
\draw[domain=-3:3,variable=\x,black] plot ({\x},  {0.5*exp((\x+6)/10)}) [arrow inside={end=latex}{0.5}];
\draw[domain=-3:3,variable=\x,black] plot ({\x},  {0.5*exp((\x+12)/10)}) [arrow inside={end=latex}{0.5}];
\draw[domain=-3:-0.08,variable=\x,black] plot ({\x},  {0.5*exp((\x+18)/10)}) [arrow inside={end=latex}{0.5}];

\draw[domain=-3:3,variable=\x,black] plot ({\x},  {-0.5*exp(\x/10)}) [arrow inside={end=latex}{0.5}];
\draw[domain=-3:3,variable=\x,black] plot ({\x},  {-0.5*exp((\x+6)/10)}) [arrow inside={end=latex}{0.5}];
\draw[domain=-3:3,variable=\x,black] plot ({\x},  {-0.5*exp((\x+12)/10)}) [arrow inside={end=latex}{0.5}];
\draw[domain=-3:-0.08,variable=\x,black] plot ({\x},  {-0.5*exp((\x+18)/10)}) [arrow inside={end=latex}{0.5}];
\end{tikzpicture}}%
  \begin{subfigure}[b]{0.48\textwidth}
    \centering
    \usebox{\largestimage}
    \caption{This Liouville structure is not Weinstein, because the vector field is not gradient-like.}
    \label{fig:liouville-cylinder}
  \end{subfigure}
  \quad
  \begin{subfigure}[b]{0.48\textwidth}
    \centering
    \raisebox{\dimexpr.5\ht\largestimage-.5\height}{%
      \begin{tikzpicture}[scale=0.75]
\draw (-3,-3) -- (3,-3) -- (3,3) -- (-3,3) -- (-3,-3);


\draw[thick,blue] plot [smooth,tension=1]
        coordinates {(-3,0) (-1.5,0)}
        [arrow inside={end=latex}{0.5}];
\draw[thick,blue] plot [smooth,tension=1]
        coordinates {(1.5,0) (-1.5,0)}
        [arrow inside={end=latex}{0.5}];
\draw[thick,blue] plot [smooth,tension=1]
        coordinates {(1.5,0) (3,0)}
        [arrow inside={end=latex}{0.5}];

\draw plot [smooth,tension=1]
        coordinates {(-1.5,0) (-1.5,3)}
        [arrow inside={end=latex}{0.5}];
\draw plot [smooth,tension=1]
        coordinates {(-1.5,0) (-1.5,-3)}
        [arrow inside={end=latex}{0.5}];
\draw plot [smooth,tension=1]
        coordinates {(1.5,0) (1.5,3)}
        [arrow inside={end=latex}{0.5}];
\draw plot [smooth,tension=1]
        coordinates {(1.5,0) (1.5,-3)}
        [arrow inside={end=latex}{0.5}];
        
\draw[domain=-0.885:1.5,variable=\x,black] plot ({\x},  {tan(deg((\x-1.5)*pi/6))}) [arrow inside={end=latex reversed}{0.5}];
\draw[domain=-0.885:1.5,variable=\x,black] plot ({\x},  {-tan(deg((\x-1.5)*pi/6))}) [arrow inside={end=latex reversed}{0.5}];
\draw[domain=1.5:3,variable=\x,black] plot ({\x},  {tan(deg((\x-1.5)*pi/6))}) [arrow inside={end=latex}{0.75}];\draw[domain=1.5:3,variable=\x,black] plot ({\x},  {-tan(deg((\x-1.5)*pi/6))}) [arrow inside={end=latex}{0.75}];
\draw[domain=-3:-2.12,variable=\x,black] plot ({\x},  {tan(deg((\x+4.5)*pi/6))}) [arrow inside={end=latex}{0.5}];
\draw[domain=-3:-2.12,variable=\x,black] plot ({\x},  {-tan(deg((\x+4.5)*pi/6))}) [arrow inside={end=latex}{0.5}];

\fill[blue] (-1.5,0) circle[radius=2pt];
\fill[blue] (1.5,0) circle[radius=2pt];
\end{tikzpicture}}
    \caption{The box fold introduces critical points which make the vector field gradient-like.}
    \label{fig:weinstein-cylinder}
  \end{subfigure}
\caption{A Liouville structure on $S^1\times[-1,1]$, before and after the installation of a box fold.  In both figures, the vertical edges are identified, and the skeleton is $S^1\times\{0\}$.}
\label{fig:cylinder-toy}
\end{figure}

While the box fold has the helpful feature of interrupting flowlines, it comes at the cost of a drastic holonomy for uninterrupted flowlines.  That is, after the box fold is installed, there will be flowlines which pass through the region $U$ with very different $t$-values along $\{s=0\}\times[0,t_0]$ and $\{s=s_0\}\times[0,t_0]$.  This local perturbation can then have unpredictable effects on the global dynamics of our Liouville domain, making our increased local understanding substantially less useful.

Consequently, we see that there are two desired features in any local operation: that it traps Liouville flowlines in backward time, and that its effect on untrapped flowlines is controllable.  In the analysis we will see that these desires are often at odds with one another, and that our control over the holonomy of these operations is hampered by the requirement that they be Liouville homotopies.  However, we present in this paper an important local operation which can be installed in a stabilized setting.

\subsection{The blocking apparatus}
We now introduce the key local operation developed in this paper, which we call the \emph{blocking apparatus}.  As with the box fold, the blocking apparatus is installed on a region $U\subset (W,\lambda)$ in a Liouville domain of dimension $2n$.  We require that this region have the form
\[
(U,\lambda|_U) \cong ([0,s_0]\times[0,t_0]\times W_0\times r_0\,\mathbb{D}^2,e^s(dt + \lambda_0 + \lambda_{\mathrm{stab}})),
\]
where $(W_0,\lambda_0)$ is some Weinstein domain of dimension $2n-4$ and $r_0>0$ is some positive constant.  In words, $(U,\lambda|_U)$ is required to be a symplectization of a contactization of a stabilization of a Weinstein domain.  On $U$ we have $X_\lambda=\partial_s$, inducing a trivial holonomy map $\partial_+U\to\partial_-U$, where
\[
\partial_-U := \{s=0\} \times [0,t_0]\times W_0\times r_0\,\mathbb{D}^2
\quad\text{and}\quad
\partial_+U := \{s=s_0\} \times [0,t_0]\times W_0\times r_0\,\mathbb{D}^2.
\]
Following the installation of the blocking apparatus, there will be a partially-defined holonomy map $\partial_+U\dashrightarrow\partial_-U$ which we must consider.

In order to precisely state the effect of the blocking apparatus, we define a few bits of notation.

\begin{definition}
For any Liouville domain $(W,\lambda)$, we let $\psi^s$ denote the time-$s$ flow of the Liouville vector field $X_\lambda$ and define $\|\cdot\|_{(W,\lambda)}\colon W\to [0,1]$ as follows:
\[
\|x\|_{(W,\lambda)} := \left\lbrace\begin{matrix}
0, & x\in \mathrm{Skel}(W,\lambda)\\
e^{-s_x}, & x\not\in\mathrm{Skel}(W,\lambda)
\end{matrix}\right.,
\]
where $s_x>0$ is the unique number such that $\psi^{s_x}(x) \in \partial W$.
\end{definition}

\begin{remark}
We will often simply write $\|\cdot\|_W$ in place of $\|\cdot\|_{(W,\lambda)}$.
\end{remark}

\begin{definition}
On the Weinstein domain $(r_0\,\mathbb{D}^2,\lambda_{\mathrm{stab}})$, for any $r_0>0$, we define $\|(p,q)\|_{\mathrm{stab}}=\sqrt{p^2+q^2}$.
\end{definition}

\begin{remark}
Note that $\|\cdot\|_{\mathrm{stab}}\neq \|\cdot\|_{(r_0\,\mathbb{D}^2,\lambda_{\mathrm{stab}})}$.
\end{remark}

\begin{definition}
For any Liouville domain $(W,\lambda)$ and any parameter $0<\epsilon<1$, we define
\[
I_\epsilon(W,\lambda) := \|\cdot\|_W^{-1}([0,1-\epsilon]),
\]
obtained from $W$ by removing an open neighborhood of the boundary $\partial W$.
\end{definition}

With this notation established, we can now state the key effects of installing a blocking apparatus.

\begin{theorem}\label{prop:mainprop}
    
Consider the Weinstein cobordism $(U=[0,s_0]\times[0,t_0]\times W_0\times r_0\,\mathbb{D}^2,e^s\,(dt+\lambda_0+\lambda_{\mathrm{stab}}))$.  Fix $0<\epsilon\ll \min(1,s_0,\frac{1}{2}(1-e^{-s_0})t_0)$ sufficiently small. If $r_0>0$ is sufficiently large, a blocking apparatus can be installed in $U$, i.e., a Liouville homotopy $\lambda_U \rightsquigarrow \lambda_{\mathrm{BA}}$ can be applied to $U$, such that the following properties hold. 
\begin{enumerate}
    \item (Weinstein compatibility)\label{mainprop:weinstein}

    \noindent The Liouville vector field $X_{\lambda_{\mathrm{BA}}}$ is Morse with 
    $8N_0$ critical points, where $N_0$ is the number of critical points of $(W_0, \lambda_0)$. Of these $8N_0$ critical points, $N_0^{\mathrm{crit}}$ of them are critical points of middle index, where $N_0^{\mathrm{crit}}$ is the number of middle index critical points of $(W_0, \lambda_0)$. 

    \item (Trapping properties)\label{mainprop:trapping}

\noindent There is a neighborhood $U_{\mathrm{trap}}$ of 
\[
[\epsilon,t_0-\epsilon]\times I_\epsilon(W_0,\lambda_0)\times \{(0,0)\} \subset [0,t_0] \times W_0 \times r_0\,\mathbb{D}^2
\]
such that any flowline passing through $\{s=s_0\}\times U_{\mathrm{trap}}\subseteq \partial_+U$ converges to a critical point of $X_{\lambda_{\mathrm{BA}}}$ in backward time.

    \item (Holonomy properties)\label{mainprop:holonomy}

\noindent Let $x\in \partial_+U$ be in the domain of the partially-defined holonomy map $h\colon\partial_+U\dashrightarrow\partial_-U$ induced by $X_{\lambda_{\mathrm{BA}}}$. Let $t(x),W_0(x)$ denote the $t$-coordinate and $W_0$-coordinate of $x$, respectively. Then the following properties hold. 
\begin{enumerate}
    \item For some constant $0<K<1$, we have $\|W_0(h(x))\|_{W_0}\leq Ke^{s_0}\,\|W_0(x)\|_{W_0}$.\label{mainprop:holonomy:weinstein}

    \item For the same constant $0<K<1$, whenever $\|W_0(x)\|_{W_0}<e^{-s_0}$ we have\label{prop:mainprop:stabilization-part}
    \[
    \|\pi_{r_0\,\mathbb{D}^2}(h(x))\|_{\mathrm{stab}}\leq Ke^{\frac{s_0}{2}}\,\|\pi_{r_0\,\mathbb{D}^2}(x)\|_{\mathrm{stab}}.
    \]
    
    \item If $t(x) \in [0,\epsilon] \cup [t_0 - \epsilon,t_0]$ and $W_0(x)\in I_{1 - e^{-s_0}}(W_0, \lambda_0)$, then $t(h(x)) \in [0,\epsilon] \cup [t_0 - \epsilon,t_0]$.\label{prop:mainprop:t-holo-part}
    
\end{enumerate}
\end{enumerate}
\end{theorem}

Some discussion is in order to better understand the statement of the theorem. First, we highlight three important features:

\begin{itemize}
    \item Morseness of the vector field $X_{
    \lambda_{\mathrm{BA}}}$ ensures that inside a blocking apparatus, each flowline either exits or limits to a critical point, in both forward and backward time; that is, there are no wandering or periodic orbits trapped in the interior.
	\item In backward time, a codimension $0$ subset of the flowlines intersecting $\partial_+U$ will  converge to a critical point in $U$.
	\item For flowlines which are not trapped, we can estimate certain aspects of the holonomy. In particular, the holonomy can be made arbitrarily small in the $t$-component near the stabilization origin. Though these estimates do not describe all aspects of the holonomy, they are sufficient for our applications.
\end{itemize}

Second, toward the final point above, we give a schematic depiction in Figure ~\ref{fig:schematic} of how we install a blocking apparatus. We will install blocking apparatuses so that the set we wish to trap in backward time is sufficiently far from $\partial W_0$ and near the origin in the stabilization direction, so that \ref{mainprop:holonomy:weinstein} and \ref{prop:mainprop:stabilization-part} provide useful control of the holonomy.  (While the estimate provided by \ref{mainprop:holonomy:weinstein} is modest, our strategy also takes advantage of a global contraction in the $W_0$-component.)  We cannot typically prevent the set we wish to trap from meeting $\partial([0,t_0])$, and thus the strategy is to install blocking apparatuses on supports which have some overlap in the $t$-direction (while remaining disjoint in the $s$-direction). The primary complication is that a flowline could enter an apparatus near $\partial([0,t_0])$ and exit with $t$-holonomy such that it misses the next apparatus in line. That this does not occur is precisely \ref{prop:mainprop:t-holo-part}. See Figure~\ref{fig:schematic} for a schematic of this strategy.

\begin{figure}[ht]
    \centering
    \vskip-1.5cm
	\begin{overpic}[scale=0.33]{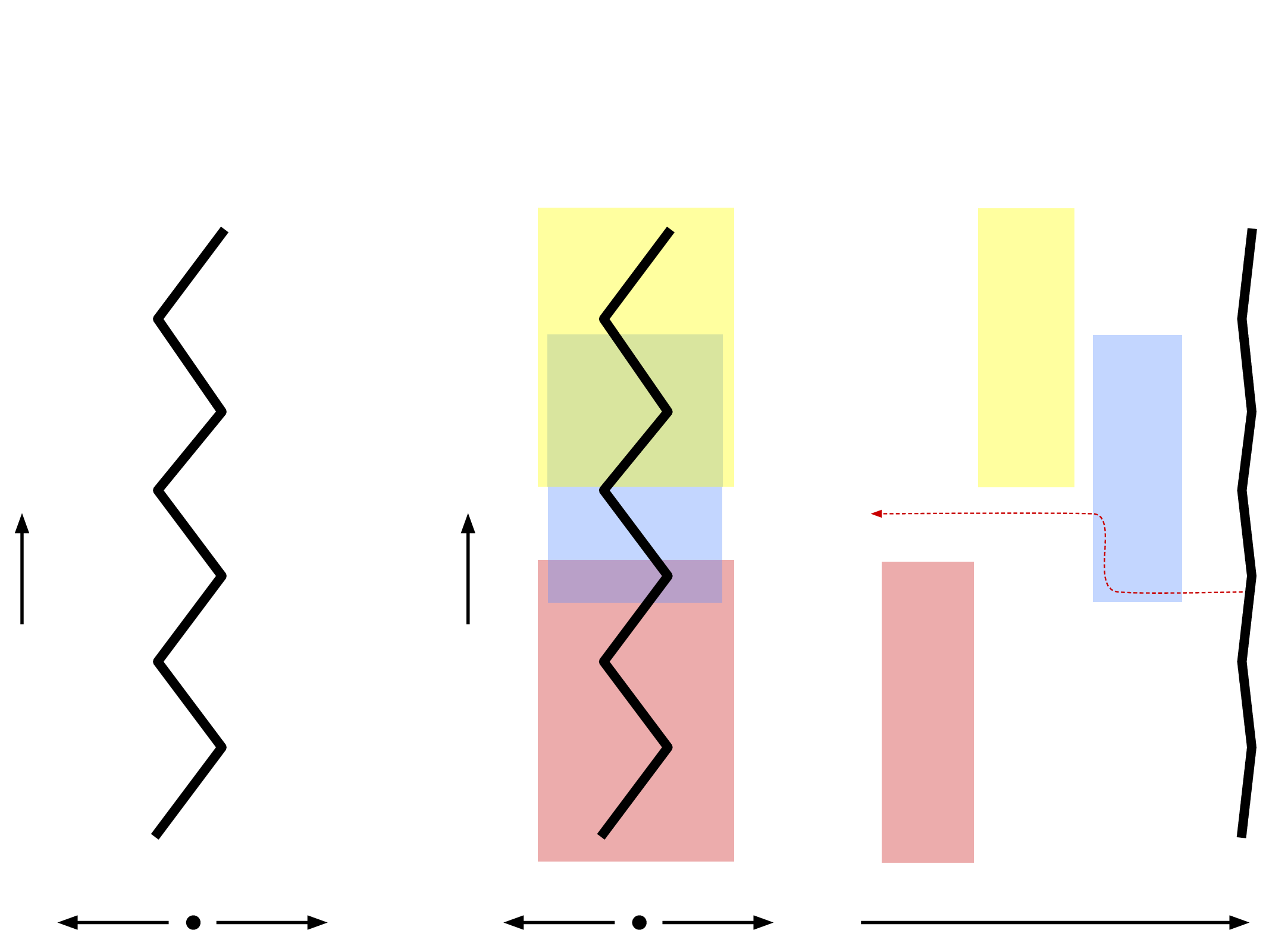}
        \put(1.5,35.5){$t$}
        \put(0,1.5){$W_0$}
        \put(36.5,35.5){$t$}
        \put(35,1.5){$W_0$}
        \put(99,1.5){$s$}
        \put(19.5,55){$K$}
	\end{overpic}
	\caption{A schematic for blocking apparatus installation. On the left, the set $K$ represents a cross section of flowlines we wish to trap; the $s$- and $\D^2$-directions are projected out of the picture. On the right, two projections of blocking apparatus profiles are given by the colored boxes. The dashed red path indicates a hypothetical problematic flowline that does not occur thanks to \ref{prop:mainprop:t-holo-part}.}
	\label{fig:schematic}
\end{figure}

Finally, we comment on the construction of the blocking apparatus and its relation to the previously studied explicit folds of Honda-Huang. These authors establish the paradigm of modeling the flowbox $U$ as a hypersurface in its contactization and modifying the dynamics of $U$ via explicit perturbations of $U$ in this contactization. The proof of Theorem ~\ref{prop:mainprop} proceeds in the same manner and we follow Honda-Huang in using the box fold as our most basic method of perturbation. However, the goal of Honda-Huang (achieving global convexity of a hypersurface containing $U$, i.e. Morse dynamics with positive and negative critical points) contrasts with our own (achieving Morse dynamics with only positive critical points) and the box fold alone is sufficient for neither purpose. Honda-Huang overcome insufficiency of the box fold with non-graphical perturbations; we must restrict to graphical perturbations, so our technique is the \emph{chimney fold}, a graphical perturbation based over a complicated but deliberately chosen subset of $U$. Thus, the novelty of Theorem ~\ref{prop:mainprop} is in the design and analysis of a chimney fold.

\subsection{Applications}
As discussed, our local operations are constructed with the goal of simplifying Liouville dynamics.  As an example of how such a simplification might play out, we will in Section~\ref{section:mitsumatsu} exhibit Liouville domains in every even dimension which are not Weinstein, but are stably Weinstein.

The domains we study are constructed by Huang in \cite[Example 0.7]{huang2019dynamical} and generalize well-known examples of Mitsumatsu \cite{mitsumatsu1995anosov} and \cite{geiges1995examples}.  We will recall the details of this construction in Section~\ref{section:mitsumatsu}, and we follow \cite{cieliebak2022floer} in referring to these as \emph{torus bundle Liouville domains}.

\begin{theorem}\label{thm:torus-bundle-domains}
The torus bundle Liouville domains of \cite[Example 0.7]{huang2019dynamical} are stably Weinstein.
\end{theorem}

Theorem~\ref{thm:torus-bundle-domains} gives an affirmative answer to \cite[Question 0.8]{huang2019dynamical}.  As pointed out by Huang, this is an interesting phenomenon, because $W_A\times r_0\,\mathbb{D}^2$ is diffeomorphic to the cotangent bundle of $\mathrm{Skel}(W_A,\lambda)$, but cannot be symplectomorphic to this cotangent bundle, since the stabilization component of $W_A\times r_0\,\mathbb{D}^2$ allows us to displace the skeleton.

The authors' expectation is that the technology introduced in this paper can be used to simplify Liouville dynamics in more general settings.  For instance, consider a Liouville domain $(W^{2n},\lambda)$ that admits a \emph{global transversal} $M^{2n-1}\subset W$.  That is, $M$ is a codimension 1 submanifold-with-boundary through which all flowlines of $W$ pass.  If the contact manifold $(M,\lambda)$ admits a partial open book decomposition compatible with the form\footnote{We point out that compatibility with $\lambda$ is strictly more difficult to achieve than is compatibility with $\ker\lambda$.} $\lambda$, then a collection of \textit{partial} blocking apparatuses can be installed on the stabilization of $(W,\lambda)$ to interrupt the flow through $M\times\{(0,0)\}$.  This ensures that the flowlines of our perturbed stabilization flow backwards to critical points, and with a bit more care we can argue that this new domain is also free of broken loops.  This strategy is inspired by the example of Honda-Huang in \cite[Section 7.3]{honda2019convex}; continuing the analogy with Honda-Huang, one would hope to create a ``backwards barricade" for an arbitrary $(W,\lambda)$ --- a collection of local transversals analogous to $(M,\lambda)$ above --- and then install several interrupters.  The difficulty we currently face in carrying out this strategy lies in obtaining partial open book decompositions compatible with the contact forms on our local transversals.  Certainly the strategy should not work if $W$ does not have the homotopy type of a CW-complex of dimension $n+1$, and the question of how this topological consideration might interact with the existence of a backwards barricade is the subject of ongoing investigation.

\subsection{Organization}
In Section~\ref{section:background} we recall the machinery by which Liouville domains may be treated as hypersurfaces in contact manifolds, and also give a simple criterion for determining whether or not a Liouville domain is Weinstein.  This criterion motivates our understanding of what it means for Liouville dynamics to be ``simplified," and thus the sort of local operations we should develop.

Sections~\ref{section:box_folds_pl} and~\ref{section:box_folds_smooth} are spent developing and exploring our first local operations: \emph{box folds}. The former section is dedicated to a piecewise-linear version of the box fold, primarily to build intuition and understanding; the smooth version which we make use of is defined in the latter section. The smooth analysis is technically independent from the piecewise-linear analysis, but is facilitated by the understanding developed in the piecewise-linear setting. As noted above, these operations will prove insufficient for our purposes, but will be a fundamental building block in our desired local operation. 

In Section~\ref{section:chimney_folds} we introduce \emph{chimney folds}, which are a variation on box folds and the primary component of the blocking apparatus. Again, we study a piecewise-linear version of the construction first before defining the technically independent smooth version. We define the blocking apparatus and prove Theorem~\ref{prop:mainprop} in Section~\ref{section:proof_main_prop}.

We discuss in Section~\ref{section:technical_properties} some technical properties which are useful when installing blocking apparatuses and in Section~\ref{section:mitsumatsu} we prove Theorem~\ref{thm:torus-bundle-domains}.

\subsection*{Acknowledgments}
The authors are happy to thank Ko Honda for countless conversations explaining (and re-explaining) the contents of \cite{honda2019convex}, as well as for offering a great deal of guidance and feedback throughout the completion of this project.  We also thank Yang Huang for his interest in our work, and for helpful discussions.  Finally, we are grateful to anonymous referees for comments which first led to a major revision of this paper and then to subsequent improvements.  Both authors were partially supported by NSF grant DMS-2003483; the first author was also partially supported by NSF grant DMS-2038103; and the second author was also partially supported by NSF grant DMS-1745583.

\section{Background}\label{section:background}

\subsection{Liouville domains and characteristic foliations}

Throughout the paper, we treat Liouville domains as hypersurfaces in their contactizations. This subsection is a brief introduction to this philosophy. First, we recall some basic notions of contact geometry as they pertain to the study of Liouville domains.

\begin{definition}[Symplectization and contactization] Let $(M, \xi = \ker \alpha)$ be a contact manifold, and let $(W, \lambda)$ be an exact symplectic manifold. 
\begin{enumerate}[label=(\roman*)]
    \item The \textbf{symplectization} of $(M, \alpha)$ is the exact symplectic manifold $(\R_s \times M, e^s\, \alpha)$. 
    \item The \textbf{contactization} of $(W,\lambda)$ is the contact manifold $(\R_z \times W, dz + \lambda)$. 
\end{enumerate}

\end{definition}

\begin{remark}
Both symplectization and contactization have compact analogues, where the first component $\mathbb{R}$ is replaced by a compact interval $[0,s_0]$ or $[0,z_0]$; we will sometimes also refer to these compact constructions as symplectization or contactization.
\end{remark}

\begin{definition}[Characteristic foliation] Let $(M^{2n+1},\xi = \ker \alpha)$ be a contact manifold, and let $\Sigma^{2n} \subset M$ be a hypersurface. The \textbf{characteristic foliation} of $\Sigma$ is the singular $1$-dimensional foliation of $\Sigma$ given by 
\[
(T\Sigma \cap \xi\mid_{\Sigma})^{\bot}.
\]
Here, $\bot$ is the symplectic orthogonal complement with respect to the conformal symplectic structure on $\xi$ induced by $d\alpha$. Note that $(T\Sigma \cap \xi\mid_{\Sigma})^{\bot} \subset T\Sigma$, as $T\Sigma \cap \xi \subset \xi$ is necessarily coisotropic.  
\end{definition}

In practice, we will use the following lemma to compute characteristic foliations. 

\begin{lemma}\label{cflemma}\cite[Lemma 2.5.20]{geiges_2008}
Let $\Sigma^{2n}$ be an oriented hypersurface in a contact manifold $(M^{2n+1}, \alpha)$. Let $\beta = \alpha\mid_{\Sigma}$ and let $\Omega$ be a volume form on $\Sigma$. The characteristic foliation of $\Sigma$ is directed by the unique vector field $X$ satisfying 
\[
\iota_X \Omega = \beta\, (d\beta)^{n-1}.    
\]
\end{lemma}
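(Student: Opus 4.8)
The plan is to check, pointwise on $\Sigma$, that the vector field $X$ determined by $\iota_X\Omega=\beta\,(d\beta)^{n-1}$ spans $(T\Sigma\cap\xi|_\Sigma)^{\bot}$ at every point. First I would note that $X$ exists and is unique: since $\Omega$ is nowhere vanishing, $Y\mapsto\iota_Y\Omega$ is a bundle isomorphism $T\Sigma\xrightarrow{\sim}\Lambda^{2n-1}T^*\Sigma$, and $\beta\,(d\beta)^{n-1}$ is a section of the target. Replacing $\Omega$ by another volume form only rescales $X$ by a nowhere-zero function, so the line field it spans — and hence the statement — does not depend on the choice of $\Omega$.

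Next I would show $X$ is everywhere tangent to $\xi$, i.e. $\beta(X)\equiv0$. Since $\beta\wedge\Omega$ is a $(2n{+}1)$-form on the $2n$-manifold $\Sigma$ it vanishes, so expanding $\iota_X(\beta\wedge\Omega)=0$ gives $\beta(X)\,\Omega=\beta\wedge\iota_X\Omega=\beta\wedge\beta\wedge(d\beta)^{n-1}=0$, whence $\beta(X)=0$; because $X$ is tangent to $\Sigma$ this says $X\in T\Sigma\cap\xi|_\Sigma$. At a point $p$ where $T_p\Sigma=\xi_p$ — a singular point of the characteristic foliation — we have $\beta_p=0$, so $\iota_{X_p}\Omega_p=0$ and $X_p=0$, consistent with the convention that such points are the zeros of the foliation.

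The remaining work is at a point $p$ with $T_p\Sigma\neq\xi_p$. There $V_p:=T_p\Sigma\cap\xi_p=\ker\beta_p$ is a hyperplane in the symplectic vector space $(\xi_p,d\alpha_p)$, hence coisotropic, so $d\beta_p|_{V_p}=d\alpha_p|_{V_p}$ has exactly the one-dimensional kernel $V_p^{\bot}$ and rank $2n-2$. I would then verify two things. First, $\beta_p\,(d\beta_p)^{n-1}\neq0$: choosing a line $L$ complementary to $V_p$ in $T_p\Sigma$ and using that $\beta_p$ annihilates $V_p$, one sees $\beta_p\,(d\beta_p)^{n-1}=\beta_p\wedge(d\beta_p|_{V_p})^{n-1}$, and the latter is nonzero since $(d\beta_p|_{V_p})^{n-1}$ is the top power of a rank-$(2n{-}2)$ form on the $(2n{-}1)$-dimensional $V_p$. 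Hence $\iota_{X_p}\Omega_p\neq0$, so $X_p\neq0$, and $\langle X_p\rangle$ is precisely the one-dimensional kernel of the $(2n{-}1)$-form $\beta_p\,(d\beta_p)^{n-1}$ on $T_p\Sigma$. Second, a generator $v$ of $V_p^{\bot}$ lies in that kernel: $\iota_v\beta_p=0$, and $\iota_v d\beta_p$ vanishes on $\ker\beta_p$ (as $v\in V_p^{\bot}$ and $d\beta_p$ is the restriction of $d\alpha_p$ to $T_p\Sigma$), so $\iota_v d\beta_p$ is a multiple of $\beta_p$; then $\iota_v\bigl(\beta_p\,(d\beta_p)^{n-1}\bigr)=-(n-1)\,\beta_p\wedge(\iota_v d\beta_p)\wedge(d\beta_p)^{n-2}$ is a multiple of $\beta_p\wedge\beta_p\wedge(\cdots)=0$. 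Combining these, $\langle X_p\rangle=\langle v\rangle=V_p^{\bot}=(T_p\Sigma\cap\xi_p)^{\bot}$.

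I expect the main obstacle to be the pointwise linear algebra at nonsingular points — in particular showing $\beta_p\,(d\beta_p)^{n-1}\neq0$, equivalently that $X_p$ does not vanish there. This is exactly where the contact condition is used: nondegeneracy of $d\alpha$ on $\xi$ is what makes $\ker\beta_p$ a coisotropic hyperplane, pinning down the rank of $d\beta_p|_{V_p}$ and hence the one-dimensional kernel of the top form. Once that is in place, matching this kernel with $V_p^{\bot}$ is routine.
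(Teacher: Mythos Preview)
The paper does not prove this lemma; it is quoted verbatim from Geiges' book with the citation \cite[Lemma 2.5.20]{geiges_2008} and used as a black box. So there is no ``paper's own proof'' to compare against.

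Your argument is correct and is essentially the standard proof. One phrasing could be tightened: writing ``$\beta_p\,(d\beta_p)^{n-1}=\beta_p\wedge(d\beta_p|_{V_p})^{n-1}$'' is not literally an identity of forms on $T_p\Sigma$, since $(d\beta_p|_{V_p})^{n-1}$ lives on $V_p$. What you mean (and what makes the nonvanishing transparent) is that evaluating the $(2n{-}1)$-form $\beta_p\wedge(d\beta_p)^{n-1}$ on a tuple $(e_0,e_1,\dots,e_{2n-2})$ with $e_0\notin V_p$ and $e_1,\dots,e_{2n-2}$ a basis of a complement to $V_p^{\bot}$ inside $V_p$ gives $\beta_p(e_0)\cdot(d\beta_p)^{n-1}(e_1,\dots,e_{2n-2})$, and the second factor is nonzero because $d\alpha_p$ is nondegenerate on that $(2n{-}2)$-dimensional subspace. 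With that clarification, the proof is complete.
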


The importance of the characteristic foliation in the study of Liouville domains is indicated by the following fact, which is the foundation of the philosophy of this paper.

\begin{lemma}\label{lemma:liouville-vf-directs-char-fol}
Let $(W^{2n}, \lambda)$ be an exact symplectic manifold with Liouville vector field $X_{\lambda}$. Consider the contactization $(\R_z \times W, dz + \lambda)$. The oriented characteristic foliation of the hypersurface $\Sigma := \{z = 0\}$ is directed by $X_{\lambda}$. 
\end{lemma}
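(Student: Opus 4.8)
The plan is to use Lemma~\ref{cflemma} with an explicit choice of contact form, hypersurface restriction, and volume form, and then simply identify the resulting vector field with $X_\lambda$. First I would fix the contact form $\alpha = dz + \lambda$ on $\R_z \times W$ and restrict to $\Sigma = \{z=0\}$, which we identify with $W$ via the obvious projection. Under this identification $\beta := \alpha|_\Sigma = \lambda$, so $d\beta = d\lambda = \omega$, and hence $\beta\,(d\beta)^{n-1} = \lambda \wedge \omega^{n-1}$. Meanwhile, since $(W,\omega)$ is symplectic, $\omega^n$ is a volume form on $W$; I would take $\Omega := \tfrac{1}{n}\,\omega^n$ as the volume form on $\Sigma$ (any nonzero multiple works, and this normalization is convenient). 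The content of the lemma is then that the characteristic foliation is directed by the unique $X$ with $\iota_X \Omega = \lambda \wedge \omega^{n-1}$, and I must check this $X$ is exactly $X_\lambda$.

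The key computational step is the pointwise identity
\[
\iota_{X_\lambda}\!\left(\tfrac{1}{n}\,\omega^n\right) = \lambda \we \omega^{n-1}.
\]
This follows from the Leibniz rule for interior multiplication: $\iota_{X_\lambda}(\omega^n) = n\,(\iota_{X_\lambda}\omega)\we \omega^{n-1} = n\,\lambda\we\omega^{n-1}$, using precisely the defining relation $\iota_{X_\lambda}\omega = \lambda$. Dividing by $n$ gives the claim. Since $\Omega$ is a volume form, the map $Y \mapsto \iota_Y \Omega$ is a pointwise isomorphism from vector fields to $(2n-1)$-forms, so the vector field produced by Lemma~\ref{cflemma} is unique and must coincide with $X_\lambda$. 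Thus $X_\lambda$ directs the characteristic foliation of $\Sigma$.

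Finally I would address orientations, since the statement concerns the \emph{oriented} characteristic foliation. The orientation data enters through the choice of $\Omega$: replacing $\Omega$ by $-\Omega$ (equivalently, reversing the co-orientation conventions) reverses the direction of $X$. The claim is that the natural orientation of $\Sigma$ — the one making Lemma~\ref{cflemma} produce $X_\lambda$ rather than $-X_\lambda$ — is the one we use by convention for symplectization/contactization hypersurfaces, so with $\Omega = \tfrac1n \omega^n$ as above the orientations match and we genuinely get $X_\lambda$ and not its negative. I do not expect any real obstacle here; the only mild subtlety is bookkeeping of the orientation/sign conventions so that the word "oriented" in the statement is honestly justified, and this amounts to recording which volume form on $\Sigma$ is the standard one. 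Everything else is a one-line application of $\iota_{X_\lambda}\omega = \lambda$ together with the derivation property of $\iota_{X_\lambda}$.
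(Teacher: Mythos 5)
Your proof is correct and follows essentially the same route as the paper: apply Lemma~\ref{cflemma} with $\Omega = \tfrac{1}{n}(d\lambda)^n$ and $\beta = \lambda$, then use $\iota_{X_\lambda}(d\lambda)^n = n\,(\iota_{X_\lambda}d\lambda)\wedge(d\lambda)^{n-1}$ together with $\iota_{X_\lambda}d\lambda = \lambda$. The paper does exactly this (writing $d\lambda$ where you write $\omega$) and handles the orientation simply by choosing $\Omega$ with the indicated sign, as you note.
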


\begin{remark}
For the rest of the paper, we will represent the characteristic foliation by any vector field directing the foliation. 
\end{remark}

\begin{proof}
A volume form on $\Sigma$ is given by $\Omega = \frac{1}{n}\, (d\lambda)^n$, and $(dz + \lambda)\mid_{\Sigma} = \lambda$. Note that 
\begin{align*}
    \iota_{X_{\lambda}} \Omega &= \frac{1}{n}\, \iota_{X_{\lambda}}(d\lambda)^n \\
    &= \frac{1}{n}\, n\, (\iota_{X_{\lambda}} d\lambda)\, (d\lambda)^{n-1} \\
    &= \lambda\, (d\lambda)^{n-1}.
\end{align*}
By Lemma \ref{cflemma}, it follows that $X_{\lambda}$ directs the characteristic foliation of $\Sigma$. 
\end{proof}

The observation of Lemma~\ref{lemma:liouville-vf-directs-char-fol} drives all of the constructions in this paper.  We know from \cite[Proposition 11.8]{cieliebak2012stein} that, up to a diffeotopy, Liouville homotopies of $(W^{2n},\lambda)$ are exact.  To be precise, we know that if $\lambda_t$ is a Liouville homotopy with $\lambda_0=\lambda$, then there is a diffeotopy $\phi_t\colon W\to W$ with $\phi_0=\mathrm{Id}$ and a smooth family of functions $f_t\colon W\to W$ such that
\[
\phi_t^*\lambda_t - \lambda_0 = df_t.
\]
But exact Liouville homotopies of $(W,\lambda)$ are precisely those which may be realized by \emph{graphical perturbations} of $W$ in $(\mathbb{R}_z\times W,dz+\lambda)$.  Indeed, we may define maps $F_t\colon W \to \Sigma_t$ by
\[
F_t(x) := (f_t(\phi_t^{-1}(x)),\phi_t^{-1}(x)),
\]
where $\Sigma_t:=\{z=f_t(x)\}$, and observe that $F_t^*(dz+\lambda)|_{\Sigma_t}=\lambda_t$, for all $t$.

The local Liouville homotopies constructed in this paper are all exact, and thus we view them as arising from compactly supported graphical perturbations of $(W,\lambda)$ in its contactization.

\subsection{A Weinstein criterion}\label{subsection:weinstein-criterion}
We present here a simple dynamical test to determine whether or not a Liouville domain is in fact Weinstein.

\begin{proposition}\label{prop:weinstein-criterion}
A Liouville domain $(W,\lambda)$, with Liouville vector field $X_{\lambda}$, is Weinstein if and only if the following conditions are satisfied:
\begin{enumerate}
	\item for any zero $p\in W$ of $X_{\lambda}$, there is a neighborhood of $p$ on which $X_{\lambda}$ admits a Morse Lyapunov function;\label{criterion:morse-cps}
	\item for any $p\in W$, the unique flowline of $X_{\lambda}$ passing through $p$ converges to a zero of $X_{\lambda}$ in backward time;\label{criterion:backward-time-cps}
	\item the vector field $X_{\lambda}$ does not admit any \emph{broken loops}, where a broken loop is a map $c\colon\mathbb{R}/\mathbb{Z}\to W$ with $0=a_0 < a_1 < \cdots < a_N =1$ such that $c(a_i)$ is a zero of $X_{\lambda}$ and $c(a_i,a_{i+1})$ is an oriented flowline of $X_{\lambda}$ from $c(a_i)$ to $c(a_{i+1})$.\label{criterion:no-broken-loops}
\end{enumerate}
\end{proposition}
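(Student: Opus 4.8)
The plan is to prove both directions, with the forward direction being essentially immediate and the reverse direction requiring the construction of a global Morse Lyapunov function from local data.

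For the forward implication, suppose $(W,\lambda)$ is Weinstein, so there is a Morse function $\phi$, locally constant on $\partial W$, with $X_\lambda$ gradient-like for $\phi$: $d\phi(X_\lambda) > c(\|X_\lambda\|^2 + \|d\phi\|^2)$ for some metric and constant $c>0$. Condition~\ref{criterion:morse-cps} is then immediate, since $\phi$ itself serves as a local (indeed global) Morse Lyapunov function near any zero of $X_\lambda$. For condition~\ref{criterion:backward-time-cps}, I would argue that $\phi$ is strictly decreasing along nonconstant backward flowlines and bounded below on the compact manifold $W$; since $X_\lambda$ points outward along $\partial W$, a backward flowline starting in the interior stays in the interior for all negative time, so it is defined for all $t \le 0$, and a standard Lyapunov/LaSalle argument forces it to converge to the zero set of $X_\lambda$ — and then, by the Morse (hence isolated, nondegenerate) hypothesis, to a single zero. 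Condition~\ref{criterion:no-broken-loops} follows because $\phi$ would have to be strictly decreasing along the concatenation of flowline segments of a broken loop while returning to its starting value, a contradiction.

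For the reverse implication, assume \ref{criterion:morse-cps}--\ref{criterion:no-broken-loops}; I must produce a global Morse function $\phi$, locally constant on $\partial W$, with $X_\lambda$ gradient-like. The key structural input is that the absence of broken loops (\ref{criterion:no-broken-loops}) lets me define a strict partial order on the set of zeros of $X_\lambda$ (which is finite, being a closed set of isolated points in a compact manifold by \ref{criterion:morse-cps}): declare $p \prec q$ if there is a (possibly broken) flowline chain from $p$ to $q$; acyclicity of this relation is exactly the no-broken-loops condition. Extending $\prec$ to a total order and assigning strictly increasing real values to the zeros, I would build $\phi$ by the following scheme. Near each zero $p$, use the local Morse Lyapunov function from \ref{criterion:morse-cps}, rescaled and shifted so that its value at $p$ is the assigned number and so that on a small sphere around $p$ it takes values in a narrow band compatible with the ordering. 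Away from the zeros, I would use a flowline "time" coordinate: condition~\ref{criterion:backward-time-cps} says every point flows back to some zero, so $W$ minus the zeros is foliated by flowlines each emanating from a zero, and I can push the boundary value of $\phi$ on the exit spheres of the local charts forward along the flow (the flow is a diffeomorphism onto its image away from the skeleton), interpolating via a partition of unity between the outgoing behavior near different zeros. Along $\partial W$ the construction is arranged to be locally constant since all flowlines exit transversally. Finally I verify $\phi$ is Morse (the only critical points are the zeros, which are nondegenerate by construction) and $d\phi(X_\lambda) > 0$ everywhere, then upgrade the strict inequality $d\phi(X_\lambda)>0$ to the quadratic gradient-like inequality by a standard argument using compactness (away from zeros both sides are bounded) together with the nondegenerate local model near each zero.

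The main obstacle is the reverse direction: assembling the local Morse Lyapunov functions and the flow-time function into a single globally-defined, smooth $\phi$ that is simultaneously Morse, strictly increasing along $X_\lambda$, and locally constant on $\partial W$. The delicate points are (a) ensuring the interpolation between charts via partitions of unity does not destroy the inequality $d\phi(X_\lambda)>0$ — this requires each piece being interpolated to be individually increasing along the flow, which is why the total order on zeros and the careful choice of value-bands matter — and (b) handling the behavior near the skeleton, where the backward flow is not proper and flowlines accumulate; here I would rely on \ref{criterion:backward-time-cps} to stratify $W$ by which zero a point limits to, and build $\phi$ stratum by stratum in order of $\prec$. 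I expect this construction to parallel the proof of the classical Weinstein/Smale handle-decomposition theory, so I would cite \cite{cieliebak2012stein} for the technical underpinnings wherever possible and focus the written proof on the new bookkeeping forced by conditions \ref{criterion:morse-cps}--\ref{criterion:no-broken-loops}.
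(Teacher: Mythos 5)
Your forward direction matches the paper's (both read off all three criteria from the global Morse Lyapunov function). For the reverse direction you take a genuinely different, though compatible, route. The paper and your proposal share the same key combinatorial input: using condition~\ref{criterion:no-broken-loops} to impose a partial order $\prec$ on the finite zero set $C(X_\lambda)$ via (possibly broken) flowlines, and using condition~\ref{criterion:backward-time-cps} to identify minimal elements as index-$0$ zeros. After that, the paper does \emph{not} directly build the Lyapunov function. It instead constructs a handle decomposition of $W$ compatible with $X_\lambda$: start with a standard neighborhood of the minimal zeros $C_0$, then inductively attach the handles corresponding to the next layer $C_{j+1}\setminus C_j$ along their unstable manifolds. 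The Weinstein structure then comes for free from the theory of Weinstein handle attachment. Your proposal goes the other way and builds the global Morse Lyapunov $\phi$ by hand: linearly extend $\prec$, assign increasing values to the zeros, splice in the local Morse Lyapunov charts from condition~\ref{criterion:morse-cps}, propagate along the flow, and glue with a partition of unity. Both are correct, and you correctly flag the two delicate points in your route — keeping $d\phi(X_\lambda)>0$ through the interpolation (which needs partition-of-unity functions that are constant along flowlines, not arbitrary ones) and controlling the construction near the skeleton where the backward flow fails to be proper. The handle-decomposition route buys brevity by outsourcing these issues to \cite{cieliebak2012stein}; your Conley--Smale style construction is more explicit and would make the resulting Lyapunov function visibly tied to the local data, at the cost of considerably more bookkeeping.
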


\begin{remark}
Proposition \ref{prop:weinstein-criterion} and its proof are adapted from \cite[Proposition 2.1.3]{honda2019convex}, which gives criteria for a vector field on a closed manifold to be Morse.
\end{remark}

\begin{proof}[Proof of Proposition \ref{prop:weinstein-criterion}]
If $(W,\lambda)$ is Weinstein, then certainly conditions \ref{criterion:morse-cps}-\ref{criterion:no-broken-loops} are satisfied, since in this case $X_{\lambda}$ admits a globally-defined Morse Lyapunov function.  Let us assume that these conditions hold and show that $(W,\lambda)$ is Weinstein.  For this, it suffices to construct a handle decomposition of $W$ which is compatible with $X_{\lambda}$.

Let $C(X_{\lambda})=\{p_1,\ldots,p_k\}$ be the zeros of $X_{\lambda}$, a finite set by the compactness of $W$ and Criterion~\ref{criterion:morse-cps}.  Criterion~ \ref{criterion:no-broken-loops} allows us to put a partial order on $C(X_{\lambda})$, with $p_i\prec p_j$ whenever $X_{\lambda}$ admits a (possibly broken) flowline from $p_i$ to $p_j$.  Criterion~\ref{criterion:backward-time-cps} then implies that any minimal element of $C(X_{\lambda})$ is a critical point of index 0.  We let $C_0$ denote the set of minimal elements of $C(X_{\lambda})$, and for $j\geq 0$ we let $C_{j+1}$ be the union of $C_j$ and the minimal elements of $C(X_{\lambda})\setminus C_j$.  Our handle decomposition of $W$ can then be constructed inductively: we begin with a standard neighborhood of $C_0$, then attach the handles corresponding to $C_1$, then those of $C_2$, and so on.

To see that $(W,\lambda)$ is Weinstein, notice that our handle decomposition provides a Morse Lyapunov function for $X_\lambda$ on a neighborhood $U$ of $\mathrm{Skel}(W,\lambda)$.  For any point $p\in W\setminus U$, Criterion~\ref{criterion:backward-time-cps} ensures that the unique flowline of $X_{\lambda}$ passing through $p$ will enter $U$ in backward time, and we may use these flowlines to extend the domain of our Morse Lyapunov function to be all of $W$.
\end{proof}

Because we are considering Liouville domains up to Liouville homotopy, we will always assume that Criterion~\ref{criterion:morse-cps} is satisfied.  Indeed, standard transversality arguments show that a generic vector field has only nondegenerate critical points.  Thus, with $X_{\lambda}$ as above, we may choose an arbitrarily small vector field $\tilde{X}$ on $W$ so that $X_{\lambda}+\tilde{X}$ has only nondegenerate critical points.  We choose $\tilde{X}$ small enough to ensure that $X_t := X_{\lambda}+t\tilde{X}$ is outwardly transverse to $\partial W$, for $t\in[0,1]$.  Then
\[
\lambda_t := \iota_{X_t}\omega
\]
defines a Liouville homotopy from $(W,\lambda)$ to $(W,\lambda_1)$, the latter of which has Liouville vector field $X_{\lambda}+\tilde{X}$.

Verifying that a given Liouville domain is Weinstein will thus consist of verifying Criteria~\ref{criterion:backward-time-cps} and \ref{criterion:no-broken-loops}.  Our local operations are developed with these criteria in mind --- the operations aim to trap large sets of flowlines in backward time, and to do so with a holonomy which prevents broken loops from developing.

\section{Piecewise linear box folds}\label{section:box_folds_pl}

The goal of this section and the next is to define the \textit{box fold}, originally due to \cite{honda2019convex}. Box folds are graphs in the contactization of smooth approximations of the indicator function $\mathbf{1}_B$ of certain subsets $B$ of a Liouville domain.

The effect of a graphical perturbation to a Liouville domain $(W,\lambda)$ is straightforward. Namely if $F:W \to \R$ is a smooth function, there is an induced Liouville homotopy from $(W,\lambda)$ to $(W, dF + \lambda)$. The Liouville vector field of the latter is $X_F + X_{\lambda}$, where $X_F$ is the Hamiltonian vector field of $F$ with respect to $d\lambda$. To prove Theorem~\ref{prop:mainprop}, we need a careful understanding of the relevant dynamics of such Hamiltonian vector fields, but rather than analyze these Hamiltonian vector fields directly, our strategy takes inspiration from \cite{honda2019convex} and proceeds as follows: 
\begin{enumerate}
    \item First we consider non-graphical, piecewise linear hypersurfaces in the contactization that approximate the desired smooth graph. The dynamics of the characteristic foliation on such a hypersurface, which is easy to compute, can be viewed as a ``limit'' of the dynamics of the Hamiltonian vector fields. \label{non-graphical}

    \item Next, we ``tip'' the vertical sides of the hypersurfaces to make them graphical.
    
    \item Finally, we smooth corners and edges of the piecewise linear hypersurfaces via a convolution.
   
\end{enumerate}

This section addresses~\ref{non-graphical} and is organized as follows. In \ref{subsection:PL_low} we study the piecewise linear box fold in a low-dimensional model. In \ref{subsection:PL_high} we extend this piecewise linear model to arbitrary dimensions. In \ref{subsection:box_holes} we discuss a variant called a \textit{box hole}. Finally, in \ref{subsection:pre_chimney} we introduce a piecewise linear fold called a \textit{pre-chimney} fold. This is preparation for Section \ref{section:chimney_folds}, where we define the notion of a (full) chimney fold.

\subsection{Piecewise linear box folds in dimension $2$}\label{subsection:PL_low}

We begin with the piecewise linear box fold in a standard low-dimensional model. In particular, consider the contactization of $(\R^2, e^s\, dt)$:
\[
\left(\R^3_{z,s,t}, \, \alpha = dz + e^s\, dt\right).
\]
The surface $W = \{z=0\}$ represents a region of a $2$-dimensional Liouville domain that we wish to perturb. Observe that the Liouville vector field of $(W, e^s\, dt)$ is $\partial_s$, and that this directs the (unoriented) characteristic foliation of $W$. To streamline the analysis of the backward-time dynamics, for the rest of the paper \textit{we choose to orient all characteristic foliations with respect to the backward Liouville flow}. Hence, the oriented characteristic foliation of $W$ is directed by $-\partial_s$. 

\begin{definition}
Fix $z_0, s_0, t_0 > 0$. A \textbf{(piecewise linear, low-dimensional) box fold with parameters $z_0,s_0,t_0$}, denoted $\Pi^{PL}$, is the surface 
\[
\Pi^{PL} := \overline{\partial\left([0,z_0] \times [0,s_0] \times [0,t_0]\right) \setminus \{z=0\}}.
\]
\end{definition}

We will refer to $[0,s_0]$ as the \textit{$s$-support} or \textit{symplectization length} of the fold, and $[0,t_0]$ as the \textit{$t$-support} or \textit{Reeb length} of the fold. Note that this instance of ``Reeb'' is not referring to $\partial_z$, which is the Reeb vector field of the contactization of $W$. We will also use the following shorthand notation to refer to the various sides of $\Pi^{PL}$:
\[
\underline{z=z_0} := \Pi^{PL} \cap \{z=z_0\}
\]
and likewise with the other sides $\underline{t=0}$, $\underline{t=t_0}$, $\underline{s=0}$, and $\underline{s=s_0}$.

\begin{figure}[htb]
    \centering
    \def\svgwidth{0.75\linewidth}
    \graphicspath{{Figures/}}
\begingroup%
  \makeatletter%
  \providecommand\color[2][]{%
    \errmessage{(Inkscape) Color is used for the text in Inkscape, but the package 'color.sty' is not loaded}%
    \renewcommand\color[2][]{}%
  }%
  \providecommand\transparent[1]{%
    \errmessage{(Inkscape) Transparency is used (non-zero) for the text in Inkscape, but the package 'transparent.sty' is not loaded}%
    \renewcommand\transparent[1]{}%
  }%
  \providecommand\rotatebox[2]{#2}%
  \newcommand*\fsize{\dimexpr\f@size pt\relax}%
  \newcommand*\lineheight[1]{\fontsize{\fsize}{#1\fsize}\selectfont}%
  \ifx\svgwidth\undefined%
    \setlength{\unitlength}{1006.28189623bp}%
    \ifx\svgscale\undefined%
      \relax%
    \else%
      \setlength{\unitlength}{\unitlength * \real{\svgscale}}%
    \fi%
  \else%
    \setlength{\unitlength}{\svgwidth}%
  \fi%
  \global\let\svgwidth\undefined%
  \global\let\svgscale\undefined%
  \makeatother%
  \begin{picture}(1,0.63398689)%
    \lineheight{1}%
    \setlength\tabcolsep{0pt}%
    \put(0,0){\includegraphics[width=\unitlength]{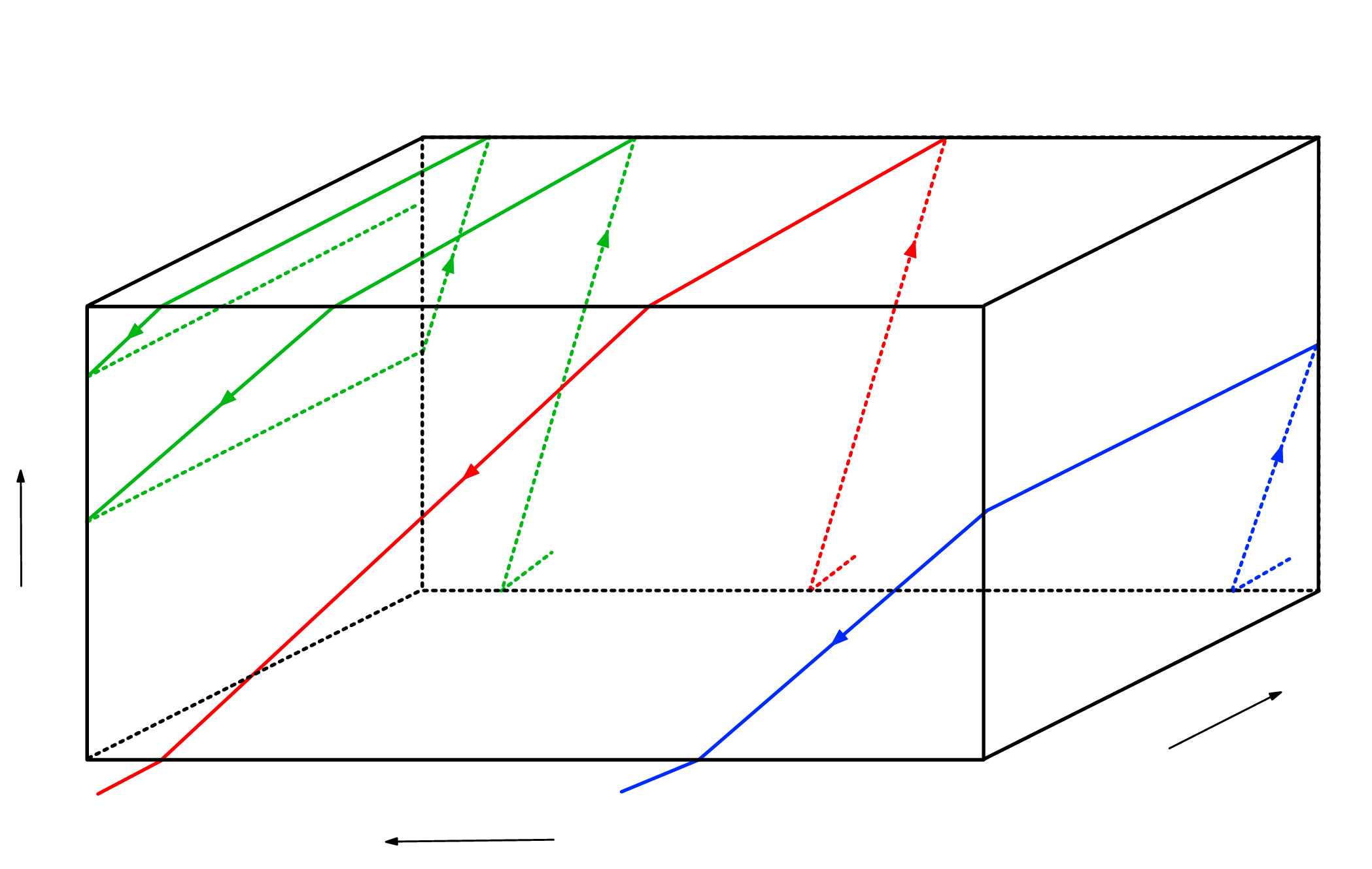}}%
    \put(0.01112926,0.30378216){\makebox(0,0)[lt]{\lineheight{1.25}\smash{\begin{tabular}[t]{l}$z$\end{tabular}}}}%
    \put(0.25870645,0.0177363){\makebox(0,0)[lt]{\lineheight{1.25}\smash{\begin{tabular}[t]{l}$t$\end{tabular}}}}%
    \put(0.94277264,0.13081061){\makebox(0,0)[lt]{\lineheight{1.25}\smash{\begin{tabular}[t]{l}$s$\end{tabular}}}}%
  \end{picture}%
\endgroup%

    \caption{A box fold with $z_0 < t_0$ and the three qualitative types of flowlines entering the fold according to Lemma \ref{lemma:PL_box_fold_holonomy_low}. The flowline in green is trapped in backward time, because it spirals around $\underline{t=t_0}\cap \underline{z=z_0}$ via the faces $\underline{s=s_0} \to \underline{z=z_0}\to \underline{s=0}\to \underline{t=t_0} \to \underline{s=s_0}$. The flowline in red passes through the fold, and its holonomy is given by a shift in the $t$-direction. The flowline in blue also passes through the fold, but its holonomy is given by a scaling in the $t$-direction.}
    \label{fig:box1}
\end{figure}

The foliation of each side of $\Pi^{PL}$ is given by the following table, where we have used Lemma \ref{cflemma}. Note that the orientation of each side is chosen to be consistent with the backward Liouville flow. 

\vspace{2mm}
\begin{center}
\begin{tabular}{ |c|c|c|c| } 
 \hline
 Side & Area form & Restriction of $dz + e^s\, dt$ & Characteristic foliation \rm \\ 
 \hline 
 $\underline{z=z_0}$ & $-e^s\, ds\, dt$ & $e^s\, dt$ & $  -\partial_s$ \\
 $\underline{s=0}$ & $-dz\, dt$ & $dz + dt$ & $\partial_t -\partial_z $ \\
 $\underline{s=s_0}$ & $- dt\, dz$ & $dz + e^{s_0}\, dt$ & $-\partial_t + e^{s_0} \partial_z$ \\
 $\underline{t=0}$ & $-ds\, dz $ & $dz$ & $-\partial_s$ \\
 $\underline{t=t_0}$ & $-dz\, ds $ & $dz$ & $\partial_s$ \\
 \hline
\end{tabular}
\end{center}
\vspace{2mm}

The most important feature of a box fold is that, in backward time, it traps some of the flowlines entering the fold through $\underline{z=0} \cap \underline{s=s_0}$. The flowlines that are not trapped reach $\underline{z=0} \cap \underline{s=0}$ and subsequently exit the fold after experiencing some holonomy in the $t$-direction; see Figure \ref{fig:box1}. For a flowline to exit the fold, it must reach $\underline{z=0}\cap \underline{s=0}$. 

The following lemma summarizes all of the induced dynamical behavior of a piecewise linear box fold in low dimensions. 

\begin{lemma}\label{lemma:PL_box_fold_holonomy_low}
Let $h^{PL}:\{z=0\} \times \{s=s_0\} \times [0,t_0] \dashrightarrow \{z=0\} \times \{s=0\}\times [0,t_0]$ be the partially-defined holonomy map given by the oriented characteristic foliation of $\Pi^{PL}$. The domain of $h^{PL}$ is 
\[
\left(0,\,e^{-s_0}\min(z_0,t_0)\right)_t \, \cup \, \left(e^{-s_0}\min(z_0,t_0),\, t_0 - (1-e^{-s_0})\min(z_0,t_0)\right)_t
\]
and 
\[
h^{PL}(t) = \begin{cases} 
e^{s_0}t & t\in \left(0,\,e^{-s_0}\min(z_0,t_0)\right) \\
t + (1-e^{-s_0})z_0 & t\in \left(e^{-s_0}\min(z_0,t_0),\, t_0 - (1-e^{-s_0})\min(z_0,t_0)\right)
\end{cases}.
\]
In particular, if $z_0 \geq t_0$ then the domain of $h^{PL}$ is $(0,e^{-s_0}t_0)$, and $h^{PL}(t) = e^{s_0}t$.
\end{lemma}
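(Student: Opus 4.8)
The plan is to compute the oriented characteristic foliation on each face of $\Pi^{PL}$ (already done in the table) and then trace a flowline entering through $\underline{z=0}\cap\underline{s=s_0}$ at parameter $t$, following it face-by-face until it either returns to $\underline{z=0}\cap\underline{s=0}$ (in which case we read off $h^{PL}(t)$) or spirals indefinitely around the edge $\underline{t=t_0}\cap\underline{z=z_0}$ (in which case $t$ is outside the domain of $h^{PL}$). The key observation driving the bookkeeping is that the foliation on each of the five faces is directed by a \emph{constant} vector field, so on each face a flowline is a straight segment and we only need to find where it exits that face.

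First I would follow a flowline entering at $(z,s,t)=(0,s_0,t)$. On $\underline{s=s_0}$ the foliation is $-\partial_t+e^{s_0}\partial_z$, so the flowline moves with slope $dz/dt=-e^{s_0}$: it travels toward smaller $t$ and larger $z$. Two things can happen: either it reaches $z=z_0$ first (at $t$-value $t-e^{-s_0}z_0$, requiring $t\geq e^{-s_0}z_0$), or it reaches $t=0$ first (at $z$-value $e^{s_0}t$, requiring $t\leq e^{-s_0}z_0$); note $e^{-s_0}z_0$ versus $e^{-s_0}\min(z_0,t_0)$ is where the case split $z_0\gtrless t_0$ will enter, but in the special case $z_0\geq t_0$ we have $t\leq t_0\leq e^{-s_0}z_0$ is \emph{not} automatic, so I must check: since $t<t_0\leq z_0$ and $e^{-s_0}<1$ we could still have $t>e^{-s_0}z_0$. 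Handling this carefully: if $z_0\geq t_0$ then for $t\in(0,e^{-s_0}t_0)$ the flowline hits $\underline{s=s_0}\cap\underline{t=0}$ at height $e^{s_0}t<t_0\le z_0$, i.e.\ it reaches the face $\underline{t=0}$ before reaching $z=z_0$. On $\underline{t=0}$ the foliation is $-\partial_s$, so the flowline runs straight across at constant $z=e^{s_0}t$ from $s=s_0$ to $s=0$, arriving at $(z,s,t)=(e^{s_0}t,0,0)$. Finally on $\underline{s=0}$ the foliation is $\partial_t-\partial_z$ (slope $dz/dt=-1$), so from $(e^{s_0}t,0,0)$ it descends to $z=0$ at $t$-value exactly $e^{s_0}t$, landing on $\underline{z=0}\cap\underline{s=0}$. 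This gives $h^{PL}(t)=e^{s_0}t$ for $t\in(0,e^{-s_0}t_0)$.

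Next I would verify that no $t\in[e^{-s_0}t_0,t_0]$ lies in the domain, i.e.\ those flowlines are trapped. For such $t$ the flowline on $\underline{s=s_0}$ reaches $z=z_0$ (since $e^{s_0}t\geq t_0$, it would overshoot the box in the $z$-direction, hence hits $z=z_0$ first) at a point on $\underline{s=s_0}\cap\underline{z=z_0}$ with $t$-coordinate $t-e^{-s_0}z_0\in[e^{-s_0}(t_0-z_0),\,t_0-e^{-s_0}z_0)\subseteq[\,\cdot\,,t_0)$, which since $z_0\geq t_0$ has $t$-coordinate possibly negative — here I need to be slightly careful and instead argue that when $z_0\ge t_0$ the relevant overshoot forces the flowline onto $\underline{z=z_0}$ within the box, where the foliation is $-\partial_s$, carrying it to $\underline{z=z_0}\cap\underline{s=0}$; then on $\underline{s=0}$ with slope $dz/dt=-1$ it travels a $t$-distance $z_0$ before reaching $z=0$, but $z_0\geq t_0$ means it exits the box through $\underline{t=t_0}$ first (at some $z\in(0,z_0)$); on $\underline{t=t_0}$ the foliation is $+\partial_s$, returning it to $\underline{s=s_0}$, and the cycle $\underline{s=s_0}\to\underline{z=z_0}\to\underline{s=0}\to\underline{t=t_0}\to\underline{s=s_0}$ repeats, spiraling toward the edge $\underline{t=t_0}\cap\underline{z=z_0}$. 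Hence such flowlines never reach $\underline{z=0}\cap\underline{s=0}$, so they are not in the domain of $h^{PL}$.

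The main obstacle I anticipate is purely organizational: correctly resolving, in each face, which boundary edge the straight segment exits through, since this depends on comparing the accumulated $t$- and $z$-displacements against $t_0$ and $z_0$, and the inequalities flip exactly at the $z_0=t_0$ threshold. Once the generic case ($z_0<t_0$, giving the two-piece formula) is handled with this edge-tracking, the special case $z_0\geq t_0$ is obtained by observing that the ``shift'' branch $t\mapsto t+(1-e^{-s_0})z_0$ disappears because its domain $(e^{-s_0}t_0,\,t_0-(1-e^{-s_0})t_0)$ is empty when $\min(z_0,t_0)=t_0$ and one rechecks that all of $(e^{-s_0}t_0,t_0)$ is trapped — which is exactly the spiraling argument above. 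I would therefore present the $z_0\geq t_0$ statement as the immediate specialization of the general lemma, emphasizing only the two points that (a) the scaling branch has domain $(0,e^{-s_0}t_0)$ with $h^{PL}(t)=e^{s_0}t$, and (b) the shifting branch is vacuous.
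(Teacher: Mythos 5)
Your face-by-face flowline-tracking is the same strategy the paper uses, and the computation giving $h^{PL}(t)=e^{s_0}t$ for $t\in(0,e^{-s_0}t_0)$ is correct. The gap is in the trapping argument.

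You claim that for $t\in[e^{-s_0}t_0,t_0]$ the flowline on $\underline{s=s_0}$ ``reaches $z=z_0$ first'' because ``$e^{s_0}t\geq t_0$ would overshoot the box in the $z$-direction.'' But the $z$-extent of the box is $z_0$, not $t_0$: the flowline on $\underline{s=s_0}$ reaches $\underline{z=z_0}$ before $\underline{t=0}$ if and only if $e^{s_0}t>z_0$, i.e.\ $t>e^{-s_0}z_0$. In the regime $z_0\geq t_0$, the inequality $e^{s_0}t\geq t_0$ is strictly weaker. Concretely, whenever $z_0>t_0$, every $t$ in the non-empty interval $(e^{-s_0}t_0,\,e^{-s_0}z_0)$ produces a flowline that hits $\underline{t=0}$ first --- this is exactly the situation you flag as giving a ``possibly negative'' $t$-coordinate --- and your attempted fix (``the relevant overshoot forces the flowline onto $\underline{z=z_0}$'') reasserts the incorrect outcome rather than handling the alternative. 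Worse, if $z_0\geq e^{s_0}t_0$ (the regime most box folds in the paper satisfy), then $(e^{-s_0}z_0,\,t_0)$ is empty and your argument covers none of the trapping region.

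What is missing is the paper's Case~1: a flowline that hits $\underline{t=0}$ at height $e^{s_0}t>t_0$ runs along $\underline{t=0}$ (via $-\partial_s$) to $\underline{s=0}$, then along $\underline{s=0}$ (via $\partial_t-\partial_z$) up to $\underline{t=t_0}$ at $z=e^{s_0}t-t_0>0$ (it cannot reach $z=0$ because $e^{s_0}t>t_0$), then along $\underline{t=t_0}$ (via $\partial_s$) back to $\underline{s=s_0}$. Each such loop strictly increases the $z$-coordinate at which the flowline meets $\underline{s=s_0}$. You need this monotone-in-$z$ iteration to argue that after finitely many cycles the flowline finally reaches $\underline{z=z_0}$ and only then enters the spiral you describe around $\underline{t=t_0}\cap\underline{z=z_0}$; without it the trapping of the full interval $(e^{-s_0}t_0,\,t_0)$ is not established.
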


\begin{remark}
Before we prove Lemma \ref{lemma:PL_box_fold_holonomy_low}, we include some discussion about its statement and role in the context of the paper.

\begin{enumerate}
    
    \item Most of the box folds in this paper will satisfy $z_0 \geq t_0$ (in fact, most will satisfy $z_0 \geq e^{s_0}t_0$), so for simplicity, the reader can focus on the ``in particular'' statement. 
    
    \item We will often abuse notation and refer to $h^{PL}$ as a partially-defined function on $\{s=s_0\} \times [0,t_0]$ or simply $[0,t_0]$. The complement of the domain of $h^{PL}$ in $[0,t_0]$ is the set of points whose Liouville flowlines do not pass through the fold, called the \textit{trapping region}. For example, if $z_0 \geq t_0$ then this region is $[e^{-s_0}t_0, t_0]$.  

    \item As $\Pi^{PL}$ is not a smooth hypersurface, we are abusing the notion of ``flowline.'' Consequently, when discussing piecewise linear folds we often do not carefully address $t$-values like $0, t_0$, and $e^{-s_0}t_0$. For example, we may refer to a trapping region of $\Pi^{PL}$ as $(e^{-s_0}t_0, t_0)$ or $[e^{-s_0}t_0, t_0]$, even though there are technically no flowlines passing through $\{s=s_0\} \times \{t=t_0\}$ or $\{s=s_0\} \times \{t=e^{-s_0}t_0\}$ due to edges and corners of the fold. In the present section this is unimportant. Intervals and trapping regions will be stated precisely when we consider smooth folds in Section \ref{section:box_folds_smooth}. 
    
\end{enumerate}
\end{remark}

\begin{proof}
Assume first that $z_0 \geq t_0$.

Suppose that a flowline enters the fold along $\underline{z=0}\cap \underline{s=s_0}$ with initial $t$-coordinate $\bar{t}\in (0, e^{-s_0}t_0)$. The characteristic foliation of $\underline{s=0}$ is directed by $-\partial_t + e^{s_0}\, \partial_z$. Because $\bar{t} < e^{-s_0}t_0$ and  $z_0 > t_0$, the flowline reaches $\underline{t=0}$ before $\underline{z=z_0}$. It reaches $\underline{t=0}$ with $z$-coordinate $e^{s_0}\bar{t} < t_0 \leq z_0$. Along $\underline{t=0}$ the flowline travels via $-\partial_s$ to $\underline{s=0}$. Here the characteristic foliation is $\partial_t - \partial_z$. Since the $z$-coordinate $e^{s_0}\bar{t}$ is less than $z_0$, the flowline reaches $\underline{z=0}$ before $\underline{t=t_0}$ and exits the fold with $t$-coordinate $e^{s_0}\bar{t}$. This proves that $h^{PL}(t) = e^{s_0}t$ for $t\in (0,e^{-s_0}t_0)$.

Next, suppose that a flowline enters along $\underline{z=0}\cap \underline{s=s_0}$ with initial $t$-coordinate $\bar{t}\in (e^{-s_0}t_0, t_0)$. The characteristic foliation of $\underline{s=0}$ is directed by $-\partial_t + e^{s_0}\, \partial_z$. The flowline reaches either $\underline{t=0}$ or $\underline{z=z_0}$. 

\vspace{2mm}
\noindent \textit{Case 1: The flowline reaches $\underline{t=0}$.}
\vspace{2mm}

The flowline reaches $\underline{t=0}$ with $z$-coordinate $e^{s_0}\bar{t} > t_0$. It travels along $\underline{t=0}$ via $-\partial_s$ to $\underline{s=0}$. Here the foliation is directed by $\partial_t - \partial_z$. Because $e^{s_0}\bar{t} > t_0$, the flowline reaches $\underline{t=t_0}$ with $z$-coordinate $e^{s_0}\bar{t} - t_0 > 0$. Along $\underline{t=t_0}$ it follows $\partial_s$, returning to $\underline{s=s_0}$. Note that the $z$-coordinate has increased from the last time the flowline was on $\underline{s=s_0}$. Here the flowline follows $-\partial_t + e^{s_0}\, \partial_z$ and enters either Case 1 (again) or Case 2. Each time the flowline cycles through Case 1, the $z$-coordinate increases. Thus, eventually, the initial $z$-coordinate along $\underline{s=s_0}$ will be large enough for the flowline to enter Case 2. 

\vspace{2mm}
\noindent \textit{Case 2: The flowline reaches $\underline{z=z_0}$.}
\vspace{2mm}

Let $\tilde{t}>0$ denote the $t$-coordinate at which the flowline reaches $\underline{z=z_0}$. The characteristic foliation here is directed by $-\partial_s$, and so the flowline then reaches $\underline{s=0}$. Since $z_0 \geq t_0$ and $\tilde{t} > 0$, the flowline follows $\partial_t - \partial_z$ and reaches $\underline{t=t_0}$ with $z$-coordinate $z_0 - (t_0 - \tilde{t})$. Here the foliation is $\partial_s$, and thus the flowline travels to $\underline{s=s_0}$. The flowline then follows $-\partial_t + e^{s_0}\, \partial_z$, returning to $\underline{z=z_0}$, i.e., returning to Case 2, with $t$-coordinate $t_0 - e^{-s_0}(t_0 - \tilde{t}) > \tilde{t}$. Note that the $t$-coordinate has increased upon return to $\underline{z=z_0}$. 

\vspace{2mm}

The above analysis shows that every flowline entering the fold with initial $t$-coordinate in $(e^{-s_0}t_0, t_0)$ cycles through Case 1 sufficiently many times to reach Case 2, which is then cycled through indefinitely; see Figure \ref{fig:box2}. The flowline spirals around $\underline{z=z_0}\cap \underline{t=t_0}$, never exiting the fold. This completes the proof of Lemma \ref{lemma:PL_box_fold_holonomy_low} when $z_0 \geq t_0$. 

The casework for a fold with $z_0 < t_0$ is identical but more tedious because of the presence of a third case. As we do not use this type of fold in the rest of paper, we refer to Figure \ref{fig:box2}.
\end{proof}

\begin{figure}[htb]
    \centering
    \def\svgwidth{0.85\linewidth}
    \graphicspath{{Figures/}}
\begingroup%
  \makeatletter%
  \providecommand\color[2][]{%
    \errmessage{(Inkscape) Color is used for the text in Inkscape, but the package 'color.sty' is not loaded}%
    \renewcommand\color[2][]{}%
  }%
  \providecommand\transparent[1]{%
    \errmessage{(Inkscape) Transparency is used (non-zero) for the text in Inkscape, but the package 'transparent.sty' is not loaded}%
    \renewcommand\transparent[1]{}%
  }%
  \providecommand\rotatebox[2]{#2}%
  \newcommand*\fsize{\dimexpr\f@size pt\relax}%
  \newcommand*\lineheight[1]{\fontsize{\fsize}{#1\fsize}\selectfont}%
  \ifx\svgwidth\undefined%
    \setlength{\unitlength}{968.60520039bp}%
    \ifx\svgscale\undefined%
      \relax%
    \else%
      \setlength{\unitlength}{\unitlength * \real{\svgscale}}%
    \fi%
  \else%
    \setlength{\unitlength}{\svgwidth}%
  \fi%
  \global\let\svgwidth\undefined%
  \global\let\svgscale\undefined%
  \makeatother%
  \begin{picture}(1,0.67247392)%
    \lineheight{1}%
    \setlength\tabcolsep{0pt}%
    \put(0,0){\includegraphics[width=\unitlength]{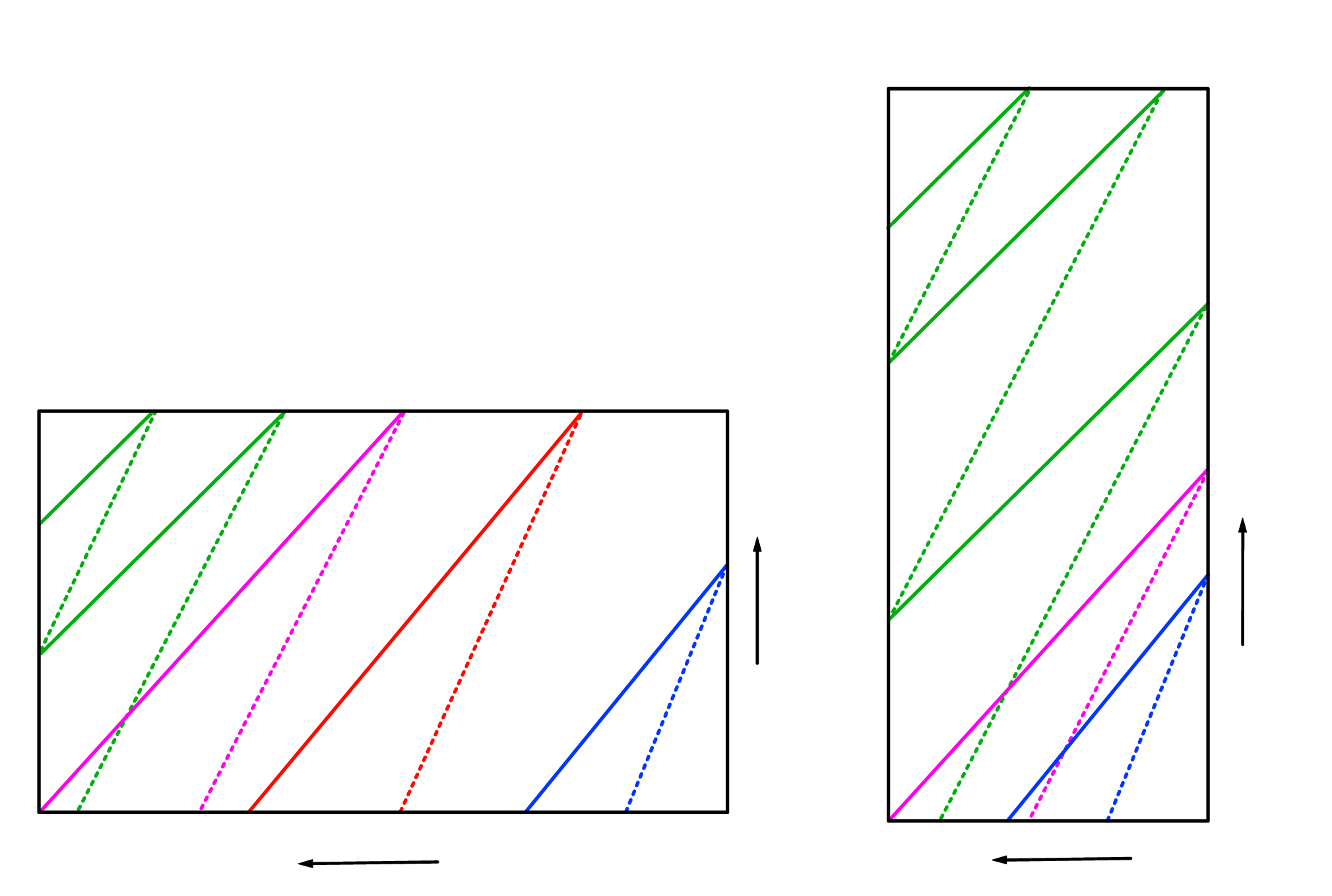}}%
    \put(0.20682011,0.02093307){\makebox(0,0)[lt]{\lineheight{1.25}\smash{\begin{tabular}[t]{l}$t$\end{tabular}}}}%
    \put(0.56171945,0.27465159){\makebox(0,0)[lt]{\lineheight{1.25}\smash{\begin{tabular}[t]{l}$z$\end{tabular}}}}%
    \put(0.7288211,0.02553218){\makebox(0,0)[lt]{\lineheight{1.25}\smash{\begin{tabular}[t]{l}$t$\end{tabular}}}}%
    \put(0.92680123,0.2868969){\makebox(0,0)[lt]{\lineheight{1.25}\smash{\begin{tabular}[t]{l}$z$\end{tabular}}}}%
  \end{picture}%
\endgroup%

    \caption{A head-on view of two different box folds, one with $z_0 < t_0$ (left) and $z_0 > t_0$ (right). This is a visual depiction of Lemma \ref{lemma:PL_box_fold_holonomy_low}. All of the dashed lines on $\underline{s=s_0}$ have $(t,z)$ slope $-e^{s_0}$ and all of the solid lines on $\underline{s=0}$ have $(t,z)$ slope $-1$. The pink flowlines represent the lower threshold of the trapping region. Observe on the right picture that increasing $z_0$ beyond $t_0$ (with $s_0$ fixed) does not increase the size of the trapping region.}
    \label{fig:box2}
\end{figure}

\begin{remark}[Symplectization support]
We have defined $\Pi^{PL}$ as a fold with $s$-support $[0,s_0]$, but it will be necessary for us to install box folds with symplectization support in some other interval $[s_1, s_2]$, where $s_2 - s_1 = s_0$. The only change to the characteristic foliation is that the $(t,z)$-slope on $\underline{s=s_1}$ is $-e^{s_1}$, and on $\underline{s=s_2}$ it is $-e^{s_2}$. The proof of Lemma \ref{lemma:PL_box_fold_holonomy_low}, see also Figure \ref{fig:box2}, implies that such a fold with $z_0 \geq e^{s_1}t_0$ has identical holonomy to a box fold installed with $z_0 \geq t_0$ and symplectization support in $[0,s_0]$. In other words, the holonomy of a box fold only depends on the symplectization \textit{length}, and not the actual interval, provided we increase $z_0$ accordingly. This poses no problem for us. In the future, we will say ``install a box fold with $s$-support $[s_1,s_2]$'' with the understanding that that the $z$-parameter is adjusted accordingly.   
\end{remark}

\subsection{Piecewise linear box folds in dimension $> 2$}\label{subsection:PL_high}

To install box folds on Liouville domains of arbitrary dimensions, we consider folds based over the symplectization of a contact handlebody. 

\begin{definition}
A \textbf{contact handlebody} is a contact manifold of the form 
\[
(H_0 := [0,t_0] \times W_0,\, dt + \lambda_0)
\]
where $(W_0, \lambda_0)$ is a Weinstein domain. 
\end{definition}

\begin{remark}
It is important that $(W_0, \lambda_0)$ is a domain, as opposed to a cobordism with nonempty negative boundary. When we discuss pre-chimney folds in \ref{subsection:pre_chimney} we will weaken this, but for now our analysis crucially uses the assumption that $(W_0, \lambda_0)$ has a skeleton and outward pointing Liouville vector field. 
\end{remark}

The low-dimensional box fold from \ref{subsection:PL_low} is based over the contact handlebody $([0,t_0], dt)$ where $W_0 = \{\text{pt}\}$. The standard example of a contact handlebody that the reader should keep in mind as it pertains to this paper is $([0,t_0] \times \D^2, \, dt + \frac{1}{2}r^2\, d\theta)$; here, the Liouville vector field of $(W_0 = \D^2,\, \lambda_0 = \frac{1}{2}r^2\, d\theta)$ is the radial vector field $\frac{1}{2}r\, \partial_r$.

Let $(H_0 = [0,t_0] \times W_0^{2n-2},\, dt + \lambda_0)$ be a contact handlebody and let 
\[
\left([0,s_0] \times H_0, \, e^s(dt + \lambda_0)\right)
\]
be its symplectization. This represents a region in our given $2n$-dimensional Liouville domain that we wish to perturb. The Liouville vector field of this model is $\partial_s$. In practice, given an arbitrary Liouville domain we will identify such a region by finding a contact handlebody transverse to the Liouville vector field and considering its time-$s_0$ flow. 

As before, we realize this region as the hypersurface $\{z=0\}$ inside its contactization:
\[
\left(\R_z \times [0,s_0] \times H_0, \, dz + e^s(dt + \lambda_0) \right).
\]

\begin{definition}
Fix $z_0, s_0 > 0$, and $H_0 = [0,t_0] \times W_0$ as above. A \textbf{(piecewise linear, high-dimensional) box fold with parameters $z_0, s_0, t_0$}, denoted $\Pi^{PL}$, is the hypersurface
\[
\Pi^{PL} := \overline{\partial\left([0,z_0] \times [0,s_0] \times H_0\right) \setminus \{z=0\}}.
\]
\end{definition}

We use the same language and notation as before. This time, there is an additional vertical side to consider: 
\[
\underline{\partial W_0} = [0,z_0] \times [0,s_0]\times [0,t_0] \times \partial W_0.
\]
The following lemma computes the backward oriented characteristic foliation of each side of $\Pi^{PL}$.

\begin{lemma}\label{lemma:PL_cf_high_dimensional}
Let $X_{\lambda_0}$ denote the Liouville vector field of $(W_0^{2n-2}, \lambda_0)$, let $\eta_0 := \lambda_0\mid_{\partial W_0}$ be the induced contact form on the boundary of $W_0$, and let $R_{\eta_0}$ denote the Reeb vector field on $\partial W_0$ of $\eta_0$. The backward oriented characteristic foliation of $\Pi^{PL}$ is given by Table \ref{tab:CF_PL_high}: 

\end{lemma}

\begin{table}[htb]
    \centering
    \begin{tabular}{ |c|c|c|c| } 
 \hline
 Side  & Characteristic foliation  \\ 
 \hline 
 $\underline{z=z_0}$  & $  -\partial_s$ \\
 $\underline{s=0}$  & $\partial_t -\partial_z $ \\
 $\underline{s=s_0}$  & $-\partial_t + e^{s_0} \partial_z$ \\
 $\underline{t=0}$ & $-\partial_s + X_{\lambda_0}$ \\
 $\underline{t=t_0}$ & $\partial_s - X_{\lambda_0}$ \\
 $\underline{\partial W_0}$ & $\partial_t - R_{\eta_0}$ \\
 \hline 
\end{tabular}
    \caption{The characteristic foliation of high-dimensional box fold.}
    \label{tab:CF_PL_high}
\end{table}

\begin{remark}
Observe that the foliation is identical to the low-dimensional fold on $\underline{z=z_0}$, $\underline{s=0}$, and $\underline{s=s_0}$. On the sides $\underline{t=0}$ and $\underline{t=t_0}$ there is an additional $\pm X_{\lambda_0}$ term, inducing motion in the $W_0$ direction. This motion, together with the new side $\underline{\partial W_0}$, is the key feature that distinguishes the behavior of the high-dimensional and low-dimensional folds. We also point out that the vector fields on the latter three sides project to the characteristic foliation of $\partial H_0$ in $H_0$. 
\end{remark}

\begin{proof}
We will consider the sides $\underline{t=0}$, $\underline{t=t_0}$, and $\underline{\partial W_0}$ that feature new behavior; the other three sides are similar. As before, we use Lemma \ref{cflemma}.

First, consider $\underline{t=0}$. A correctly oriented volume form on this side is 
\[
\Omega = e^{(n-1)s}\, dz\, ds\, (d\lambda_0)^{n-1}. 
\]
Let $\beta := (dz + e^s\, (dt + \lambda_0))\mid_{\underline{t=0}} = dz + e^s\, \lambda_0$. Then $d\beta = e^s\, ds\, \lambda_0 + e^s\, d\lambda_0$, and so 
\begin{align*}
\beta \, (d\beta)^{n-1} &= \left[dz + e^s\, \lambda_0\right]\, \left[(n-1)e^{(n-1)s}\, ds\, \lambda_0\, (d\lambda_0)^{n-2} + e^{(n-1)s}\,(d\lambda_0)^{n-1}\right] \\
&= e^{(n-1)s}\,\left[(n-1)\, dz\, ds\, \lambda_0\, (d\lambda_0)^{n-2} + dz\, (d\lambda_0)^{n-1}\right].
\end{align*}
Note that 
\begin{align*}
    \iota_{-\partial_s + X_{\lambda_0}}\Omega &= e^{(n-1)s}\,\left[dz\, (d\lambda_0)^{n-1} + (n-1)\, dz\, ds\, \lambda_0\, (d\lambda_0)^{n-2} \right] \\
    &= \beta \, (d\beta)^{n-1}.
\end{align*}
By Lemma \ref{cflemma}, it follows that $-\partial_s + X_{\lambda_0}$ directs the characteristic foliation of $\underline{t=0}$. The computation for $\underline{t=t_0}$ is identical, using the volume form $\Omega = -e^{(n-1)s}\, dz\, ds\, (d\lambda_0)^{n-1}.$

Finally, consider $\underline{\partial W_0}$. A correctly oriented volume form on this side is 
\[
\Omega = (n-1)e^{(n-1)s}\, dz\, ds\, dt\, \eta_0\, (d\eta_0)^{n-2}.
\]
Let $\beta := (dz + e^s\, (dt + \lambda_0))\mid_{\underline{\partial W_0}} = dz + e^s(dt + \eta_0)$. Then $d\beta = e^s\, ds\, (dt + \eta_0) + e^s\, d\eta_0$, and so 
\begin{align*}
    \beta \, (d\beta)^{n-1} &= \left[dz + e^s(dt + \eta_0)\right]\, \left[(n-1)e^{(n-1)s}\, ds\, (dt + \eta_0)\, (d\eta_0)^{n-2}\right] \\
    &= (n-1)e^{(n-1)s}\,\left[dz\, ds\, dt\, (d\eta_0)^{n-2} + dz\, ds\, \eta_0\, (d\eta_0)^{n-2}\right].
\end{align*}
Note that 
\begin{align*}
    \iota_{\partial_t - R_{\eta_0}} \Omega &= (n-1)e^{(n-1)s}\,\left[dz\, ds\, \eta_0\, (d\eta_0)^{n-2} + dz\, ds\, dt\, (d\eta_0)^{n-2}\right] \\
    &= \beta \, (d\beta)^{n-1}.
\end{align*}
By Lemma \ref{cflemma}, $\partial_t - R_{\eta_0}$ directs the characteristic foliation of $\underline{\partial W_0}$. 
\end{proof}

When visualizing the dynamics of a high-dimensional box fold, there are two useful ``contact projections'' of $\Pi^{PL}$: the projection to $[0,z_0]\times [0,s_0]\times [0,t_0]$, and the projection to $H_0 = [0,t_0]\times W_0$. See, for example, Figure \ref{fig:PL_high_dimensional_holonomy}.

\begin{lemma}\label{lemma:t_0_trap}
Let $\Pi^{PL}$ be a high-dimensional box fold (with no assumption on $z_0$). If a flowline reaches $\underline{t=t_0}$, it is trapped in backward time by the fold. 
\end{lemma}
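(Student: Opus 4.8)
The plan is to track a flowline that has just arrived on $\underline{t=t_0}$ and show it can never escape the fold, i.e.\ it can never reach $\underline{z=0}\cap\underline{s=0}$ (the only exit locus). On $\underline{t=t_0}$ the characteristic foliation is $\partial_s - X_{\lambda_0}$, so the flowline moves toward $\underline{s=s_0}$ while its $W_0$-coordinate flows \emph{backward} along the Liouville field of $(W_0,\lambda_0)$ (i.e.\ toward $\skel(W_0,\lambda_0)$); it cannot hit $\underline{\partial W_0}$ from here since $X_{\lambda_0}$ points inward along $\partial W_0$. Thus the flowline reaches $\underline{s=s_0}\cap\underline{t=t_0}$, then follows $-\partial_t + e^{s_0}\partial_z$ on $\underline{s=s_0}$, decreasing $t$ and increasing $z$. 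There are two cases: either it returns to $\underline{z=z_0}$, or it first reaches $\underline{t=0}$.

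First I would handle the $\underline{z=z_0}$ return. On $\underline{z=z_0}$ the foliation is $-\partial_s$, carrying the flowline to $\underline{s=0}\cap\underline{z=z_0}$, where the foliation is $\partial_t - \partial_z$: this increases $t$ and decreases $z$, so the flowline reaches either $\underline{t=t_0}$ (back to our starting face, with strictly larger $z$-coordinate at the moment of arrival, exactly as in Case 2 of Lemma~\ref{lemma:PL_box_fold_holonomy_low}) or $\underline{z=0}$. But it cannot reach $\underline{z=0}$ here unless it does so with $t<t_0$, and since it left $\underline{z=z_0}$ with some $t=\tilde t$ and travels with slope $-1$ in the $(t,z)$-plane, reaching $z=0$ would require $\tilde t + z_0 \le t_0$; in the subcase we are in (arrived via $\underline{s=s_0}$ from $\underline{t=t_0}$), the $t$-coordinate on $\underline{z=z_0}$ is bounded below by a positive amount that rules this out — more robustly, one observes the flowline is confined to $t$-values that were produced by the $\underline{s=s_0}$ dynamics starting from $t=t_0$, so it can only exit $\underline{z=z_0}\to\underline{s=0}$ by returning to $\underline{t=t_0}$. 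Crucially, the $W_0$-coordinate is unchanged on all of $\underline{z=z_0}$, $\underline{s=0}$, $\underline{s=s_0}$, and $\underline{z=0}$, so it only ever moves (toward the skeleton) on the $\underline{t=0}$ and $\underline{t=t_0}$ faces.

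Next I would handle the $\underline{t=0}$ case: on $\underline{t=0}$ the foliation is $-\partial_s + X_{\lambda_0}$, so the flowline moves toward $\underline{s=0}$ while its $W_0$-coordinate now flows \emph{forward} along $X_{\lambda_0}$. Here is the main obstacle, and the reason the hypothesis ``$(W_0,\lambda_0)$ is a domain'' matters: forward flow of $X_{\lambda_0}$ could in principle carry the $W_0$-coordinate out through $\partial W_0$, i.e.\ the flowline could reach $\underline{\partial W_0}$ before reaching $\underline{s=0}$. I claim this still leaves the flowline trapped. On $\underline{\partial W_0}$ the foliation is $\partial_t - R_{\eta_0}$, which increases $t$ (toward $\underline{t=t_0}$) while moving along a Reeb orbit of $\eta_0$ on $\partial W_0$ — it never decreases $t$ and never leaves $\partial W_0$, so it deposits the flowline back on $\underline{t=t_0}\cap\underline{\partial W_0}$; and once on $\underline{t=t_0}$ the foliation $\partial_s - X_{\lambda_0}$ immediately pushes the $W_0$-coordinate back into the interior of $W_0$, so the flowline re-enters the cycle described above. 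If instead the flowline reaches $\underline{s=0}\cap\underline{t=0}$, the foliation $\partial_t - \partial_z$ there increases $t$, returning it to $\underline{t=t_0}$ (possibly after bouncing off $\underline{\partial W_0}$ again), because to reach $\underline{z=0}$ it would have to do so with $t<t_0$, which as above is incompatible with the $t$-range the flowline occupies.

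Finally I would assemble these observations into a confinement argument: the flowline's itinerary among the faces is constrained to the cycle $\underline{t=t_0}\to\underline{s=s_0}\to\{\underline{z=z_0}\text{ or }\underline{t=0}\}\to\cdots$, with every branch eventually returning to $\underline{t=t_0}$, and the exit locus $\underline{z=0}\cap\underline{s=0}$ is never reached because the flowline's $t$-coordinate, measured at each visit to $\underline{z=z_0}$ or $\underline{z=0}$, stays in a range bounded away from allowing a descent to $z=0$ with $t<t_0$ (mirroring the spiral in Case 2 of Lemma~\ref{lemma:PL_box_fold_holonomy_low}, now possibly decorated with detours onto $\underline{\partial W_0}$). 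Hence no flowline that reaches $\underline{t=t_0}$ can exit the fold, which is the assertion. The step I expect to be the genuine content — rather than bookkeeping — is verifying that excursions onto $\underline{\partial W_0}$ always terminate back on $\underline{t=t_0}$ and never onto $\underline{s=0}\cap\underline{\partial W_0}$ in a way that could lead to $\underline{z=0}$; this is exactly where the structure of a contact handlebody (outward $X_{\lambda_0}$, hence the $\partial_t$-component on $\underline{\partial W_0}$ pointing back toward $t=t_0$) is used.
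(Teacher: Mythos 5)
Your proposal follows the same backward-time, face-by-face casework as the paper's proof, with the same case decomposition (after $\underline{t=t_0}$, cross $\underline{s=s_0}$ to $\underline{z=z_0}$ or $\underline{t=0}$, with possible detours onto $\underline{\partial W_0}$), and the characteristic-foliation data on each face are all correctly identified and applied---so the overall approach is identical. The genuine gap is that the central non-escape claim---that the flowline can never reach $\underline{z=0}\cap\underline{s=0}$---is asserted twice without being verified. You write that ``the $t$-coordinate on $\underline{z=z_0}$ is bounded below by a positive amount that rules this out'' and that escaping is ``incompatible with the $t$-range the flowline occupies,'' but the bound is never computed, and that computation is exactly what the lemma rests on. The precise fact is: on each visit to $\underline{s=0}$ at $(t,z)$ in the $(t,z)$-projection, one has $t+z>t_0$, so the slope-$(-1)$ flow of $\partial_t-\partial_z$ reaches $\underline{t=t_0}$ before $\underline{z=0}$. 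This is immediate once stated: arriving from $\underline{t=0}$ gives $(t,z)=(0,\bar z+e^{s_0}t_0)$ and $\bar z+e^{s_0}t_0>t_0$; arriving from $\underline{z=z_0}$ at $\tilde t=t_0-e^{-s_0}(z_0-\bar z)>0$ gives $\tilde t+z_0=t_0+(1-e^{-s_0})(z_0-\bar z)>t_0$; and neither inequality needs any assumption on $z_0$, which is what the lemma promises.

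You also identify the $\underline{\partial W_0}$ excursions as ``the genuine content rather than bookkeeping,'' but this misplaces the difficulty: on $\underline{\partial W_0}$ the foliation $\partial_t-R_{\eta_0}$ monotonically increases $t$ while preserving $s$ and $z$, so it simply deposits the flowline on $\underline{t=t_0}$, as you yourself note---that part really is bookkeeping, while the $t+z>t_0$ inequality is the step that does the work. Two smaller slips: the parenthetical ``(possibly after bouncing off $\underline{\partial W_0}$ again)'' during the $\underline{s=0}$ traversal cannot occur, since $\partial_t-\partial_z$ has no $W_0$-component; and on $\underline{t=t_0}$ the flowline avoids $\underline{\partial W_0}$ because the $W_0$-component of $\partial_s-X_{\lambda_0}$ is $-X_{\lambda_0}$, which points inward, not because ``$X_{\lambda_0}$ points inward'' (it points outward, as $(W_0,\lambda_0)$ is a Liouville domain).
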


\begin{proof}
Suppose that a flowline reaches $\underline{t=t_0}$ with $z$-coordinate $\bar{z}$, $s$-coordinate $\bar{s}$, and $W_0$-coordinate $\bar{w}$. The characteristic foliation here is directed by $\partial_s - X_{\lambda_0}$. Since $(W_0, \lambda_0)$ is a domain, the flowline travels to $\underline{s=s_0}$, concurrently moving toward $\skel(W_0,\lambda_0)$. In particular, if $\psi^s$ denote the time-$s$ flow of $X_{\lambda_0}$ on $W_0$, then the flowline reaches $\underline{s=s_0}$ with $W_0$-coordinate $\psi^{-(s_0-\bar{s})}(\bar{w})$. Here the foliation is directed by $-\partial_t + e^{s_0}\, \partial_z$. The flowline then reaches either $\underline{t=0}$ or $\underline{z=z_0}$. 

\vspace{2mm}
\noindent \textit{Case 1: the flowline reaches $\underline{t=0}$.}
\vspace{2mm}

The flowline reaches $\underline{t=0}$ with $z$-coordinate $\bar{z} + e^{s_0}t_0$. Here the foliation is directed by $-\partial_s + X_{\lambda_0}$. There are two subcases: the flowline reaches either $\underline{s=0}$ or $\underline{\partial W_0}$.

\vspace{2mm}
\noindent \textit{Case 1A: the flowline reaches $\underline{s=0}$.}
\vspace{2mm}

In this case, the flowline reaches $\underline{s=0}$ with $W_0$-coordinate $\psi^{s_0}(\psi^{-(s_0-\bar{s})}(\bar{w})) = \psi^{\bar{s}}(\bar{w})$. Note that Case 1A is characterized precisely by the fact that $\psi^{\bar{s}}(\bar{w}) \in W_0 \setminus \partial W_0$. Along $\underline{s=0}$ the flowline follows $\partial_t - \partial_z$. Since the current $z$-coordinate is $\bar{z} + e^{s_0}t_0 > t_0$, the flowline reaches $\underline{t=t_0}$ before $\underline{z=0}$, and it does so with $z$-coordinate $\bar{z} + (e^{s_0} - 1)t_0 > \bar{z}$. 

Observe that we return to the hypothesis of the lemma (reaching $\underline{t=t_0}$) with an increased $z$-coordinate and a $W_0$-coordinate which is closer to $\partial W_0$, namely, $\psi^{\bar{s}}(\bar{w})$. The flowline enters either Case 1 or Case 2. If the flowline continues to re-enter Case 1A, the $W_0$-coordinate will eventually be close enough to $\partial W_0$ to ensure that the flowline reaches $\underline{\partial W_0}$ before $\underline{s=0}$, entering Case 1B.

\vspace{2mm}
\noindent \textit{Case 1B: the flowline reaches $\underline{\partial W_0}$.}
\vspace{2mm}

Here the foliation is directed by $\partial_t - R_{\eta_0}$. The flowline travels around $\partial W_0$ via $-R_{\eta_0}$ and ultimately reaches $\underline{t=t_0}$, returning to the hypothesis of the lemma. Note that the $z$-coordinate has increased from $\bar{z}$ and the $s$-coordinate has increased from $\bar{s}$. The flowline either re-enters Case 1 or enters Case 2. If it continues to re-enter Case 1B, its $s$-coordinate will eventually increase to the point where it enters Case 1A.

\vspace{2mm}

The upshot of the Case 1A/1B analysis above is the following. If a flowline cycles through Case 1, it cannot cycle through Case 1A forever or Case 1B forever. The flowline will cycle through Case 1A sufficiently many times to reach Case 1B, where it cycles through Case 1B sufficiently many times to reach Case 1A, and so on. Both Case 1A and Case 1B contribute to a net increase in the $z$-coordinate. Thus, eventually the flowline will cycle through Case 1 sufficiently many times for the $z$-coordinate to be large enough to enter Case 2.

\vspace{2mm}
\noindent \textit{Case 2: the flowline reaches $\underline{z=z_0}$.}
\vspace{2mm}

Let $\tilde{z}$ denote the $z$-coordinate from which the flowline left $\underline{t=t_0}$ before reaching $\underline{z=z_0}$. Let $\tilde{t}, \tilde{w}$ denote the $t$-coordinate and $W_0$-coordinate at which the flowline reaches $\underline{z=z_0}$. Note that $\tilde{t} = t_0 - e^{-s_0}(z_0-\tilde{z})$. Here the foliation is directed by $-\partial_s$, and so the flowline reaches $\underline{s=0}$. It then follows $\partial_t - \partial_z$ to $\underline{t=t_0}$, attaining a $z$-coordinate of
\[
z_0 - (t_0 - \tilde{t}) = z_0 - e^{-s_0}(z_0-\tilde{z}) > \tilde{z}.
\]
The $s$-coordinate is now $0$ and the $W_0$-coordinate is still $\tilde{w}$. 

The foliation along $\underline{t=t_0}$ is directed by $\partial_s - X_{\lambda_0}$. Thus, since $(W_0, \lambda_0)$ is a domain, the flowline reaches $\underline{s=s_0}$ with $W_0$-coordinate $\psi^{-s_0}(\tilde{w})$. The flowline travels along $-\partial_t + e^{s_0}\, \partial_z$ to $\underline{z=z_0}$ with $W_0$-coordinate $\psi^{-s_0}(\tilde{w})$, re-entering Case 2.

\vspace{2mm}

The above analysis shows that when a flowline reaches Case 2, it continues to re-enter Case 2 indefinitely. The flowline does not exit the fold and spirals around $\underline{t=t_0} \cap \underline{z=z_0}$ while limiting towards $\skel(W_0, \lambda_0)$. See Figure \ref{fig:PL_high_dimensional_holonomy}. 

\end{proof}

\begin{figure}[htb]
    \centering
    \def\svgwidth{\linewidth}
    \graphicspath{{Figures/}}
    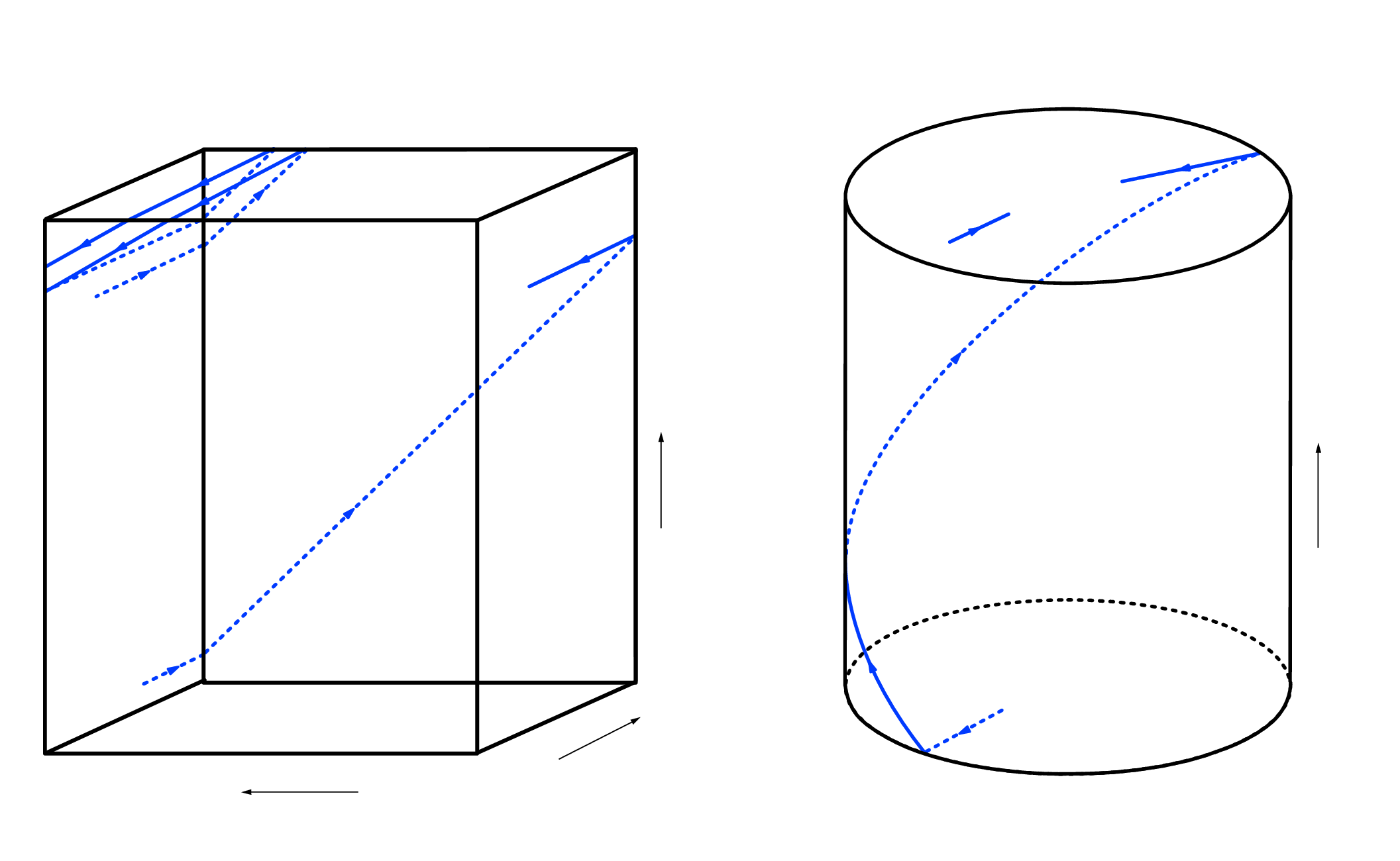
    \caption{A visualization of Lemma \ref{lemma:t_0_trap}. On the left is the $(z,s,t)$ projection, and on the right is the contact handlebody $H_0= [0,t_0]\times W_0$. The figure depicts a single sample flowline beginning on $\underline{t=t_0}$ at $x_1$. It travels along $\underline{t=t_0}$ and reaches $\underline{s=s_0}$ at $x_2$. Here the flowline is in Case 1, as it reaches $\underline{t=0}$ at $x_3$ before reaching $\underline{z=z_0}$. Then the flowline is in Case 1B, because it travels along $\underline{t=0}$ and reaches $\underline{\partial W_0}$ at $x_4$ before reaching $\underline{s=0}$. Along $\underline{\partial W_0}$ the flowline swirls around $\partial W_0$ via $-R_{\eta_0}$ and reaches $\underline{t=t_0}$ at $x_5$. From here, the flowline enters Case 2. The flowline cycles through Case 2 indefinitely, ultimately swirling around $\underline{t=t_0} \cap \underline{z=z_0}$ on the left and limiting towards $\skel(W_0, \lambda_0)$ on the right.}
    \label{fig:PL_high_dimensional_holonomy}
\end{figure}

The proof of Lemma \ref{lemma:t_0_trap} suggests the following (overly simplified) principle: when a flowline enters a high-dimensional box fold, its trajectory takes turns following the characteristic foliations in the $(z,s,t)$ and $H_0$ contact projections. 

To summarize the holonomy through a piecewise linear, high-dimensional box fold, we introduce more notation this will persist for the rest of the paper. Given a Weinstein domain $(W_0, \lambda_0)$, let $\psi^s$ denote the time-$s$ flow of the Liouville vector field $X_{\lambda_0}$. Define a distinguished collar neighborhood of $\partial W_0$ as follows: 
\[
N^{s_0}(\partial W_0) := \bigcup_{s\in (-s_0,0]} \psi^{s}(\partial W_0). 
\]
In words, $N^{s_0}(\partial W_0)$ is the set of points in $W_0$ that reach the boundary after a time-$s_0$ flow of the Liouville vector field.

The following proposition is the generalization of Lemma \ref{lemma:PL_box_fold_holonomy_low} when $z_0 \geq e^{s_0}t_0$. 

\begin{proposition}\label{prop:PL_box_fold_holonomy_high}
Let $\Pi^{PL}$ be a high-dimensional box fold with $z_0 \geq e^{s_0}t_0$, and let $h^{PL}:\{z=0\}\times \{s=s_0\} \times H_0 \dashrightarrow \{z=0\}\times \{s=0\} \times H_0$ be the partially-defined holonomy map given by the oriented characteristic foliation of $\Pi^{PL}$. Let $x\in H_0$ be the entry point of a flowline in $H_0$, and let $t(x)$ and $W_0(x)$ be the $t$-coordinate and $W_0$-coordinate of $x$, respectively. 
\begin{enumerate}
    \item If either $t(x) \in (e^{-s_0}t_0, t_0)$ or $W_0(x) \in N^{s_0}(\partial W_0)$, then the flowline through $x$ is trapped in backward time.\label{prop:PL_box_fold_holonomy_high:trap} 
    
    \item For all $(t,w)\in H_0$ in the domain of $h^{PL}$, 
    \[
    h^{PL}(t,w) = (e^{s_0}t, \psi^{s_0}(w)).
    \]\label{prop:PL_box_fold_holonomy_high:holonomy}
\end{enumerate}
\end{proposition}

\begin{proof}
We begin by proving~\ref{prop:PL_box_fold_holonomy_high:trap}.  Suppose first that $W_0(x)\in N^{s_0}(\partial W_0)$. Since $z_0 \geq e^{s_0}t_0$, the flowline enters $\Pi^{PL}$ along $\underline{s=s_0}$ and travels via $-\partial_t + e^{s_0}\, \partial_z$ to $\underline{t=0}$. Here the foliation is directed by $-\partial_s + X_{\lambda_0}$. Because $W_0(x)\in N^{s_0}(\partial W_0)$, the flowline reaches $\underline{\partial W_0}$ before reaching $\underline{s=0}$. Here it follows $\partial_t - R_{\eta_0}$ up to $\underline{t=t_0}$. By Lemma \ref{lemma:t_0_trap}, the flowline is trapped.

Next, suppose that $t(x) \in (e^{-s_0}t_0, t_0)$. We may further suppose that $W_0(x) \notin N^{s_0}(\partial W_0)$. Again by the assumption that $z_0 \geq e^{s_0}t_0$, the flowline enters $\Pi^{PL}$ and travels across $\underline{s=s_0}$ via $-\partial_t + e^{s_0}\, \partial_z$ to $\underline{t=0}$, attaining a $z$-coordinate of $e^{s_0}t(x) > t_0$. Because $W_0(x) \notin N^{s_0}(\partial W_0)$, it then follows $-\partial_s + X_{\lambda_0}$ to $\underline{s=0}$. Here it follows $\partial_t - \partial_z$. Since the $z$-coordinate upon entry to $\underline{s=0}$ exceeds $t_0$, the flowline reaches $\underline{t=t_0}$ and is trapped by Lemma \ref{lemma:t_0_trap}.

Now we prove~\ref{prop:PL_box_fold_holonomy_high:holonomy}. By~\ref{prop:PL_box_fold_holonomy_high:trap} we necessarily have $t(x) < e^{-s_0}t_0$ and $W_0(x) \notin N^{s_0}(\partial W_0)$. The flowline travels along $\underline{s=s_0}$ via $-\partial_t + e^{s_0}\, \partial_z$ to $\underline{t=0}$ where it attains a $z$-coordinate of $e^{s_0}t(x) < t_0$. Here it follows $-\partial_s + X_{\lambda_0}$. Since $W_0(x) \notin N^{s_0}(\partial W_0)$, it reaches $\underline{s=0}$ with $W_0$-coordinate $\psi^{s_0}(W_0(x))$. Here it travels along $\partial_t - \partial_z$ to $\underline{z=0}$, where it exits the fold with $t$-coordinate $e^{s_0}t(x)$ and $W_0$-coordinate $\psi^{s_0}(W_0(x))$. This proves~\ref{prop:PL_box_fold_holonomy_high:holonomy}. 
\end{proof}

\begin{remark}
A more general statement (similar to Lemma~\ref{lemma:PL_box_fold_holonomy_low}, without assuming $z_0 \geq e^{s_0}t_0$) can be obtained with more casework. We do not need such a statement.
\end{remark}

\begin{figure}[htb]
    \centering
    \def\svgwidth{\linewidth}
    \graphicspath{{Figures/}}
\begingroup%
  \makeatletter%
  \providecommand\color[2][]{%
    \errmessage{(Inkscape) Color is used for the text in Inkscape, but the package 'color.sty' is not loaded}%
    \renewcommand\color[2][]{}%
  }%
  \providecommand\transparent[1]{%
    \errmessage{(Inkscape) Transparency is used (non-zero) for the text in Inkscape, but the package 'transparent.sty' is not loaded}%
    \renewcommand\transparent[1]{}%
  }%
  \providecommand\rotatebox[2]{#2}%
  \newcommand*\fsize{\dimexpr\f@size pt\relax}%
  \newcommand*\lineheight[1]{\fontsize{\fsize}{#1\fsize}\selectfont}%
  \ifx\svgwidth\undefined%
    \setlength{\unitlength}{996.33078564bp}%
    \ifx\svgscale\undefined%
      \relax%
    \else%
      \setlength{\unitlength}{\unitlength * \real{\svgscale}}%
    \fi%
  \else%
    \setlength{\unitlength}{\svgwidth}%
  \fi%
  \global\let\svgwidth\undefined%
  \global\let\svgscale\undefined%
  \makeatother%
  \begin{picture}(1,0.4824074)%
    \lineheight{1}%
    \setlength\tabcolsep{0pt}%
    \put(0,0){\includegraphics[width=\unitlength]{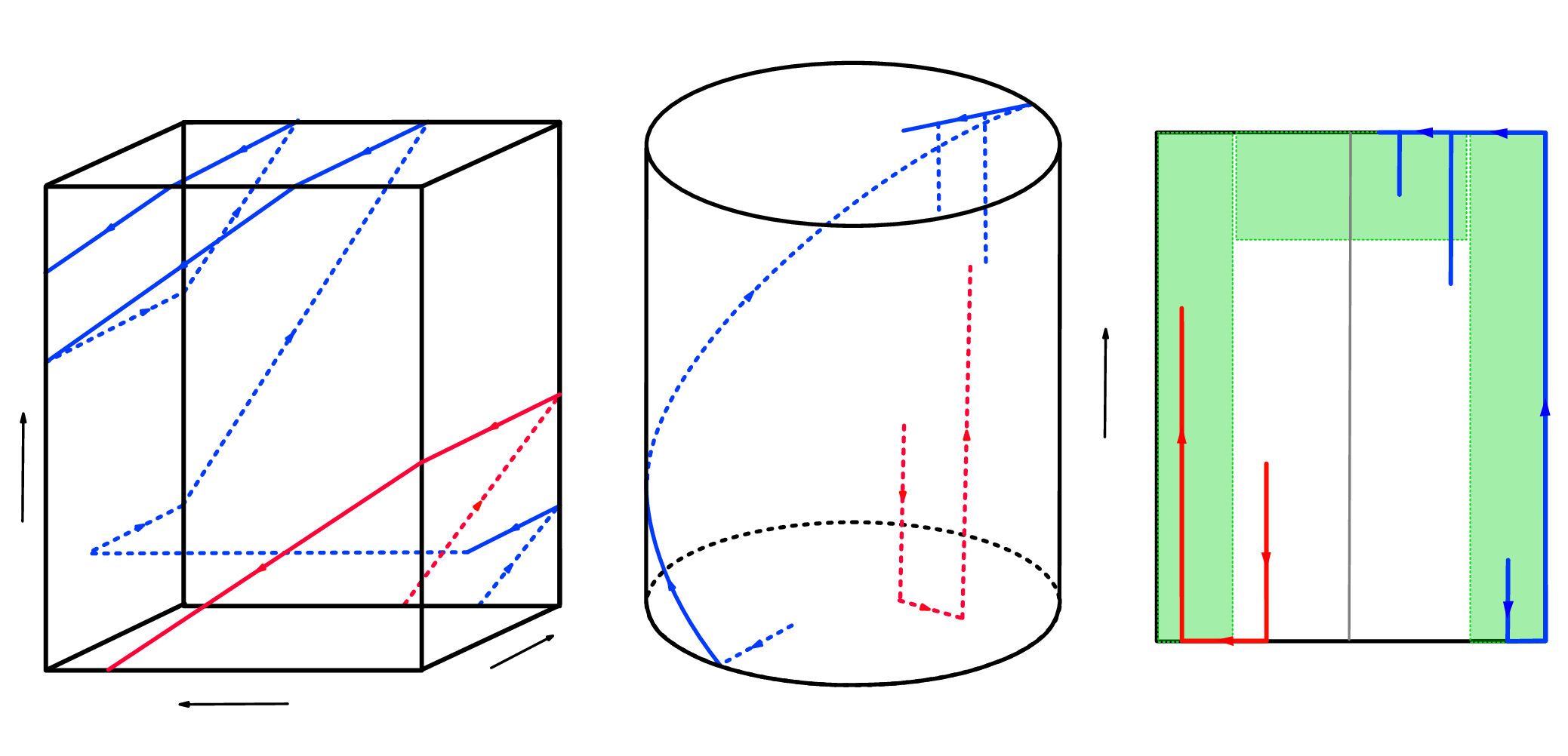}}%
    \put(0.01077623,0.22915429){\makebox(0,0)[lt]{\lineheight{1.25}\smash{\begin{tabular}[t]{l}$z$\end{tabular}}}}%
    \put(0.09710416,0.02997753){\makebox(0,0)[lt]{\lineheight{1.25}\smash{\begin{tabular}[t]{l}$t$\end{tabular}}}}%
    \put(0.35855645,0.07930577){\makebox(0,0)[lt]{\lineheight{1.25}\smash{\begin{tabular}[t]{l}$s$\end{tabular}}}}%
    \put(0.70078306,0.28156574){\makebox(0,0)[lt]{\lineheight{1.25}\smash{\begin{tabular}[t]{l}$t$\end{tabular}}}}%
    \put(0.51826112,0.01701654){\makebox(0,0)[lt]{\lineheight{1.25}\smash{\begin{tabular}[t]{l}$W_0$\end{tabular}}}}%
    \put(0.84260758,0.03799368){\makebox(0,0)[lt]{\lineheight{1.25}\smash{\begin{tabular}[t]{l}$W_0$\end{tabular}}}}%
  \end{picture}%
\endgroup%

    \caption{A visualization of Proposition~\ref{prop:PL_box_fold_holonomy_high}. In particular, the depiction of two flowlines entering the fold in various projections: on the far left is the $(z,s,t)$ projection, in the middle is the $H_0=[0,t_0]\times W_0$ projection, and the far right is a further projection of the middle picture for the sake of clarity. The shaded green regions on the far right are the trapping regions described in~\ref{prop:PL_box_fold_holonomy_high:trap}, and the gray line indicates $\skel(W_0,\lambda_0)$. The blue flowline enters the fold in $N^{s_0}(\partial W_0)$ and is ultimately trapped. The red flowline enters the fold and passes through with holonomy given by $h^{PL}$.}
    \label{fig:PL_high_dimensional_holonomy2}
\end{figure}

\subsection{Box holes}\label{subsection:box_holes}

Here we briefly discuss the notion of a \textit{box hole}. The primary function of a box fold installation as defined in \ref{subsection:PL_low} is to trap a portion of the flowlines entering the fold near the top of the Reeb chord $[0,t_0]$. Namely, the trapping region of a low-dimensional box fold (with $z_0 \geq t_0$) is $(e^{-s_0}t_0, t_0)$. In Section \ref{section:proof_main_prop}, it will be desirable to instead trap a portion of the flowlines entering a certain fold near the \textit{bottom} of the Reeb chord. This is possible by simply mirroring the installation of a box fold with a box-shaped hole.

\begin{definition}
Fix $z_0, s_0, t_0 > 0$. A \textbf{(piecewise linear, low-dimensional) box hole with parameters $z_0, s_0, t_0$}, denoted $\rotreflPi^{PL}$, is the surface 
\[
\rotreflPi^{PL} := \overline{\partial\left([-z_0,0] \times [0,s_0] \times [0,t_0]\right) \setminus \{z=0\}}.
\]
\end{definition}

\begin{figure}[htb]
    \centering
    \def\svgwidth{\linewidth}
    \graphicspath{{Figures/}}
\begingroup%
  \makeatletter%
  \providecommand\color[2][]{%
    \errmessage{(Inkscape) Color is used for the text in Inkscape, but the package 'color.sty' is not loaded}%
    \renewcommand\color[2][]{}%
  }%
  \providecommand\transparent[1]{%
    \errmessage{(Inkscape) Transparency is used (non-zero) for the text in Inkscape, but the package 'transparent.sty' is not loaded}%
    \renewcommand\transparent[1]{}%
  }%
  \providecommand\rotatebox[2]{#2}%
  \newcommand*\fsize{\dimexpr\f@size pt\relax}%
  \newcommand*\lineheight[1]{\fontsize{\fsize}{#1\fsize}\selectfont}%
  \ifx\svgwidth\undefined%
    \setlength{\unitlength}{995.94394483bp}%
    \ifx\svgscale\undefined%
      \relax%
    \else%
      \setlength{\unitlength}{\unitlength * \real{\svgscale}}%
    \fi%
  \else%
    \setlength{\unitlength}{\svgwidth}%
  \fi%
  \global\let\svgwidth\undefined%
  \global\let\svgscale\undefined%
  \makeatother%
  \begin{picture}(1,0.63936293)%
    \lineheight{1}%
    \setlength\tabcolsep{0pt}%
    \put(0,0){\includegraphics[width=\unitlength]{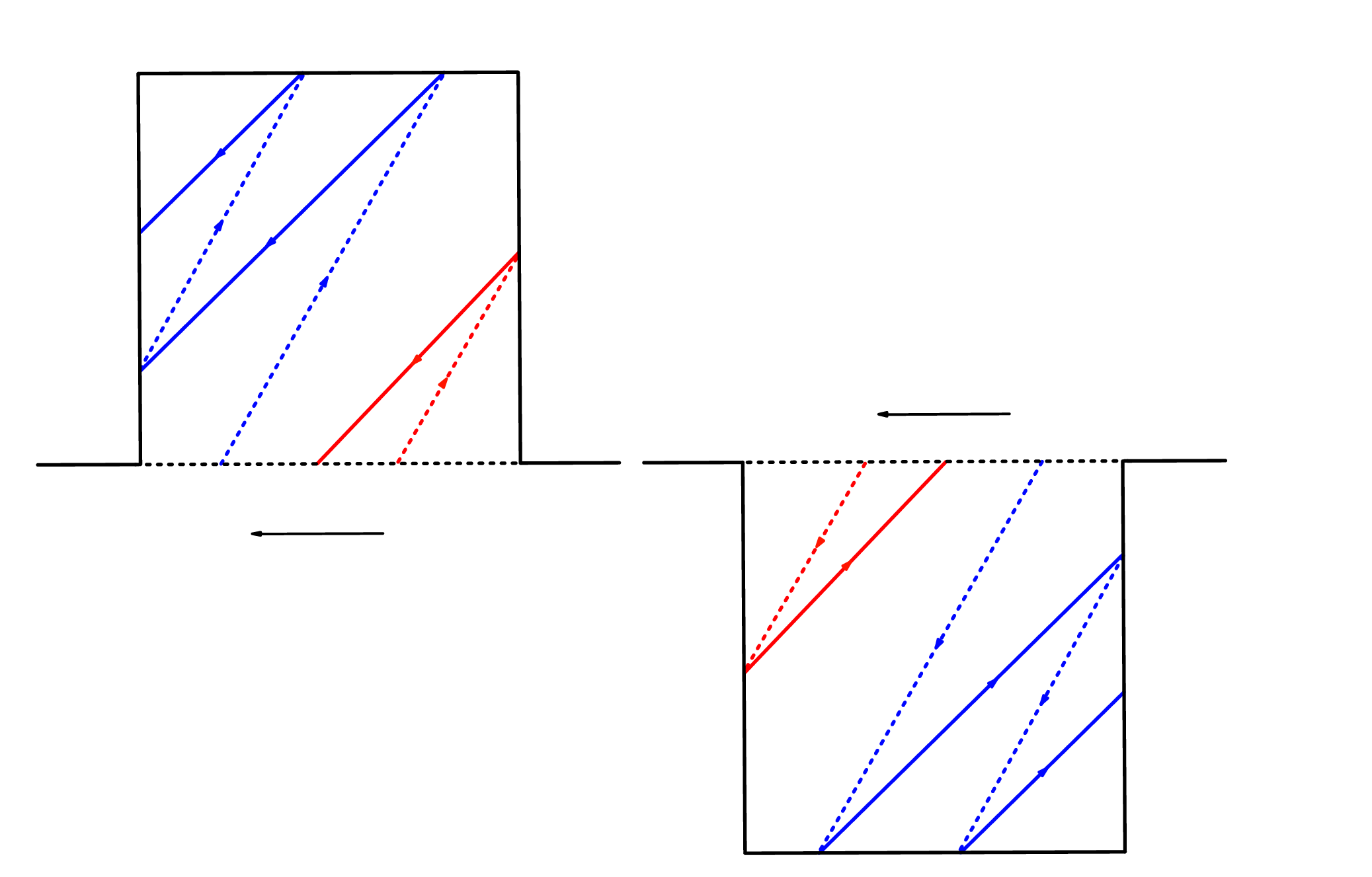}}%
    \put(0.16130984,0.24632576){\makebox(0,0)[lt]{\lineheight{1.25}\smash{\begin{tabular}[t]{l}$t$\end{tabular}}}}%
    \put(0.61954699,0.33378503){\makebox(0,0)[lt]{\lineheight{1.25}\smash{\begin{tabular}[t]{l}$t$\end{tabular}}}}%
    \put(0.39243594,0.58525154){\makebox(0,0)[lt]{\lineheight{1.25}\smash{\begin{tabular}[t]{l}$z=z_0$\end{tabular}}}}%
    \put(0.44784731,0.01464338){\makebox(0,0)[lt]{\lineheight{1.25}\smash{\begin{tabular}[t]{l}$z=-z_0$\end{tabular}}}}%
    \put(0.90124548,0.30303739){\makebox(0,0)[lt]{\lineheight{1.25}\smash{\begin{tabular}[t]{l}$z=0$\end{tabular}}}}%
  \end{picture}%
\endgroup%

    \caption{A depiction of a box fold on the left and a box hole on the right. The red flowlines pass through the folds, and the blue flowlines are trapped by the folds. Note that in a box hole, in contrast to a box fold, the flowlines entering near $t=0$ are trapped.}
    \label{fig:box_hole}
\end{figure}

By repeating the same analysis as in \ref{subsection:PL_low}, we obtain the following description of the trapping region and holonomy of a low-dimensional box hole. 

\begin{lemma}
Let $h^{PL}:\{z=0\} \times \{s=s_0\} \times [0,t_0] \dashrightarrow \{z=0\} \times \{s=s_0\} \times [0,t_0]$ be the partially-defined holonomy map given by the oriented characteristic foliation of $\rotreflPi^{PL}$. Assume that $z_0 \geq t_0$. The domain of $h^{PL}$ is $((1-e^{-s_0})t_0, t_0)$ and $h^{PL}(t) = t_0 - e^{s_0}(t - t_0).$
\end{lemma}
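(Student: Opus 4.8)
The plan is to deduce the statement from Lemma~\ref{lemma:PL_box_fold_holonomy_low} by a coordinate reflection; a direct repetition of the casework of~\ref{subsection:PL_low} works equally well, and I would include it as a cross-check.

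First I would set $\iota(t):=t_0-t$ and $\Phi(z,s,t):=(-z,s,\iota(t))$ on $\R^3_{z,s,t}$. This $\Phi$ carries the box $[-z_0,0]\times[0,s_0]\times[0,t_0]$ onto $[0,z_0]\times[0,s_0]\times[0,t_0]$ while fixing the hyperplane $\{z=0\}$ setwise, so it sends $\rotreflPi^{PL}$ onto the box fold $\Pi^{PL}$ with the same parameters. Moreover $\Phi^*(dz+e^s\,dt)=-(dz+e^s\,dt)$, so $\Phi$ is a contactomorphism of $(\R^3,\ker(dz+e^s\,dt))$ and therefore carries the (unoriented) characteristic foliation of $\rotreflPi^{PL}$ onto that of $\Pi^{PL}$. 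Since $\Phi$ preserves the $s$-coordinate it fixes the backward Liouville direction $-\partial_s$ on $\{z=0\}$, so it also intertwines the backward-oriented foliations. The one delicate point is that replacing $\alpha$ by $-\alpha$ reverses the \emph{canonical} orientation of a characteristic foliation of a surface; I would note that the backward-flow orientation used throughout~\ref{subsection:PL_low} is unaffected by this, so no reversal is introduced.

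With $\Phi$ established, the rest is immediate from Lemma~\ref{lemma:PL_box_fold_holonomy_low}: a flowline of $\rotreflPi^{PL}$ through $\{z=0\}\times\{s=s_0\}\times\{t\}$ corresponds to one of $\Pi^{PL}$ through $\{z=0\}\times\{s=s_0\}\times\{\iota(t)\}$, which (using $z_0\ge t_0$) is trapped exactly when $\iota(t)\in(e^{-s_0}t_0,t_0)$ and otherwise exits through $\{z=0\}\times\{s=0\}$ with $t$-coordinate $e^{s_0}\iota(t)$. Conjugating by $\iota$ gives domain $((1-e^{-s_0})t_0,t_0)$ — so the box hole traps precisely the flowlines entering near $t=0$, mirroring Figure~\ref{fig:box_hole} — and holonomy $h^{PL}(t)=\iota(e^{s_0}\iota(t))=t_0-e^{s_0}(t_0-t)$, landing along the exit face $\{z=0\}\times\{s=0\}$ exactly as in the box fold case.

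For the direct cross-check I would first tabulate, via Lemma~\ref{cflemma}, the backward-oriented characteristic foliations on the five faces of $\rotreflPi^{PL}$: $-\partial_s$ on $\underline{z=-z_0}$, $\partial_t-e^{s_0}\partial_z$ on $\underline{s=s_0}$ (flowlines now descend into the hole), $\partial_z-\partial_t$ on $\underline{s=0}$ (flowlines climb back out), and $\mp\partial_s$ on $\underline{t=t_0}$ and $\underline{t=0}$ oriented so that a flowline on $\underline{t=t_0}$ runs toward $\underline{s=0}$. Then a flowline entering at $(0,s_0,\bar t)$ descends $\underline{s=s_0}$ along the slope-$(-e^{s_0})$ ruling: if $\bar t>(1-e^{-s_0})t_0$ it meets $\underline{t=t_0}$ before the floor, crosses to $\underline{s=0}$, and climbs along a slope-$(-1)$ ruling to $\{z=0\}$, exiting at $t$-coordinate $t_0-e^{s_0}(t_0-\bar t)$; if $\bar t<(1-e^{-s_0})t_0$ it hits the floor $\underline{z=-z_0}$ first and thereafter spirals indefinitely around $\underline{z=-z_0}\cap\underline{t=0}$, exactly mirroring how the trapped flowlines of a box fold spiral around $\underline{z=z_0}\cap\underline{t=t_0}$. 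The main obstacle in either approach is purely the orientation bookkeeping — confirming $\Phi$ does not reverse the backward-flow orientation, or equivalently pinning down the signs on $\underline{t=0}$ and $\underline{t=t_0}$; past that everything transcribes from Lemma~\ref{lemma:PL_box_fold_holonomy_low}.
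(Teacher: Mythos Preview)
Your argument is correct, and in fact cleaner than what the paper does. The paper does not give a proof at all; it simply states that the result follows ``by repeating the same analysis as in~\ref{subsection:PL_low},'' i.e., by rerunning the face-by-face casework of Lemma~\ref{lemma:PL_box_fold_holonomy_low}. Your primary approach --- the involution $\Phi(z,s,t)=(-z,s,t_0-t)$ satisfying $\Phi^*\alpha=-\alpha$ and carrying $\rotreflPi^{PL}$ onto $\Pi^{PL}$ --- is a genuine shortcut: it reduces the lemma to a one-line conjugation of the already-established box fold holonomy, and your observation that the backward-flow orientation is fixed because $\Phi_*(-\partial_s)=-\partial_s$ is exactly the right way to dispatch the orientation issue. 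Your direct cross-check then \emph{is} the paper's intended argument, with correct signs on each face.

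Two remarks worth recording. First, your computation gives $h^{PL}(t)=t_0-e^{s_0}(t_0-t)$, whereas the lemma as stated reads $t_0-e^{s_0}(t-t_0)$; yours is the correct expression (check the endpoints: at $t=(1-e^{-s_0})t_0$ your formula gives $0$, while the stated one gives $2t_0$). Second, you correctly note that the exit face is $\{z=0\}\times\{s=0\}$, not $\{s=s_0\}$ as written in the codomain of the stated map. Both are evident typos in the paper, and your proof lands on the intended statement.
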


The extension to arbitrary dimensions is identical to \ref{subsection:PL_high}.

\subsection{Pre-chimney folds}\label{subsection:pre_chimney}

Finally, we close with an analysis of a different kind of piecewise linear fold, which we call a \textit{pre-chimney fold}. We will not ever install a pre-chimney fold as described here, but the analysis will be helpful in understanding both the function and purpose of a chimney fold in Section \ref{section:chimney_folds}.

A pre-chimney fold arises by allowing $(W_0,\lambda_0)$ to be a cobordism between manifolds with boundary. For the purpose of this discussion as it relates to this paper, we will consider the two-dimensional trivial cobordism 
\[
(W_0 = [-r_0,0]_r \times [0,\theta_0]_{\theta}, \, \lambda_0 = e^{r}\, d\theta).
\]
In practice we will identify such a region as a small neighborhood near the boundary of a Weinstein domain. 

With $(W_0, \lambda_0)$ as above, let $H_0= [0,t] \times W_0$ and consider the model 
\[
\left([0,s_0] \times H_0,\, e^s\, (dt + \lambda_0)\right).
\]
As always we then realize this piece of a Liouville domain  as the hypersurface $\{z=0\}$ inside its contactization: \[
\left([0,z_0] \times [0,s_0] \times H_0,\, dz + e^s\, (dt + \lambda_0)\right).
\]

\begin{definition}
A \textbf{(piecewise linear) pre-chimney fold}, denoted $pC\Pi^{PL}$, is the hypersurface 
\[
pC\Pi^{PL} := \overline{\partial \left([0,z_0] \times [0,s_0] \times H_0\right)\setminus \{z=0\}}.
\]
\end{definition}

The proof of Lemma \ref{lemma:PL_cf_high_dimensional} gives the backward-oriented characteristic foliation of $pC\Pi^{PL}$ in Table \ref{tab:prechimney}. 

\begin{table}[htb]
    \centering
    \begin{tabular}{ |c|c|c|c| } 
 \hline
 Side  & Characteristic foliation \\ 
 \hline 
 $\underline{z=z_0}$  & $  -\partial_s$ \\
 $\underline{s=0}$  & $\partial_t -\partial_z $ \\
 $\underline{s=s_0}$  & $-\partial_t + e^{s_0} \, \partial_z$ \\
 $\underline{t=0}$ & $-\partial_s + \partial_r$ \\
 $\underline{t=t_0}$ & $\partial_s - \partial_r$ \\
 $\underline{r=0}$ & $\partial_t -\partial_{\theta}$ \\ 
 $\underline{r=-r_0}$ & $-\partial_t +e^{r_0}\, \partial_{\theta}$ \\
 $\underline{\theta = 0}$ & $-\partial_r$ \\
 $\underline{\theta = \theta_0}$ & $\partial_r$ \\
 \hline
\end{tabular}
    \caption{The characteristic foliation of a pre-chimney fold.}
    \label{tab:prechimney}
\end{table}

Note that Table \ref{tab:prechimney} is consistent with the existing sides in the usual box fold case, where we have $X_{\lambda_0} = \partial_r$ and $R_{\eta_0} = \partial_{\theta}$.

As $(W_0, \lambda_0)$ is a trivial cobordism and does not have a skeleton, one might expect that a pre-chimney fold as defined above does not trap any flowlines in backward time. However, the dynamical analysis will reveal that the mechanism by which a fold based over a region $H_0$ traps flowlines in backward time depends on the characteristic foliation of $\partial H_0$. In particular, if $H_0$ is any contact region such that $\partial H_0$ has a characteristic foliation with (positive) critical points, it will trap some flowlines in backward time. In a pre-chimney fold, $\partial H_0$ is diffeomorphic to $S^2$ (up to corner rounding) and has a singular characteristic foliation with an index $0$ and index $2$ critical point.

\begin{lemma}\label{lemma:pre_chimney_t_0_trap}
Let $pC\Pi^{PL}$ be a pre-chimney fold with $t_0 \geq \theta_0$. If a flowline reaches $\underline{t=t_0}$, it is trapped by the fold in backward time.
\end{lemma}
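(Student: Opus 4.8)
The plan is to run the same kind of case analysis as in Lemma~\ref{lemma:t_0_trap}, tracking a flowline that reaches $\underline{t=t_0}$ and showing it can never escape through $\underline{z=0}\cap\underline{s=0}$. The new wrinkle is that $(W_0,\lambda_0) = ([-r_0,0]_r\times[0,\theta_0]_\theta, e^r\,d\theta)$ is a trivial cobordism with no skeleton, so in place of ``the flowline limits onto $\mathrm{Skel}(W_0,\lambda_0)$'' we will show the flowline gets trapped against the corner $\underline{t=t_0}\cap\underline{z=z_0}\cap\underline{r=-r_0}$, i.e.\ against the edge of $\partial H_0$ where the characteristic foliation of $\partial H_0$ has its index-$2$ critical point. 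This is precisely the heuristic advertised just before the lemma statement.

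First I would set up notation for a flowline arriving at $\underline{t=t_0}$ with coordinates $(\bar z,\bar s,\bar r,\bar\theta)$, and note that the characteristic foliation there is $\partial_s - \partial_r$, so the flowline moves toward $\underline{s=s_0}$ while $r$ decreases toward $-r_0$ (this replaces the ``moves toward $\mathrm{Skel}$'' step; here $\bar r$ decreases by $s_0-\bar s$). On $\underline{s=s_0}$ the foliation is $-\partial_t + e^{s_0}\partial_z$, so the flowline heads toward $\underline{t=0}$ (or $\underline{z=z_0}$). Then, exactly as in Lemma~\ref{lemma:t_0_trap}, I would organize the subsequent behavior into the analogue of Cases 1 and 2. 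In Case~1 (reaching $\underline{t=0}$) the foliation is $-\partial_s + \partial_r$, and one subdivides according to whether the flowline reaches $\underline{s=0}$ (Case~1A) or the vertical side $\underline{r=-r_0}$ (Case~1B); in Case~1A the $z$-coordinate has grown past $t_0$ (using $z_0\ge e^{s_0}t_0$, or handled by the extra casework if not) so the flowline is routed along $\underline{s=0}$ back to $\underline{t=t_0}$ with a net increase in $z$, returning to the hypothesis; in Case~1B the flowline runs along $\underline{r=-r_0}$ via $-\partial_t + e^{r_0}\partial_\theta$ — and here the hypothesis $t_0\ge\theta_0$ is what guarantees it reaches $\underline{t=t_0}$ (rather than $\underline{\theta=\theta_0}$) before exiting, since the $\theta$-travel available, scaled by $e^{-r_0}$, is bounded by $\theta_0\le t_0$. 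As in the box-fold case, alternation between 1A and 1B cannot persist forever because each cycle strictly increases $z$ (Case 1A) or strictly decreases $r$ / increases $s$ toward $s_0$ along $\underline{r=-r_0}$ and $z$ upon return; so the flowline eventually enters Case~2. In Case~2 (reaching $\underline{z=z_0}$) the foliation $-\partial_s$ sends it to $\underline{s=0}$, then $\partial_t-\partial_z$ back to $\underline{t=t_0}$ with a strictly larger $z$-coordinate ($z_0 - e^{-s_0}(z_0-\tilde z) > \tilde z$), then $\partial_s-\partial_r$ to $\underline{s=s_0}$ with $r$ decreased, then $-\partial_t+e^{s_0}\partial_z$ back to $\underline{z=z_0}$, re-entering Case~2. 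Since $z$ is bounded above by $z_0$ and strictly increases each Case-2 cycle, the flowline converges to a spiral around $\underline{t=t_0}\cap\underline{z=z_0}$ with $r\to -r_0$; in particular it never returns to $\underline{z=0}$, hence is trapped. I would include a figure reference (analogous to Figure~\ref{fig:PL_high_dimensional_holonomy}) showing the $(z,s,t)$ projection and the $H_0=[0,t_0]\times W_0$ projection.

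The main obstacle is the bookkeeping needed to confirm that the alternating Case~1A/1B loop genuinely terminates into Case~2 and that no flowline sneaks out through $\underline{\theta=\theta_0}$ or $\underline{\theta=0}$ en route; this is exactly where the hypothesis $t_0\ge\theta_0$ must be used carefully, playing the role that ``$(W_0,\lambda_0)$ is a domain'' played in Lemma~\ref{lemma:t_0_trap}. I expect the argument to be a close mirror of that proof, with $\partial_r$ substituting for $X_{\lambda_0}$, $\partial_\theta$ for $R_{\eta_0}$, the side $\underline{r=-r_0}$ for $\underline{\partial W_0}$, and the extra sides $\underline{\theta=0},\underline{\theta=\theta_0}$ requiring a short check that they are not reached before $\underline{t=t_0}$ when $t_0\ge\theta_0$; once that is pinned down, the monotonicity of $z$ closes the argument just as before.
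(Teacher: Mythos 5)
Your plan is a backward-time case analysis mirroring Lemma~\ref{lemma:t_0_trap}; the paper deliberately does the opposite, presenting a \emph{forward}-time argument precisely because the backward bookkeeping for a pre-chimney fold ``is fairly tedious.'' The forward argument is short: a flowline can only leave the fold through $\underline{z=0}\cap\underline{s=0}$, and to arrive there it must have passed through $\underline{t=0}$ with $z\le t_0$; one then follows any such flowline forward and checks it never crosses $\underline{t=t_0}$, so nothing that touches $\underline{t=t_0}$ can exit. The backward approach could in principle work, so let me point at where your version breaks.

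Your Case 1B has the geometry wrong. On $\underline{t=0}$ the backward foliation is $-\partial_s+\partial_r$, which \emph{increases} $r$, so the vertical side reached from $\underline{t=0}$ is $\underline{r=0}$, not $\underline{r=-r_0}$; and the foliation on $\underline{r=0}$ is $\partial_t-\partial_\theta$ (so $t$ increases), not $-\partial_t+e^{r_0}\partial_\theta$. With the foliation you wrote, $t$ would \emph{decrease}, so your claim that the flowline reaches $\underline{t=t_0}$ could not hold. After correcting $\underline{r=-r_0}$ to $\underline{r=0}$, the wall you might leak through is $\underline{\theta=0}$, not $\underline{\theta=\theta_0}$, and a flowline that leaks there then runs around the cycle $\underline{\theta=0}\to\underline{r=-r_0}\to\underline{\theta=\theta_0}\to\underline{r=0}$ of $\partial W_0$, which generates further subcases your sketch does not track. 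Separately, you also assert that on $\underline{t=t_0}$ the flowline (foliation $\partial_s-\partial_r$) always reaches $\underline{s=s_0}$ with $r$ decreased by $s_0-\bar s$; that step was automatic in Lemma~\ref{lemma:t_0_trap} because $(W_0,\lambda_0)$ was a domain and $-X_{\lambda_0}$ points inward, but here $W_0$ is a trivial cobordism and $-\partial_r$ points out through $\{r=-r_0\}$, so the flowline may reach $\underline{r=-r_0}$ before $\underline{s=s_0}$ --- a new subcase you need to handle and do not.
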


\begin{proof}

Proving this lemma with backward time casework as in Lemma \ref{lemma:t_0_trap} using  Figure \ref{fig:pre_chimney} as reference is possible but tedious. Because of this, we will present a \textit{forward} time argument which is simpler, at the cost of obfuscating the nature of the trapping mechanism of $pC\Pi^{PL}$.

To exit the fold, a flowline must reach $\underline{s=0}$ with a $z$-coordinate no greater than $t_0$. In particular, it must necessarily reach $\underline{s=0}$ from $\underline{t=0}$ with a $z$-coordinate no greater than $t_0$. Consider such a flowline in \textit{forward} time, beginning on $\underline{t=0}$ with a $z$-coordinate no greater than $t_0$. We will argue that the flowline does not traverse $\underline{t=t_0}$, which will prove the lemma. 

In forward time, the flowline follows $\partial_s - \partial_r$ to either $\underline{s=s_0}$ or $\underline{r=-r_0}$.

\vspace{2mm}
\noindent \textit{Case 1: the flowline reaches $\underline{s=s_0}$ in forward time.} 
\vspace{2mm}

Here it follows $\partial_t - e^{s_0}\, \partial_z$ and subsequently reaches $\underline{s=s_0}\cap \underline{z=0}$, because the initial $z$-coordinate of the flowline was no larger than $t_0$. In this case, the flowline passes through the entire fold without traversing across $\underline{t=t_0}$.

\vspace{2mm}
\noindent \textit{Case 2: the flowline reaches $\underline{r=-r_0}$ in forward time.} 
\vspace{2mm}

Along $\underline{r=-r_0}$ the forward time foliation is directed by $\partial_t - e^{r_0}\, \partial_{\theta}$. Because $t_0 \geq \theta_0$, the flowline then reaches $\underline{\theta=0}$, where it follows $\partial_r$ to $\underline{r=0}$. Here the forward time foliation is $-\partial_t + \partial_{\theta}$, so the flowline returns to $\underline{t=0}$, and re-enters either Case 1 or Case 2. Note that the $s$-coordinate has increased from its initial value at the beginning of the proof, and the $z$-coordinate has not changed. Note also that the flowline has not traversed across $\underline{t=t_0}$.

\vspace{2mm}

The point of the above casework is that, in forward time, a flowline beginning on $\underline{t=0}$ with $z$-coordinate no larger than $t_0$ will cycle through Case 2 sufficiently many times until it reaches Case 1. Thus, any such flowline will pass through the fold without reaching $\underline{t=t_0}$, and therefore any flowline that does reach $\underline{t=t_0}$ is trapped in backward time. 
\end{proof}

It is possible to identify the trapping region of a pre-chimney fold precisely, but for the purpose of our arguments in Section \ref{section:chimney_folds} we will only need Lemma \ref{lemma:pre_chimney_t_0_trap}. For example, in a piecewise-linear pre-chimney fold with $t_0 \geq \theta_0$ and $z_0 \geq e^{s_0}t_0$, one can show that the trapping region of the fold is 
\[
\left((0,t_0)_t \times (-r_0, 0)_r \times (e^{-s_0}\theta_0, \theta_0)_{\theta} \right) \cup \left((e^{-s_0}t_0, t_0)_t \times (-r_0, 0)_r \times (0,\theta_0)_{\theta}\right) \subset H_0. 
\]
A flowline that is trapped ultimately swirls around $\underline{z=z_0} \cap \underline{t=t_0}$ in the $(z,s,t)$ projection and swirls around $\underline{t=t_0} \cap \underline{\theta = \theta_0}$ in the $H_0$ projection. See Figure \ref{fig:pre_chimney}.

\begin{figure}[htb]
    \centering
    \def\svgwidth{\linewidth}
            \graphicspath{{Figures/}}
            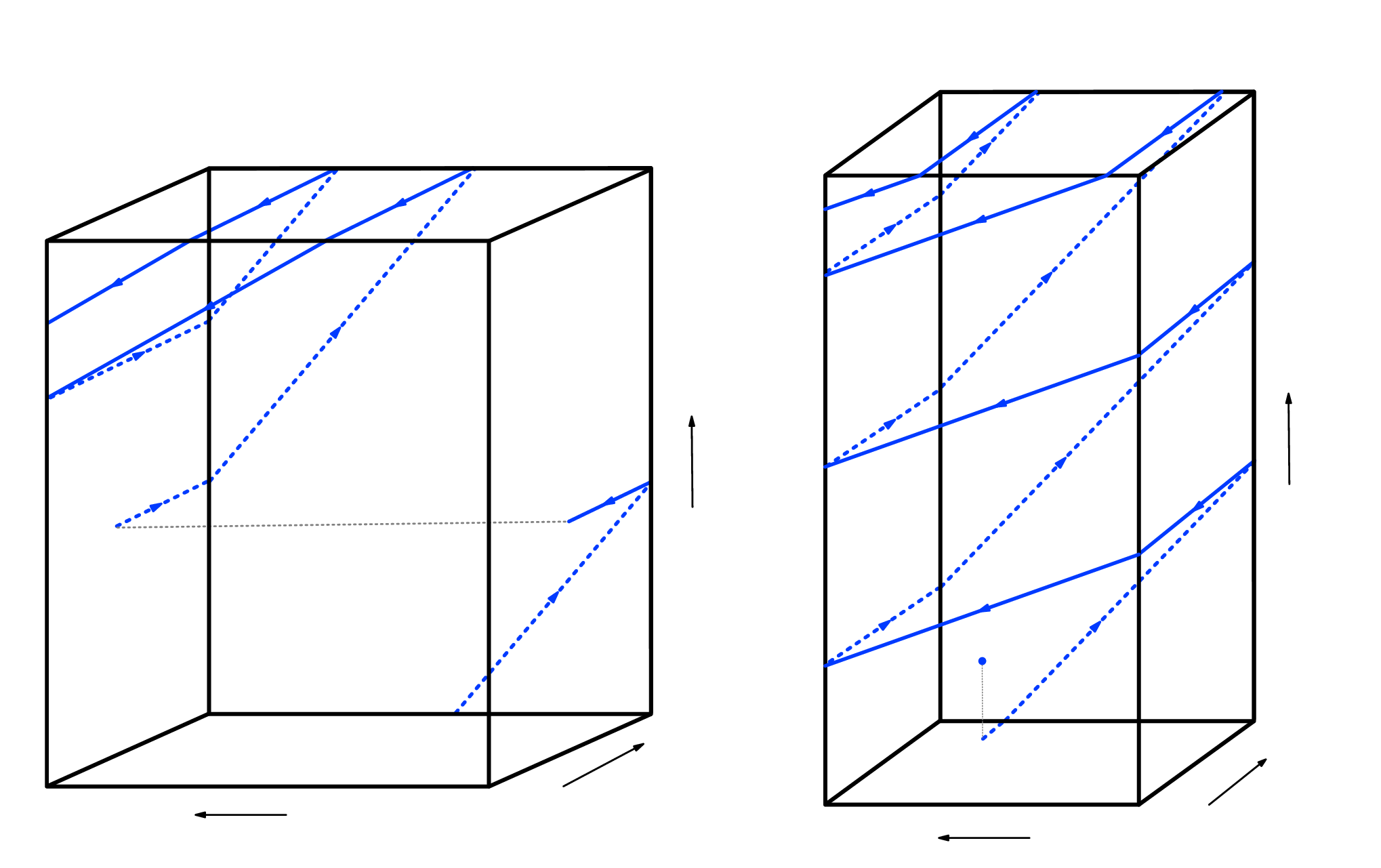
    \caption{A sample flowline that is trapped by a pre-chimney fold. On the left is the $(z,s,t)$ projection, and on the right is $H_0$ with coordinates $(t,r,\theta)$. The flowline enters the fold at $x_1$, travels to $x_2 \in \underline{t=0} \cap \underline{s=s_0}$, and then follows the characteristic foliation of $\partial H_0$ all the way to $x_3\in \underline{t=t_0} \cap \underline{r=0}$. The flowline essentially follows the characteristic foliations of both contact projections, ultimately swirling around $\underline{t=t_0} \cap \underline{z=z_0}$ on the left and $\underline{t=t_0}\cap \underline{\theta=\theta_0}$ on the right.}
    \label{fig:pre_chimney}
\end{figure}

\section{Smooth box folds}\label{section:box_folds_smooth}

In this section we construct the smooth box fold, which is a smooth, graphical approximation of the piecewise linear hypersurface constructed in Section \ref{section:box_folds_pl}. Our goal is to prove the following theorem, which summarizes all of the properties of such folds that we need for the duration of the paper. 

\begin{theorem}[Existence and behavior of smooth box folds]\label{theorem:new_smooth_box_fold}
Let $(W_0, \lambda_0)$ be a Weinstein domain of dimension $2n-2 \geq 2$. For any $s_0, t_0 > 0$, let $V_0 := [0,s_0]\times [0,t_0] \times W_0$. For any smooth function $F:V_0 \to \R$, let $\lambda_{F} := dF + e^s\, (dt + \lambda_0)$ denote a Liouville form on $V_0$, let $X_{\lambda_F} = \partial_s + X_{F}$ be its Liouville vector field, and let 
\[
h_{F}:\{s=s_0\} \times [0,t_0] \times W_0 \dashrightarrow \{s=0\} \times [0,t_0] \times W_0
\]
be the partially-defined holonomy map given by backward flow of $X_{\lambda_F}$. 

Fix $0 < \epsilon \ll \min(1, s_0, t_0)$. There is a smooth function $G_{\epsilon}:V_0 \to [0,\infty)$, compactly supported in the interior of $V_0$, with the following properties. 

\begin{enumerate}
    \item (Weinstein compatibility) \label{property:high_Weinstein_compat}
    
    \noindent The Liouville vector field $X_{\lambda_{G_{\epsilon}}}$ is Morse, with critical points of index $k$ and $k+1$ for each critical point of index $k$ in the underlying Weinstein domain $(W_0, \lambda_0)$.

    \item (Trapping properties)\label{property:high_trap2} 
    
    \noindent Let $(t_{\mathrm{init}}, p_{\mathrm{init}}) \in \{s=s_0\}$ be the initial point of a flowline of $X_{\lambda_{G_{\epsilon}}}$. Define subsets of $\{s=s_0\}$ as follows: 
    \begin{align*}
        U_{\mathrm{trap},1}^{\epsilon} &:= [e^{-s_0}t_0 + \epsilon, t_0 - \epsilon] \,\times\, (W_0 \setminus N^{s_0+\epsilon}(\partial W_0)), \\
        U_{\mathrm{trap},2}^{\epsilon} &:=[\epsilon, t_0-\epsilon] \,\times\, \left(N^{s_0 - 2\epsilon}(\partial W_0)\setminus N^{\epsilon}(\partial W_0)\right).
    \end{align*}
    If $(t_{\mathrm{init}}, p_{\mathrm{init}}) \in U_{\mathrm{trap},1}^{\epsilon} \cup U_{\mathrm{trap},2}^{\epsilon}$ then the flowline converges to a critical point of $X_{\lambda_{F_{\epsilon}}}$ in backward time.

    \item (Holonomy properties)\label{property:high_hol3}  
    
    \noindent Let $(t_{\mathrm{init}}, p_{\mathrm{init}}) \in \{s=s_0\}$ be the initial point of a flowline of $X_{\lambda_{G_{\epsilon}}}$. Assume $(t_{\mathrm{init}}, p_{\mathrm{init}})$ is in the domain of $h_{G_{\epsilon}}$. Let $(t_{\mathrm{fin}}, p_{\mathrm{fin}}) := h_{G_{\epsilon}}(t_{\mathrm{init}}, p_{\mathrm{init}}) \in \{s=0\}$ denote the exit point of the flowline in backward time. 
    \begin{enumerate}
        \item The estimate $\norm{p_{\mathrm{fin}}}_{W_0}\leq e^{s_0}\norm{p_{\mathrm{init}}}_{W_0}$ holds.\label{thm-part:new_smooth_holonomy}
    
        \item If $t_{\mathrm{init}} \in [t_0 - \epsilon, t_0]$, then $t_{\mathrm{fin}}\in [t_0 - \epsilon, t_0]$.\label{property:high_hol3b} 

        \item If $t_{\mathrm{init}} \in [0,\epsilon]$ and $t_{\mathrm{fin}} \gg e^{s_0 + \epsilon}\epsilon$, then $p_{\mathrm{init}}\in N^{s_0+\epsilon}(\partial W_0)$ and hence we may identify 
        \[
        p_{\mathrm{init}} = (r_{\mathrm{init}}, q_{\mathrm{init}}) \in \left((-\infty, 0]_r \times \partial W_0, \, \lambda_0 = e^r\, \eta_0\right)
        \]
        where $\eta_0$ is the contact form induced on $\partial W_0$ by $\lambda_0$. Moreover, we have 
        \[
        p_{\mathrm{fin}} = (r_{\mathrm{fin}}, R_{\eta_0}^{-t_{*}}(q_{\mathrm{init}}))
        \]
        where $R_{\eta_0}^{t}: \partial W_0 \to \partial W_0$ is the time-$t$ flow of the Reeb vector field $R_{\eta_0}$ and $t_{*}$ satisfies $\epsilon \ll t_{*} \leq e^{\epsilon}t_0$.\label{property:high_hol3c}  
        
    \end{enumerate}
\end{enumerate}
\end{theorem}

Despite being an unsurprising smooth generalization of the behavior of piecewise linear box folds (for example, sending $\epsilon \to 0$ in \ref{property:high_trap2} gives the exact piecewise linear trapping regions), this is not a priori clear and a careful proof of Theorem \ref{theorem:new_smooth_box_fold} involves some technical work. Thus, we outline the progression of the section below. 

\begin{itemize}
    \item In \ref{subsec:smooth2d} we define the smooth box fold in dimension $2$. This is done in two steps. First, we ``tip'' the sides of the piecewise linear box fold to obtain a graphical surface, and then we smooth all corners and edges via convolution.  

    \item In \ref{subsec:nearlysmooth} we consider the \textit{nearly smooth} box fold, which is a non-graphical, piecewise smooth hypersurface in a higher-dimensional model. It has one vertical side and one codimension-$1$ corner. 
    
    \item In \ref{subsec:smooth}, in analogy to \ref{subsec:smooth2d}, we ``tip'' the vertical side of the nearly smooth box fold and smooth the corners with a convolution. This produces the smooth, graphical box fold featured in the statement of Theorem \ref{theorem:new_smooth_box_fold}.

    \item Along the way in \ref{subsec:smooth2d}, \ref{subsec:nearlysmooth}, and \ref{subsec:smooth}, we state and prove various technical aspects of the dynamics of the auxiliary folds. In \ref{subsec:smooth_proof} we tie all of this together and prove Theorem \ref{theorem:new_smooth_box_fold}.
\end{itemize}

\begin{remark}[Box holes]
    Theorem \ref{theorem:new_smooth_box_fold} concerns box folds. By considering the function $-G_{\epsilon}$, one may also record a version for smooth box holes, which is nearly identical save for the natural adjustments according to the behavior described in \ref{subsection:box_holes}. We spare the reader a repetition of the adjusted theorem. In the future we will freely reference Theorem \ref{theorem:new_smooth_box_fold} when installing box holes, with the implicit understanding that we are applying the theorem statement with its modified conclusions to $-G_{\epsilon}$. 
\end{remark}

\begin{remark}[Parameter precision]
    In the statement of Theorem \ref{theorem:new_smooth_box_fold}, it is possible to give sharper and stronger bounds on the trapping regions and holonomy properties than what is stated in \ref{property:high_trap2} and \ref{property:high_hol3}, for instance by introducing additional levels of parameter dependency. This will be most clear from the work in \ref{subsec:smooth2d}. For our applications, the statements depending on the lone $\epsilon$ parameter, while generously far from being sharp, are sufficient. We have opted to state Theorem \ref{theorem:new_smooth_box_fold} and associated results like Theorem \ref{prop:mainprop}, Proposition \ref{prop:2D_smooth_box_fold}, and Proposition \ref{prop:new_chimney_smooth} in this way for the sake of clarity and ease of use. 
\end{remark}

\subsection{Smooth box folds in dimension $2$}\label{subsec:smooth2d}

In this subsection we prove the following proposition, which is the $2$-dimensional version of Theorem~\ref{theorem:new_smooth_box_fold}. 

\begin{proposition}\label{prop:2D_smooth_box_fold}
Fix $s_0, t_0 > 0$. For any smooth function $F:[0,s_0]\times [0,t_0] \to \R$, let $\lambda_{F} := dF + e^s\, dt$ denote a Liouville form on $[0,s_0]\times [0,t_0]$, let $X_{\lambda_F} = \partial_s + X_{F}$ be its Liouville vector field, and let 
\[
h_{F}:\{s=s_0\} \times [0,t_0] \dashrightarrow \{s=0\} \times [0,t_0] 
\]
be the partially-defined holonomy map given by backward flow of $X_{\lambda_F}$. 

Fix a general smoothing threshold parameter $0 < \epsilon \ll \min(1, s_0, t_0)$.  There is a smooth function $F_{\epsilon}:\mathbb{R}^2 \to [0,\infty)$, supported in the interior of $[0,s_0]\times [0,t_0]$, along with positive auxiliary parameters $0 < \delta_{\mathrm{con}} <\delta_{\mathrm{tip}}^2 \ll \epsilon^2$ that may be chosen arbitrarily small, with the following properties.           

\begin{enumerate}
    \item (Weinstein compatibility)\label{property:2d_Weinstein_compat}
    
    \noindent The Liouville vector field $X_{\lambda_{F_{\epsilon}}}$ is Morse, with one critical point of index $0$ and one critical point of index $1$. 

    \item (Trapping properties)

    \noindent Let $(s_{\mathrm{init}}, t_{\mathrm{init}})$ be the initial point of a flowline of $X_{\lambda_{F_{\epsilon}}}$. 

    \begin{enumerate}
        \item If $(s_{\mathrm{init}}, t_{\mathrm{init}}) \in \{s=s_0\} \times [e^{-s_0}t_0 + \epsilon, t_0 - \epsilon]$, then the flowline converges to a critical point of $X_{\lambda_{F_{\epsilon}}}$ in backward time. \label{property:2D_trapping_interval}

        \item Define a region $\tilde{D}_4 \subseteq [0,s_0]\times [0,t_0]$ by
        \[
        \tilde{D}_4 := \{\delta_{\mathrm{tip}} + \delta_{\mathrm{con}}\leq s \leq s_0 - (\delta_{\mathrm{tip}} + \delta_{\mathrm{con}})\} \cap \{t_0 - (\delta_{\mathrm{tip}} - \delta_{\mathrm{con}}) \leq t \leq t_0 - (\delta_{\mathrm{tip}}^2 + \delta_{\mathrm{con}})\}.
        \]
        If $(s_{\mathrm{init}}, t_{\mathrm{init}}) \in \tilde{D}_4$, then the flowline converges to a critical point of $X_{\lambda_{F_{\epsilon}}}$ in backward time. \label{property:2D_trapping_region}
    \end{enumerate}

    \item (Holonomy properties)

    \noindent Let $(s_{\mathrm{init}}, t_{\mathrm{init}}) \in \{s=s_0\}\times[0,t_0]$ be the initial point of a flowline of $X_{\lambda_{F_{\epsilon}}}$. Assume $t_{\mathrm{init}}$ is in the domain of $h_{F_{\epsilon}}$. Let $t_{\mathrm{fin}} := h_{F_{\epsilon}}(t_{\mathrm{init}})$ denote the exit point of the flowline in backward time. 

    \begin{enumerate}
        \item The estimate $t_{\mathrm{fin}} \leq e^{s_0+\epsilon}t_{\mathrm{init}}$ holds. Moreover, for sufficiently small $\epsilon$ and $t_{\mathrm{init}} \in [0,\epsilon]$, this estimate holds for the function $F^{\tau}_{\epsilon}:=\tau F_{\epsilon}$, for any $0\leq \tau \leq 1$. \label{property:2D_holonomy_bound}
        \item If $t_{\mathrm{init}} \in [t_0 - \epsilon, t_0]$, then $t_{\mathrm{fin}}\in [t_0 - \epsilon, t_0]$.\label{property:2D_holonomy_top}
        \item Define a region $\tilde{D}_2 \subseteq [0,s_0]\times [0,t_0]$ by
        \[
        \tilde{D}_2 := \{\delta_{\mathrm{tip}} + \delta_{\mathrm{con}}\leq s \leq s_0 - (\delta_{\mathrm{tip}} + \delta_{\mathrm{con}})\} \cap \{ \delta_{\mathrm{tip}}^2 + \delta_{\mathrm{con}} \leq t\leq \delta_{\mathrm{tip}} - \delta_{\mathrm{con}}\}.
        \]
        Then in $\tilde{D}_2$, $X_{\lambda_{F_{\epsilon}}}$  is positively parallel to $\partial_s$. Moreover, if $t_{\mathrm{init}}\in [\epsilon,t_0 - \epsilon]$, then in backward time the flowline enters $\tilde{D}_2$ along $\{s=s_0 - (\delta_{\mathrm{tip}} + \delta_{\mathrm{con}})\}$ and exits along $\{s=\delta_{\mathrm{tip}} + \delta_{\mathrm{con}}\}$. \label{property:2D_holonomy_region}
    \end{enumerate}
\end{enumerate}
\end{proposition}

Throughout this subsection we will consider fixed parameters $s_0,t_0,z_0>0$, with $z_0\geq t_0$, as well as $0 < \epsilon \ll \min(1,s_0,t_0)$.

\subsubsection{Graphical approximation}

\begin{figure}
\centering
\begin{tikzpicture}[scale=0.35]
\draw (-1,-1) -- (25,-1) -- (25,17) -- (-1,17) -- (-1,-1);
\draw (0,0) -- (24,0) -- (24,16) -- (0,16) -- (0,0);
\draw (3,3) -- (21,3) -- (21,13) -- (3,13) -- (3,3);

\draw (0,0) -- (3,3);
\draw (24,0) -- (21,3);
\draw (24,16) -- (21,13);
\draw (0,16) -- (3,13);

\draw[thick,->] (26,-2) -- (-2,-2);
\draw[thick,->] (26,-2) -- (26,18);

\node at (-2.5,-2) {$t$};
\node at (26,18.5) {$s$};

\draw[dashed] (25,-1) -- (25,-2.5);
\draw[dashed] (24,-1) -- (24,-2.5);
\draw[dashed] (21,-1) -- (21,-2.5);
\draw[dashed] (3,-1) -- (3,-2.5);
\draw[dashed] (0,-1) -- (0,-3.5);
\draw[dashed] (-1,-1) -- (-1,-2.5);
{\small
\node at (25,-3) {$0$};
\node at (24,-3) {$\delta_{\mathrm{tip}}^2$};
\node at (21,-3) {$\delta_{\mathrm{tip}}$};
\node at (3,-3) {$t_0-\delta_{\mathrm{tip}}$};
\node at (0.5,-4) {$t_0-\delta_{\mathrm{tip}}^2$};
\node at (-1,-3) {$t_0$};
}

\draw[dashed] (25,-1) -- (26.5,-1);
\draw[dashed] (25,0) -- (26.5,0);
\draw[dashed] (25,3) -- (26.5,3);
\draw[dashed] (25,13) -- (26.5,13);
\draw[dashed] (25,16) -- (26.5,16);
\draw[dashed] (25,17) -- (26.5,17);
{\small
\node at (27,-1) {$0$};
\node at (27.5,0) {$\delta_{\mathrm{tip}}^2$};
\node at (27.5,3) {$\delta_{\mathrm{tip}}$};
\node at (28.5,13) {$s_0-\delta_{\mathrm{tip}}$};
\node at (28.5,16) {$s_0-\delta_{\mathrm{tip}}^2$};
\node at (27.25,17) {$s_0$};
}

\node at (12,8) {$D^{PL}_1$};
\node at (22.5,8) {$D^{PL}_2$};
\node at (12,1.5) {$D^{PL}_3$};
\node at (1.5,8) {$D^{PL}_4$};
\node at (12,14.5) {$D^{PL}_5$};
\end{tikzpicture}
\caption{The regions used in the definition of $F^{PL}_{\delta_{\mathrm{tip}}}$.}
\label{fig:pl-regions}
\end{figure}

Our first step towards Proposition~\ref{prop:2D_smooth_box_fold} is to construct, for any $0<\delta_{\mathrm{tip}}\ll\epsilon$, a piecewise linear function $F^{PL}_{\delta_{\mathrm{tip}}}\colon\mathbb{R}^2\to[0,z_0]$ supported on $[\delta_{\mathrm{tip}}^2,s_0-\delta_{\mathrm{tip}}^2]\times[\delta_{\mathrm{tip}}^2,t_0-\delta_{\mathrm{tip}}^2]$ which satisfies the trapping and holonomy conditions of Proposition~\ref{prop:2D_smooth_box_fold}.  To this end, we consider the following partition of the plane $\mathbb{R}^2_{s,t}$:
\begin{align*}
D^{PL}_1 &= [\delta_{\mathrm{tip}},s_0-\delta_{\mathrm{tip}}]\times[\delta_{\mathrm{tip}},t_0-\delta_{\mathrm{tip}}]\\
D^{PL}_2 &= \{(s,t)\,|\,t\leq s\leq s_0-t \text{~and~} t\in[\delta_{\mathrm{tip}}^2,\delta_{\mathrm{tip}}]\}\\
D^{PL}_3 &= \{(s,t)\,|\,s\leq t\leq t_0-s \text{~and~} s\in[\delta_{\mathrm{tip}}^2,\delta_{\mathrm{tip}}]\}\\
D^{PL}_4 &= \{(s,t)\,|\,t_0-t\leq s\leq s_0-(t_0-t) \text{~and~} t\in[t_0-\delta_{\mathrm{tip}},t_0-\delta_{\mathrm{tip}}^2]\}\\
D^{PL}_5 &= \{(s,t)\,|\,s_0-s\leq t\leq t_0-(s_0-s) \text{~and~} s\in[s_0-\delta_{\mathrm{tip}},s_0-\delta_{\mathrm{tip}}^2]\}\\
D^{PL}_6 &= \mathbb{R}^2\setminus([\delta_{\mathrm{tip}}^2,s_0-\delta_{\mathrm{tip}}^2]\times[\delta_{\mathrm{tip}}^2,t_0-\delta_{\mathrm{tip}}^2]).
\end{align*}
See Figure~\ref{fig:pl-regions}.  Using this partition, we may define $F^{PL}_{\delta_{\mathrm{tip}}}\colon\mathbb{R}^2\to[0,z_0]$ by
\[
F^{PL}_{\delta_{\mathrm{tip}}}(s,t) := \left\{\begin{matrix}
    z_0, & (s,t)\in D^{PL}_1\\
    \tfrac{z_0}{\delta_{\mathrm{tip}}(1-\delta_{\mathrm{tip}})}(t-\delta_{\mathrm{tip}}^2), & (s,t)\in D^{PL}_2\\
    \tfrac{z_0}{\delta_{\mathrm{tip}}(1-\delta_{\mathrm{tip}})}(s-\delta_{\mathrm{tip}}^2), & (s,t)\in D^{PL}_3\\
    \tfrac{z_0}{\delta_{\mathrm{tip}}(\delta_{\mathrm{tip}}-1)}(t-(t_0-\delta_{\mathrm{tip}}^2)), & (s,t)\in D^{PL}_4\\
    \tfrac{z_0}{\delta_{\mathrm{tip}}(\delta_{\mathrm{tip}}-1)}(s-(s_0-\delta_{\mathrm{tip}}^2)), & (s,t)\in D^{PL}_5\\
    0, & (s,t)\in D^{PL}_6
\end{matrix}\right. .
\]
Notice that the graph of $F^{PL}_{\delta_{\mathrm{tip}}}$ converges to the piecewise linear box fold as $\delta_{\mathrm{tip}}$ tends to $0$.  Though undefined where the regions of our partition intersect, the Liouville 1-form $\lambda_{\mathrm{tip}}=dF^{PL}_{\delta_{\mathrm{tip}}}+e^s\,dt$ is defined on the interior of each $D^{PL}_j$, and we may compute the resulting Liouville vector field:

\vspace{2mm}
\begin{center}
\begin{tabular}{ |c|c|c|c| } 
 \hline
 Region & $dF^{PL}_{\delta_{\mathrm{tip}}} + e^s\, dt$ & $X_{\lambda_{\mathrm{tip}}}$ \rm \\ 
 \hline 
 $D^{PL}_1$ & $e^s\, dt$ & $\partial_s$ \\
 $D^{PL}_2$ & $(e^s+\tfrac{z_0}{\delta_{\mathrm{tip}}(1-\delta_{\mathrm{tip}})})\, dt$ & $(1+e^{-s}\tfrac{z_0}{\delta_{\mathrm{tip}}(1-\delta_{\mathrm{tip}})})\partial_s$ \\
 $D^{PL}_3$ & $\tfrac{z_0}{\delta_{\mathrm{tip}}(1-\delta_{\mathrm{tip}})}\, ds + e^s\, dt$ & $\partial_s - e^{-s}\tfrac{z_0}{\delta_{\mathrm{tip}}(1-\delta_{\mathrm{tip}})} \partial_t$ \\
 $D^{PL}_4$ & $(e^s+\tfrac{z_0}{\delta_{\mathrm{tip}}(\delta_{\mathrm{tip}}-1)})\, dt$ & $(1-e^{-s}\tfrac{z_0}{\delta_{\mathrm{tip}}(1-\delta_{\mathrm{tip}})})\partial_s$ \\
 $D^{PL}_5$ & $\tfrac{z_0}{\delta_{\mathrm{tip}}(\delta_{\mathrm{tip}}-1)}\, ds + e^s\, dt$ & $\partial_s + e^{-s}\tfrac{z_0}{\delta_{\mathrm{tip}}(1-\delta_{\mathrm{tip}})} \partial_t$ \\
 $D^{PL}_6$ & $e^s\,dt$ & $\partial_s$ \\
 \hline
\end{tabular}
\end{center}
\vspace{2mm}

While $X_{\lambda_{\mathrm{tip}}}$ is not piecewise linear, it is very nearly so.  In particular, $X_{\lambda_{\mathrm{tip}}}$ is parallel to a linear foliation of $\mathbb{R}^2$ on $\mathbb{R}^2\setminus(D^{PL}_3\cup D^{PL}_5)$, and on $D^{PL}_3$ and $D^{PL}_5$ we have very tight bounds for the slope of $X_{\lambda_{\mathrm{tip}}}$:
\begin{equation}\label{eq:2D-graphical-linear-bounds}
\begin{aligned}
D^{PL}_3: & -\frac{e^{-\delta_{\mathrm{tip}}^2}z_0}{\delta_{\mathrm{tip}}(1-\delta_{\mathrm{tip}})} < -\frac{e^{-s}z_0}{\delta_{\mathrm{tip}}(1-\delta_{\mathrm{tip}})} < -\frac{e^{-\delta_{\mathrm{tip}}}z_0}{\delta_{\mathrm{tip}}(1-\delta_{\mathrm{tip}})}\\
D^{PL}_5: & \frac{e^{-(s_0-\delta_{\mathrm{tip}}^2)}z_0}{\delta_{\mathrm{tip}}(1-\delta_{\mathrm{tip}})} < \frac{e^{-s}z_0}{\delta_{\mathrm{tip}}(1-\delta_{\mathrm{tip}})} < \frac{e^{-(s_0-\delta_{\mathrm{tip}})}z_0}{\delta_{\mathrm{tip}}(1-\delta_{\mathrm{tip}})}.
\end{aligned}
\end{equation}
An analysis such as was conducted in Section~\ref{section:box_folds_pl} then yields the following description of the dynamics of $X_{\lambda_{\mathrm{tip}}}$ (c.f. Lemma~\ref{lemma:PL_box_fold_holonomy_low}).

\begin{lemma}\label{lemma:graphical_box_fold_holonomy}
Suppose that $s_0,t_0,z_0,\delta_{\mathrm{tip}}>0$ are fixed, with $z_0\geq t_0$.  Let
\[
h^{PL}_{\delta_{\mathrm{tip}}}:\{s=s_0\} \times [0,t_0] \dashrightarrow \{s=0\}\times [0,t_0]
\]
denote the partially-defined holonomy map given by backwards flow of the piecewise-defined Liouville vector field of $\lambda_{\mathrm{tip}}=dF^{PL}_{\delta_{\mathrm{tip}}}+e^s\,dt$.  If $\delta_{\mathrm{tip}}$ is sufficiently small, then there is a value $\tilde{t}$ satisfying
\begin{equation}\label{ineq:2D_trapping_interval}
e^{2\delta_{\mathrm{tip}}^2-s_0}(t_0-\delta_{\mathrm{tip}}^2-\delta_{\mathrm{tip}})+\delta_{\mathrm{tip}}^2
\leq \tilde{t} \leq
e^{\delta_{\mathrm{tip}}+\delta_{\mathrm{tip}}^2-s_0}(t_0-2\delta_{\mathrm{tip}}^2)+\delta_{\mathrm{tip}}
\end{equation}
and a smooth function $\tilde{s}\colon[\delta_{\mathrm{tip}}^2,\tilde{t}]\to[\delta_{\mathrm{tip}}^2,\delta_{\mathrm{tip}}]$ satisfying $\tilde{s}(\delta_{\mathrm{tip}}^2)=\delta_{\mathrm{tip}}^2$ such that
\begin{equation}\label{eq:2D_graphical_holonomy}
h^{PL}_{\delta_{\mathrm{tip}}}(t) = \begin{cases}
    t, & t\in (0,\delta_{\mathrm{tip}}^2) \\
    e^{s_0-\tilde{s}(t)-\delta_{\mathrm{tip}}^2}(t-\tilde{s}(t))+\tilde{s}(t), & t\in (\delta_{\mathrm{tip}}^2,\tilde{t}) \\
    t, & t\in (t_0-\delta_{\mathrm{tip}}^2,t_0)
\end{cases}.
\end{equation}
Flowlines which pass through $(s_0,t^*)$, with $t^*\in(\tilde{t},t_0-\delta_{\mathrm{tip}}^2)$, do not leave $[0,s_0]\times[0,t_0]$ in backward time.
\end{lemma}
\begin{proof}
Analogous to the proof of Lemma~\ref{lemma:PL_box_fold_holonomy_low}, we proceed by investigating the flow within each region $D^{PL}_j$.  For sufficiently small $\delta_{\mathrm{tip}}>0$, the slope bounds given by Equation~\ref{eq:2D-graphical-linear-bounds} ensure that points in the domain of $h^{PL}_{\delta_{\mathrm{tip}}}$ lie on flowlines which pass through $D^{PL}_2$.  The function $\tilde{s}(t)$ gives the $s$-coordinate of this flowline as it passes through $D^{PL}_2\cap D^{PL}_3$, and from this we may compute $h^{PL}_{\delta_{\mathrm{tip}}}(t)$.  Because $h^{PL}_{\delta_{\mathrm{tip}}}$ depends on $\tilde{s}(t)$, the bounds on $\tilde{t}$ then follow from those on $\tilde{s}(t)$.
\end{proof}

\begin{remark}
Observe that as $\delta_{\mathrm{tip}}$ tends to $0$, $h^{PL}_{\delta_{\mathrm{tip}}}$ converges to $h^{PL}(t)=e^{s_0}t$, with domain $(0,e^{-s_0}t_0)$, as predicted by Lemma~\ref{lemma:PL_box_fold_holonomy_low}.
\end{remark}

\begin{remark}\label{remark:small-z0}
The hypothesis in Lemma~\ref{lemma:graphical_box_fold_holonomy} that $z_0\geq t_0$ is for convenience only, to avoid the complicated statement of $h^{PL}_{\delta_{\mathrm{tip}}}$ indicated by Lemma~\ref{lemma:PL_box_fold_holonomy_low}.  The same analysis can be carried out in case $z_0<t_0$, resulting in a subinterval of $[0,t_0]$ where backward-time holonomy approximates the map $t\mapsto t+(1-e^{-s_0}z_0)$.  However, this holonomy will only prevail where $t\geq e^{-s_0}z_0$, meaning that the estimate $t_{\mathrm{fin}} \leq e^{s_0+\epsilon}t_{\mathrm{init}}$ in part~\ref{property:2D_holonomy_bound} of Proposition~\ref{prop:2D_smooth_box_fold} continues to hold.
\end{remark}

\begin{corollary}\label{cor:2D_PL_properties}
For sufficiently small $\delta_{\mathrm{tip}}$, $\delta_{\mathrm{con}}$ satisfying $0 < \delta_{\mathrm{con}} <\delta_{\mathrm{tip}}^2 \ll \epsilon^2$, $F^{PL}_{\delta_{\mathrm{tip}}}$ satisfies the trapping and holonomy conclusions of Proposition~\ref{prop:2D_smooth_box_fold}.
\end{corollary}
\begin{proof}
By choosing $\delta_{\mathrm{tip}}$ sufficiently small in Inequality~\ref{ineq:2D_trapping_interval} we see that $\tilde{t}$ can be made arbitrarily close to $e^{-s_0}t_0$, leading us to Property~\ref{property:2D_trapping_interval} in Proposition~\ref{prop:2D_smooth_box_fold}.  With $\delta_{\mathrm{tip}}$ as in Lemma~\ref{lemma:graphical_box_fold_holonomy}, Property~\ref{property:2D_trapping_region} will hold for any $\delta_{\mathrm{con}}>0$.  Properties~\ref{property:2D_holonomy_bound} and~\ref{property:2D_holonomy_top} follow from Equation~\ref{eq:2D_graphical_holonomy} and Remark~\ref{remark:small-z0}; the proof of Lemma~\ref{lemma:graphical_box_fold_holonomy}, which tracks the flowlines of $X_{\lambda_{\mathrm{tip}}}$ as they pass through $D^{PL}_2$, confirms Property~\ref{property:2D_holonomy_region} for any $\delta_{\mathrm{con}}>0$.
\end{proof}

\subsubsection{Smooth approximation}
We now smooth our graphical approximation $F^{PL}_{\delta_{\mathrm{tip}}}$ via convolution with a mollifier.  Namely, we consider a smooth cutoff function $\varphi\colon\mathbb{R}^2\to[0,\infty)$ defined by
\[
\varphi(s,t) = \left\{\begin{matrix}
    k\,\exp(-\tfrac{1}{1-s^2-t^2}), & s^2+t^2<1\\
    0, & s^2+t^2\geq 1
\end{matrix}\right.,
\]
where the constant $k>0$ is chosen to ensure that $\int_{\mathbb{R}^2}\varphi\,dA=1$.  For any $\delta_{\mathrm{con}}>0$ we may then define $\varphi_{\delta_{\mathrm{con}}}$ by
\[
\varphi_{\delta_{\mathrm{con}}}(s,t) = \frac{1}{\delta_{\mathrm{con}}^2}\varphi\left(\frac{s}{\delta_{\mathrm{con}}},\frac{t}{\delta_{\mathrm{con}}}\right).
\]
Notice that $\varphi_{\delta_{\mathrm{con}}}$ is supported on the ball $B(0,\delta_{\mathrm{con}})$, and that $\int_{\mathbb{R}^2}\varphi_{\delta_{\mathrm{con}}}\,dA=1$.

Finally, we consider $\delta_{\mathrm{con}}>0$ such that $\delta_{\mathrm{con}}<\delta_{\mathrm{tip}}^2$ and $\delta_{\mathrm{tip}}+\delta_{\mathrm{con}}<\epsilon$, and we define $F_\epsilon$ by convolution:
\[
F_\epsilon(x) = (F^{PL}_{\delta_{\mathrm{tip}}}*\varphi_{\delta_{\mathrm{con}}})(x) = \int_{\mathbb{R}^2}F^{PL}_{\delta_{\mathrm{tip}}}(y)\,\varphi_{\delta_{\mathrm{con}}}(y-x)\,dy.
\]
Because $\varphi_{\delta_{\mathrm{con}}}$ is smooth, the same is true of $F_\epsilon$.  We now investigate the Liouville vector field $X_{\lambda_\epsilon}$ of the resulting form $\lambda_\epsilon = dF_\epsilon + e^s\,dt$.

We begin by identifying regions $D^{\mathrm{sm}}_j$, $1\leq j\leq 6$, which closely approximate the interiors of the regions $D^{PL}_j$ defined above:
\[
D^{\mathrm{sm}}_j = \{(s,t)\,|\, B((s,t),\delta_{\mathrm{con}})\subset D^{PL}_j\},
\quad\text{for~}1\leq j\leq 6.
\]
Here $B((s,t),\delta_{\mathrm{con}})$ denotes the ball in $\mathbb{R}^2$ of radius $\delta_{\mathrm{con}}>0$ which is centered at $(s,t)$.  Because $F^{PL}_{\delta_{\mathrm{tip}}}$ is linear on each $D^{PL}_j$, we find that $F_\epsilon\equiv F^{PL}_{\delta_{\mathrm{tip}}}$ on each $D^{\mathrm{sm}}_j$, and thus our analysis on these regions proceeds as before.  It remains to understand the dynamics of $X_{\lambda_\epsilon}$ near nonempty intersections $D^{PL}_i\cap D^{PL}_j$, $i\neq j$.

\begin{figure}
\centering
\begin{tikzpicture}[scale=0.35]
\draw (-1,-1) -- (25,-1) -- (25,17) -- (-1,17) -- (-1,-1);
\draw (0,0) -- (24,0) -- (24,16) -- (0,16) -- (0,0);
\draw (3,3) -- (21,3) -- (21,13) -- (3,13) -- (3,3);

\draw (0,0) -- (3,3);
\draw (24,0) -- (21,3);
\draw (24,16) -- (21,13);
\draw (0,16) -- (3,13);

\draw[thick,->] (26,-2) -- (-2,-2);
\draw[thick,->] (26,-2) -- (26,18);

\node at (-2.5,-2) {$t$};
\node at (26,18.5) {$s$};

\draw[dashed] (25,-1) -- (25,-2.5);
\draw[dashed] (24,-1) -- (24,-2.5);
\draw[dashed] (21,-1) -- (21,-2.5);
\draw[dashed] (3,-1) -- (3,-2.5);
\draw[dashed] (0,-1) -- (0,-3.5);
\draw[dashed] (-1,-1) -- (-1,-2.5);
{\small
\node at (25,-3) {$0$};
\node at (24,-3) {$\delta_{\mathrm{tip}}^2$};
\node at (21,-3) {$\delta_{\mathrm{tip}}$};
\node at (3,-3) {$t_0-\delta_{\mathrm{tip}}$};
\node at (0.5,-4) {$t_0-\delta_{\mathrm{tip}}^2$};
\node at (-1,-3) {$t_0$};
}

\draw[dashed] (25,-1) -- (26.5,-1);
\draw[dashed] (25,0) -- (26.5,0);
\draw[dashed] (25,3) -- (26.5,3);
\draw[dashed] (25,13) -- (26.5,13);
\draw[dashed] (25,16) -- (26.5,16);
\draw[dashed] (25,17) -- (26.5,17);
{\small
\node at (27,-1) {$0$};
\node at (27.5,0) {$\delta_{\mathrm{tip}}^2$};
\node at (27.5,3) {$\delta_{\mathrm{tip}}$};
\node at (28.5,13) {$s_0-\delta_{\mathrm{tip}}$};
\node at (28.5,16) {$s_0-\delta_{\mathrm{tip}}^2$};
\node at (27.25,17) {$s_0$};
}

\node at (12,8) {$\widehat{D}_1$};
\node at (22.5,8) {$\widehat{D}_2$};
\node at (12,1.5) {$\widehat{D}_3$};
\node at (1.5,8) {$\widehat{D}_4$};
\node at (12,14.5) {$\widehat{D}_5$};

\draw[densely dashed]   (0.25,-.25) -- 
                (23.75,-.25) .. controls (24.5,-.25) .. 
                (23.75,.5) -- 
                (21.5,2.75) .. controls (21,3.25) ..
                (20.75,3.25) --
                (3.25,3.25) .. controls (3,3.25) ..
                (2.75,3) --
                (0.25,0.5) .. controls (-0.5,-.25) ..
                (0.25,-0.25);

\draw[densely dashed]   (3.25,12.75) -- 
                (20.75,12.75) .. controls (21,12.75) .. 
                (21.25,13) -- 
                (23.75,15.25) .. controls (24.5,16.25) ..
                (23.75,16.25) --
                (0.25,16.25) .. controls (-0.5,16.25) ..
                (0.25,15.5) --
                (2.75,13) .. controls (3,12.75) ..
                (3.25,12.75);

\draw[densely dotted]   (0.25,0) -- 
                (2.75,2.5) .. controls (3.25,3) .. 
                (3.25,3.25) -- 
                (3.25,12.75) .. controls (3.25,13) ..
                (2.75,13.5) --
                (0.25,16) .. controls (-0.25,16.25) ..
                (-0.25,15.75) --
                (-0.25,0.25) .. controls (-0.25,-0.25) ..
                (0.25,0);
\draw[red] (-0.15,0.3) -- (-0.15,15.7);
\draw[red] (3.15,3.3) -- (3.15,12.7);

\draw[densely dotted]   (23.75,0) -- 
                (21.25,2.5) .. controls (20.75,3) .. 
                (20.75,3.25) -- 
                (20.75,12.75) .. controls (20.75,13) ..
                (21.25,13.5) --
                (23.75,16) .. controls (24.25,16.25) ..
                (24.25,15.75) --
                (24.25,0.25) .. controls (24.25,-0.25) ..
                (23.75,0);

\draw[densely dashdotted]   (3.25,2.75) -- 
                (20.75,2.75) .. controls (21.25,2.75) .. 
                (21.25,3.25) -- 
                (21.25,12.75) .. controls (21.25,13.25) ..
                (20.75,13.25) --
                (3.25,13.25) .. controls (2.75,13.25) ..
                (2.75,12.75) --
                (2.75,3.25) .. controls (2.75,2.75) ..
                (3.25,2.75);
\end{tikzpicture}
\caption{The domains of influence $\widehat{D}_1,\ldots,\widehat{D}_5$, with the critical points of $X_{\lambda_\epsilon}$ seen in $\widehat{D}_4$.}
\label{fig:2d-smooth-behavior}
\end{figure}

Towards this understanding, let us consider regions
\[
\widehat{D}_j = \{(s,t)\,|\, B((s,t),\delta_{\mathrm{con}})\cap D^{PL}_j \neq \emptyset\},
\quad\text{for~}1\leq j\leq 6.
\]
See Figure~\ref{fig:2d-smooth-behavior}.  The region $\widehat{D}_j$ represents the ``domain of influence" for $D^{PL}_j$ in the convolution $F_\epsilon=F^{PL}_{\delta_{\mathrm{tip}}}\ast\varphi_{\delta_{\mathrm{con}}}$.  For instance, the Liouville vector field $X_{\lambda_\epsilon}$ will have positive $\partial_t$-component in the interior of $\widehat{D}_5$, negative $\partial_t$-component in the interior of $\widehat{D}_3$, and zero $\partial_t$-component outside of these regions.  Similarly, the $\partial_s$-component of $X_{\lambda_\epsilon}$ will be positive outside of $\widehat{D}_4$.

The smooth function $F_\epsilon$ differs from $F^{PL}_{\delta_{\mathrm{tip}}}$ --- and thus $X_{\lambda_\epsilon}$ differs from $X_{\lambda_{\mathrm{tip}}}$ --- precisely where distinct sets $\widehat{D}_i$, $\widehat{D}_j$ intersect.  However, with the exception of $\widehat{D}_1\cap \widehat{D}_4$ and $\widehat{D}_4\cap\widehat{D}_6$, these overlaps represent intersections which are crossed by $X_{\lambda_{\mathrm{tip}}}$, and the net effect of replacing $X_{\lambda_{\mathrm{tip}}}$ with $X_{\lambda_\epsilon}$ is zero.  For instance, consider the intersection $\widehat{D}_3\cap\widehat{D}_6$.  Away from $\widehat{D}_2$ and $\widehat{D}_4$ we have $\tfrac{\partial F_\epsilon}{\partial t}\equiv 0$, and thus
\[
X_{\lambda_\epsilon} = \left(1 + \tfrac{\partial F_\epsilon}{\partial t}\right)\partial_s - \tfrac{\partial F_\epsilon}{\partial s}\partial_t = \partial_s - \left(\tfrac{\partial F^{PL}_{\delta_{\mathrm{tip}}}}{\partial s}\ast\varphi_{\delta_{\mathrm{con}}}\right)\partial_t.
\]
Now consider a flowline of $X_{\lambda_\epsilon}$ which enters $(\widehat{D}_3\cap\widehat{D}_6)\setminus(\widehat{D}_2\cup\widehat{D}_4)$ along $s=\delta_{\mathrm{tip}}^2-\delta_{\mathrm{con}}$ and exits along $s=\delta_{\mathrm{tip}}^2+\delta_{\mathrm{con}}$.  The $t$-coordinate $t_{\mathrm{exit}}$ at which this flowline exits  will differ from its entrance $t$-coordinate $t_{\mathrm{ent}}$ by
\begin{align*}
\left[-\int_{\delta_{\mathrm{tip}}^2-\delta_{\mathrm{con}}}^{\delta_{\mathrm{tip}}^2+\delta_{\mathrm{con}}} \left(\tfrac{\partial F^{PL}_{\delta_{\mathrm{tip}}}}{\partial s}\ast\varphi_{\delta_{\mathrm{con}}}\right)\, ds \right]_{t_{\mathrm{ent}}}^{t_{\mathrm{exit}}} &= \left(F^{PL}_{\delta_{\mathrm{tip}}}\ast\varphi_{\delta_{\mathrm{con}}}\right)(\delta_{\mathrm{tip}}^2-\delta_{\mathrm{con}},t_{\mathrm{ent}}) - \left(F^{PL}_{\delta_{\mathrm{tip}}}\ast\varphi_{\delta_{\mathrm{con}}}\right)(\delta_{\mathrm{tip}}^2+\delta_{\mathrm{con}},t_{\mathrm{exit}})\\
    &= F^{PL}_{\delta_{\mathrm{tip}}}(\delta_{\mathrm{tip}}^2-\delta_{\mathrm{con}},t_{\mathrm{ent}}) - F^{PL}_{\delta_{\mathrm{tip}}}(\delta_{\mathrm{tip}}^2+\delta_{\mathrm{con}},t_{\mathrm{exit}}).
\end{align*}
Because $F^{PL}_{\delta_{\mathrm{tip}}}$ is $t$-invariant in this region, we see that the change in $t$-coordinate induced by $X_{\lambda_\epsilon}$ agrees with that induced by $X_{\lambda_{\mathrm{tip}}}$, so the net effect of smoothing on this region is zero.  A similar analysis applies to $\widehat{D}_1\cap\widehat{D}_3$, $\widehat{D}_1\cap\widehat{D}_5$, and $\widehat{D}_5\cap\widehat{D}_6$.  On $\widehat{D}_2\cap\widehat{D}_3$, $\widehat{D}_2\cap\widehat{D}_5$, $\widehat{D}_3\cap\widehat{D}_4$, and $\widehat{D}_4\cap\widehat{D}_5$ we may repeat the analysis with respect to the coordinates $x=s+t$, $y=s-t$ to once again see that smoothing does not affect our overall trapping and holonomy analysis.  In the regions $D^{PL}_1$, $D^{PL}_2$, and $D^{PL}_4$, $X_{\lambda_{\mathrm{tip}}}$ has positive $\partial_s$-component and zero $\partial_t$-component, and thus the smoothing which occurs on $\widehat{D}_1\cap\widehat{D}_2$ and $\widehat{D}_2\cap\widehat{D}_6$ will simply be an interpolation in the $\partial_s$-component, with no affect on our dynamical analysis.

Finally, the regions $(\widehat{D}_1\cap\widehat{D}_4)\setminus(\widehat{D}_3\cup\widehat{D}_5)$ and $(\widehat{D}_4\cap\widehat{D}_6)\setminus(\widehat{D}_3\cup\widehat{D}_5)$ behave rather differently, but again do not greatly affect our overall analysis.  The large magnitude of $X_{\lambda_{\mathrm{tip}}}$ in $D^{PL}_4$ ensures that $X_{\lambda_\epsilon}$ has negative $\partial_s$-component throughout this region.  In $D^{PL}_1\cap\widehat{D}_4$ we interpolate from this negative $\partial_s$-component to the vector field $\partial_s$, resulting in a curve of points with nearly constant $t$-value.  This critical curve acts as a spiral source for the flowlines of $X_{\lambda_\epsilon}$, and there is a second critical curve in $D^{PL}_6\cap\widehat{D}_4$ which acts as a degenerate saddle.  See Figure~\ref{fig:2d-smooth-behavior}.

At last we observe that each nonempty triple intersection $\widehat{D}_i\cap\widehat{D}_j\cap\widehat{D}_k$ will meet either $D^{PL}_1$ or $D^{PL}_6$.  For sufficiently small $\epsilon>0$, the contribution of these regions to $X_{\lambda_\epsilon}$ within $[\delta_{\mathrm{tip}}^2,s_0-\delta_{\mathrm{tip}}^2]\times[\delta_{\mathrm{tip}}^2,t_0-\delta_{\mathrm{tip}}^2]$ will be dominated by those of $D^{PL}_2$ or $D^{PL}_4$, and thus our understanding of the pairwise intersections $\widehat{D}_i\cap\widehat{D}_j$ suffices.

\begin{proof}[Proof of Proposition~\ref{prop:2D_smooth_box_fold}]
Corollary~\ref{cor:2D_PL_properties} and the discussion above ensure that $X_{\lambda_\epsilon}$ satisfies the trapping and holonomy properties of Proposition~\ref{prop:2D_smooth_box_fold}.  As constructed, our $X_{\lambda_\epsilon}$ fails to be Morse, since it has two curves of critical points.  However, we may apply an arbitrarily small perturbation to $F_\epsilon$, supported in isolating neighborhoods of these critical curves, so that the resulting vector field $X_{\lambda_\epsilon}$ is Morse.  Because the perturbation is supported on isolating neighborhoods of the critical curves, our trapping and holonomy analysis is unaffected.
\end{proof}

\begin{remark}\label{remark:2D-cp-height}
The critical points of $X_{\lambda_\epsilon}$ have $z$-values very near $z=0$ and $z=z_0$, respectively, as they approximate the critical edges of the piecewise linear box fold.  For instance, suppose $(s^*,t^*)$ is a critical point in $D_4^{PL}$ of the Liouville vector field of $F^{PL}_{\delta_{\mathrm{tip}}}\ast\varphi_{\delta_{\mathrm{con}}}$.  Then
\[
(\tfrac{\partial F^{PL}_{\delta_{\mathrm{tip}}}}{\partial t}\ast\varphi_{\delta_{\mathrm{con}}})(s^*,t^*) = -\int_{B^*} \dfrac{e^{-s}z_0}{\delta_{\mathrm{tip}}(1-\delta_\mathrm{tip})}\,\varphi_{\delta_{\mathrm{con}}}(s-s^*,t-t^*)\,dA < -\tfrac{e^{-s_0}z_0}{\delta_{\mathrm{tip}}(1-\delta_\mathrm{tip})} \int_{B^*} \varphi_{\delta_{\mathrm{con}}}(s-s^*,t-t^*)\,dA,
\]
where $B^*=B((s^*,t^*),\delta_{\mathrm{con}})\cap D^{PL}_4$.  Because $(s^*,t^*)$ is a critical point of the Liouville vector field, we have $(\tfrac{\partial F^{PL}_{\delta_{\mathrm{tip}}}}{\partial t}\ast\varphi_{\delta_{\mathrm{con}}})(s^*,t^*)=-1$, and thus
\[
\int_{B^*} \varphi_{\delta_{\mathrm{con}}}(s-s^*,t-t^*)\,dA < \tfrac{e^{s_0}\delta_{\mathrm{tip}}(1-\delta_\mathrm{tip})}{z_0}.
\]
One then finds that
\[
(F^{PL}_{\delta_{\mathrm{tip}}}\ast\varphi_{\delta_{\mathrm{con}}})(s^*,t^*) < \tfrac{z_0\,\delta_{\mathrm{con}}}{\delta_{\mathrm{tip}}(1-\delta_\mathrm{tip})}\,\int_{B^*} \varphi_{\delta_{\mathrm{con}}}(s-s^*,t-t^*)\,dA < \tfrac{z_0\,\delta_{\mathrm{con}}}{\delta_{\mathrm{tip}}(1-\delta_\mathrm{tip})}\,\tfrac{e^{s_0}\delta_{\mathrm{tip}}(1-\delta_\mathrm{tip})}{z_0} = \delta_{\mathrm{con}}\,e^{s_0}.
\]
\end{remark}

\subsection{Nearly smooth box folds}\label{subsec:nearlysmooth}

Let $(W_0, \lambda_0)$ be a Weinstein domain of dimension $2n-2\geq 2$. Consider the model contact manifold 
\[
\left(M_0:= [0,z_0] \times [0,s_0] \times [0,t_0] \times W_0,\, \alpha_0 = dz + e^s(dt + \lambda_0) \right).
\]
Let $V_0 = \{z=0\}$ and fix $\epsilon > 0$. The goal of this subsection is to define a hypersurface $\Pi^{NS}\subset M_0$ which is the union of two smoothly embedded hypersurfaces with boundary meeting along a codimension-$1$ corner, and is a continuous perturbation of $V_0$. 

Let $F_{\epsilon}:[0,s_0]\times [0,t_0]\to [0,z_0]$ be the smooth function constructed in the previous subsection and let $\Pi_0:= \{z=F_{\epsilon}(s,t)\}\subset [0,z_0]\times [0,s_0]\times [0,t_0]$ denote the corresponding $2$-dimensional box fold. Define
\begin{align*}
    \Pi_{\mathrm{top}} &:= \Pi_0 \times W_0,\\
    \Pi_{\mathrm{side}} &:= 
    \{0\leq z \leq F_{\epsilon}(s,t)\}\times \partial W_0.
\end{align*}
Note that $\Pi^{NS} := \Pi_{\mathrm{top}} \cup \Pi_{\mathrm{side}}$ has a codimension-$1$ corner given by $\Pi_{\mathrm{top}} \cap \Pi_{\mathrm{side}} = \Pi_0 \times \partial W_0$.

The following lemma is the nearly smooth generalization of Lemma \ref{lemma:PL_cf_high_dimensional}.

\begin{lemma}\label{lemma:NS_cf}
Let $X_{\lambda_0}$ denote the Liouville vector field of $(W_0^{2n-2}, \lambda_0)$, let $\eta_0 := \lambda_0\mid_{\partial W_0}$ be the induced contact form on the boundary of $W_0$, and let $R_{\eta_0}$ denote the Reeb vector field on $\partial W_0$ of $\eta_0$. Let
\[
X_0 := -e^{s}\partial_s -X_{F_{\epsilon}}^{ds\, dt}= -\left(e^s + \frac{\partial F_{\epsilon}}{\partial t}\right)\, \partial_s + \frac{\partial F_{\epsilon}}{\partial s}\, \partial_t
\]
be the vector field directing the backward oriented characteristic foliation of $\Pi_0$. Then the backward oriented characteristic foliation of $\Pi^{NS}$ is given by Table \ref{table:NS}. 
\end{lemma}

\begin{table}[ht]
    \centering
    \begin{tabular}{ |c|c|c|c| }
 \hline
 Side  & Characteristic foliation \rm \\ 
 \hline 
 $\Pi_{\mathrm{top}}$  & $  X_0 + \frac{\partial F_{\epsilon}}{\partial t}\, X_{\lambda_0}$ \\
 $\Pi_{\mathrm{side}}$  & $  \partial_t - R_{\eta_0}$ \\
 \hline
\end{tabular}
    \caption{The characteristic foliation of a nearly smooth box fold.}
    \label{table:NS}
\end{table}

\begin{proof}
The foliation of $\Pi_{\mathrm{side}}$ is given by Lemma \ref{lemma:PL_cf_high_dimensional}, in particular by the exact same calculation for $\underline{\partial W_0}$. On $\Pi_{\mathrm{top}}$ we take as volume form $\Omega = -e^{(n-1)s}\,ds\, dt\, (d\lambda_0)^{n-1}.$ Let $\beta := \alpha_0 \mid_{\Pi_{\mathrm{top}}} = dF_{\epsilon} + e^s\, (dt + \lambda_0)$. Then $d\beta = e^s\, ds\, (dt + \lambda_0) + e^s\, d\lambda_0$ and so 
\begin{align*}
    \beta\, (d\beta)^{n-1} &= \left[dF_{\epsilon} + e^s\, (dt + \lambda_0)\right]\, \left[e^s\, ds\, (dt + \lambda_0) + e^s\, d\lambda_0\right]^{n-1} \\
    &= \left[dF_{\epsilon} + e^s\, (dt + \lambda_0)\right]\,\left[e^{(n-1)s}\, (d\lambda_0)^{n-1} + (n-1)e^{(n-1)s}\, ds\, (dt + \lambda_0)\, (d\lambda_0)^{n-2} \right] \\
    &= e^{(n-1)s}\, dF_{\epsilon}\, (d\lambda_0)^{n-1} + (n-1)e^{(n-1)s}\, dF_{\epsilon}\, ds \, \lambda_0\, (d\lambda_0)^{n-2} + e^{ns}\, dt\, (d\lambda_0)^{n-1} \\
    &= e^{(n-1)s}\left[dF_{\epsilon}\, (d\lambda_0)^{n-1} + (n-1)\frac{\partial F_{\epsilon}}{\partial t}\, dt\, ds \, \lambda_0\, (d\lambda_0)^{n-2} + e^s\, dt\, (d\lambda_0)^{n-1}\right].
\end{align*}
Note that 
\begin{align*}
    \iota_{-e^s\, \partial_s} \Omega &= e^{(n-1)s}\, e^s\, dt\, (d\lambda_0)^{n-1} \\
    \iota_{-X_{F_{\epsilon}}^{ds\, dt}} \Omega &= e^{(n-1)s}\, dF_{\epsilon}\, (d\lambda_0)^{n-1} \\ 
    \iota_{\frac{\partial F_{\epsilon}}{\partial t}\, X_{\lambda_0}}\Omega &= e^{(n-1)s}\, (n-1)\frac{\partial F_{\epsilon}}{\partial t}\, dt\, ds \, \lambda_0\, (d\lambda_0)^{n-2}
\end{align*}
and so $\iota_{X_0 + \frac{\partial F_{\epsilon}}{\partial t}\, X_{\lambda_0}} \Omega = \beta \, (d\beta)^{n-1}.$ By Lemma \ref{cflemma}, it follows that $X_0 + \frac{\partial F_{\epsilon}}{\partial t}\, X_{\lambda_0}$ directs the characteristic foliation of $\Pi_{\mathrm{top}}$.
\end{proof}

Recall that the holonomy $h^{PL}:[0,t_0]\times W_0 \dashrightarrow [0,t_0]\times W_0$ of a piecewise linear box fold is given by $h^{PL}(t, p) =(e^{s_0}t, \psi^{s_0}(p))$, where $\psi^{s}:W_0 \to W_0$ is the time-$s$ flow of the Liouville vector field $X_{\lambda_0}$. In particular, for $x\in [0,t_0]\times W_0$ in the domain of $h^{PL}$ we have $\norm{h^{PL}(x)}_{W_0} = e^{s_0}\norm{x}_{W_0}$. In the nearly smooth case, we have: 

\begin{proposition}\label{prop:NS_holo_norm}
Let $h: [0,t_0] \times W_0 \dashrightarrow [0,t_0] \times W_0$ be the partially-defined holonomy map given by backward passage through a nearly smooth box fold $\Pi^{NS}$ with $s$-thickness $s_0$. Then for any $x\in [0,t_0] \times W_0$, we have
\begin{equation*}
\norm{h(x)}_{W_0} \leq e^{s_0}\norm{x}_{W_0}.    
\end{equation*}
\end{proposition}

\begin{proof}
Let $x\in [0,t_0] \times W_0$ be in the domain of $h$ and let $\gamma:[0,\tau_{\infty}] \to \Pi^{NS}$ be the (piecewise smooth) parametrized flowline of the backward oriented characteristic foliation through $x$, so that $\gamma(0) = x$ and $\gamma(\tau_{\infty}) = h(x)$. Here $\tau_{\infty}\in (0,\infty)$ is some finite, positive number. In this notation, our goal is to prove the estimate 
\begin{equation}\label{eq:NS_hol}
    \norm{\gamma(\tau_{\infty})}_{W_0} \leq e^{s_0}\norm{\gamma(0)}_{W_0}.
\end{equation}

\vspace{2mm}
\noindent \emph{Step 1: Decompose the interval $[0,\tau_{\infty}]$.}
\vspace{2mm}

We begin by decomposing the time interval $[0,\tau_{\infty}]$ into subintervals of four distinct types. First, let $I_{\mathrm{side}} := \gamma^{-1}(\Pi_{\mathrm{side}})$. Since trajectories along $\Pi_{\mathrm{side}}$ are directed by $\partial_t - R_{\eta_0}$ and hence are determined by the $\partial_t$-trajectories, $I_{\mathrm{side}} \subset [0,\tau_{\infty}]$ is a disjoint union of closed intervals. Next, let $I_{\mathrm{top}} = (0,\tau_{\infty}) \setminus I_{\mathrm{side}}$ be the set of times for which $\gamma(\tau) \in \mathrm{int}(\Pi_{\mathrm{top}})$. By Lemma \ref{lemma:NS_cf}, the characteristic foliation on $\Pi_{\mathrm{top}}$ is directed by 
\[
X_{\mathrm{top}}:= -\left(e^s + \frac{\partial F_{\epsilon}}{\partial t}\right)\, \partial_s + \frac{\partial F_{\epsilon}}{\partial s}\, \partial_t + \frac{\partial F_{\epsilon}}{\partial t}\, X_{\lambda_0}.
\]
We then define three subsets of $I_{\mathrm{top}}$ according to the (negative) sign of the coefficient of $\partial_s$ as follows. First, for $\gamma(\tau) \in \Pi_{\mathrm{top}}$, let $s(\tau)$ and $t(\tau)$ denote the $s$- and $t$-components of $\gamma$, respectively, so that $s(0) = s_0$ and $s(\tau_{\infty}) = 0$. 
\begin{align*}
    I_{\mathrm{top}}^0 &:= \set{\tau\in I_{\mathrm{top}}}{e^{s(\tau)} + \frac{\partial F_{\epsilon}}{\partial t}(s(\tau), t(\tau)) = 0}, \\
    I_{\mathrm{top}}^+ &:= \set{\tau\in I_{\mathrm{top}}}{e^{s(\tau)} + \frac{\partial F_{\epsilon}}{\partial t}(s(\tau), t(\tau)) > 0}, \\
    I_{\mathrm{top}}^- &:= \set{\tau\in I_{\mathrm{top}}}{e^{s(\tau)} + \frac{\partial F_{\epsilon}}{\partial t}(s(\tau), t(\tau)) < 0}. 
\end{align*}
Note that $I_{\mathrm{top}}^0 \subset (0,\tau_{\infty})$ is a closed and bounded set, hence is a disjoint union of closed intervals. In total we have a disjoint decomposition 
\[
(0,\tau_{\infty}) = I_{\mathrm{side}} \sqcup I_{\mathrm{top}}^0 \sqcup I_{\mathrm{top}}^+ \sqcup I_{\mathrm{top}}^-
\]
where $I_{\mathrm{side}}$ and $I_{\mathrm{top}}^0$ are collections of disjoint closed intervals, while $I_{\mathrm{top}}^+$ and $I_{\mathrm{top}}^-$ are collections of disjoint open intervals. Write 
\[
I_{\mathrm{top}}^+ \sqcup I_{\mathrm{top}}^- =: \bigsqcup_{j=1}^N (a_j, b_j)
\]
where $a_1 = 0$, $b_N = \tau_{\infty}$, and $b_j < a_{j+1}$ for all $1\leq j\leq N$. 

\vspace{2mm}
\noindent \emph{Step 2: Estimate the norm in each subinterval.}
\vspace{2mm}

We will prove the estimate \eqref{eq:NS_hol} with successive estimates on $\norm{\gamma(a_j)}_{W_0}$ and $\norm{\gamma(b_j)}_{W_0}$. Before proceeding with the casework, we introduce one more piece of notation. For each $1\leq j\leq N$, let $\Delta s_j = s(b_j) - s(a_j)$. By definition of our decomposition, the $s$-component of $\gamma$ only changes in $I_{\mathrm{top}}^+ \sqcup I_{\mathrm{top}}^-$ and so 
\[
\sum_{j=1}^N \Delta s_j = -s_0. 
\]
Now we derive norm estimates on each of the four types of intervals. 

\vspace{2mm}
\noindent \emph{Case I: Intervals in $I_{\mathrm{side}}$.}
\vspace{2mm}

Let $[\tau_0, \tau_1] \subset I_{\mathrm{side}}$ be a connected component of $I_{\mathrm{side}}$. By Lemma \ref{lemma:NS_cf}, the characteristic foliation is directed by $\partial_t - R_{\eta_0}$, so $\norm{\gamma(\tau_1)}_{W_0} = \norm{\gamma(\tau_0)}_{W_0}$. 

\vspace{2mm}
\noindent \emph{Case II: Intervals in $I_{\mathrm{top}}^0$.}
\vspace{2mm}

Let $[\tau_0, \tau_1] \subset I_{\mathrm{top}}^0$ be a connected component of $I_{\mathrm{top}}^0$. Then by definition we have $e^s + \frac{\partial F_{\epsilon}}{\partial t} = 0$, hence $\frac{\partial F_{\epsilon}}{\partial t} < 0$, so the characteristic foliation is directed by the vector field 
\[
X_{\mathrm{top}} = \frac{\partial F_{\epsilon}}{\partial s}\, \partial_t - \left|\frac{\partial F_{\epsilon}}{\partial t}\right|\, X_{\lambda_0}.
\]
As the coefficient in front of $X_{\lambda_0}$ is negative, it follows that $\norm{\gamma(\tau_1)}_{W_0} \leq \norm{\gamma(\tau_0)}_{W_0}$.

\vspace{2mm}
\noindent \emph{Case III: Intervals in $I_{\mathrm{top}}^+$.}
\vspace{2mm}

Let $(a_j, b_j)\subset I_{\mathrm{top}}^+$ be a connected component of $I_{\mathrm{top}}^+$. Since $e^s + \frac{\partial F_{\epsilon}}{\partial t} > 0$, we may rescale $X_{\mathrm{top}}$ by $\frac{1}{e^s + \frac{\partial F_{\epsilon}}{\partial t}}$ and direct the characteristic foliation by 
\[
- \partial_s + \frac{\frac{\partial F_{\epsilon}}{\partial s}}{e^s + \frac{\partial F_{\epsilon}}{\partial t}}\, \partial_t + \frac{\frac{\partial F_{\epsilon}}{\partial t}}{e^s + \frac{\partial F_{\epsilon}}{\partial t}} \, X_{\lambda_0}.
\]
Since $\frac{\partial F_{\epsilon}}{\partial t} < e^s + \frac{\partial F_{\epsilon}}{\partial t}$ and $e^s + \frac{\partial F_{\epsilon}}{\partial t} > 0$, the coefficient in front of $X_{\lambda_0}$ satisfies $\frac{\partial F_{\epsilon}}{\partial t}\cdot\left(e^s + \frac{\partial F_{\epsilon}}{\partial t}\right)^{-1} \leq 1$. (The coefficient could be negative, which is fine.) Next, note that $\Delta s_j < 0$ since the leading term of the vector field is $-\partial_s$. Thus, the flowline experiences at worst a time-$|\Delta s_j|$ flow in the $X_{\lambda_0}$-direction on the time interval $(a_j,b_j)$, so 
\[
\norm{\gamma(b_j)}_{W_0} \leq e^{|\Delta s_j|}\norm{\gamma(a_j)}_{W_0} = e^{-\Delta s_j}\norm{\gamma(a_j)}_{W_0}.
\]
\vspace{2mm}
\noindent \emph{Case IV: Intervals in $I_{\mathrm{top}}^-$.}
\vspace{2mm}

Finally, let $(a_j, b_j)\subset I_{\mathrm{top}}^-$ be a connected component of $I_{\mathrm{top}}^-$. The analysis is similar to Case IV. Since $e^s + \frac{\partial F_{\epsilon}}{\partial t} < 0$, we rescale $X_{\mathrm{top}}$ by $-\frac{1}{e^s + \frac{\partial F_{\epsilon}}{\partial t}}$ and direct the characteristic foliation by 
\[
\partial_s - \frac{\frac{\partial F_{\epsilon}}{\partial s}}{e^s + \frac{\partial F_{\epsilon}}{\partial t}}\, \partial_t - \frac{\frac{\partial F_{\epsilon}}{\partial t}}{e^s + \frac{\partial F_{\epsilon}}{\partial t}} \, X_{\lambda_0}.
\]
Note that  $\frac{\partial F_{\epsilon}}{\partial t} < e^s + \frac{\partial F_{\epsilon}}{\partial t} < 0$ and so the coefficient in front of $X_{\lambda_0}$ is $-\left|\frac{\partial F_{\epsilon}}{\partial t}\right|\cdot\left|e^s + \frac{\partial F_{\epsilon}}{\partial t}\right|^{-1} < -1$. Next, note that $\Delta s_j > 0$ since the leading term of the vector field is $\partial_s$. Thus, the flowline experiences at least a time-$\Delta s_j$ flow in the $-X_{\lambda_0}$-direction on the time interval $(a_j,b_j)$, and so 
\[
\norm{\gamma(b_j)}_{W_0} \leq e^{-\Delta s_j}\norm{\gamma(a_j)}_{W_0}.
\]
This completes the subinterval casework.

\vspace{2mm}
\noindent \emph{Step 3: Compose the subinterval estimates.}
\vspace{2mm}

We complete the proof by chaining together each of the above subinterval estimates. First, by Case I and Case II, for any interval $[\tau_0, \tau_1]\subset I_{\mathrm{side}} \sqcup I_{\mathrm{top}}^0$ we have $\norm{\gamma(\tau_1)}_{W_0}\leq \norm{\gamma(\tau_0)}_{W_0}$. In particular this means that for all $2\leq j \leq N$ we have $\norm{\gamma(a_j)}_{W_0}\leq \norm{\gamma(b_{j-1})}_{W_0}$. Second, by Case III and Case IV, for all $1\leq j\leq N$ we have $\norm{\gamma(b_j)}_{W_0} \leq e^{-\Delta s_j}\norm{\gamma(a_j)}_{W_0}.$ Combining these estimates gives $\norm{\gamma(b_j)}_{W_0} \leq e^{-\Delta s_j}\norm{\gamma(b_{j-1})}_{W_0}$ for all $2\leq j\leq N$ and hence 
\[
\norm{\gamma(b_N)}_{W_0} \leq e^{-\Delta s_N}\norm{\gamma(b_{N-1})}_{W_0} \leq \cdots \leq e^{-\sum_{j=2}^N \Delta s_j} \norm{\gamma(b_1)}_{W_0} \leq e^{-\sum_{j=1}^N \Delta s_j} \norm{\gamma(a_1)}_{W_0}.
\]
Since $a_1 = 0$, $b_N = \tau_{\infty}$, and $-\sum_{j=1}^N \Delta s_j = s_0$, this proves the estimate \eqref{eq:NS_hol} as desired.
\end{proof}

The next two propositions concern the trapping behavior and holonomy properties of nearly smooth folds. 

\begin{proposition}[Nearly smooth trapping properties]\label{prop:NS_trap}
Let $h^{\epsilon}: [0,t_0] \times W_0 \dashrightarrow [0,t_0] \times W_0$ be the partially-defined holonomy map given by backward passage through $\Pi^{NS}$, where $\epsilon >0$ is the $2$-dimensional smoothing parameter.  
\begin{enumerate}

\item Any flowline entering $[e^{-s_0}t_0 + \epsilon, t_0 - \epsilon] \times (W_0 \setminus N^{s_0}(\partial W_0))$ is trapped in backward time.\label{property:NS_trap_1}
    
\item For the auxiliary parameter $\delta_{\mathrm{tip}}$ as in Proposition \ref{prop:2D_smooth_box_fold} chosen sufficiently small, any flowline entering $\Pi^{NS}$ in $[\epsilon, t_0 - \epsilon] \times N^{s_0-\epsilon}(\partial W_0)$ is trapped in backward time.\label{property:NS_trap_2} 
\end{enumerate}

\end{proposition}

\begin{proof}
Let $x\in [e^{-s_0}t_0 + \epsilon, t_0 - \epsilon] \times (W_0 \setminus N^{s_0}(\partial W_0))$. Since $W_0(x)\notin N^{s_0}(\partial W_0)$, the proof of Proposition ~\ref{prop:NS_holo_norm} implies that the flowline through $x$ is entirely contained in $\Pi_{\mathrm{top}}$. Thus, the long-term behavior of the flowline is completely determined by the vector field $X_0 + \frac{\partial F_{\epsilon}}{\partial t}\, X_{\lambda_0}$ directing the characteristic foliation of $\Pi_{\mathrm{top}}$, where $X_0$ is the vector field directing the characteristic foliation of the $2$-dimensional fold $\Pi_0$. Note that this factor of $X_0$ does not depend on the $W_0$-coordinate. Thus, by \ref{property:2D_trapping_interval} of Proposition \ref{prop:2D_smooth_box_fold}, the projection of the flowline to the $\Pi_0$-direction converges to a critical point of $X_0$, necessarily with $\frac{\partial F_{\epsilon}}{\partial t} < 0$. In the $W_0$-direction, the term $\frac{\partial F_{\epsilon}}{\partial t}\, X_{\lambda_0}$ then limits to a critical point on the skeleton of $W_0$ and hence the flowline is trapped. This proves \ref{property:NS_trap_1}. 

Now we consider \ref{property:NS_trap_2}. We proceed in two steps. 

\vspace{2mm}
\noindent \textit{Step 1: Every such flowline reaches $\Pi_{\mathrm{side}}$ with $(s,t)$-coordinate in $\tilde{D}_2$.}
\vspace{2mm}

As in the proof of Proposition \ref{prop:NS_holo_norm}, let $\gamma:[0,\tau_{\infty})\to \Pi^{NS}$ denote a parametrized piecewise smooth flowline of the backward oriented characteristic foliation and assume that $\gamma(0) \in \{s=s_0\}\times [\epsilon, t_0 - \epsilon] \times N^{s_0-\epsilon}(\partial W_0)$. Also as before, identify the time intervals $I_{\mathrm{side}}, I_{\mathrm{top}}^*$ for $*=0,-,+$. Since $0\in I_{\mathrm{top}}^+$, the first part of the trajectory is contained in $\Pi_{\mathrm{top}}$ and we may direct the characteristic foliation by 
\begin{equation}\label{eq:NS_trap}
    - \partial_s + \frac{\frac{\partial F_{\epsilon}}{\partial s}}{e^s + \frac{\partial F_{\epsilon}}{\partial t}}\, \partial_t + \frac{\frac{\partial F_{\epsilon}}{\partial t}}{e^s + \frac{\partial F_{\epsilon}}{\partial t}} \, X_{\lambda_0}.
\end{equation}
as in Case III of the proof of Proposition \ref{prop:NS_holo_norm}. By \ref{property:2D_holonomy_region} of Proposition \ref{prop:2D_smooth_box_fold}, the ($(s,t)$-coordinate of the) flowline enters the region $\tilde{D}_2$ along $\{s=s_0 - (\delta_{\mathrm{tip}} + \delta_{\mathrm{con}})\}$. In this region, by the definition of $F_{\epsilon}$, we have 
\[
 \frac{\partial F_{\epsilon}}{\partial t} = \frac{z_0}{\delta_{\mathrm{tip}}(1-\delta_{\mathrm{tip}})}.
\]
In $\tilde{D}_2$ we can then identify a lower bound for the (positive) coefficient of $X_{\lambda_0}$ in \eqref{eq:NS_trap}:
\[
\frac{\frac{\partial F_{\epsilon}}{\partial t}}{e^s + \frac{\partial F_{\epsilon}}{\partial t}} = \frac{1}{\frac{e^s\delta_{\mathrm{tip}}(1-\delta_{\mathrm{tip}})}{z_0} + 1} \geq \frac{1}{\frac{e^{s_0}\delta_{\mathrm{tip}}(1-\delta_{\mathrm{tip}})}{z_0} + 1}.
\]
By \ref{property:2D_holonomy_region} of Proposition \ref{prop:2D_smooth_box_fold}, upon entering $\tilde{D}_2$, the flowline follows \eqref{eq:NS_trap} until it either reaches $\Pi_{\mathrm{side}}$ or reaches $\{s=\delta_{\mathrm{tip}} + \delta_{\mathrm{con}}\}$. In order for the latter to occur, it must flow via \eqref{eq:NS_trap} for time $s_0 - 2(\delta_{\mathrm{tip}} + \delta_{\mathrm{con}})$ and hence, using the lower bound above, must flow forward in the $X_{\lambda_0}$ direction for at least time 
\begin{equation}\label{eq:NS_trap2}
\frac{s_0 - 2(\delta_{\mathrm{tip}} + \delta_{\mathrm{con}})}{\frac{e^{s_0}\delta_{\mathrm{tip}}(1-\delta_{\mathrm{tip}})}{z_0} + 1}.    
\end{equation}
By choosing $\delta_{\mathrm{tip}}$ in Proposition \ref{prop:2D_smooth_box_fold} sufficiently small relative to $\epsilon$, we may ensure that the quantity in \eqref{eq:NS_trap2} is bounded below by $s_0 - \epsilon$. Indeed, note that as $\delta_{\mathrm{tip}} \to 0$, \eqref{eq:NS_trap2} $\to s_0$. Since the $W_0$-coordinate of the entry point of the flowline is in $N^{s_0 - \epsilon}(\partial W_0)$, this implies that the flowline reaches $\Pi_{\mathrm{side}}$ via $\tilde{D}_2$ as desired. 

\vspace{2mm}
\noindent \textit{Step 2: Every such flowline as in Step 1 is trapped.}
\vspace{2mm}

Consider a flowline that reaches $\Pi_{\mathrm{side}}$ with $(s,t)$-coordinate in $\tilde{D}_2$. It follows the characteristic foliation $\partial_t - R_{\eta_0}$ until it returns to $\Pi_{\mathrm{top}}$. Since the function $F_{\epsilon}$ is symmetric across $t=\frac{t_0}{2}$ by construction, the flowline returns to $\Pi_{\mathrm{top}}$ with $(s,t)$-coordinate in $\tilde{D}_4$ as defined in Proposition \ref{prop:2D_smooth_box_fold}. If the flowline never reaches $\Pi_{\mathrm{side}}$, then by \ref{property:2D_trapping_interval} of Proposition \ref{prop:2D_smooth_box_fold} and the same argument as in \ref{property:NS_trap_1} of the present proposition, the flowline is trapped. 

If the flowline does reach $\Pi_{\mathrm{side}}$, it does so necessarily with $(s,t)$-coordinates in the unstable manifold of the index $0$ point of $X_{\lambda_{F_{\epsilon}}}$. In the worst case scenario, we simply repeat Step 1 but with an increased $t$-coordinate upon entry to $\tilde{D}_2$. This is the exact same analysis as in the high-dimensional piecewise linear case of Lemma \ref{lemma:t_0_trap}. Ultimately, the flowline is trapped as in the first scenario of Step 2. 
\end{proof}

\begin{proposition}[Nearly smooth holonomy properties]\label{prop:NS_holonomy}
Let $h^{\epsilon}: [0,t_0] \times W_0 \dashrightarrow [0,t_0] \times W_0$ be the partially-defined holonomy map given by backward passage through $\Pi^{NS}$, where $\epsilon >0$ is the $2$-dimensional smoothing parameter.
    \begin{enumerate}
        \item Any flowline entering $\Pi^{NS}$ with $t$-coordinate in $[t_0-\epsilon, t_0]$ which is not trapped in backward time exits the fold with $t$-coordinate in $[t_0-\epsilon, t_0]$.\label{property:NS_holo_1} 

    \item Any flowline entering $\Pi^{NS}$ with $t$-coordinate in $[0,\epsilon]$ which is not trapped in backward time either \label{property:NS_holo_2}
    \begin{enumerate}
        \item exits the fold with $t$-coordinate in $[0,e^{s_0+ \epsilon}\epsilon]$, or \label{property:NS_holo_2a}
        \item traverses once across $\Pi_{\mathrm{side}}$ via $\partial_t - R_{\eta_0}$ and then exits the fold. \label{property:NS_holo_2b}
    \end{enumerate}
    \end{enumerate}
\end{proposition}

\begin{remark}
The second part of Proposition \ref{prop:NS_holonomy} is important enough to restate in heuristic terms for emphasis. It says that if a flowline enters the fold with small $t$-coordinate and exits the fold with large $t$-coordinate, it must necessarily experience a flow in the negative Reeb direction $-R_{\eta_0}$ of $\partial W_0$ for a time which equals the corresponding net change in $t$. This feature is used in the design of a chimney fold in Section \ref{section:chimney_folds}.
\end{remark}

\begin{proof}
We begin with \ref{property:NS_holo_1}, which quickly follows from the $2$-dimensional statement given by \ref{property:2D_holonomy_top} of Proposition \ref{prop:2D_smooth_box_fold}. In particular, if the flowline is entirely contained in $\Pi_{\mathrm{top}}$, then its $t$-holonomy is entirely governed by the $2$-dimensional dynamics of $\Pi_0$. The only new possibility to consider in the high-dimensional nearly smooth fold case is if the flowline traverses $\Pi_{\mathrm{side}}$. However, the foliation on $\Pi_{\mathrm{side}}$ is directed by $\partial_t - R_{\eta_0}$ and hence only serves to further increase the $t$-coordinate.

Next we consider \ref{property:NS_holo_2}. There are two possibilities: either the flowline does not ever reach $\Pi_{\mathrm{side}}$, or it reaches $\Pi_{\mathrm{side}}$ at least once. If the flowline does not ever reach $\Pi_{\mathrm{side}}$, then \ref{property:2D_holonomy_bound} of Proposition ~\ref{prop:2D_smooth_box_fold} implies \ref{property:NS_holo_2a}. It remains to prove that if the flowline reaches $\Pi_{\mathrm{side}}$ at least once, then \ref{property:NS_holo_2b} must hold. 

So consider a flowline with initial $t$-coordinate in $[0,\epsilon]$ which is not trapped in backward time, and assume at some point it reaches $\Pi_{\mathrm{side}}$ for the first time. This necessarily occurs where $\frac{\partial F_{\epsilon}}{\partial t} > 0$, which implies that the $t$-coordinate at this moment is contained in $[0,\epsilon]$. The flowline then traverses $\Pi_{\mathrm{side}}$ via $\partial_t - R_{\eta_0}$. By $t$-symmetry of $F_{\epsilon}$, the flowline reaches $\Pi_{\mathrm{top}}$ with $t$-coordinate in $[t_0 - \epsilon, t_0]$. From here we appeal to the dynamics of the vector field $X_{\lambda_{F_{\epsilon}}}$ in the region $\{t_0 - \epsilon\leq t\leq t_0\}$ as described by Proposition ~\ref{prop:2D_smooth_box_fold}. For instance, by \ref{property:2D_trapping_region} of Proposition ~\ref{prop:2D_smooth_box_fold}, any flowline reaching $\tilde{D}_4$ (a region which approximates a large interior part of $\{t_0 - \epsilon\leq t\leq t_0\}$; see Figure ~\ref{fig:pl-regions}) is trapped in backward time. Since the flowline in consideration is not trapped, its $t$-coordinate upon reaching $\Pi_{\mathrm{top}}$ must be within $\delta_{\mathrm{tip}}^2$ of $t_0$. By the proof of \ref{property:2D_holonomy_top} of Proposition ~\ref{prop:2D_smooth_box_fold}, the resulting flowline remains in the region $\{t_0 - \epsilon\leq t\leq t_0\}$, very close to $t_0$, until it exits the fold. In particular, the flowline never reaches $\Pi_{\mathrm{side}}$ for a second time, as it can only do so from the region $\{0\leq t\leq \epsilon\}$. This gives \ref{property:NS_holo_2b} and completes the proof.
\end{proof}

\subsection{Smooth box folds}\label{subsec:smooth}

Finally, we complete the definition of the smooth box fold. We proceed in the same way as in \ref{subsec:smooth2d}: first, we define a graphical, piecewise smooth approximation to the nearly smooth fold $\Pi^{NS}$ by slightly ``tipping'' $\Pi_{\mathrm{side}}$, and then we perform the final step of smoothing with arbitrary precision via a convolution. 

It now suffices to work in a model where we further localize to a neighborhood of the boundary of $W_0$:
\[
\left(M_0:= [0,z_0] \times [0,s_0] \times [0,t_0] \times [-r_0,0] \times \Gamma_0,\, \alpha_0 = dz + e^s(dt + e^r\, \eta_0) \right).
\]
where $(\Gamma_0, \eta_0) = \partial (W_0, \lambda_0)$.

\subsubsection{Graphical approximation}

Let $\epsilon > 0$ be as in \ref{subsec:smooth2d}. Let $0 < \delta_{\mathrm{aux}} < \frac{\epsilon}{2}$ be an additional parameter. Let $\Pi_0 = \{z = F_{\epsilon}(s,t)\}\subset [0,z_0]\times [0,s_0] \times [0,t_0]$ and $\tilde{\Pi}_{\mathrm{top}} := \Pi_0 \times [-r_0,0]\times \Gamma_0\subset M_0$ be as in \ref{subsec:nearlysmooth}, where we have introduced a temporary $\sim$ to decorate the former top side. Finally, define 
\[
\tilde{\Pi}_{\mathrm{side}}^{\delta_{\mathrm{aux}}} := \{z = -\frac{z_0}{\epsilon - 2\delta_{\mathrm{aux}}}(r+\delta_{\mathrm{aux}})\} = \{-\delta_{\mathrm{aux}} - mz = r\} \subset M_0
\]
where in the last expression we have set $m := \frac{\epsilon-2\delta_{\mathrm{aux}}}{z_0}$. See Figure \ref{fig:tip1} for a side profile.

\begin{figure}[ht]
	\begin{overpic}[scale=0.19]{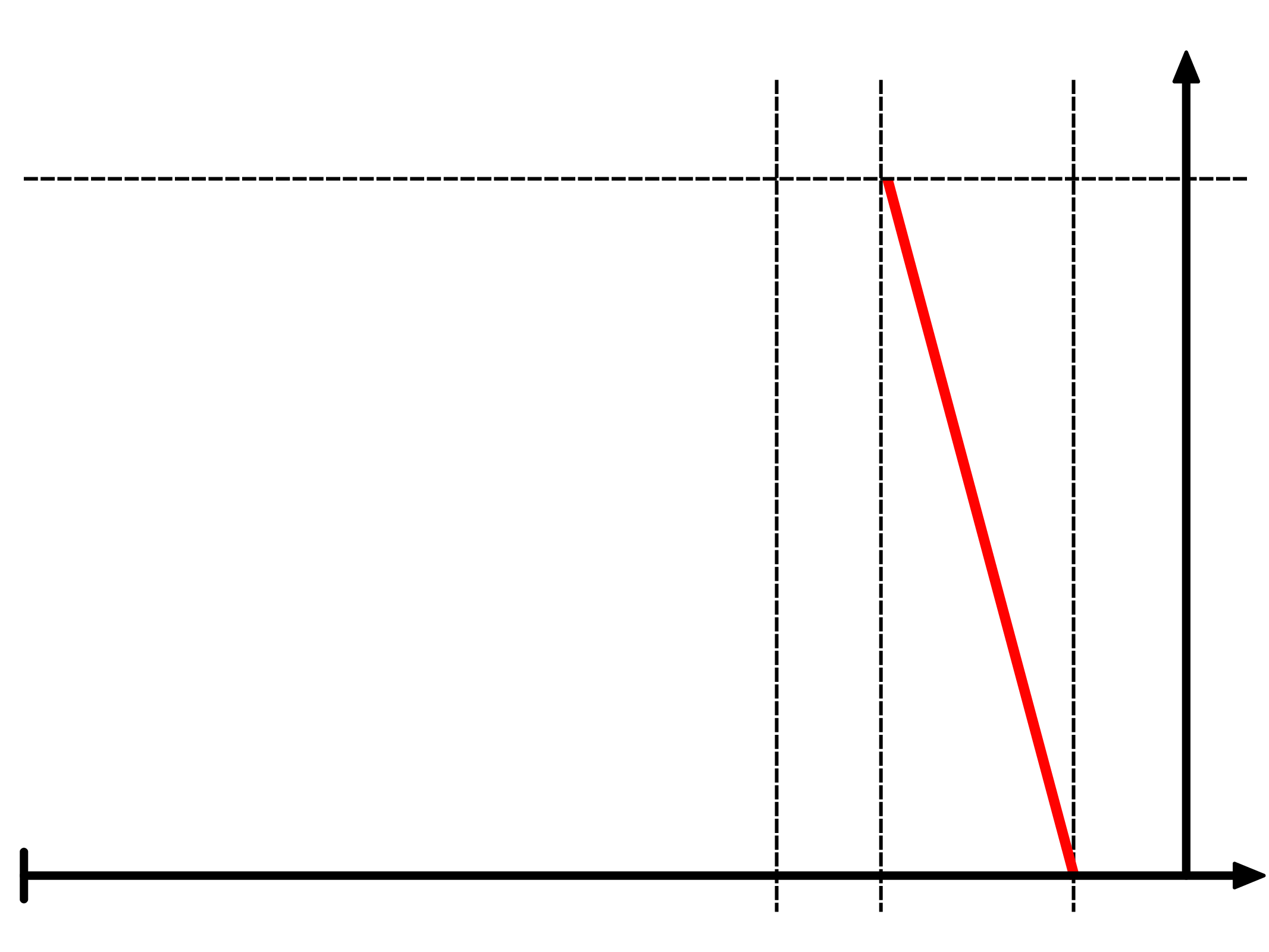}
	    \put(-2,0){\small $-r_0$}
        \put(57,-0.5){\small $-\epsilon$}
     \put(63,70){\tiny $-\epsilon + \delta_{\mathrm{aux}}$}
     \put(81,70){\tiny $-\delta_{\mathrm{aux}}$}
	    \put(74,53){\small \textcolor{red}{$\tilde{\Pi}_{\mathrm{side}}^{\delta_{\mathrm{aux}}}$}}
     \put(100.75,4.75){\small $r$}
     \put(100.25,60){\small $z_0$}
     \put(92,73){\small $z$}
	\end{overpic}
	\caption{The tipped side $\tilde{\Pi}_{\mathrm{side}}^{\delta_{\mathrm{aux}}}$.}
	\label{fig:tip1}
\end{figure}

Note that the intersection of the hypersurfaces $\tilde{\Pi}_{\mathrm{top}}$ and $\tilde{\Pi}_{\mathrm{side}}^{\delta_{\mathrm{aux}}}$ is given by $\tilde{\Pi}_{\mathrm{top}} \cap \tilde{\Pi}_{\mathrm{side}}^{\delta_{\mathrm{aux}}} = \{-\delta_{\mathrm{tip}} - mF_{\epsilon}(s,t) = r\}$. Thus, we define the two sides of the graphical approximation inside our model contact manifold as follows (without $\sim$ decorations): 
\begin{align*}
    \Pi_{\mathrm{top}} &:= \{z = F_{\epsilon}(s,t), \, r \leq -\delta_{\mathrm{aux}} - mF_{\epsilon}(s,t)\}, \\
    \Pi_{\mathrm{side}}^{\delta_{\mathrm{aux}}} &:= \{0 \leq z \leq F_{\epsilon}(s,t),\,  r = -\delta_{\mathrm{aux}}- mz\}.
\end{align*}
Note that $\Pi_{\mathrm{top}}$ as defined here is simply a subset of $\Pi_{\mathrm{top}}$ as defined in the nearly smooth case. In particular, the characteristic foliation on this side is still given by Lemma \ref{lemma:NS_cf}. The characteristic foliation of $\Pi_{\mathrm{side}}^{\delta_{\mathrm{aux}}}$ is given by the following lemma. 

\begin{lemma}\label{lemma:CF_high_side_tilt}
Let $\Pi_{\mathrm{side}}^{\delta_{\mathrm{aux}}}$ be constructed as above. The backward oriented characteristic foliation is directed by 
\[
X_{\mathrm{side}}^{\delta_{\mathrm{aux}}} := \partial_t - e^{\delta_{\mathrm{aux}} + mz}\, R_{\eta_0} - me^s\, \partial_s
\]
where $m = \frac{\epsilon-2\delta_{\mathrm{aux}}}{z_0}$ and $R_{\eta_0}$ is the Reeb vector field of the contact form $\eta_0$ on $\Gamma_0$. 
\end{lemma}

\begin{proof}
The computation is the same as all other computations appealing to Lemma \ref{cflemma}, so we spare many of the details. On $\Pi_{\mathrm{side}}^{\delta_{\mathrm{aux}}}$ we use the following volume form: 
\[
\Omega = (n-1)e^{(n-1)(s - \delta_{\mathrm{aux}} - mz)}\, dz\, ds\, dt\, \eta_0\, (d\eta_0)^{n-2}.
\]
The restriction of the contact form to $\Pi_{\mathrm{side}}^{\delta_{\mathrm{aux}}}$ is $\beta = dz + e^s(dt + e^{-\delta_{\mathrm{aux}} - mz}\, \eta_0)$ and one may verify that $\beta\, (d\beta)^{n-1}$ is given by
\[
(n-1)e^{(n-1)(s - \delta_{\mathrm{aux}} - mz)}\left(me^s \, dz\, dt\, \eta_0\, (d\eta_0)^{n-2} + e^{\delta_{\mathrm{aux}}+ mz}\, dz\, ds\, dt\, (d\eta_0)^{n-2} + dz\, ds\, \eta_0\, (d\eta_0)^{n-2}\right).
\]
Lemma \ref{cflemma} then implies that $X_{\mathrm{side}}^{\delta_{\mathrm{aux}}} = \partial_t - e^{\delta_{\mathrm{aux}} + mz}\, R_{\eta_0} - me^s\, \partial_s$ directs the characteristic foliation. 
\end{proof}

\begin{remark}\label{remark:smooth_tip_eps}
The characteristic foliation $\partial_t - e^{\delta_{\mathrm{aux}} + mz}\, R_{\eta_0} - me^s\, \partial_s$ is a slight deformation of the foliation in the nearly smooth case given by $\partial_t - R_{\eta_0}$, and we can quantify the effect of the deformation at the level of holonomy. First, note that $0 < \delta_{\mathrm{aux}} + mz < \epsilon$ and so the coefficient in front of $-R_{\eta_0}$ is bounded between $1$ and $e^{\epsilon} \approx 1 + \epsilon$. Second, note that 
\[
me^{s} = \left(\frac{\epsilon - 2\delta_{\mathrm{aux}}}{z_0}\right)e^s \leq \frac{\epsilon}{z_0}e^{s_0},
\]
so the total holonomy in the negative $s$-direction induced by $\Pi_{\mathrm{side}}^{\delta_{\mathrm{aux}}}$ is bounded by $t_0 \cdot \frac{\epsilon}{z_0}e^{s_0}$. With our usual assumption that $z_0 \geq e^{s_0}t_0$, this bounds the total negative $s$-direction holonomy along $\Pi_{\mathrm{side}}^{\delta_{\mathrm{aux}}}$ by $\epsilon$. 
\end{remark}

\subsubsection{Smooth approximation}

Note that the graphical, piecewise smooth hypersurface $\Pi_{\mathrm{top}} \cup \Pi_{\mathrm{side}}^{\delta_{\mathrm{aux}}}$ is given by the graph of the function $G_{\delta_{\mathrm{aux}}}^{PS}:[0,s_0]\times [0,t_0]\times [-r_0, 0] \times \Gamma_0 \to [0,z_0]$ defined as 
\begin{equation}\label{eq:defG}
G_{\delta_{\mathrm{aux}}}^{PS}(s,t,r,p) := \begin{cases}
    F_{\epsilon}(s,t) & r\leq -\delta_{\mathrm{aux}} - mF_{\epsilon}(s,t) \\
    -\frac{1}{m}(r + \delta_{\mathrm{aux}}) & -\delta_{\mathrm{aux}} - mF_{\epsilon}(s,t) \leq r\leq -\delta_{\mathrm{aux}} \\
    0 & -\delta_{\mathrm{aux}} \leq r \leq 0
\end{cases}.    
\end{equation}
The locus of points in the domain where $G_{\delta_{\mathrm{aux}}}^{PS}$ is not smooth is given by $S = \{r=-\delta_{\mathrm{aux}}- mF_{\epsilon}(s,t)\} \cup \{r=-\delta_{\mathrm{aux}}\}$. See Figure \ref{fig:tip2} and Figure \ref{fig:bigfig}.

\begin{figure}[ht]
	\begin{overpic}[scale=0.33]{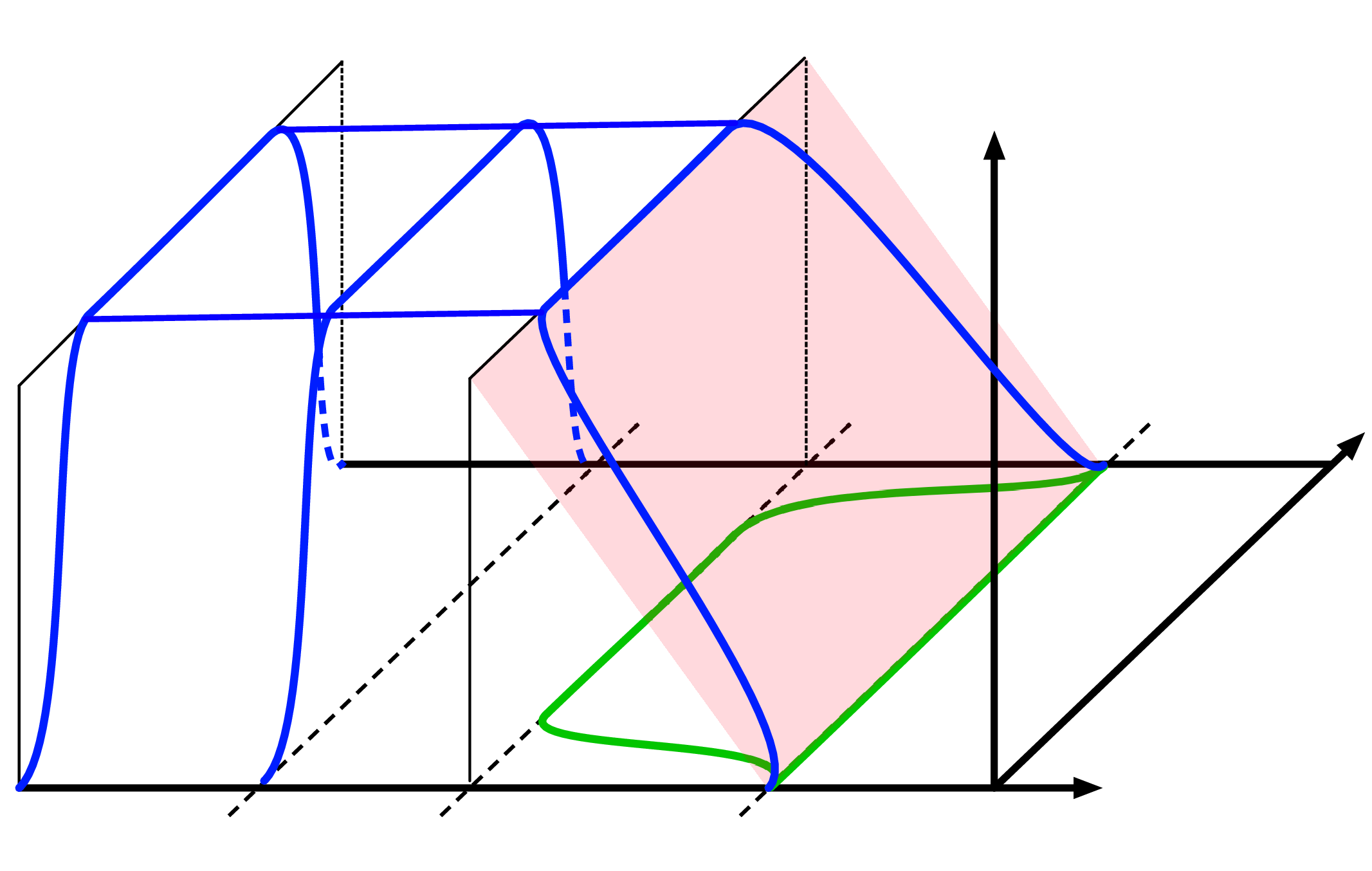}
	    \put(-2,3){\small $-r_0$}
        \put(13,1.5){\small $-\epsilon$}
     \put(27,1.5){\small $-\epsilon + \delta_{\mathrm{aux}}$}
     \put(49,1.5){\small $-\delta_{\mathrm{aux}}$}
	    \put(77,38){\small \textcolor{red}{$z = -\frac{1}{m}(r+\delta_{\mathrm{aux}})$}}
     
     \put(81,5.5){\small $r$}
     \put(72,55){\small $z$}
     \put(100,33){\small $s,t$}

     \put(67,12){\small \textcolor{green}{$S$}}
	\end{overpic}
	\caption{The graph of $G_{\delta_{\mathrm{aux}}}^{PS}$ indicated in blue. Shaded in red is the hyperplane $z = -\frac{1}{m}(r+\delta_{\mathrm{aux}})$. In green is the set $S\subset \{z=0\}$ where $G_{\delta_{\mathrm{aux}}}^{PS}$ is not smooth.}
	\label{fig:tip2}
\end{figure}

As in the $2$-dimensional case, let $\varphi:\R^3 \to [0, \infty)$ be the smooth cutoff function defined by 
\[
\varphi(s,t,r) = \begin{cases}
    k \exp\left(- \dfrac{1}{1 - s^2 - t^2 - r^2}\right) & s^2 + t^2 + r^2 < 1 \\
    0 & s^2 + t^2 + r^2 \geq 1
\end{cases}
\]
this time including the $r$-direction. (Note that there is no need to cutoff in the $\Gamma_0$-direction since $\Gamma_0$ is a closed manifold.) As before, choose $k>0$ so that $\int_{\R^3} \varphi\, dA = 1$. Let $\delta_{\mathrm{con}} > 0$ be as in \ref{subsec:smooth2d}, and assume without loss of generality that $\delta_{\mathrm{con}} \ll \min(\frac{\epsilon}{2} - \delta_{\mathrm{aux}}, \delta_{\mathrm{aux}})$. Set $\varphi_{\delta_{\mathrm{con}}}(s,t,r) := \frac{1}{\delta_{\mathrm{con}}^3}\varphi(\frac{s}{\delta_{\mathrm{con}}}, \frac{t}{\delta_{\mathrm{con}}}, \frac{r}{\delta_{\mathrm{con}}})$. Finally, define $G_{\epsilon} := G_{\delta_{\mathrm{aux}}}^{PS} * \varphi_{\delta_{\mathrm{con}}}$, where in the convolution we only integrate over $\R^3$. 

\begin{remark}[Accumulation of convolutions]\label{remark:accumulation}
Fix a point $w_0\in W_0$ such that $r(w_0) < -\epsilon$ and consider the function $\tilde{F}_{\epsilon}(s,t):= G_{\epsilon}(s,t,w_0)$. Strictly speaking, the convolution defining $G_{\epsilon}$ compounds the initial convolution defining $F_{\epsilon}$, so $\tilde{F}_{\epsilon}$ differs from $F_{\epsilon}$ in a $\delta_{\mathrm{con}}$-neighborhood of the first $\delta_{\mathrm{con}}$-neighborhood of the corners in \ref{subsec:smooth2d}. In particular, $\tilde{F}_{\epsilon}$ differs from $F_{\delta_{\mathrm{tip}}}^{PL}$ in a neighborhood of the corners of diameter $4\delta_{\mathrm{con}}$, rather than of diameter $2\delta_{\mathrm{con}}$. It is clear from the analysis in \ref{subsec:smooth2d} that this makes no real difference, in particular since $\delta_{\mathrm{con}}$ may be chosen arbitrarily small. We proceed without further acknowledging this fact or and make no distinction between $F_{\epsilon}$ and $\tilde{F}_{\epsilon}$. 
\end{remark}

\subsection{Proof of Theorem \ref{theorem:new_smooth_box_fold}}\label{subsec:smooth_proof}

In this subsection we prove Theorem \ref{theorem:new_smooth_box_fold} for the function $G_{\epsilon}$ constructed above. First we denote by $\hat{S}\subset [0,s_0] \times [0,t_0] \times [-r_0, 0] \times \Gamma_0$ a $\delta_{\mathrm{con}}$-neighborhood of $S = \{r=-\delta_{\mathrm{aux}}- mF_{\epsilon}(s,t)\} \cup \{r=-\delta_{\mathrm{aux}}\}$, the set of points where $G_{\delta_{\mathrm{aux}}}^{PS}$ fails to be smooth. This is (effectively, see Remark \ref{remark:accumulation}) the domain on which $G_{\epsilon}$ fails to be linear. Then define the following further subsets of $[0,s_0] \times [0,t_0] \times [-r_0, 0] \times \Gamma_0$: 
\begin{align*}
    S_{\mathrm{top}} &:= \{r\leq \delta_{\mathrm{aux}} - mF_{\epsilon}(s,t)\} - \hat{S},\\
    S_{\mathrm{side}} &:= \{\delta_{\mathrm{aux}} - mF_{\epsilon}(s,t) \leq r\leq -\delta_{\mathrm{aux}}\} - \hat{S}.
\end{align*}
These are the parts of the first two domains in \eqref{eq:defG} away from $\hat{S}$. In Figure \ref{fig:bigfig}, $S_{\mathrm{side}}$ is the interior of the green prism.

With this notation we record the Liouville vector field (and vector fields directing the backward flow) on the regions $S_{\mathrm{top}}, S_{\mathrm{side}}$, and $\hat{S}$.

\begin{lemma}\label{lemma:big_cf}
The Liouville vector field $X_{\lambda_{G_{\epsilon}}}$ on the neighborhood $[0,s_0] \times [0,t_0] \times [-r_0, 0]\times \Gamma_0$ is given by 
\begin{equation}\label{eq:LVF_high_new}
    X_{\lambda_{G_{\epsilon}}} = \left(1 + e^{-s}\frac{\partial G_{\epsilon}}{\partial t}\right)\, \partial_s - e^{-s}\frac{\partial G_{\epsilon}}{\partial s}\, \partial_t - e^{-s}\frac{\partial G_{\epsilon}}{\partial t}\, X_{\lambda_0} - e^{-s}\frac{\partial G_{\epsilon}}{\partial r}\left(-\partial_t + e^{-r}\, R_{\eta_{0}}\right)
\end{equation}
where $X_{\lambda_0} = \partial_r$. In particular, the vector field $-X_{\lambda_{G_{\epsilon}}}$ is positively parallel to the following vector fields on the corresponding regions.
\begin{enumerate}
    \item In $S_{\mathrm{top}}$:\label{formula:Stop}
    \[
    X_{\mathrm{top}}:= -\left(e^s + \frac{\partial F_{\epsilon}}{\partial t}\right)\, \partial_s + \frac{\partial F_{\epsilon}}{\partial s}\, \partial_t + \frac{\partial F_{\epsilon}}{\partial t}\, X_{\lambda_0}.
    \]
    \item In $S_{\mathrm{side}}$:\label{formula:Sside}
    \[
    X_{\mathrm{side}}:= \partial_t - e^{-r}\, R_{\eta_0} - me^s\, \partial_s
    \]
    where $m= \frac{\epsilon - 2\delta_{\mathrm{aux}}}{z_0}$.
    \item Everywhere, but in particular in $\hat{S}$:\label{formula:Shat}
    \[
    \hat{X}:= -\left(e^s + \frac{\partial G_{\epsilon}}{\partial t}\right)\, \partial_s + \frac{\partial G_{\epsilon}}{\partial s}\, \partial_t + \frac{\partial G_{\epsilon}}{\partial t}\, X_{\lambda_0} +\left|\frac{\partial G_{\epsilon}}{\partial r}\right|\left(\partial_t - e^{-r}\, R_{\eta_{0}}\right).
    \]
    In particular, $\hat{X}$ is an interpolation between $X_{\mathrm{top}}$ and $X_{\mathrm{side}}$. 
\end{enumerate}
\end{lemma}

\begin{proof}
One can verify by an explicit calculation that $\iota_{X_{\lambda_{G_{\epsilon}}}} d\lambda = \lambda$, where $\lambda = dG_{\epsilon} + e^s(dt + e^{r}\, \eta_0)$. This gives \eqref{eq:LVF_high_new}. 

The formula in \ref{formula:Stop} follows from the fact that $\frac{\partial G_{\epsilon}}{\partial r} = 0$ in $S_{\mathrm{top}}$, together with the observation in Remark \ref{remark:accumulation}. Note that $X_{\mathrm{top}}$ is exactly the vector field that direct the characteristic foliation of $\Pi_{\mathrm{top}}$ in the nearly smooth fold, as expected. One may similarly prove \ref{formula:Sside} by way of Lemma \ref{lemma:CF_high_side_tilt} by observing that $X_{\mathrm{side}}$ agrees with the vector field in the lemma, up to choice of coordinates. Alternatively, in $S_{\mathrm{side}}$, \eqref{eq:LVF_high_new} gives 
    \[
    -X_{\lambda_{G_{\epsilon}}}   = -\partial_{s} + e^{-s} \frac{1}{m} \left(\partial_t - e^{-r}\, R_{\eta_{0}}\right)
    \]
    from which we get $X_{\mathrm{side}}$ after a further rescaling by $me^s$. Finally, \ref{formula:Shat} is immediate from \eqref{eq:LVF_high_new}.
\end{proof}

\begin{proof}[Proof of Theorem \ref{theorem:new_smooth_box_fold}.]
First we address the Weinstein compatibility statement \ref{property:high_Weinstein_compat}, which is straightforward. By Lemma \ref{lemma:big_cf}, critical points of $X_{\lambda_{G_{\epsilon}}}$ must occur when $\frac{\partial G_{\epsilon}}{\partial r} = 0$ and as such it suffices to identify critical points of a nearly smooth fold $\Pi^{NS}$ on $\Pi_{\mathrm{top}}$. By the nearly smooth characteristic foliation in Lemma \ref{lemma:NS_cf} together with \ref{property:2d_Weinstein_compat} of Proposition \ref{prop:2D_smooth_box_fold}, it follows that for every critical point of index $k$ of $X_{\lambda_0}$ in $W_0$, there are two critical points of index $k$ and $k+1$ of $X_{\lambda_{G_{\epsilon}}}$. The Morse property is also immediate from the Morse property of Proposition \ref{prop:2D_smooth_box_fold} and the fact that $(W_0, \lambda_0)$ is Weinstein.

\vspace{2mm}
\noindent \textit{Trapping properties \ref{property:high_trap2}.}
\vspace{2mm}

First, assume that a flowline enters $U_{\mathrm{trap},1}^{\epsilon}$, so that in particular the $W_0$-coordinate of entry is outside of $N^{s_0+\epsilon}(\partial W_0)$. For such a flowline, the $W_0$-norm estimate for nearly smooth folds in Proposition \ref{prop:NS_holo_norm} still applies and in particular there is no interaction with the $\epsilon$-neighborhood of $\partial W_0$ supporting the final tipping and convolution that defines $G_{\epsilon}$. The argument for \ref{property:NS_trap_1} of Proposition \ref{prop:NS_trap}, word for word, then identifies $U_{\mathrm{trap},1}^{\epsilon}$ as a trapping region in Theorem \ref{theorem:new_smooth_box_fold}.

The second trapping region $U_{\mathrm{trap},2}^{\epsilon}$ is the smooth counterpart to the nearly smooth region in \ref{property:NS_trap_2} of Proposition \ref{prop:NS_trap}. In the smooth case there are two additional factors to consider, one of which is the ``tipping'' of $\Pi_{\mathrm{side}}$ governed by $\delta_{\mathrm{aux}}$, and the other is the convolution smoothing governed by $\delta_{\mathrm{con}}$, resulting in the interpolation in $\hat{S}$ between $X_{\mathrm{top}}$ and $X_{\mathrm{side}}$. The first factor is the reason for the exponent $s_0 - 2\epsilon$. Indeed, by Remark \ref{remark:smooth_tip_eps}, there is an at-worst $s$-holonomy of magnitude $\epsilon$ as the flowline traverses $\Pi_{\mathrm{side}}^{\mathrm{aux}}$. By entering the fold in $N^{s_0 - 2\epsilon}(\partial W_0)$ as opposed to $N^{s_0 - \epsilon}(\partial W_0)$, this is accounted for and the argument in the proof of \ref{property:NS_trap_2} of Proposition \ref{prop:NS_trap} holds.

\begin{figure}[ht]
	\begin{overpic}[scale=0.44]{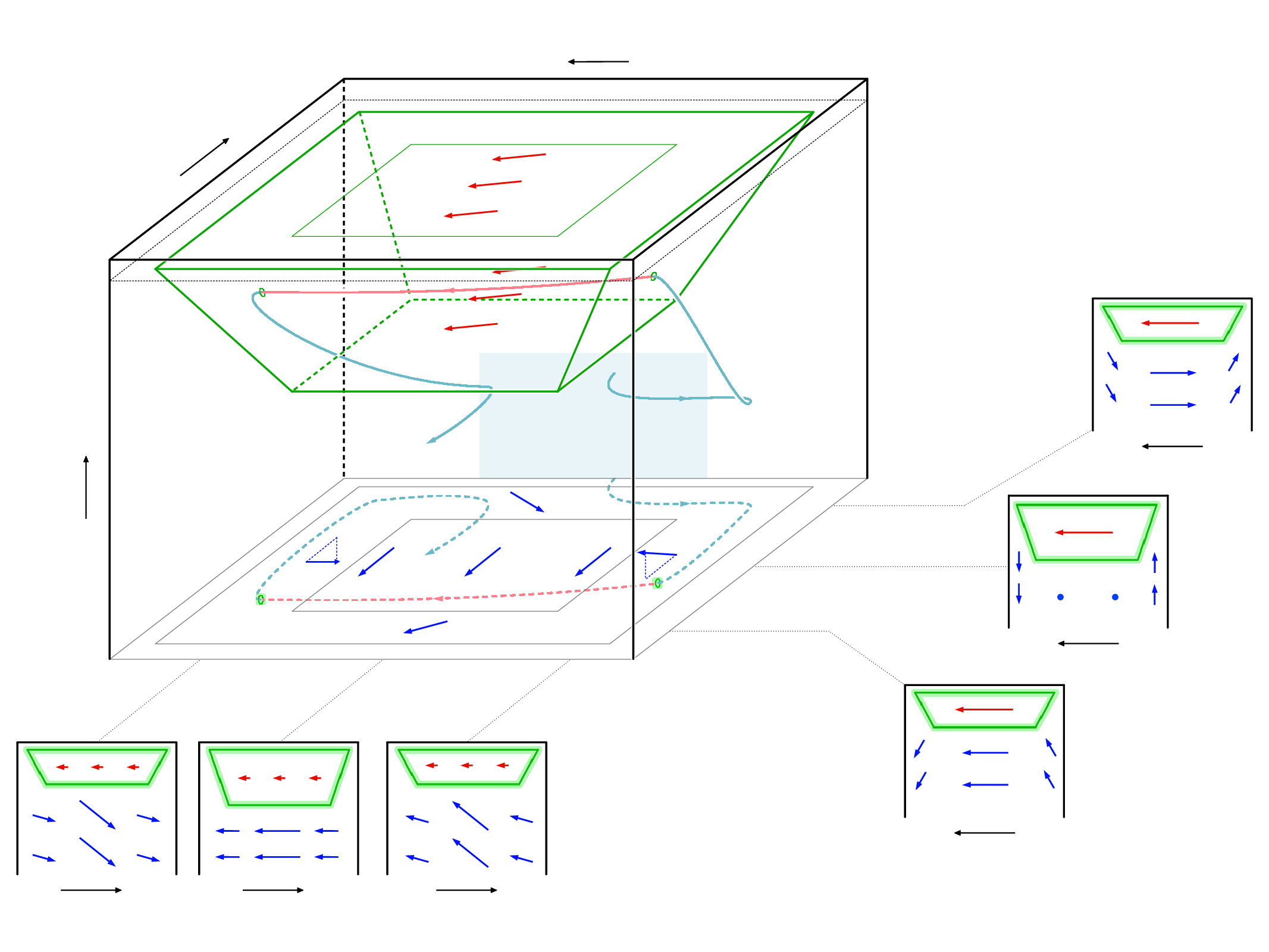}
	    \put(10.2,4.5){\small $s$}
     \put(24.5,4.5){\small $s$}
     \put(39.75,4.5){\small $s$}
    
     \put(73.75,9){\small $t$}
     \put(82,24){\small $t$}
     \put(88.5,39.5){\small $t$}

     \put(6.5,40){\small $r$}
     \put(18.75,64.5){\small $s$}
     \put(43.5,69.75){\small $t$}

     \put(42,60){\small \textcolor{red}{$X_{\mathrm{side}}$}} 
     \put(40,30){\small \textcolor{blue}{$X_{\mathrm{top}}$}} 
     
	\end{overpic}
	\caption{The dynamics of a chimney fold in the $[0,s_0]\times [0,t_0] \times[-r_0,0]$ projection. The green $2$-dimensional prism in the main figure represents $S$, the set of points where $G^{PS}_{\delta_{\mathrm{aux}}}$ is not smooth. In the various constant $t$-slices (below) and constant $s$-slices (right), the thicker shaded green region is $\hat{S}$. A sample flowline in light blue enters $U_{\mathrm{trap},2}^{\epsilon}$ (the rectangle shaded in light blue), then travels through $S_{\mathrm{side}}$ in pink before returning to $S_{\mathrm{top}}$ in light blue and eventually getting trapped. The projection of the flowline to the $(s,t)$-plane is given by the dashed curve.}
	\label{fig:bigfig}
\end{figure}

These consequences are most easily understood via the visualization in Figure \ref{fig:bigfig}. Figure \ref{fig:bigfig} depicts the region $[0,s_0]\times [0,t_0]\times [-r_0,0] \times \Gamma_0$ with the $\Gamma_0$-direction suppressed. The set $S$ where $G_{\delta_{\mathrm{aux}}}^{PS}$ fails to be smooth is outlined in green; $\hat{S}$ is simply a $\delta_{\mathrm{con}}$-small tubular neighborhood of this complex. The interior prism region contained inside $\hat{S}$ is $S_{\mathrm{side}}$. Various slices of the dynamics ($X_{\mathrm{side}}$ in red, $X_{\mathrm{top}}$ in blue) are indicated, and $\hat{X}$ is the natural interpolation of these vector fields in $\hat{S}$. 

The light-blue-and-pink trajectory represents a flowline entering $U_{\mathrm{trap},2}^{\epsilon}$. By entering sufficiently below the prism, it avoids the potential interference arising from the interpolation in $\hat{S}$ in the region indicated by the top right constant $s$-slice. On the other hand, by entering sufficiently close to to the prism, it traverses enough of the interior to emerge in the part of $S_{\mathrm{top}}$ where Step 2 of the proof of \ref{property:NS_trap_2} of Proposition \ref{prop:NS_trap} holds.

\vspace{2mm}
\noindent \textit{Holonomy properties \ref{property:high_hol3}.}
\vspace{2mm}

We first consider the $W_0$-norm estimate in \ref{thm-part:new_smooth_holonomy}. As remarked above, if the flowline avoids $N^{\epsilon}(\partial W_0)$ then Proposition \ref{prop:NS_holo_norm} gives the desired estimate without further remark. Otherwise, by Lemma \ref{lemma:big_cf}, the additional decay in the $r$-direction in $N^{\epsilon}(\partial W_0)$ of the function $G_{\epsilon}$ only introduces movement parallel to $\partial_t - e^{-r} R_{\eta_0}$, which does not affect the $W_0$-norm and hence Proposition \ref{prop:NS_holo_norm} still implies \ref{thm-part:new_smooth_holonomy}.

Similarly, \ref{property:high_hol3b} of Theorem \ref{theorem:new_smooth_box_fold} follows from the corresponding nearly smooth statement \ref{property:NS_holo_1} of Proposition \ref{prop:NS_holonomy}. According to Lemma \ref{lemma:big_cf}, the only difference in the dynamics of the nearly smooth fold with the backward flow of $X_{\lambda_{G_{\epsilon}}}$ is the term $\left|\frac{\partial G_{\epsilon}}{\partial r}\right|(\partial_t - e^{-r} R_{\eta_0})$. This only serves to increase the $t$-coordinate. Thus, if $t_{\mathrm{init}}\in [t_0 - \epsilon, t_0]$, this remark together with Proposition \ref{prop:NS_holonomy} implies that $t_{\mathrm{fin}}\in [t_0 - \epsilon, t_0]$.

Finally, \ref{property:high_hol3c} is the smooth statement corresponding to \ref{property:NS_holo_2} of Proposition \ref{prop:NS_holonomy}. It is slightly less precise, but it is sufficient for our purposes. Assume that $t_{\mathrm{init}}\in [0,\epsilon]$. We again use Figure \ref{fig:bigfig} as a reference and refer to the prism region $S_{\mathrm{side}}\cup \hat{S}$. The flowline either passes through the prism, or it doesn't. If the flowline does not pass through the prism, its dynamics are entirely dictated by $X_{\mathrm{top}}$ and \ref{property:2D_holonomy_bound} of Proposition \ref{prop:2D_smooth_box_fold} implies that $t_{\mathrm{fin}}\leq e^{s_0 + \epsilon}\epsilon$. This means that to achieve $t_{\mathrm{fin}}\gg e^{s_0 + \epsilon}\epsilon$ as in the assumption of \ref{property:high_hol3c}, it is necessary for the flowline to cover a significant $t$-distance inside the prism. 

In traversing a significant $t$-distance inside the prism, it can either travel through $S_{\mathrm{side}}$ or $\hat{S}$. We consider each situation separately. 

\vspace{2mm}
\textit{Traversal through $S_{\mathrm{side}}$.}
\vspace{2mm}

 Let $\Delta_{\mathrm{side}}t$ denote the $t$-holonomy arising from passage through $S_{\mathrm{side}}$, i.e., the strict interior of the prism. Here the flow is directed by $X_{\mathrm{side}} = \partial_t - e^{-r}\, R_{\eta_0} - me^s\, \partial_s$. Since $S_{\mathrm{side}} \subset \{r>-\epsilon\}$, while traveling along $S_{\mathrm{side}}$ the flowline experiences a flow in the $-R_{\eta_0}$-direction for some time $t_{\mathrm{side}}$, where $\Delta_{\mathrm{side}} t\leq t_{\mathrm{side}}\leq e^{\epsilon}\Delta_{\mathrm{side}} t$; see also Remark \ref{remark:smooth_tip_eps}.

\vspace{2mm}
\textit{Traversal through $\hat{S}$.}
\vspace{2mm}

Since we are only concerned with the flowline covering a large $t$-distance, we only need to consider the four regions of $\hat{S}$ in Figure \ref{fig:bigfig} which are those corresponding to the neighborhoods of the faces closest to $r=0$ (the top face), $r=-r_0$ (the bottom face), $s=0$ (the near face), and $s=s_0$ (the far face). Indeed, the remaining two regions have $t$-thickness on the order of $\delta_{\mathrm{con}}$.

By definition of $G_{\epsilon}$, in the regions of $\hat{S}$ near the top face and the bottom face, $\frac{\partial G_{\epsilon}}{\partial s}=0$ in their large interior parts. Therefore, in the vector field 
\[
\hat{X}= -\left(e^s + \frac{\partial G_{\epsilon}}{\partial t}\right)\, \partial_s + \frac{\partial G_{\epsilon}}{\partial s}\, \partial_t + \frac{\partial G_{\epsilon}}{\partial t}\, X_{\lambda_0} +\left|\frac{\partial G_{\epsilon}}{\partial r}\right|\left(\partial_t - e^{-r}\, R_{\eta_{0}}\right)
\]
directing the flow, the $t$-holonomy coming from the second term is negligible, so any $t$-holonomy is accompanied by $-R_{\eta_0}$-flow of comparable time, just as in $S_{\mathrm{side}}$. In particular, with $\hat{\Delta} t$ denoting the $t$-holonomy through the regions near the top face and the bottom face where $\frac{\partial G_{\epsilon}}{\partial s}=0$, the flowline experiences a flow in the $-R_{\eta_0}$-direction for some time $\hat{t}$, where $\hat{\Delta} t\leq \hat{t}\leq e^{\epsilon}\hat{\Delta} t$ by the same argument as in $S_{\mathrm{side}}$. 

Finally, via control of the $\delta_{\mathrm{con}}$ parameter it is actually not possible to traverse a large $t$-distance in the regions of $\hat{S}$ near the near face and far face. In these regions, $\frac{\partial G_{\epsilon}}{\partial t}$ is either $0$ or controllably small via $\delta_{\mathrm{con}}$, so the $\partial_s$-component $\hat{X}$ is close (via $\delta_{\mathrm{con}}$) to $-e^s\, \partial_s$. As the regions of $\hat{S}$ near the near face and far face have $s$-thickness on the order of $\delta_{\mathrm{con}}$, any $t$-holonomy here is negligible compared to other parameters, so we will ignore it. 

Altogether, the $t$-holonomy experienced by the flowline as it passes through the prism is controllably close (via $\delta_{\mathrm{con}}$) to $\Delta_{\mathrm{side}}t + \hat{\Delta} t$. Consequently, the flowline experiences a total holonomy in the $-R_{\eta_0}$-direction of time $t_*$, where $t_*$ satisfies 
\[
t_* > t_{\mathrm{side}} + \hat{t} \geq \Delta_{\mathrm{side}}t + \hat{\Delta} t.
\]
As $t_{\mathrm{fin}} \gg e^{s_0 + \epsilon}\epsilon$ necessitates a significant $t$-distance traversed through the prism, we may assume $\Delta_{\mathrm{side}}t + \hat{\Delta} t \gg \epsilon$. Moreover, we have $t_*\leq e^{\epsilon}(\Delta_{\mathrm{side}}t + \hat{\Delta} t) \leq e^{\epsilon}t_0.$ This gives 
\[
\epsilon \ll t_* \leq e^{\epsilon} t_0
\]
as desired.
\end{proof}

\section{Chimney folds}\label{section:chimney_folds}

A box fold is based over the symplectization of a contact handlebody, i.e., a Reeb-thickened Weinstein domain. A \textit{chimney fold} is a generalization wherein the fold is based over the symplectization of a more complicated contact manifold with boundary. The specific shape of the base is chosen to alter the shape of the trapping region in a way that allows us to prove Theorem~\ref{prop:mainprop}, as will become clear in Section \ref{section:proof_main_prop}. As with box folds, we begin by defining chimney folds in piecewise linear form in \ref{subsection:PL_chimney_fold} and discuss their smoothing in \ref{subsection:smooth_chimney}. In particular, in \ref{subsection:smooth_chimney} we state and prove Proposition \ref{prop:new_chimney_smooth}, which is the centerpiece of the section and the analogue of Theorem \ref{theorem:new_smooth_box_fold} for chimney folds.

\subsection{Piecewise linear chimney folds}\label{subsection:PL_chimney_fold}

For most of this subsection, we focus on chimney folds defined on $4$-dimensional Liouville domains, which is sufficient to describe the interesting aspects of their design. In \ref{subsubsec:high_dim_chim} we extend chimney folds to arbitrary dimensions, which is a straightforward process and does not introduce any significant complications. 

Fix $z_0, s_0, t_0 > 0$ and assume that $z_0 = e^{s_0}t_0$. Let $(W_{\mathrm{Ch}}, \lambda_{\mathrm{Ch}})$ be a $2$-dimensional Weinstein domain with connected boundary, so that $\partial W_{\mathrm{Ch}} \cong S^1$. Identify a collar neighborhood $N^{s_0}(\partial W_{\mathrm{Ch}})$ as in Sections~\ref{section:box_folds_pl} and~\ref{section:box_folds_smooth} and recall that $N^{s_0}(\partial W_{\mathrm{Ch}})$ is symplectomorphic to $((-s_0,0]_r \times \partial W_{\mathrm{Ch}}, e^r\, \eta_{\mathrm{Ch}})$ where $\eta_{\mathrm{Ch}}$ is the induced contact form on $\partial W_{\mathrm{Ch}}$. Let $\gamma_C \subset \partial W_{\mathrm{Ch}}$ be any connected arc, and define 
\[
C:= [-s_0, 0]_r \times \gamma_C \subset N^{s_0}(\partial W_{\mathrm{Ch}}).
\]
Next, we will identify a set $H_{\mathrm{Ch}}$ inside the contact handlebody $([0,t_0] \times W_{\mathrm{Ch}}, \, dt + \lambda_0)$ that supports the eventual chimney fold. Fix $0 < t_- \ll t_0$ and define 
\[
H_{\mathrm{Ch}} := ([0,t_-] \times W_{\mathrm{Ch}}) \cup ([0,t_0]\times C).
\]
Informally, $H_{\mathrm{Ch}}$ is a generalized contact handlebody with Reeb chords of two different lengths: over $W_{\mathrm{Ch}} \setminus C$ Reeb chords have length $t_-$, and over $C$ they have length $t_0$. Alternatively, $H_{\mathrm{Ch}}$ is an ordinary contact handlebody $[0,t_-]\times W_0$ together with an appended contact region that is the supporting region of a pre-chimney fold as described in \ref{subsection:pre_chimney}. 

\begin{remark}
We refer to the region $[0,t_0]\times C$ as the \textit{chimney} or \textit{chimney region} of $H_{\mathrm{Ch}}$, and to $[0,t_-]\times (W_{\mathrm{Ch}}\setminus C)$ as the \textit{stove} or \textit{stove region}.
\end{remark}

\begin{figure}[ht]
	\begin{overpic}[scale=0.43]{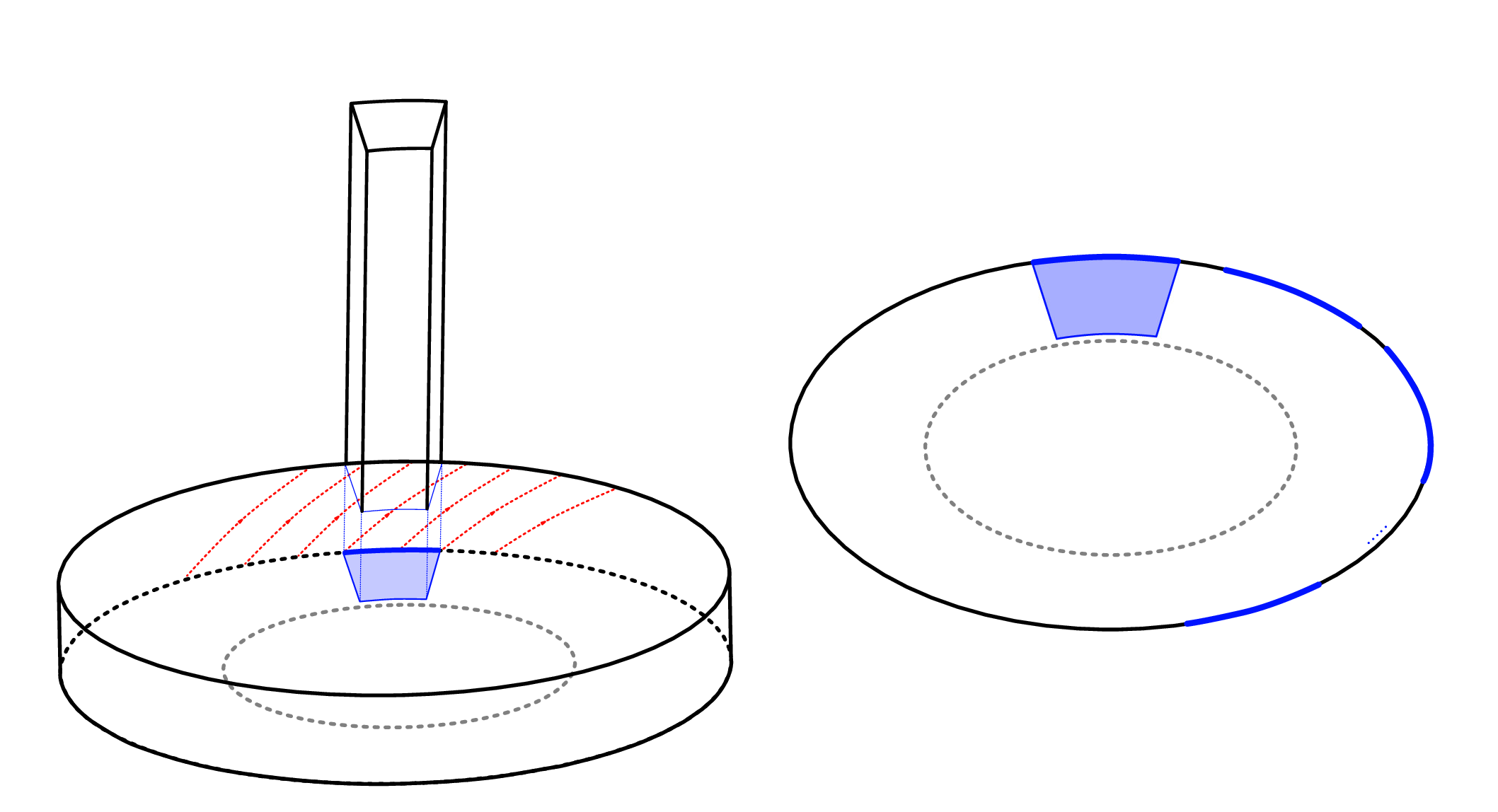}
	\put(-1.5,8){\small $t=0$}
    \put(-2,16){\small $t=t_-$}
    \put(17,46){\small $t=t_0$}
    \put(73,24){\small $W_0$}

    \put(73,34){\small $C$}
    \put(72.5,38.5){\small \textcolor{blue}{$\gamma_C$}}

    \put(82,32.5){\small \textcolor{blue}{$h(\gamma_C)$}}
    \put(88,25.5){\small \textcolor{blue}{$h^2(\gamma_C)$}}
    \put(79,15.5){\small \textcolor{blue}{$h^n(\gamma_C)$}}

    \put(25,-0.5){\small $H_{\mathrm{Ch}}$}
	\end{overpic}
	\caption{The supporting region $H_{\mathrm{Ch}}$ of a chimney fold. On the left, the characteristic foliation $\partial_t - R_{\eta_{\mathrm{Ch}}}$ of $\underline{\partial W_{\mathrm{Ch}}}$ is depicted by the dashed red lines; Assumption \ref{ass:chimney} about iterates of $\gamma_C$ under $h=h_{\partial W_{\mathrm{Ch}}}$ is depicted in the $W_{\mathrm{Ch}}$ projection on the right.}
	\label{fig:chimney}
\end{figure}

For a chimney fold to work as desired, we need to impose one more assumption that requires additional notation to state. Let $h_{\partial W_{\mathrm{Ch}}}: \{t=0\} \times \partial W_{\mathrm{Ch}} \to \{t=t_-\} \times \partial W_{\mathrm{Ch}}$ be the holonomy map given by the flow of the backward oriented characteristic foliation of $[0,t_-] \times \partial W_{\mathrm{Ch}}$ in the stove $([0,t_-] \times W_{\mathrm{Ch}}, \, dt + \lambda_{\mathrm{Ch}})$. Recall that this characteristic foliation is directed by $\partial_t - R_{\eta_{\mathrm{Ch}}}$. Abusing notation, we will write $h_{\partial W_{\mathrm{Ch}}}: \partial W_{\mathrm{Ch}} \to \partial W_{\mathrm{Ch}}$. 

\begin{example}
Suppose that $(W_{\mathrm{Ch}} = r_0\D^2, \lambda_{\mathrm{Ch}} = \frac{1}{2}r^2\, d\theta)$. The backward oriented characteristic foliation of $[0,t_-]\times \partial W_{\mathrm{Ch}}$ is directed by $\partial_t - \frac{2}{r_0^2}\, \partial_{\theta}$, so 
\[
h_{\partial W_{\mathrm{Ch}}}(\theta) = \theta - \frac{2}{r_0^2} t_-.
\]
\end{example}

The key assumption needed to define a chimney fold is the following. 

\begin{assumption}\label{ass:chimney}
    Let $n$ be the smallest integer so that $nt_- > t_0$. Then $\gamma_C \cap h^j_{\partial W_{\mathrm{Ch}}}(\gamma_C) = \emptyset$ for all $1\leq j \leq n$.
\end{assumption}

This assumption ensures two main features of the boundary holonomy $h_{\partial W_{\mathrm{Ch}}}$: 
\begin{enumerate}[label=(\arabic*)]
    \item the map $h_{\partial W_{\mathrm{Ch}}}$ displaces $\gamma_C$, and \label{feature:displace}
    \item the image of $\gamma_C$ does not circle around the boundary back to itself after up to $n$ iterates under $h_{\partial W_{\mathrm{Ch}}}$. \label{feature:image}
\end{enumerate}
See Figure \ref{fig:chimney}. In practice, we will choose $t_- \ll t_0$ to be very small. Thus,~\ref{feature:displace} will be achieved from a combination of $\gamma_C$ being small and $R_{\eta_{\mathrm{Ch}}}$ being large near $\gamma_C$, and~\ref{feature:image} will be achieved by ensuring that $\partial W_{\mathrm{Ch}}$ is long enough. 

\begin{remark}
If we drop the assumption $z_0 = e^{s_0}t_0$ on the initial parameters of the fold, Assumption \ref{ass:chimney} is more generally stated by requiring $n$ to satisfy $ne^{s_0}t_- > z_0$. Assuming $z_0 =e^{s_0}t_0$, this simplifies to $nt_- > t_0$. The cost of increasing $z_0$ further is an increase in the value of $n$. 
\end{remark}

Consider the contactization of the symplectization of $H_{\mathrm{Ch}}$:
\[
\left(\R_z \times [0,s_0] \times H_{\mathrm{Ch}},\, dz + e^s\, (dt + \lambda_{\mathrm{Ch}})\right)
\]

\begin{definition}
Fix $z_0, s_0, t_0 > 0$ with $z_0 = e^{s_0}t_0$, and suppose that $H_{\mathrm{Ch}}$ is defined as above with Assumption \ref{ass:chimney} satisfied. A \textbf{piecewise linear chimney fold with parameters $z_0, s_0, t_0, t_-$}, denoted $C\Pi^{PL}$, is the hypersurface 
\[
C\Pi^{PL} := \overline{\partial ([0,z_0] \times [0,s_0]\times H_{\mathrm{Ch}}) \setminus \{z=0\}}.
\]
\end{definition}

As with piecewise linear box folds, we adopt the following notation to refer to the various sides of $C\Pi^{PL}$:
\begin{align*}
    \underline{z=z_0} &:= \{z=z_0\} \times [0,s_0]\times H_{\mathrm{Ch}} \\
    \underline{s=0} &:= [0,z_0] \times \{s=0\} \times H_{\mathrm{Ch}} \\
\underline{s=s_0} &:= [0,z_0] \times \{s=s_0\} \times H_{\mathrm{Ch}} \\
\underline{t=0} &:= [0,z_0]\times [0,s_0] \times \{t=0\} \times W_{\mathrm{Ch}} \\
\underline{t=t_-} &:= [0,z_0]\times [0,s_0] \times \{t=t_-\} \times (W_{\mathrm{Ch}} \setminus C) \\
\underline{t=t_0} &:= [0,z_0]\times [0,s_0] \times \{t=t_0\} \times C \\ 
\underline{\partial W_{\mathrm{Ch}}} &:= [0,z_0] \times [0,s_0] \times [0,t_-] \times \partial W_{\mathrm{Ch}} \\
\underline{\partial C} &:= [0,z_0] \times [0,s_0] \times [t_-, t_0] \times \partial C.
\end{align*}
As in Section \ref{section:box_folds_pl}, we compute the oriented characteristic foliation of each side.

\vspace{2mm}
\begin{center}
\begin{tabular}{ |c|c|c|c| } 
 \hline
 Side  & Characteristic foliation \rm \\ 
 \hline 
 $\underline{z=z_0}$  & $  -\partial_s$ \\
 $\underline{s=0}$  & $\partial_t -\partial_z $ \\
 $\underline{s=s_0}$  & $-\partial_t + e^{s_0} \partial_z$ \\
 $\underline{t=0}$ & $-\partial_s + X_{\lambda_0}$ \\
 $\underline{t=t_-}$ & $\partial_s - X_{\lambda_0}$ \\
 $\underline{t=t_0}$ & $\partial_s - X_{\lambda_0}$ \\
 $\underline{\partial W_{\mathrm{Ch}}}$ & $\partial_t -R_{\eta_{\mathrm{Ch}}}$ \\ $\underline{\partial C}$ & $X_{\partial C}$ \\
 \hline
\end{tabular}
\end{center}
\vspace{2mm}

Here, $X_{\partial C}$ is simply a placeholder for the backward oriented characteristic foliation of $\underline{\partial C}$. For example, we could write $X_{\partial W_{\mathrm{Ch}}} = \partial_t -R_{\eta_{\mathrm{Ch}}}$. The explicit description of $X_{\partial C}$ is given by the table in \ref{subsection:pre_chimney} that gives the characteristic foliation of a pre-chimney fold. Speaking loosely, most flowlines of $X_{\partial C}$ swirl around $\underline{\partial C}$ and up in the $t$-direction toward $\underline{t=t_0}$. 

The following proposition is the key feature of a chimney fold, namely, that such a fold traps the entire chimney region.

\begin{proposition}\label{prop:PL_chimney_trap}
Suppose that $x \in \{z=0\} \times \{s=s_0\} \times (0,t_0) \times \text{int}(C)$. Then the flowline through $x$ of the characteristic foliation of $C\Pi^{PL}$ is trapped in backward time. 
\end{proposition}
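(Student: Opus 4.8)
The plan is to trace the backward characteristic flowline through $x$ face by face, in the spirit of the casework proving Lemmas~\ref{lemma:t_0_trap} and~\ref{lemma:pre_chimney_t_0_trap}, and to show it never returns to $\underline{z=0}\cap\underline{s=0}$ --- which, as always, is the only way a flowline can exit the fold. Write $x=(0,s_0,\bar t,\bar w)$ with $\bar t\in(0,t_0)$ and $\bar w\in\interior(C)$, and use the collar coordinates on $N^{s_0}(\partial W_0)$ to write $\bar w=(\bar r,w_0)$ with $\bar r\in(-s_0+\ve_{\mathrm{aux}},0)$ and $w_0\in\interior(\gamma_C)$; recall the standing assumption $z_0=e^{s_0}t_0$ and that $X_{\lambda_0}=\partial_r$ in this collar.

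First I would follow the initial passage out of the chimney. On $\underline{s=s_0}$ the flowline runs along $-\partial_t+e^{s_0}\partial_z$ and reaches $\underline{t=0}$ at height $z=e^{s_0}\bar t<z_0$ (using $\bar t<t_0$), the fixed $W_0$-coordinate $\bar w\in\interior(C)$ keeping it off $\underline{\partial C}$ and $\underline{\partial W_0}$ on the way. Along $\underline{t=0}$ it follows $-\partial_s+\partial_r$ and, since $-\bar r<s_0$, reaches $\underline{\partial W_0}$ --- not $\underline{s=0}$ --- at $s=s_0+\bar r=:\hat s\in(\ve_{\mathrm{aux}},s_0)$ with $\partial W_0$-coordinate still $w_0$. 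Along $\underline{\partial W_0}$ it follows $\partial_t-R_{\eta_0}$, so $t$ climbs to $t_-$ while the $\partial W_0$-coordinate is carried to $h_{\partial W_0}(w_0)$. This is where the Main Assumption first enters (with $j=1$): since $h_{\partial W_0}(w_0)\in h_{\partial W_0}(\gamma_C)$ is disjoint from $\gamma_C$, at $t=t_-$ the flowline lies over $W_0\setminus C$ and crosses onto $\underline{t=t_-}$ instead of continuing up the chimney. It then follows $\partial_s-\partial_r$ back to $\underline{s=s_0}$ at $r=\bar r$. Thus the flowline is now on $\underline{s=s_0}$ with $(z,s,t)=(e^{s_0}\bar t,s_0,t_-)$ and $W_0$-coordinate $(\bar r,h_{\partial W_0}(w_0))$ --- no longer over the chimney column.

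Next I would iterate. Let $m$ be the least integer with $\bar t+mt_->t_0$; since $\bar t>0$ we have $m\le n$. For $j=1,\dots,m-1$ the flowline repeats the loop $\underline{s=s_0}\to\underline{t=0}\to\underline{\partial W_0}\to\underline{t=t_-}\to\underline{s=s_0}$, each pass raising $z$ by $e^{s_0}t_-$, returning $t$ to $t_-$ and $r$ to $\bar r$, and advancing the $\partial W_0$-coordinate by one more iterate of $h_{\partial W_0}$; since $h^j_{\partial W_0}(\gamma_C)\cap\gamma_C=\emptyset$ for $1\le j\le m\le n$, the $W_0$-coordinate never re-enters $C$, so these loops stay over the stove. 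After the $(m-1)$-st loop $z=e^{s_0}(\bar t+(m-1)t_-)$, and on the following pass along $\underline{s=s_0}$ the flowline now reaches $\underline{z=z_0}$ (at $t=\bar t+mt_--t_0>0$) before $\underline{t=0}$. From there it runs $\underline{z=z_0}\to\underline{s=0}\to\underline{t=t_-}\to\underline{s=s_0}$: along $\underline{s=0}$ (foliation $\partial_t-\partial_z$) it reaches $\underline{t=t_-}$ rather than $\underline{z=0}$, because $t_0-\bar t-(m-1)t_-<t_0<z_0$, arriving with $z=z_0-t_0+\bar t+(m-1)t_->0$; then along $\underline{t=t_-}$ (foliation $\partial_s-X_{\lambda_0}$) it returns to $\underline{s=s_0}$ with its $W_0$-coordinate pushed forward by $\psi^{-s_0}$, i.e.\ closer to $\skel(W_0,\lambda_0)$. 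This configuration then recurs verbatim, with $t$ and $z$ cycling through a fixed finite list of values and the $W_0$-coordinate iterated under $\psi^{-s_0}$; so the backward flowline spirals forever, its $z$-coordinate bounded below by $\min\{e^{s_0}\bar t,\, z_0-t_0+\bar t+(m-1)t_-\}>0$. In particular it never again meets $\underline{z=0}$, hence is trapped.

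The main obstacle is the bookkeeping itself: keeping straight which face the flowline occupies and how its four coordinates evolve across all the cases (at each step, the question of which wall is hit next reduces to a string of elementary inequalities). The one genuinely load-bearing point is the count of holonomy applications --- the Main Assumption controls $h^j_{\partial W_0}(\gamma_C)$ only for $j\le n$, so one must check that the flowline is forced off $\underline{s=s_0}$ onto $\underline{z=z_0}$, and into the terminal stove-spiral, after at most $n$ applications of $h_{\partial W_0}$. The equality $z_0=e^{s_0}t_0$ together with $\bar t>0$ is exactly what gives $m\le n$, making the assumption as stated sufficient. As in the earlier lemmas one agrees throughout to ignore the non-smooth edges and corners of $C\Pi^{PL}$, treating a flowline that reaches an edge as passing immediately onto the adjacent face.
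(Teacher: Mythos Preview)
Your proof is correct and follows essentially the same route as the paper's: trace the flowline from $\underline{s=s_0}$ to $\underline{t=0}$ to $\underline{\partial W_0}$ to $\underline{t=t_-}$ and back to $\underline{s=s_0}$, using the Main Assumption to keep the $\partial W_0$--coordinate out of $\gamma_C$ for the first $n$ iterates, and observe that $z$ increases by $e^{s_0}t_-$ per loop until $\underline{z=z_0}$ is reached. The only structural difference is that the paper invokes Lemma~\ref{lemma:chimney_z_0_trap} once $\underline{z=z_0}$ is hit, whereas you reproduce Case~2 of that lemma's proof inline; your version is self-contained but a bit more laborious.

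One small inaccuracy: in your terminal spiral $\underline{z=z_0}\to\underline{s=0}\to\underline{t=t_-}\to\underline{s=s_0}\to\underline{z=z_0}$, the pair $(t,z)$ does \emph{not} cycle through a fixed finite list. If $a:=t_0-\bar t-(m-1)t_-\in(0,t_-]$, then after the $k$-th spiral pass the $z$-value at $\underline{t=t_-}$ is $z_0-a\,e^{-(k-1)s_0}$, converging monotonically to $z_0$ rather than repeating. This does not affect your conclusion, since your stated lower bound $z\ge\min\{e^{s_0}\bar t,\,z_0-t_0+\bar t+(m-1)t_-\}>0$ remains valid and is all that is needed to preclude a return to $\underline{z=0}$.
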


To prove Proposition \ref{prop:PL_chimney_trap}, we begin with a lemma that elucidates how a chimney fold exhibits the behavior of both an ordinary box fold and a pre-chimney fold in different regions. 

\begin{lemma}\label{lemma:chimney_z_0_trap}
If a flowline reaches $\underline{z=z_0}$ in a chimney fold, it is trapped in backward time. 
\end{lemma}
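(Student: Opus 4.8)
The plan is to imitate the casework in the proof of Lemma~\ref{lemma:t_0_trap}, exploiting the standing assumption $z_0 = e^{s_0}t_0$ together with the fact that every Reeb chord of $H_0^C$ has length at most $t_0$. Suppose a flowline reaches $\underline{z=z_0}$. The only side of $C\Pi^{PL}$ whose characteristic foliation (read off the table above) carries a positive $\partial_z$-component is $\underline{s=s_0}$, so the flowline arrives on $\underline{z=z_0}$ with $s=s_0$; write the arrival point as $(z_0,s_0,\tilde t,\tilde w)$. Since the foliation on $\underline{z=z_0}$ is $-\partial_s$, the flowline descends to $\underline{s=0}$ with $z$, $t$, $w$ held fixed at $z_0$, $\tilde t$, $\tilde w$.

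First I would check that the flowline cannot exit on this pass. On $\underline{s=0}$ it follows $\partial_t-\partial_z$, and its $t$-coordinate is bounded above by the length of the Reeb chord through $\tilde w$, which is $t_-$ if $\tilde w\in W_0\setminus C$ and $t_0$ if $\tilde w\in\text{int}(C)$ --- in either case at most $t_0 < e^{s_0}t_0 = z_0$. Hence the flowline reaches the top of its Reeb chord, i.e.\ the side $\underline{t=t_-}$ or $\underline{t=t_0}$, before the $z$-coordinate can decrease to $0$, and it does so with $z$-coordinate $z_0-(t_{\mathrm{top}}-\tilde t)\geq z_0-t_0>0$, where $t_{\mathrm{top}}\in\{t_-,t_0\}$. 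From this moment on the $z$-coordinate is frozen: none of the remaining sides $\underline{t=t_-}$, $\underline{t=t_0}$, $\underline{\partial W_0}$, $\underline{\partial C}$ carries a $\partial_z$-term.

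Next I would trace the flowline onward from the top of its Reeb chord, where the foliation is $\partial_s-X_{\lambda_0}$. If $\tilde w$ lies in the stove, the flowline moves away from $\partial W_0$ and hence proceeds directly to $\underline{s=s_0}$, arriving with $t=t_-$ and with its frozen $z$-coordinate; this is exactly the box-fold picture of Lemma~\ref{lemma:t_0_trap}. If $\tilde w$ lies in the chimney, the flowline instead enters $\underline{\partial C}$ and swirls there as in the pre-chimney fold of~\ref{subsection:pre_chimney}; since it has reached $\underline{t=t_0}$ and the width of $\gamma_C$ is small compared with $t_0$, Lemma~\ref{lemma:pre_chimney_t_0_trap}, applied to the pre-chimney structure over $[0,t_0]\times C$, guarantees it can never descend to $\underline{s=0}\cap\underline{z=0}$, and in the course of the swirling its $s$-coordinate drifts upward until it returns to $\underline{s=s_0}$ with $t=t_0$ and its frozen $z$-coordinate. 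In both cases the flowline is back on $\underline{s=s_0}$ with $t$-coordinate $t_{\mathrm{top}}$ and $z$-coordinate $z_0-(t_{\mathrm{top}}-\tilde t)$.

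Finally I would close the loop. From $\underline{s=s_0}$ the flowline follows $-\partial_t+e^{s_0}\partial_z$, reaching $\underline{z=z_0}$ after a $t$-decrease of $e^{-s_0}(t_{\mathrm{top}}-\tilde t)$, which leaves the $t$-coordinate equal to $t_{\mathrm{top}}-e^{-s_0}(t_{\mathrm{top}}-\tilde t)>\tilde t\geq 0$; in particular the flowline returns to $\underline{z=z_0}$ strictly before it could ever reach $\underline{t=0}$. Since $\underline{t=0}$ is, besides $\underline{z=z_0}$, the only side that flows onto $\underline{s=0}$, and since exiting the fold means arriving at $\underline{s=0}\cap\underline{z=0}$, this shows the flowline cycles forever among the sides listed above with its $z$-coordinate pinned at $z_0-(t_{\mathrm{top}}-\tilde t)\geq z_0-t_0>0$, hence is trapped in backward time. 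The main obstacle is the bookkeeping in the chimney case: one must verify that the excursion onto $\underline{\partial C}$ really does behave as in the pre-chimney analysis --- in particular that the flowline does not leak through the bottom edge $\{t=t_-\}$ of $\underline{\partial C}$ into the stove in an uncontrolled fashion, and that it does return to $\underline{s=s_0}$ --- and this is the one place where the fine structure of $H_0^C$, rather than merely the inequality $z_0\geq e^{s_0}t_0$, enters.
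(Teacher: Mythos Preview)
Your proof is correct and follows the same two-case split as the paper, but you over-engineer the chimney case in a way that creates the very obstacle you flag at the end. The paper's treatment is cleaner there: once the flowline (over $C$) has reached $\underline{t=t_0}$, the paper simply says that the chimney fold behaves like a pre-chimney fold near $\underline{t=t_0}$ and invokes Lemma~\ref{lemma:pre_chimney_t_0_trap} to conclude that the flowline is trapped --- end of Case~1. You make the same invocation (``it can never descend to $\underline{s=0}\cap\underline{z=0}$''), which is already the trapping conclusion, but then you pile on the additional claim that the flowline returns to $\underline{s=s_0}$ with $t=t_0$ and frozen $z$-coordinate, so as to fold the chimney case into the uniform cycling argument of your final paragraph. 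That additional claim is not needed, and it is exactly what forces you to worry about whether the excursion on $\underline{\partial C}$ can leak through $\{t=t_-\}$ into the stove. The paper avoids this by treating the chimney case as terminal rather than as one more iteration of the cycle.

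In the stove case your argument matches the paper's Case~2: the flowline on $\underline{t=t_-}$ follows $\partial_s - X_{\lambda_0}$, stays in $W_0\setminus C$ because $-X_{\lambda_0}$ points away from $\partial W_0$, reaches $\underline{s=s_0}$, then returns to $\underline{z=z_0}$ with strictly larger $t$-coordinate, and cycles in this case forever. Two minor remarks: your opening observation that the flowline must arrive on $\underline{z=z_0}$ with $s=s_0$ is true but unnecessary (the lemma only assumes you are on $\underline{z=z_0}$, and from there $-\partial_s$ carries you to $\underline{s=0}$ regardless); and the phrase ``$z$-coordinate pinned at $z_0-(t_{\mathrm{top}}-\tilde t)$'' is slightly misleading, since $\tilde t$ increases each cycle --- what you mean is that this value stays bounded below by $z_0-t_0>0$.
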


\begin{proof}
The characteristic foliation of $\underline{z=z_0}$ is directed by $-\partial_s$, so upon reaching $\underline{z=z_0}$ the flowline will travel to $\underline{s=0}$. There are two cases to consider: the flowline reaches either $\underline{s=0} \cap C$ or $\underline{s=0} \cap (W_{\mathrm{Ch}} \setminus C)$.

\vspace{2mm}
\noindent\textit{Case 1: the flowline reaches $\underline{s=0} \cap C$.}
\vspace{2mm}

Along $\underline{s=0}$ the foliation is directed by $\partial_t - \partial_z$. Over $C$, the length of the Reeb chords is $t_0$. Since $z_0 = e^{s_0}t_0 > t_0$, the flowline reaches $\underline{t=t_0}$. Because a chimney fold has the structure of a pre-chimney fold near $\underline{t=t_0}$, and because the flowline came from $\underline{z=z_0}$, we may apply Lemma \ref{lemma:pre_chimney_t_0_trap} to conclude that the flowline is ultimately trapped. 

\vspace{2mm}
\noindent \textit{Case 2: the flowline reaches $\underline{s=0} \cap (W_{\mathrm{Ch}} \setminus C)$.}
\vspace{2mm}

This case follows from Lemma \ref{lemma:t_0_trap}, with one minor subtlety. Over $W_{\mathrm{Ch}} \setminus C$, the length of the Reeb chords is $t_- < z_0$. Thus, the flowline follows $\partial_t - \partial_z$ to $\underline{t=t_-}$. Here the foliation is directed by $\partial_s - X_{\lambda_{\mathrm{Ch}}}$. Because $C$ is defined by the backward flow of some arc in $\partial W_{\mathrm{Ch}}$, and because the flowline is currently in $W_{\mathrm{Ch}} \setminus C$, it follows $\partial_s - X_{\lambda_{\mathrm{Ch}}}$ to $\underline{s=s_0}$ (as opposed to potentially reaching $\underline{\partial C}$). The $W_{\mathrm{Ch}}$-norm has decreased, and the flowline is still in $W_{\mathrm{Ch}} \setminus C$. 

Along $\underline{s=s_0}$ the flowline follows $-\partial_t + e^{s_0}\, \partial_z$. Because the flowline originally reached $\underline{t=t_-}$ from $\underline{z=z_0}$, the flowline then reaches $\underline{z=z_0}$. We return to Case 2. Note that the $t$-coordinate has increased, and the $W_{\mathrm{Ch}}$-norm has decreased. The flowline cycles through Case 2 indefinitely and never exits the fold. 

\end{proof}

\begin{proof}[Proof of Proposition \ref{prop:PL_chimney_trap}.]

Let $x\in  \{z=0\} \times \{s=s_0\} \times (0,t_0) \times \text{int}(C)$ denote the entry point of the flowline. The foliation is directed by $-\partial_t + e^{s_0}\, \partial_z$ along $\underline{s=s_0}$. Since $t(x) < t_0$ and $z_0 = e^{s_0}t_0$, the flowline reaches $\underline{t=0}$ with $z$-coordinate $e^{s_0}t(x)$. Here it follows $-\partial_s + X_{\lambda_{\mathrm{Ch}}}$. Since the flowline began in $C \subset N^{s_0}(\partial W_{\mathrm{Ch}})$, it then reaches $\underline{\partial W_0}$ before $\underline{s=0}$.

Here it follows $\partial_t - R_{\eta_{\mathrm{Ch}}}$. By Assumption \ref{ass:chimney} --- in particular, the fact that $\gamma_C \cap h_{\partial W_{\mathrm{Ch}}}(\gamma_C) = \emptyset$ --- the flowline then reaches $\underline{t=t_-}$ (as opposed to $\underline{\partial C}$). 

Along $\underline{t=t_-}$ the flowline follows $\partial_s - X_{\lambda_{\mathrm{Ch}}}$. Since the flowline is currently in $W_{\mathrm{Ch}} \setminus C$, it reaches $\underline{s=s_0}$ (as opposed to $\underline{\partial C}$), as in the proof of Lemma \ref{lemma:chimney_z_0_trap}. Along $\underline{s=s_0}$, the flowline follows $-\partial_t + e^{s_0}\, \partial_z$. If the flowline reaches $\underline{z=z_0}$, it is ultimately trapped by Lemma \ref{lemma:chimney_z_0_trap}. Otherwise, it reaches $\underline{t=0}$ with $z$-coordinate $e^{s_0}t(x) + e^{s_0}t_-$.

At this point, we essentially return to the beginning of the proof: the flowline has reached $\underline{t=0}$, but now with an increased $z$-coordinate of $e^{s_0}t(x) + e^{s_0}t_-$. After $j$ cycles through this process, the flowline will either reach $\underline{z=z_0}$, or it will reach $\underline{t=0}$ with $z$-coordinate 
\[
e^{s_0}t(x) + je^{s_0}t_-.
\]
Furthermore, with each cycle through this process, the flowline travels along $\partial W_{\mathrm{Ch}}$ via $h_{\partial W_{\mathrm{Ch}}}$. Recall Assumption \ref{ass:chimney}: with $n$ the smallest integer satisfying $nt_- > t_0$, we have $\gamma_C \cap h_{\partial W_{\mathrm{Ch}}}^j(\gamma_C) = \emptyset$ for $1\leq j \leq n$. This assumption ensures that the above process terminates only by reaching $\underline{z=z_0}$, rather than eventually reaching $\underline{\partial C}$. Indeed, after $n$ cycles through this process, the $z$-coordinate of the flowline would otherwise be 
\[
e^{s_0}t(x) + ne^{s_0}t_- > ne^{s_0}t_- > e^{s_0}t_0 = z_0.
\]
See Figure \ref{fig:chimney2} for a visualization of this argument. 

\end{proof}

\begin{figure}[ht]
	\begin{overpic}[scale=0.43]{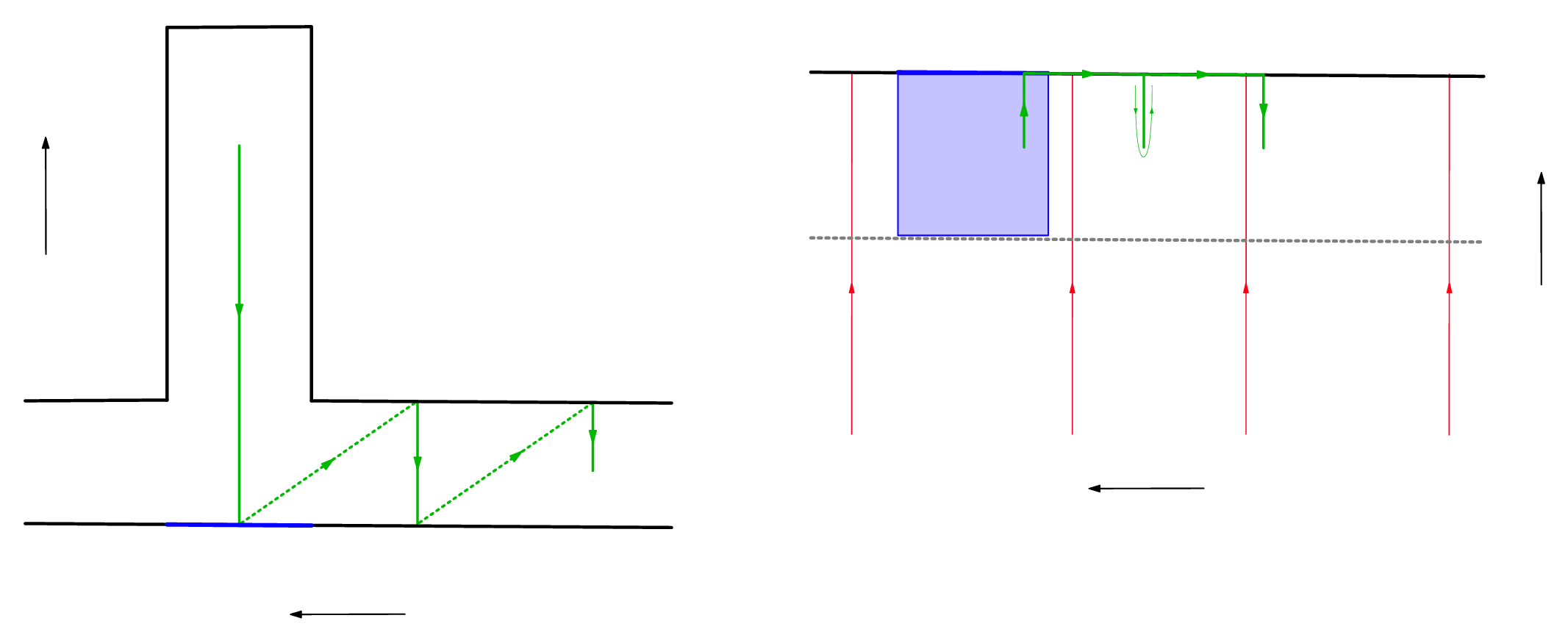}
	\put(2.5,33){\small $t$}
    \put(14,1.5){\small $R_{\eta_{\mathrm{Ch}}}$}
    \put(64,9.75){\small $R_{\eta_{\mathrm{Ch}}}$}
    \put(96,31.5){\small $X_{\lambda_{\mathrm{Ch}}}$}

    \put(61.5,30.5){\small $C$}
    \put(60.5,38){\small \textcolor{blue}{$\gamma_C$}}
    
	\end{overpic}
	\caption{A flowline entering $C\Pi^{PL}$ in the chimney that eventually gets trapped according to Proposition~\ref{prop:PL_chimney_trap}. It initially travels down to $\underline{t=0}$, then cycles through a process involving $h_{\partial W_{\mathrm{Ch}}}$ a number of times before reaching $\underline{z=z_0}$.}
	\label{fig:chimney2}
\end{figure}

\begin{remark}[Holonomy discussion]
Before moving on, we discuss what goes wrong without Assumption \ref{ass:chimney} in order to to clarify this aspect of the behavior of a chimney fold. This discussion is not logically necessary for the rest of the paper, in particular Section \ref{section:proof_main_prop}.

Recall that in an ordinary (piecewise linear) box fold, any flowline that reaches $\underline{\partial W_0}$ is ultimately trapped. Indeed, the foliation along this side is directed by $\partial_t - R_{\eta_0}$, so the flowline reaches $\underline{t=t_0}$ and is trapped by Lemma \ref{lemma:t_0_trap}. In contrast, it is not the case that any flowline reaching $\underline{\partial W_{\mathrm{Ch}}}$ in a chimney fold is trapped. Any flowline that reaches $\underline{\partial W_{\mathrm{Ch}}}$ via $C$ is trapped --- this is essentially the proof of Proposition \ref{prop:PL_chimney_trap} --- but it possible to reach $\underline{\partial W_{\mathrm{Ch}}}$ outside of $C$ and eventually exit the fold. 

For example, let 
\[
h^{-1}_{\partial W_{\mathrm{Ch}}}(C) := [-s_0, 0]_r \times h^{-1}_{\partial W_{\mathrm{Ch}}}(\gamma_C)
\]
and consider a flowline that enters the fold in $H_{\mathrm{Ch}}$ via $(0,e^{-s_0}t_-) \times h^{-1}_{\partial W_{\mathrm{Ch}}}(C)$. Suppose that the initial $r$-coordinate of the flowline is $-\bar{s}$ for some $0 < \bar{s} < s_0$. As the flowline enters the fold, it travels across $\underline{s=s_0}$ via $-\partial_t + e^{s_0}\, \partial_z$, reaching $\underline{t=0}$ first. Here it follows $-\partial_s + X_{\lambda_{\mathrm{Ch}}}$. Since $\bar{s} < s_0$, the flowline reaches $\underline{\partial W_{\mathrm{Ch}}}$. Note that the new $s$-coordinate is $s_0 - \bar{s}$. By definition of $h^{-1}_{\partial W_{\mathrm{Ch}}}(C)$, the flowline follows $\partial_t - R_{\eta_{\mathrm{Ch}}}$ and eventually reaches $\underline{\partial C}$, as opposed to $\underline{t=t_-}$; see Figure \ref{fig:chimney4}.

The flowline then follows the characteristic foliation of $\underline{\partial C}$, and a portion of such flowlines reaches $\underline{t=t_-}$ rather than spiraling up toward $\underline{t=t_0}$. Note that traversing the characteristic foliation of $\underline{\partial C}$ does not change the $s$-coordinate. Thus, a flowline that reaches $\underline{t=t_-}$ via $\underline{\partial C}$ will do so with $s$-coordinate $s_0 - \bar{s}$. On $\underline{t=t_-}$ the flowline follows $\partial_s - X_{\lambda_{\mathrm{Ch}}}$ to $\underline{s=s_0}$, again due to the definition of $C$. The $r$-coordinate (i.e., the $X_{\lambda_{\mathrm{Ch}}}$-coordinate) of the flowline is now $-s_0 - \bar{s}$. Here the flowline follows $-\partial_t + e^{s_0}\, \partial_z$ to $\underline{t=0}$ where it then follows $-\partial_s + X_{\lambda_{\mathrm{Ch}}}$. Because the $r$-coordinate upon reaching $\underline{t=0}$ is $-s_0 - \bar{s}$, the flowline reaches $\underline{s=0}$ first, and it does so with $r$-coordinate $-\bar{s}$. In particular, for most values of $\bar{s}$, the $W_{\mathrm{Ch}}$-coordinate of the flowline is now in $C$. Along $\underline{s=0}$ the flowline follows $\partial_t - \partial_z$. Since the length of the Reeb direction over $C$ is $t_0$, which is larger than than the current $z$-coordinate of the flowline, the flowline will travel up the chimney some distance and exit the fold.     
\end{remark}

\begin{figure}[htb]
            \centering
           \def\svgwidth{\linewidth}
            \graphicspath{{Figures/}}
            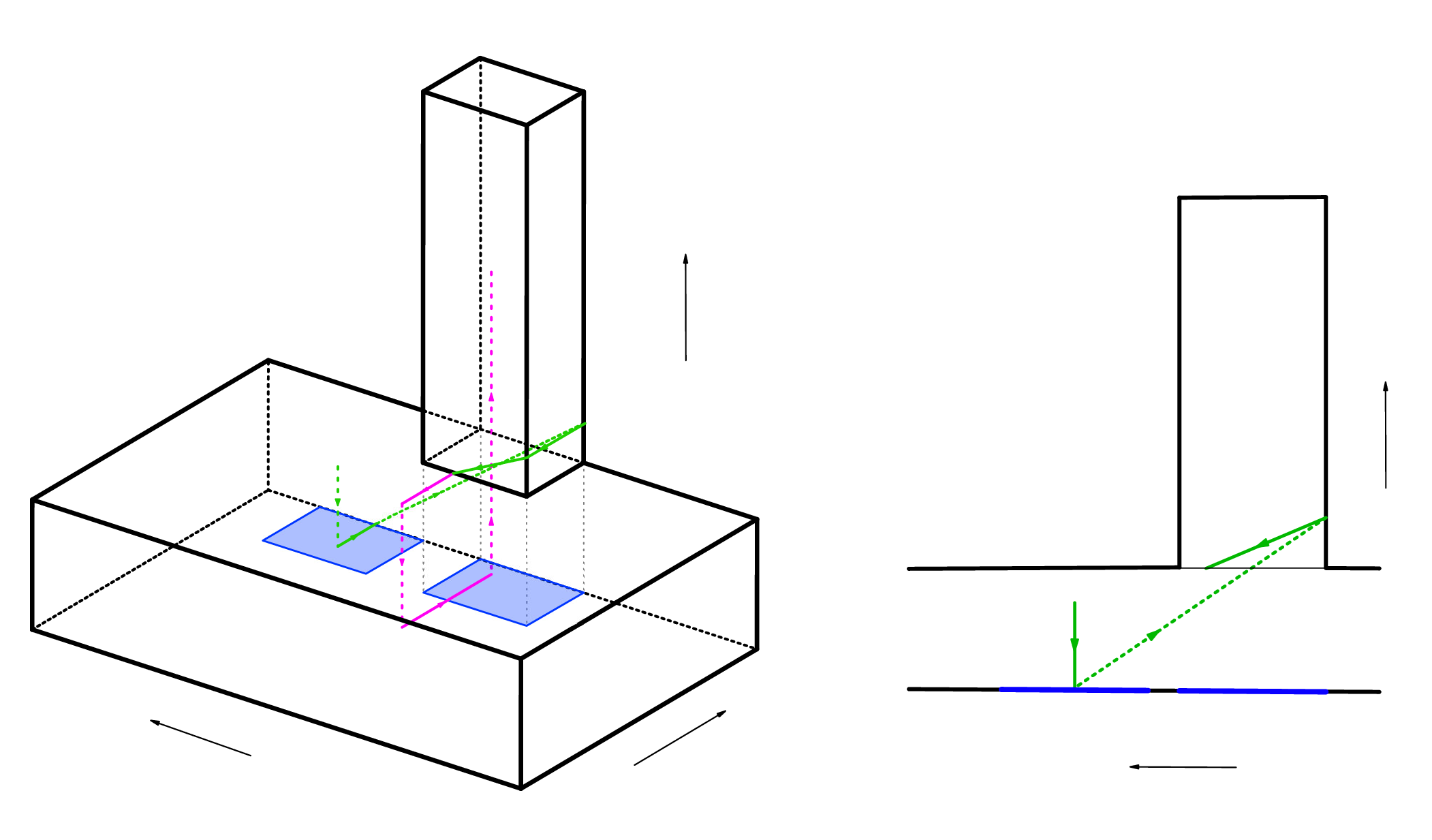
            \caption{A depiction of a flowline that enters the fold in the stove above $h^{-1}_{\partial W_{\mathrm{Ch}}}(C)$ and ultimately exits the fold after traveling up the chimney some distance. Two different phases of the flowline are color coded in green and pink for visual clarity, with the rightmost projection containing only the green phase. The flowline enters the fold at $x_1$ and exits the fold at $x_3$.}
            \label{fig:chimney4}
        \end{figure}

All this said, we summarize the behavior of a chimney fold. Points that enter in the chimney get funneled down and pass through the iterates $h_{\partial W_{\mathrm{Ch}}}^j(C)$ in the stove for $1\leq j \leq n$ before ultimately getting trapped. While this is happening, points entering through a collection of preimages $h_{\partial W_{\mathrm{Ch}}}^{-j}(C)$ in the stove pass through the various iterates and eventually travel up the chimney to exit the fold. As the $z$-coordinate of a flowline increases with each application of $h_{\partial W_{\mathrm{Ch}}}$, it follows that points entering $h^{-(j+1)}_{\partial W_{\mathrm{Ch}}}(C)$ with holonomy fill in a portion of the chimney above points entering $h^{-j}_{\partial W_{\mathrm{Ch}}}(C)$ with holonomy. Thus, the role of the $t$-thin stove is two-fold: it provides enough room to funnel points entering the chimney away to be trapped, and it also provides a repository of points that will ultimately fill in the trapping region in the chimney. Assumption \ref{ass:chimney} guarantees that these two features function independently of each other.

\subsubsection{High-dimensional piecewise linear chimney folds}\label{subsubsec:high_dim_chim}

The chimney folds we have discussed so far are based over 
\[
[0,s_0] \times H_{\mathrm{Ch}} \subset [0,s_0] \times [0,t_0] \times W_{\mathrm{Ch}}. 
\]
To extend this to higher dimensions, we next define a chimney fold based in a region of the form 
\[
[0,s_0] \times H_{\mathrm{Ch}} \times W_0 \subset [0,s_0] \times [0,t_0] \times W_{\mathrm{Ch}} \times W_0
\]
where $(W_0, \lambda_0)$ is a Weinstein domain of arbitrary dimension, different from $(W_{\mathrm{Ch}}, \lambda_{\mathrm{Ch}})$. This new $W_0$ direction corresponds to $W_0$ in the statement of Theorem~\ref{prop:mainprop}. The extension of a chimney fold to this setting is straightforward.

\begin{definition}
Fix $z_0, s_0, t_0, t_- > 0$ with $z_0 = e^{s_0}t_0$, and define $H_{\mathrm{Ch}} \subset [0,t_0] \times W_{\mathrm{Ch}}$ as before. A \textbf{(high-dimensional) piecewise linear chimney fold with parameters $z_0, s_0, t_0, t_-$}, still denoted $C\Pi^{PL}$, is the hypersurface 
\[
C\Pi^{PL} := \overline{\partial ([0,z_0] \times [0,s_0] \times H_{\mathrm{Ch}} \times W_0) \setminus \{z=0\}}.
\]
\end{definition}

The following proposition is the high-dimensional generalization of Proposition \ref{prop:PL_chimney_trap}. It says that nothing interesting happens to the trapping behavior of a chimney fold in this high-dimensional extension, at least as far as the chimney is concerned: every flowline entering the chimney (and anywhere in $W_0$) is trapped.

\begin{proposition}\label{prop:PL_chimney_trap_high}
Suppose that $x\in \{z=0\} \times \{s=s_0\} \times (0,t_0) \times \text{int}(C) \times \text{int}(W_0)$. Then the flowline through $x$ of the characteristic foliation of $C\Pi^{PL}$ is trapped in backward time. 
\end{proposition}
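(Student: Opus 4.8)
The plan is to reduce Proposition~\ref{prop:PL_chimney_trap_high} to the four-dimensional statement of Proposition~\ref{prop:PL_chimney_trap} by exploiting the product structure of the chimney fold in its high-dimensional incarnation. The key observation is that the folding function $F$ defining $C\Pi^{PL}$ does not depend on the new Weinstein coordinate $W_0$ except to decay near $\partial W_0$; in particular, away from a collar neighborhood of $\partial W_0$ we have $d_{W_0}F = 0$. So I would first invoke Lemma~\ref{lemma:LVFcomputation}(i) to write down the Liouville vector field of $dF + e^s(dt + \tilde\lambda_0 + \lambda_0)$ on the region where $d_{W_0}F = 0$: it is the four-dimensional chimney-fold Liouville vector field (living in the $(z,s,t,\tilde W_0)$ directions) plus a term of the form $-e^{-s}\frac{\partial F}{\partial t}\,X_{\lambda_0}$ pointing purely in the new $W_0$ direction. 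Phrased in the characteristic foliation language of this section, this simply says that the backward-oriented characteristic foliation of each face of $C\Pi^{PL}$ agrees with that of the four-dimensional chimney fold in the $(z,s,t,\tilde W_0)$ components, with an additional $\pm X_{\lambda_0}$ term on the faces $\underline{t=0}$, $\underline{t=t_-}$, $\underline{t=t_0}$ (and the appropriate Reeb term on the new face $\underline{\partial W_0}$), exactly as in the passage from Lemma~\ref{lemma:PL_box_fold_holonomy_low} to Lemma~\ref{lemma:PL_cf_high_dimensional}.

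The second step is the actual trapping argument. Take $x \in \{z=0\}\times\{s=s_0\}\times(0,t_0)\times\mathrm{int}(C)\times\mathrm{int}(W_0)$. I would run the proof of Proposition~\ref{prop:PL_chimney_trap} verbatim, tracking the $(z,s,t,\tilde W_0)$ coordinates: the flowline descends to $\underline{t=0}$ with $z$-coordinate $e^{s_0}t(x)$, reaches $\underline{\partial\tilde W_0}$ before $\underline{s=0}$ since $x$ started in $C\subset N^{s_0}(\partial\tilde W_0)$, and the main assumption $\gamma_C\cap h_{\partial\tilde W_0}(\gamma_C)=\emptyset$ forces it to reach $\underline{t=t_-}$ rather than $\underline{\partial C}$; then it cycles through the stove as before, gaining $e^{s_0}t_-$ in the $z$-coordinate each time, and after at most $n$ cycles the $z$-coordinate exceeds $z_0 = e^{s_0}t_0$, forcing it to reach $\underline{z=z_0}$, whence it is trapped by (the high-dimensional analogue of) Lemma~\ref{lemma:chimney_z_0_trap}. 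The only new ingredient is that, during this whole process, the $W_0$-coordinate evolves only along the faces $\underline{t=0}$, $\underline{t=t_-}$, $\underline{t=t_0}$ (via $\mp X_{\lambda_0}$), and along the new face $\underline{\partial W_0}$ (via $-R_{\eta_0}$); since $(W_0,\lambda_0)$ is a Weinstein \emph{domain}, $X_{\lambda_0}$ and $R_{\eta_0}$ never push the flowline across $\partial W_0$, and in fact each passage along $\underline{t=0}$ or $\underline{t=t_-}$ moves the $W_0$-coordinate strictly toward $\mathrm{Skel}(W_0,\lambda_0)$. Thus the entire $(z,s,t,\tilde W_0)$-dynamics is unaffected by the presence of the $W_0$-factor and the trapping mechanism goes through unchanged.

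The last step is a bookkeeping one: I should confirm that the high-dimensional versions of the auxiliary lemmas used inside the proof of Proposition~\ref{prop:PL_chimney_trap} (notably Lemma~\ref{lemma:chimney_z_0_trap} and Lemma~\ref{lemma:pre_chimney_t_0_trap}) remain valid after adjoining the $W_0$-factor. For Lemma~\ref{lemma:chimney_z_0_trap} this is the same argument I just described, and for Lemma~\ref{lemma:pre_chimney_t_0_trap} (invoked in Case~1 of that proof) one observes that the forward-time argument there likewise only uses the $(z,s,t)$ and $\tilde W_0$ coordinates, with the new $W_0$-coordinate merely riding along via $X_{\lambda_0}$/$R_{\eta_0}$ without ever exiting $W_0$. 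Finally, I would note that the flowline may pass through the collar of $\partial W_0$ where $d_{W_0}F\neq 0$; there Lemma~\ref{lemma:LVFcomputation}(ii) applies instead and gives an extra $-e^{-s}\frac{\partial F}{\partial r}(-\partial_t + e^{-r}R_{\eta_0})$ term, but this is precisely the term accounted for by the face $\underline{\partial W_0}$ in the piecewise-linear model and does not alter the $(z,s,t,\tilde W_0)$ casework.

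The main obstacle I anticipate is not conceptual but organizational: making precise the claim that ``the $(z,s,t,\tilde W_0)$-dynamics decouples from the $W_0$-dynamics.'' This requires being careful that at every face the characteristic foliation genuinely splits as (four-dimensional chimney vector field) $\oplus$ ($W_0$-term), and that the faces of $\partial([0,z_0]\times[0,s_0]\times H_0^C\times W_0)$ are exactly the old faces crossed with $W_0$ together with the one new face $[0,z_0]\times[0,s_0]\times H_0^C\times\partial W_0$ — a computation entirely parallel to Lemma~\ref{lemma:PL_cf_high_dimensional}, which I would simply cite rather than redo. Once that decoupling is stated cleanly, the proof is a one-paragraph reduction to Proposition~\ref{prop:PL_chimney_trap}.
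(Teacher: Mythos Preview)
Your decoupling claim is too strong, and the argument as written has a genuine gap.  On the face $\underline{t=0}$ the backward characteristic foliation is $-\partial_s + X_{\tilde\lambda_0} + X_{\lambda_0}$, so the flowline is simultaneously pushed toward $\partial\tilde W_0$ \emph{and} toward $\partial W_0$.  Whether it first hits $\underline{\partial\tilde W_0}$ or $\underline{\partial W_0}$ depends on the relative Liouville distances, and there is no reason the former must win.  When $\underline{\partial W_0}$ is reached first, the $(z,s,t,\tilde W_0)$ casework you want to import from Proposition~\ref{prop:PL_chimney_trap} breaks down: on $\underline{\partial W_0}$ the foliation is $\partial_t - R_{\eta_0}$, which \emph{does} move the $t$-coordinate, contrary to your assertion that this face ``does not alter the $(z,s,t,\tilde W_0)$ casework.''  (Relatedly, your claim that passage along $\underline{t=0}$ moves the $W_0$-coordinate toward $\mathrm{Skel}(W_0,\lambda_0)$ is backward --- the sign there is $+X_{\lambda_0}$, away from the skeleton.)

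The paper's proof handles exactly this missing case.  If the flowline reaches $\underline{\partial W_0}$ while its $\tilde W_0$-coordinate is still in $C$, then over $C$ the Reeb chord has length $t_0$, so following $\partial_t - R_{\eta_0}$ carries it all the way to $\underline{t=t_0}$, where Lemma~\ref{lemma:t_0_trap} and Lemma~\ref{lemma:pre_chimney_t_0_trap} give trapping directly --- bypassing the stove-cycling argument entirely.  So the fix is not to argue decoupling, but to split into the two cases (reaching $\underline{\partial\tilde W_0}$ first versus $\underline{\partial W_0}$ first) and observe that the second case is in fact \emph{easier}.
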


\begin{proof}
The proof is identical to the proof of Proposition \ref{prop:PL_chimney_trap}. Initially, the foliation along $\underline{s=s_0}$ is directed by $-\partial_t + e^{s_0}\, \partial_z$, so the flowline will reach $\underline{t=0}$. Here the foliation is $-\partial_s + X_{\lambda_{\mathrm{Ch}}} + X_{\lambda_0}$. The flowline will either reach $\underline{\partial W_{\mathrm{Ch}}}$ or $\underline{\partial W_0}$. In the former case, the same analysis in the proof of Proposition \ref{prop:PL_chimney_trap} shows that the flowline is trapped; the only additional behavior is some inconsequential back and forth movement in the $W_0$-direction before ultimately limiting towards $\skel(W_0, \lambda_0)$. 

In the latter case --- when the flowline reaches $\underline{\partial W_0}$ --- it then follows $\partial_t - R_{\eta_0}$, where $R_{\eta_0}$ is the Reeb vector field of $\eta_0 := \lambda_0\mid_{\partial W_0}$. Since the $W_{\mathrm{Ch}}$-coordinate of the flowline is in $C$, where Reeb chords have length $t_0$, the flowline follows $\partial_t - R_{\eta_0}$ and ultimately reaches $\underline{t=t_0}$. By Lemma \ref{lemma:t_0_trap} and Lemma \ref{lemma:pre_chimney_t_0_trap}, the flowline is trapped. 
\end{proof}

\subsection{Smooth chimney folds}\label{subsection:smooth_chimney}

As we did with box folds in Section \ref{section:box_folds_smooth}, we now pass from the piecewise linear chimney fold to a smooth, graphical version which is a legitimate Liouville homotopy. In particular, we prove the following proposition, which is the chimney fold analogue of Theorem \ref{theorem:new_smooth_box_fold}.

\begin{proposition}[Existence and behavior of smooth chimney folds]\label{prop:new_chimney_smooth}

Let $(W_{\mathrm{Ch}}, \lambda_{\mathrm{Ch}})$ and $(W_0, \lambda_0)$ be Weinstein domains of dimension $2$ and $2n-2\geq 2$, respectively. Fix $s_0, t_0 >0$ and let $V_0:= [0,s_0] \times [0,t_0] \times W_{\mathrm{Ch}} \times W_0$. For any smooth function $F:V_0 \to \R$, let $\lambda_{F} := dF + e^s\, (dt + \lambda_{\mathrm{Ch}} + \lambda_0)$ denote a Liouville form on $V_0$, let $X_{\lambda_F} = \partial_s + X_{F}$ be its Liouville vector field, and let 
\[
h_{F}:\{s=s_0\} \times [0,t_0] \times W_{\mathrm{Ch}} \times W_0 \dashrightarrow \{s=0\} \times [0,t_0] \times W_{\mathrm{Ch}} \times W_0
\]
be the partially-defined holonomy map given by backward flow of $X_{\lambda_F}$. 

Fix $0 < t_- \ll t_0$. Let $C\subset N^{s_0}(\partial W_{\mathrm{Ch}})$ be a flowbox of the Liouville vector field $X_{\lambda_{\mathrm{Ch}}}$ of length $s_0$ and width $\mu_0$, measured with respect to the Reeb flow of $\partial (W_{\mathrm{Ch}}, \lambda_{\mathrm{Ch}})$. Finally, fix $0< \epsilon \ll \min(1,s_0,t_-, \mu_0)$. There is a smooth function $F_{\epsilon}^{\mathrm{Ch}}: V_0 \to [0,\infty)$, compactly supported in the interior of $V_0$, with the following properties.

\begin{enumerate}
    \item (Weinstein compatibility)\label{chimney:weinstein}

    \noindent The Liouville vector field $X_{\lambda_{F_{\epsilon}^{\mathrm{Ch}}}}$ is Morse with $2(N_{\mathrm{Ch}} + 2)N_0$ critical points, where $N_{\mathrm{Ch}}$ and $N_0$ are the number of critical points of $(W_{\mathrm{Ch}}, \lambda_{\mathrm{Ch}})$ and $(W_0, \lambda_0)$, respectively. Moreover, the number of middle index critical points is $(N_{\mathrm{Ch}}^{\mathrm{crit}} + 1)N_0^{\mathrm{crit}}$, where $N_{\mathrm{Ch}}^{\mathrm{crit}}$ and $N_0^{\mathrm{crit}}$ are the number of middle index critical points of $(W_{\mathrm{Ch}}, \lambda_{\mathrm{Ch}})$ and $(W_0, \lambda_0)$, respectively.

    \item (Trapping properties)\label{chimney:trapping}

    \noindent Let $(t_{\mathrm{init}}, p_{\mathrm{init}}^{\mathrm{Ch}}, p_{\mathrm{init}}^0) \in \{s=s_0\} \times [0,t_0] \times W_{\mathrm{Ch}} \times W_0$ be the initial point of a flowline of $X_{\lambda_{F_{\epsilon}^{\mathrm{Ch}}}}$. Define a subset of $\{s= s_0\}$ by
    \[
    U_{\mathrm{trap}}^{\epsilon} := [\epsilon, t_0-\epsilon] \times C_{\epsilon} \times (W_0 \setminus N^{\epsilon}(\partial W_0))
    \]
    where $C_{\epsilon}\subset C$ is an $\epsilon$-retract of $C$. If $(t_{\mathrm{init}}, p_{\mathrm{init}}^{\mathrm{Ch}}, p_{\mathrm{init}}^0) \in U_{\mathrm{trap}}^{\epsilon}$, then the flowline converges to a critical point of $X_{\lambda_{F_{\epsilon}^{\mathrm{Ch}}}}$ in backward time. 

    \item (Holonomy properties) 

    \noindent Let $(t_{\mathrm{init}}, p_{\mathrm{init}}^{\mathrm{Ch}}, p_{\mathrm{init}}^0) \in \{s=s_0\} \times [0,t_0] \times W_{\mathrm{Ch}} \times (W_0 \setminus N^{s_0}(\partial W_0))$ be the initial point of a flowline of $X_{\lambda_{F_{\epsilon}^{\mathrm{Ch}}}}$. Assume $(t_{\mathrm{init}}, p_{\mathrm{init}}^{\mathrm{Ch}}, p_{\mathrm{init}}^0)$ is in the domain of $h_{F_{\epsilon}^{\mathrm{Ch}}}$. Let $(t_{\mathrm{fin}}, p_{\mathrm{fin}}^{\mathrm{Ch}}, p_{\mathrm{fin}}^0) := h_{F_{\epsilon}^{\mathrm{Ch}}}(t_{\mathrm{init}}, p_{\mathrm{init}}^{\mathrm{Ch}}, p_{\mathrm{init}}^0)\in \{s=0\}$ denote the exit point of the flowline in backward time.  

    \begin{enumerate}
        \item The estimate $\norm{p^0_{\mathrm{fin}}}_{W_0}\leq e^{s_0}\norm{p^0_{\mathrm{init}}}_{W_0}$ holds.\label{chimney:weinstein-bound}

        \item If $(t_{\mathrm{init}}, p_{\mathrm{init}}^{\mathrm{Ch}}) \in [0, \epsilon] \times C$, then $t_{\mathrm{fin}} \in [0, t_-]$.\label{chimney:bottom}

        \item If $t_{\mathrm{init}} \in [0, t_-]$ and $t_{\mathrm{fin}}> t_-$, then $p_{\mathrm{init}}^{\mathrm{Ch}} \notin C_{\epsilon}$ and $p_{\mathrm{fin}}^{\mathrm{Ch}} \in C$.\label{chimney:stove}

    \end{enumerate}    
\end{enumerate}
    
\end{proposition}

\begin{remark}
Properties~\ref{chimney:bottom} and~\ref{chimney:stove} are important enough to merit rephrasings in informal language for the sake of clarity. First, the assumption $p_{\mathrm{init}}^0 \notin N^{s_0}(\partial W_0)$ means that we are only interested in the holonomy of flowlines that enter sufficiently far from $\partial W_0$.  Then~\ref{chimney:bottom} states that such points near the bottom of the chimney region, if they are not trapped, exit in the stove. Likewise,~\ref{chimney:stove} says that any point which enters in the stove and exits above the stove must have entered outside of the chimney region.    
\end{remark}

To define the smooth chimney fold and prove Proposition \ref{prop:new_chimney_smooth}, we will pass directly through the definition of the smooth box fold in Section \ref{section:box_folds_smooth} and will appeal to Theorem \ref{theorem:new_smooth_box_fold}. With all of the notation as established in the statement of Proposition \ref{prop:new_chimney_smooth}, let 
\[
M_{\mathrm{aux}} := [0,t_-] \times W_{\mathrm{Ch}} \times W_0.
\]
In words, $M_{\mathrm{aux}}$ is an auxiliary manifold with the same shape as the stove region of the base of a chimney fold. Next, we define a region $H_{\mathrm{Ch}}^{\epsilon} \subset [0,t_0] \times W_{\mathrm{Ch}}$ as follows. Let $F_{\epsilon}^C:C \to [0,t_{0} - t_-]$ be a smooth box fold with height $t_0 - t_-$ and smoothing parameter $\epsilon$ as defined in \ref{subsec:smooth2d}, so that $F_{\epsilon}^C \equiv t_0 - t_-$ on $C_{\epsilon}$, and extend $F_{\epsilon}^C$ to a function on $W_{\mathrm{Ch}}$ by the $0$-function outside of $C$. Define $H_{\mathrm{Ch}}^{\epsilon} := \{0\leq t \leq t_- + F_{\epsilon}^C\}$. See the left side of Figure \ref{fig:chimney_new}. In words, $H_{\mathrm{Ch}}^{\epsilon}$ is a smooth approximation of the low-dimensional piecewise linear chimney region $H_{\mathrm{Ch}}$ obtained by installing a box fold to the top side of the stove, based in $C$, and with the appropriate height.

\begin{figure}[ht]
	\begin{overpic}[scale=0.4]{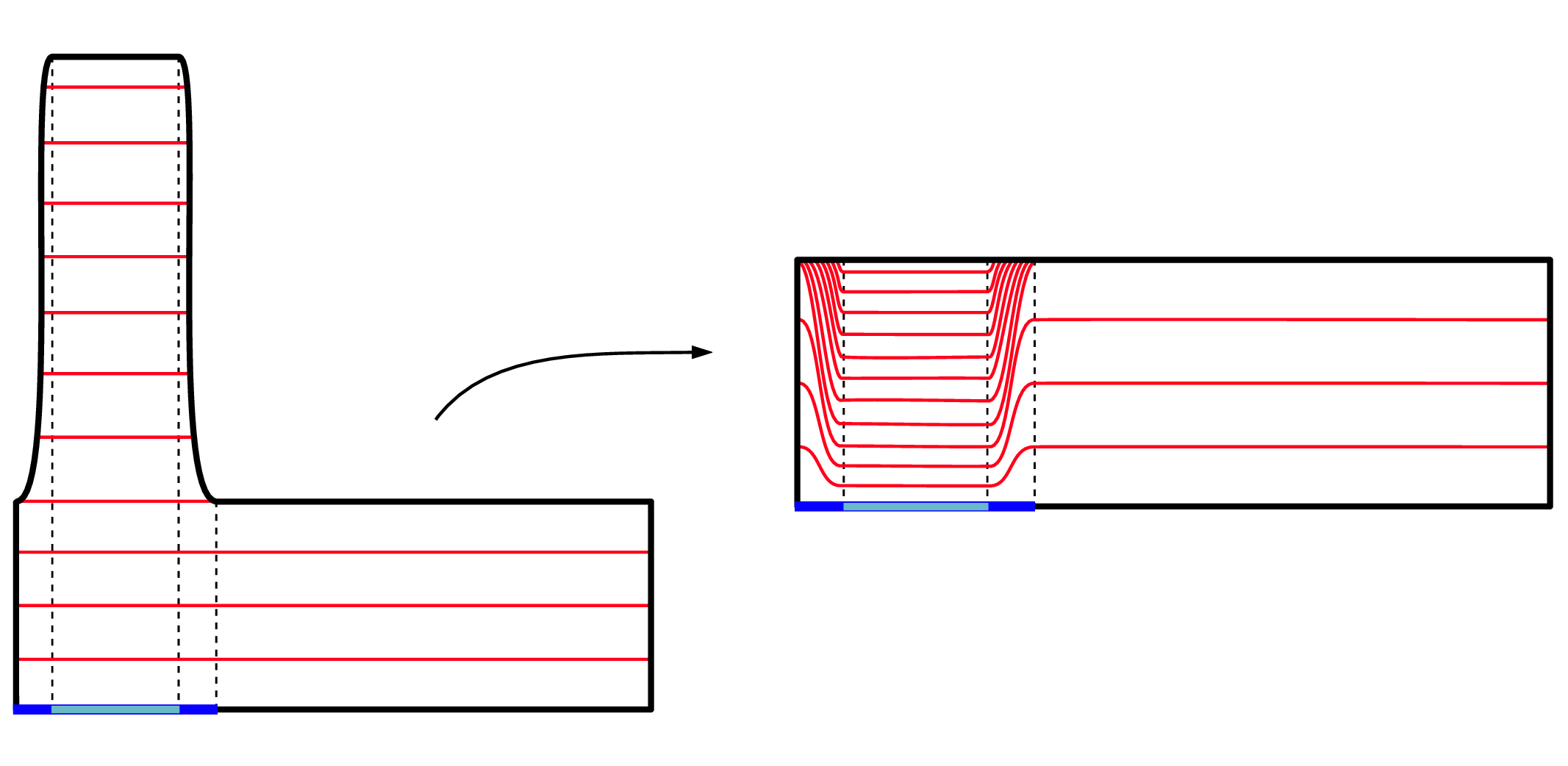}
	 \put(-1.5,16){\small $t_-$}
     \put(-1,3){\small $0$}
     \put(0,45){\small $t_0$}
     \put(16,0.5){\small $H_{\mathrm{Ch}}^{\epsilon}\times W_0$}   
        
     \put(100,32){\small $t_-$}
     \put(100,15.5){\small $0$}
     \put(74,13.5){\small $M_{\mathrm{aux}}$}

     \put(35,27){\small $\vartheta$}

	\end{overpic}
	\caption{In both figures, $C$ is indicated in dark blue and $C_{\epsilon}$ in light blue.}
	\label{fig:chimney_new}
\end{figure}

Let $\vartheta: H_{\mathrm{Ch}}^{\epsilon}\times W_0 \to M_{\mathrm{aux}}$ be a diffeomorphism of the form 
\[
\vartheta(t, p^{\mathrm{Ch}}, p^0) = (\theta(t, p^{\mathrm{Ch}}), p^{\mathrm{Ch}}, p^0)
\]
where $\theta: H_{\mathrm{Ch}}^{\epsilon} \to [0,t_-]$ satisfies the following properties. 
\begin{enumerate}
    \item For $p^{\mathrm{Ch}}\in C_{\epsilon}$, we have $\theta(t, p^{\mathrm{Ch}}) = \frac{t_-}{t_0}t$.

    \item For $p^{\mathrm{Ch}}\notin C$, we have $\theta(t, p^{\mathrm{Ch}}) = t$.
\end{enumerate}
See Figure \ref{fig:chimney_new}. Let $\alpha_{\mathrm{aux}}:= (\vartheta^{-1})^{\ast}(dt + \lambda_{\mathrm{Ch}} + \lambda_0)$ be the induced contact form on $M_{\mathrm{aux}}$. Note that, outside of $C$, $\alpha_{\mathrm{aux}} = dt + \lambda_{\mathrm{Ch}} + \lambda_0$, which is the usual contact form when viewing $M_{\mathrm{aux}}$ as a contact handlebody over $W_{\mathrm{Ch}} \times W_0$ with Reeb chords of length $t_-$. In $C_{\epsilon}$, however, $\alpha_{\mathrm{aux}} = \frac{t_0}{t_-}\, dt + \lambda_{\mathrm{Ch}} + \lambda_0$ and the Reeb chords have length $t_0$ as in a chimney fold. Over the region $C\setminus C_{\epsilon}$ there is some induced rotation of the Reeb vector field around $C$, but the precise behavior is inconsequential.

Choose $\epsilon'>0$ such that $\frac{t_0}{t_-}\epsilon' < \epsilon$, let $z_0 = e^{s_0}t_0$, and let $F_{\epsilon'}:[0,s_0]\times M_{\mathrm{aux}} \to [0,z_0]$ be the smooth box fold function as defined in Section \ref{section:box_folds_smooth} with smoothing parameter $\epsilon'$ and height $z_0$. Moreover, assume that Assumption \ref{ass:chimney} is satisfied. Finally, define $F^{\mathrm{Ch}}_{\epsilon}: [0,s_0]\times H_{\mathrm{Ch}}^{\epsilon}\times W_0 \to [0,z_0]$ by 
\[
F^{\mathrm{Ch}}_{\epsilon}(s, t, p^{\mathrm{Ch}}, p^0) := F_{\epsilon'}(s, \vartheta(t, p^{\mathrm{Ch}}, p^0)).
\]
This is the function that we will call a \textit{smooth chimney fold} and for which we will prove Proposition \ref{prop:new_chimney_smooth}. Given $F^{\mathrm{Ch}}_{\epsilon}$ as constructed above, we refer to the \textit{stove} or \textit{stove region} as the set 
\[
H_{\mathrm{Ch}}^{\epsilon} \cap \{t \leq t_- + \delta_{\mathrm{con}}e^{s_0}\}
\]
where $\delta_{\mathrm{con}} \ll \epsilon$ is the parameter used in the definition of the smooth box fold.  The smoothing of the chimney fold requires us to include in the stove some points which have $t$-value greater than $t_-$.  Namely, Remark~\ref{remark:2D-cp-height} tells us that the $t$-height of the index $1$ point of the box fold used to define the chimney portion of $H_{\mathrm{Ch}}^{\epsilon}$ is below the height $t_- + \delta_{\mathrm{con}}e^{s_0}$, and we define the stove region so that it contains this point.

\begin{remark}
The point of the above construction, and the proof of Proposition \ref{prop:new_chimney_smooth}, is that a chimney fold can be viewed as a ``box fold'' wherein the contact form on the underlying contact handlebody has been perturbed. As a result, the Reeb flow inside the region over $C$ has been modified --- in particular, the vertical Reeb flow has been massively slowed down over $C_{\epsilon}$ --- as has the characteristic foliation on the top side of $M_{\mathrm{aux}}$ (which dictates the nature of the eventual critical points). Accounting for these modifications, the content of Section \ref{section:box_folds_smooth} ``pulls back'' via the contactomorphism $\vartheta^{-1}$ to the chimney fold setting. 
\end{remark}

\begin{proof}[Proof of Proposition \ref{prop:new_chimney_smooth}.]

The Morse condition~\ref{chimney:weinstein} is immediate from~\ref{property:high_Weinstein_compat} of Theorem \ref{theorem:new_smooth_box_fold}. To count the number of critical points as stated, we count the critical points of the characteristic foliation of the top side $\{t=t_-\} \cong W_{\mathrm{Ch}} \times W_0$ of $(M_{\mathrm{aux}}, \alpha_{\mathrm{aux}})$. The restriction of the contact form to $\{t=t_-\}$ is $dF_{\epsilon}^C + \lambda_{\mathrm{Ch}} + \lambda_0$ where $F_{\epsilon}^C$ is the $2$-dimensional box folding function on $C$. The Liouville vector field of $dF_{\epsilon}^C + \lambda_{\mathrm{Ch}}$ on $W_{\mathrm{Ch}}$ has $N_{\mathrm{Ch}}+2$ critical points, of which $N_{\mathrm{Ch}}^{\mathrm{crit}}+1$ have middle index, and so $dF_{\epsilon}^C + \lambda_{\mathrm{Ch}} + \lambda_0$ has $(N_{\mathrm{Ch}}+2)N_0$ critical points, of which $(N_{\mathrm{Ch}}^{\mathrm{crit}}+1)N_0^{\mathrm{crit}}$ have middle index. Then by~\ref{property:2d_Weinstein_compat} of Proposition \ref{prop:2D_smooth_box_fold}, the Liouville vector field $X_{\lambda_{F_{\epsilon}^{\mathrm{Ch}}}}$ of the chimney fold function has $2(N_{\mathrm{Ch}}+2)N_0$ critical points, of which $(N_{\mathrm{Ch}}^{\mathrm{crit}}+1)N_0^{\mathrm{crit}}$ have middle index. 

Next we consider the trapping region described in~\ref{chimney:trapping}. Recall that in a piecewise linear chimney fold, the entire chimney region is trapped. By a piecewise smooth homeomorphism from the piecewise linear chimney fold base onto $M_{\mathrm{aux}}$ defined analogously to $\vartheta$, we may induce a piecewise linear contact form $\alpha_{\mathrm{aux}}^{PL}$ on $M_{\mathrm{aux}}$ for which a piecewise linear box fold based over $(M_{\mathrm{aux}}, \alpha_{\mathrm{aux}}^{PL})$ captures this behavior. By~\ref{chimney:trapping} of Theorem \ref{theorem:new_smooth_box_fold}, we may approximate the trapping behavior of this piecewise linear fold with $\epsilon'$ precision with a smooth box fold based over $(M_{\mathrm{aux}}, \alpha_{\mathrm{aux}})$. Then under the contactomorphism $\vartheta$ and the assumption that $\frac{t_0}{t_-}\epsilon' < \epsilon$, this gives a smooth approximation with $\epsilon$ precision to the trapping region of the piecewise linear chimney fold, which is the region described by~\ref{chimney:trapping}.

Finally we consider the holonomy properties. 

\begin{enumerate}
    \item[(\ref{chimney:weinstein-bound})] The $W_0$-norm estimate in~\ref{chimney:weinstein-bound} follows from~\ref{thm-part:new_smooth_holonomy} of Theorem \ref{theorem:new_smooth_box_fold}, as the modified contact form in $M_{\mathrm{aux}}$ does not affect its proof.

    Note that, since $p^0_{\mathrm{init}}\in W_0 \setminus N^{s_0}(\partial W_0)$ by assumption, this implies that the flowline experiences negligible holonomy in the $\partial W_0$-direction, which only occurs in an $\epsilon$-neighborhood of $\partial W_0$. This comment is relevant for~\ref{chimney:bottom} and~\ref{chimney:stove}.

    \item[(\ref{chimney:bottom})] The next property~\ref{chimney:bottom} is a consequence of~\ref{property:high_hol3c} of Theorem \ref{theorem:new_smooth_box_fold} together with Assumption \ref{ass:chimney}. Indeed, suppose for the sake of contradiction that $t_{\mathrm{fin}} > t_-$. Then necessarily by the support of the chimney fold we have $p^{\mathrm{Ch}}_{\mathrm{fin}}\in C$. By~\ref{property:high_hol3c} of Theorem~\ref{theorem:new_smooth_box_fold}, the flowline experiences a time-$(-t_*)$ flow in the $R_{\eta_{\mathrm{Ch}}}$-direction, where $t_{*}$ is sufficiently large enough to satisfy Assumption~\ref{ass:chimney}. Such a flow displaces $C$. This is a contradiction, and hence $t_{\mathrm{fin}}\in [0,t_-]$. 

    \item[(\ref{chimney:stove})] The final property is similar. The observation that $p^{\mathrm{Ch}}_{\mathrm{fin}}\in C$ is immediate from the support of the chimney fold. Next, suppose for the sake of contradiction that $p_{\mathrm{init}}^{\mathrm{Ch}}\in C_{\epsilon}$. By~\ref{chimney:trapping}, we have $t_{\mathrm{init}} \in [0,\epsilon]$. But then by~\ref{chimney:bottom}, $p_{\mathrm{init}}^{\mathrm{Ch}} \notin C$. Thus, we must have $p_{\mathrm{init}}^{\mathrm{Ch}}\notin C_{\epsilon}$.
    
\end{enumerate}
\end{proof}

\section{The blocking apparatus and proof of Theorem~\ref{prop:mainprop}}\label{section:proof_main_prop}
In this section we prove Theorem~\ref{prop:mainprop}, the local operation that drives the strategy of Section \ref{section:mitsumatsu}. The majority of the section is spent in \ref{subsec:low_dim_BA_new} where we construct and analyze the blocking apparatus in low dimensions. In \ref{subsec:real_proof}, we extend this work to higher dimensions and prove Theorem \ref{prop:mainprop}.

\subsection{The low-dimensional blocking apparatus}\label{subsec:low_dim_BA_new}

We begin by proving a low-dimensional version of Theorem~\ref{prop:mainprop} without the $W_0$-factor. 

\begin{proposition}[Low-dimensional blocking apparatus]\label{prop:reduced_mainprop}

Consider the Weinstein cobordism $(U=[0,s_0]\times[0,t_0]\times r_0\,\mathbb{D}^2,\lambda_U = e^s\,(dt+\lambda_{\mathrm{stab}}))$.  Fix $0<\epsilon\ll \min(1,s_0,\frac{1}{2}(1-e^{-s_0})t_0)$ sufficiently small. If $r_0>0$ is sufficiently large, a blocking apparatus can be installed in $U$, i.e., a Liouville homotopy $\lambda_U \rightsquigarrow \lambda_{\mathrm{BA}}$ can be applied to $U$, such that the following properties hold. 
\begin{enumerate}
    \item (Weinstein compatibility)\label{reduced_mainprop:compat}

    \noindent The Liouville vector field $X_{\lambda_{\mathrm{BA}}}$ is Morse with 
    $8$ critical points. Of these $8$ critical points, there is a single critical point of middle index. 

    \item (Trapping properties)\label{reduced_mainprop:trapping}

\noindent There is a neighborhood $U_{\mathrm{trap}}$ of $[\epsilon,t_0-\epsilon]\times \{(0,0)\}\subset[0,t_0]\times r_0\,\mathbb{D}^2$ such that any flowline passing through $\{s=s_0\}\times U_{\mathrm{trap}}\subseteq \partial_+U$ converges to a critical point of $X_{\lambda_{\mathrm{BA}}}$ in backward time.

    \item (Holonomy properties)\label{reduced_mainprop:holonomy}

\noindent Let $x\in \partial_+U$ be in the domain of the partially-defined holonomy map $h\colon\partial_+U\dashrightarrow\partial_-U$. Then the following properties hold. 
\begin{enumerate}
    \item For some constant $0<K<1$, we have  $\|\pi_{r_0\,\mathbb{D}^2}(h(x))\|_{\mathrm{stab}}\leq Ke^{\frac{s_0}{2}}\,\|\pi_{r_0\,\mathbb{D}^2}(x)\|_{\mathrm{stab}}$, where $\|\cdot\|_{\mathrm{stab}}$ is the usual Euclidean norm on $r_0\D^2$.\label{prop-part:reduced_mainprop_holonomy}
    \item If $t(x) \in [0,\epsilon] \cup [t_0 - \epsilon,t_0]$, then $t(h(x)) \in [0,\epsilon] \cup [t_0 - \epsilon, t_0]$.\label{prop-part:reduced_mainprop_t_holonomy}
    
\end{enumerate}
\end{enumerate}

\end{proposition}

\begin{remark}
The $\epsilon$ parameter in the statement of Proposition \ref{prop:reduced_mainprop} will be the smoothing parameter for all folds involved in the blocking apparatus. In particular, $\epsilon$ is the same $\epsilon$ that appears in the statements of Theorem \ref{theorem:new_smooth_box_fold} and Proposition \ref{prop:new_chimney_smooth}.    
\end{remark}

In \ref{subsubsec:low-dim_BA} we construct the low-dimensional blocking apparatus, and in \ref{subsubsec:low-dim_BA_proof} we use it to prove Proposition \ref{prop:reduced_mainprop}.

\subsubsection{Construction of the low-dimensional blocking apparatus}\label{subsubsec:low-dim_BA}

A blocking apparatus will consist of a chimney fold $C\Pi_1$ and an ordinary box hole $\rotreflPi_2$. The chimney fold will be installed so that the chimney region traps a large portion of the $t$-axis in $[0,t_0] \times r_0\D^2$, and the box hole will be installed behind the chimney fold (in the $s$-sense) so that its trapping region contains the stove of the chimney fold. Roughly, the point is that we use the chimney fold to achieve most of the desired trapping behavior, and we use the box hole to swallow all of the unwanted holonomy behavior induced by the stove of the chimney fold. 

For now we use the model
\[
\left([0,s_0]\times [0,t_0]\times \R^2, \, e^s\, (dt + \lambda_{\mathrm{stab}})\right)
\]
with an infinite stabilization direction. There are various parameters involved in the construction of a blocking apparatus, and we will choose $r_0$ at the end. We instantiate them in an intentional order below, and for each parameter we provide some informal explanation as to its role. Our convention in this section is as follows, in contrast to previous sections: parameters with a $0$ subscript (like $s_0, t_0$) correspond to the given model cobordism, parameters with a $1$ subscript correspond to the chimney fold $C\Pi_1$, and parameters with a $2$ subscript correspond to the box hole $\rotreflPi_2$. Furthermore, we will often use the radial coordinate $r = \sqrt{p^2 + q^2}$ on $\R^2$. 

Given $\epsilon > 0$ as in the statement of Proposition \ref{prop:reduced_mainprop}, we proceed as follows. We prompt the reader to consult Figure \ref{fig:BAadj} and Figure \ref{fig:BAadj2} while noting each parameter. 
\begin{enumerate}
    \item \underline{Choose $s_1,s_2 > 0$ such that $s_1 + s_2 < s_0$.}

    These are the symplectization lengths of $C\Pi_1$ and $\rotreflPi_2$, respectively. 

    \item \underline{Choose $t_2$ and $z_2$ satisfying $\frac{t_0}{2} < t_2 < t_0 - 2\epsilon$ and $z_2 \geq e^{s_2}t_2$.}

    These are the Reeb-thickness of the contact handlebody supporting the box hole $\rotreflPi_2$ and the $z$-depth of the hole, respectively. The lower bound on $t_2$ is not so important; it simply serves to define an $\epsilon$-independent bound. The upper bound ensures that the top of the box hole will not influence the holonomy induced by the top of the chimney fold. 

    \item \underline{Choose $\delta_1>\epsilon$ so that $3\delta_1 < (1-e^{-s_2})t_2$.}

    The parameter $\delta_1$ is a $t$-value that will position the tilted base of the region $H_1^{C_1}$ supporting the chimney fold $C\Pi_1$. It will also represent the Reeb-thickness of the stove of $H_1^{C_1}$. The required inequality ensures that the trapping region of the box hole $\rotreflPi_2$ is sufficiently $t$-thick to contain the entire stove of $H_1^{C_1}$.

   \item \underline{Choose $c_1, p_1 > 0$ such that $c_1p_1 < \delta_1 - \epsilon$.}
    
    The parameter $c_1$ will be the (negative) $p$-slope of the Weinstein base $W_1 \subset [0,t_0]\times \R^2_{p,q}$ that will define the chimney fold. The parameter $p_1$ will be the $p$-thickness of the Weinstein base $W_1$, ensuring that the support of the chimney fold is a subset of $[\epsilon,t_0]\times r_0\D^2$.

    \item \underline{Choose $0< \rho_1 < p_1$ sufficiently small to satisfy the requirements described below.}
    
    This will be the radius of the chimney. Let $W_1$ be the surface
    \[
    W_1 := \{t=\delta_1-c_1p\,|\, (p,q) \in [-p_1,p_1] \times [-q_1,0] \text{ or } r \leq \rho_1\} \subset [0,t_0]\times \R^2_{p,q},
    \]
    for some $q_1 > 0$ to be chosen later. Endow $W_1$ with the Liouville form induced by the restriction of $dt + \lambda_{\mathrm{stab}}$. We require $\rho_1$ to be small enough so that the following two properties hold. Note that both hold trivially as $\rho_1 \to 0$, and that neither depend on $q_1$. 
    \begin{enumerate}
        \item \underline{$\{r\leq \rho_1\} \cap W_1 \subset N^{s_1}(\partial W_1).$}
        
        This ensures that $C_1 := \{r \leq \rho_1\}\cap W_1$ is a viable chimney region. 
        
        \item \underline{$p\left(h_{\partial W_1}(p=-\rho_1, q=0)\right) > \rho_1$.}
        
        Here $h_{\partial W_1}$ is the boundary holonomy map of the stove as in Section \ref{section:chimney_folds}. This ensures part of Assumption ~\ref{ass:chimney} needed to define $C\Pi_1$. Towards this, we then let $\gamma_{C_1} \subset \partial W_1$ be an arc containing $\{r=\rho_1\} \cap \{q \geq 0\} \cap \partial W_1$ such that $\gamma_{C_1}\cap h_{\partial W_1}(\gamma_{C_1}) = \emptyset$ and $\{r\leq \rho_1\} \subset N^{s_1}(\gamma_{C_1})$. 
    \end{enumerate}

     \item \underline{Choose $q_1 > 2c_1 > 0$ large enough so that Assumption \ref{ass:chimney} holds.}
    
    In other words, we choose the $q$-length of $W_1$ large enough to ensure that $\gamma_{C_1}\cap h^j_{\partial W_1}(\gamma_{C_1}) = \emptyset$ for all $1\leq j \leq n$, where $n$ satisfies $n\delta_1 > t_0$. It is possible to give a precise quantitative estimate on $q_1$, but not necessary. We need $q_1 > 2c_1$ to ensure that $W_1$ is a domain (the critical point of the Liouville vector field of $W_1$ is at $(p=0, q=-2c_1)$), and we simply need $q_1$ to be large enough so that $h^n_{\partial W_1}(\gamma_{C_1})$ does not circle around $\partial W_1$ back to $\gamma_{C_1}$. It is clear that such a $q_1$ exists, provided we have an arbitrarily large stabilization direction.

    \item \underline{Pick $r_2 > 0$ such that $e^{s_2}\sqrt{p_1^2 + q_1^2} <  r_2$.}
    
    This will be the radius of the contact handlebody supporting $\rotreflPi_2$, which needs to be large enough so that the box-hole version of $U_{\mathrm{trap}, 1}$ in \ref{property:high_trap2} of Theorem \ref{theorem:new_smooth_box_fold} traps the entire stove of $H_1^{C_1}$.

    \item \underline{Pick $r_0 > r_2$.}

    This is just to provide enough room in the stabilization direction to support both folds in the blocking apparatus. 

\end{enumerate}

\begin{figure}[ht]
	\begin{overpic}[scale=0.43]{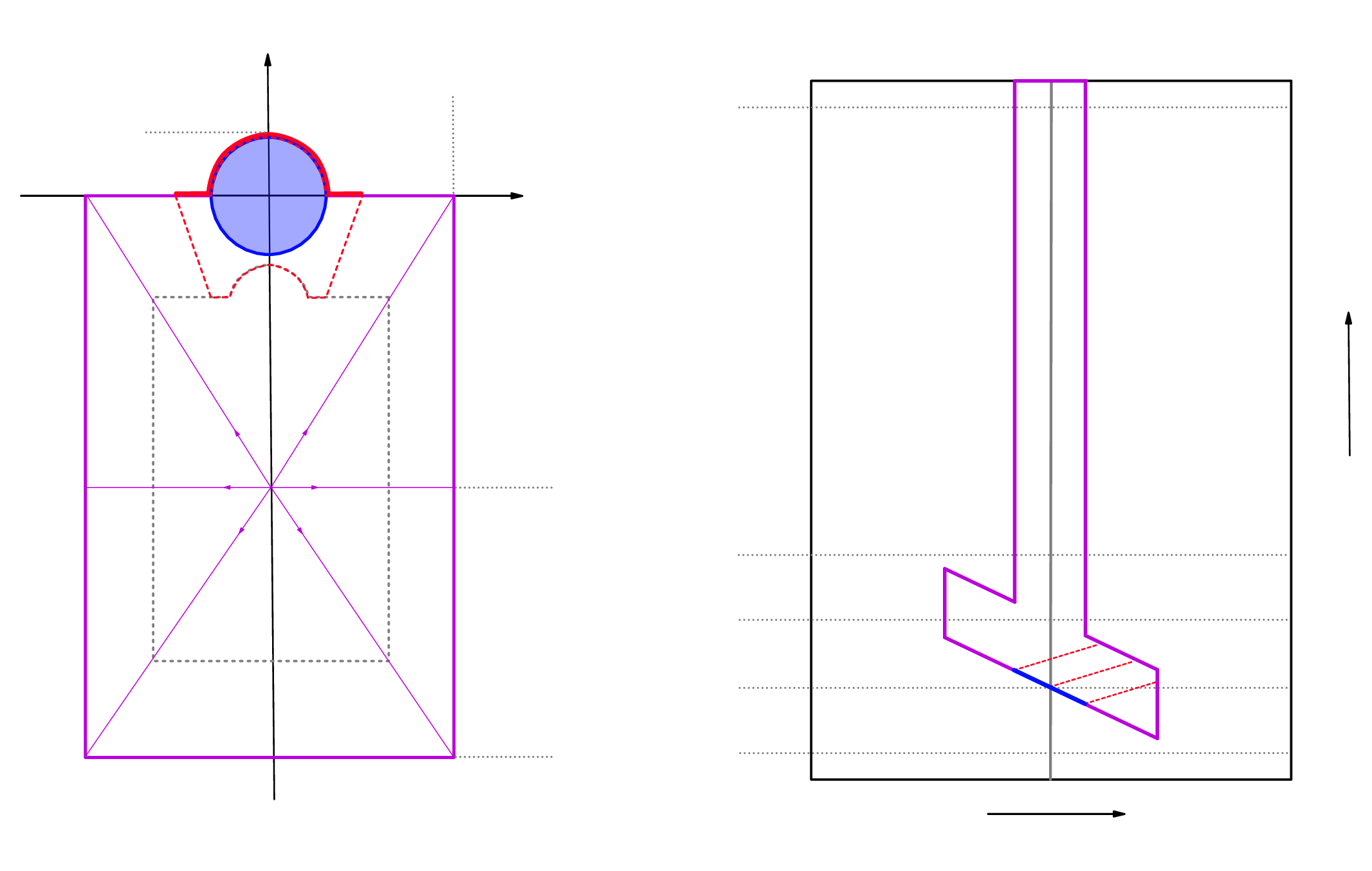}
	 \put(83,3.5){\small $p$}
     \put(98,41.5){\small $t$}
     \put(39,48.75){\small $p$}
     \put(19,61){\small $q$}

     \put(40.5,27.5){\small $-2c_1$}
     \put(40.5,7.75){\small $-q_1$}
     \put(8,53.5){\small $\rho_1$}

     \put(51.75,8.25){\small $\epsilon$}
     \put(51.25,12.75){\small $\delta_1$}
     \put(50.75,17.75){\small $2\delta_1$}
     \put(50.75,22.75){\small $3\delta_1$}

     \put(48.25,55.25){\small $t_0 - \epsilon$}

	\end{overpic}
	\caption{Two projections of $H_1^{C_1}$. On the left, $C_1$ is the blue disk, $\gamma_{C_1}$ is the thick red curve, and the region $N^{s_1}(\gamma_{C_1})$ is given by the dashed red region. On the right, $H_1^{C_1}$ is outlined in purple. The dashed red lines indicate the characteristic foliation of $\underline{\partial W_1}$.}
	\label{fig:BAadj}
\end{figure}

With these choices, we define the contact manifolds $H_1^{C_1}$ and $H_2$ supporting $C\Pi_1$ and $\rotreflPi_2$. First, let $H_1^{\mathrm{st}}$ be the stove of $H_1^{C_1}$ defined by flowing $W_1$ along $\partial_t$ for time $\delta_1$. Then let 
    \[
    H_1^{C_1} := H_1^{\mathrm{st}} \cup \left(\{\delta_1 - c_1p\leq t\leq t_0\}\cap \{r \leq \rho_1\}\right).
    \]
    See Figure \ref{fig:BAadj}. Next, let $W_2 := \{t=0\} \cap \{r\leq r_2\}$ and endow it with the Liouville form $\lambda_{\mathrm{stab}}$. Then let $H_2 := [0,t_2]\times W_2$. See Figure \ref{fig:BAadj2}. Finally: 

\begin{definition}
A \textbf{blocking apparatus (with smoothing parameter $\epsilon$)} consists of a chimney fold $C\Pi_1$ with smoothing parameter $\epsilon$ installed over $[s_0-s_1, s_0] \times H_1^{C_1}$ and a box hole with smoothing parameter $\epsilon$ installed over $[0,s_2]\times H_2$. 
\end{definition}

\begin{figure}[ht]
	\begin{overpic}[scale=0.43]{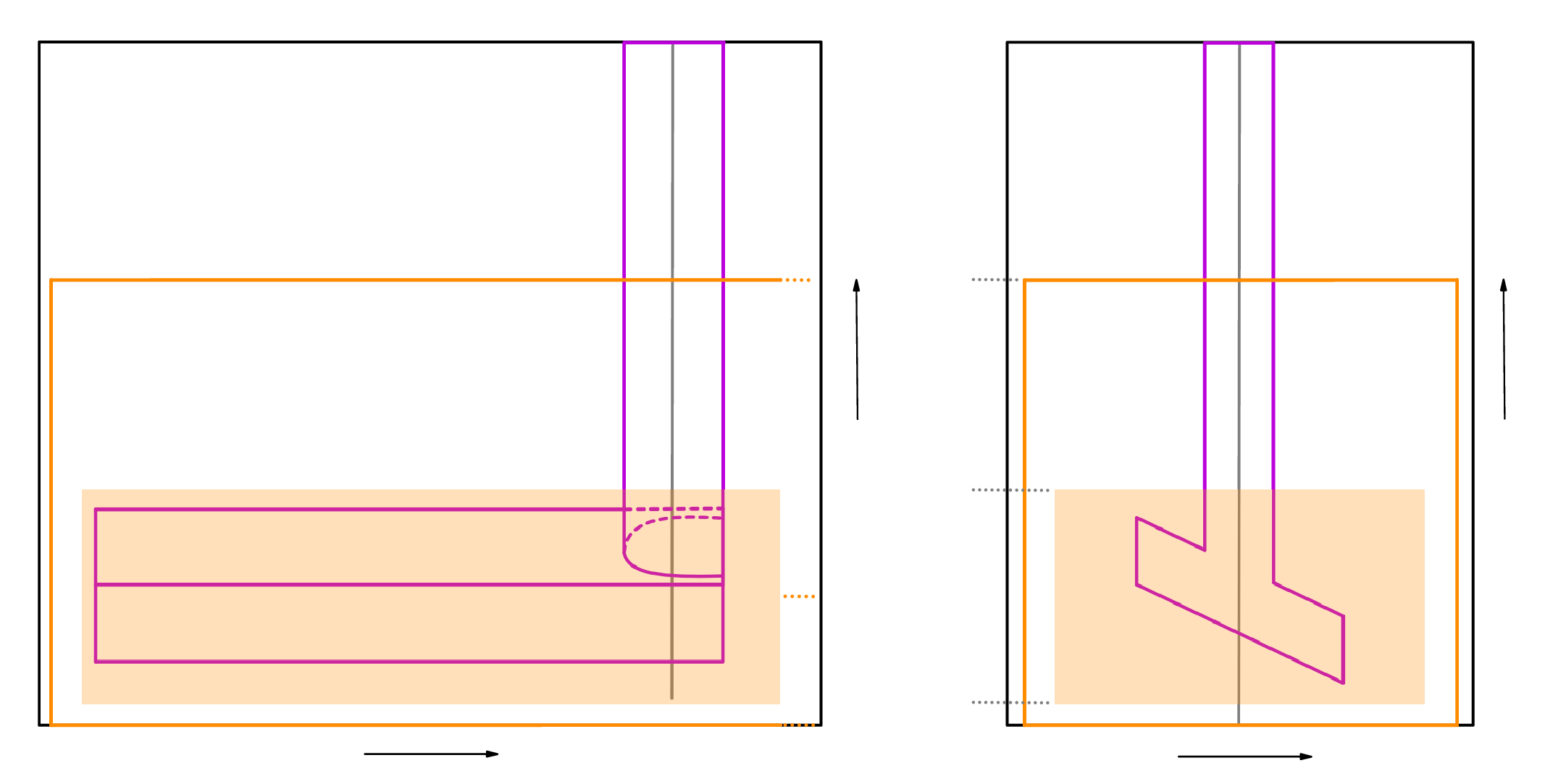}
	 \put(33,1.5){\small $q$}
  \put(86,1.5){\small $p$}
     \put(97,33.5){\small $t$}
     \put(55,33.5){\small $t$}

     \put(61.25,5){\small $\epsilon$}
     
     \put(61,18.5){\small $t^*$}

     \put(61,32.25){\small $t_2$}

	\end{overpic}
	\caption{The region $H_2$ in orange placed against $H_1^{C_1}$ in purple. The lower trapping region of $\rotreflPi_2$ is shaded in light orange. In particular, it contains the entire stove of $H_1^{C_1}$. The point of the design is that most of the complicated behavior induced by and emanating from the stove of the chimney fold is rendered obsolete by the box hole. The value $t^*$ is $t^* = (1-e^{-s_2})t_2 - \epsilon$.}
	\label{fig:BAadj2}
\end{figure}

\begin{remark}
Strictly speaking, the contact region $H_1^{C_1}$ is more complicated than the model region that supports a chimney fold as defined in Section \ref{section:chimney_folds}, for two reasons:
\begin{enumerate}
    \item The region $C_1$ is not a literal flowbox of the Liouville vector field of $W_1$. However, it is contained in a flowbox, and thus the naturally modified Proposition \ref{prop:new_chimney_smooth} still applies. 

    \item Reeb chords have variable length over $C_1$. This modification does not result in any significant change to the dynamics of the resulting chimney fold. The point is to ensure that the index $2$ critical point of the characteristic foliation of $\partial H_1^{C_1}$ at the top of the chimney coincides with the $t$-axis. It is possible to instead use a version of $\tilde{H}_1^{C_1}$ with a tilted top in accordance with the strict model of Section \ref{section:chimney_folds}. The blocking apparatus would require some extra fudging with more parameters, and possibly an extra ordinary box fold to trap some unwanted holonomy induced near $t=t_0$ as a result. We will not describe the details here, but we emphasize that the interplay of chimney folds and box folds is robust enough to allow for multiple design and parameter choices. 
\end{enumerate}

\end{remark}

\subsubsection{Proof of Proposition \ref{prop:reduced_mainprop}}\label{subsubsec:low-dim_BA_proof}

We begin with a lemma which is an explicit witness of the Weinstein norm estimate~\ref{thm-part:new_smooth_holonomy} of Theorem \ref{theorem:new_smooth_box_fold}.

\begin{lemma}\label{lemma:for_free}
Let $\Pi_0$ be a smooth box fold installed over the contact handlebody $(H_0 = [0,t_0] \times r_0 \D^2, \, dt + \frac{1}{2}r^2\, d\theta)$ with symplectization length $s_0$. Let $h:\{s_0\} \times H_0 \dashrightarrow \{0\} \times H_0$ be the partially-defined holonomy map given by backward passage through $\Pi_0$. Then 
\[
\|h(x)\|_{\mathrm{stab}} \leq e^{\frac{s_0}{2}}\|x\|_{\mathrm{stab}}.
\]
\end{lemma}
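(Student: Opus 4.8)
The plan is to deduce this immediately from Lemma \ref{lemma:for_free_general} by unwinding what the two norms $\norm{\cdot}_{\mathrm{stab}}$ and $\norm{\cdot}_{(r_0\D^2,\lambda_{\mathrm{stab}})}$ actually measure. First I would record that the Liouville vector field of $(r_0\D^2,\lambda_{\mathrm{stab}} = \tfrac12 r^2\,d\theta)$ is the radial field $X_{\lambda_{\mathrm{stab}}} = \tfrac12 r\,\partial_r$, whose time-$s$ flow $\psi^s$ scales the Euclidean radius $r = \sqrt{p^2+q^2}$ by $r\mapsto e^{s/2}r$. Hence for any $x\in r_0\D^2$ with $r(x)>0$, the unique $s_x$ with $\psi^{s_x}(x)\in\partial(r_0\D^2)$ satisfies $e^{s_x/2}r(x) = r_0$, so that
\[
\norm{x}_{(r_0\D^2,\lambda_{\mathrm{stab}})} = e^{-s_x} = \left(\frac{r(x)}{r_0}\right)^2 = \frac{1}{r_0^2}\,\norm{x}_{\mathrm{stab}}^2,
\]
with both sides vanishing precisely on $\skel(r_0\D^2,\lambda_{\mathrm{stab}}) = \{(0,0)\}$. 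This quadratic relationship is the whole point: it is exactly what upgrades the exponent from $e^{s_0}$ to $e^{\frac{s_0}{2}}$.

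With this identity in hand, I would apply Lemma \ref{lemma:for_free_general} to the box fold $\Pi_0$, whose Weinstein base is $(W_0,\lambda_0) = (r_0\D^2,\lambda_{\mathrm{stab}})$. That lemma gives $\norm{h(x)}_{(r_0\D^2,\lambda_{\mathrm{stab}})} \leq e^{s_0}\,\norm{x}_{(r_0\D^2,\lambda_{\mathrm{stab}})}$ for every $x$ in the domain of $h$. Substituting the boxed identity on both sides yields $\tfrac{1}{r_0^2}\norm{h(x)}_{\mathrm{stab}}^2 \leq e^{s_0}\,\tfrac{1}{r_0^2}\norm{x}_{\mathrm{stab}}^2$, and taking square roots gives $\norm{h(x)}_{\mathrm{stab}} \leq e^{\frac{s_0}{2}}\norm{x}_{\mathrm{stab}}$, which is the claim. (As in Lemma \ref{lemma:for_free_general}, points whose backward flowline misses $\Pi_0$ have trivial holonomy and satisfy the inequality with equality, so only $x$ in the domain of $h$ need be treated.)

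There is essentially no obstacle here; the only subtlety to keep straight is the one flagged by the remark following the definition, that $\norm{\cdot}_{\mathrm{stab}}$ (the literal Euclidean radius) and $\norm{\cdot}_{(r_0\D^2,\lambda_{\mathrm{stab}})}$ (the flow-adapted coordinate $e^{-s_x}$) are genuinely different functions — they differ by a square precisely because the radial Liouville flow expands the Euclidean radius at rate $\tfrac12$ rather than at rate $1$.
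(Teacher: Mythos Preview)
Your proposal is correct and takes essentially the same approach as the paper: both deduce the result from Lemma~\ref{lemma:for_free_general} together with the observation that the Liouville vector field on $(r_0\D^2,\lambda_{\mathrm{stab}})$ is $\tfrac{1}{2}r\,\partial_r$, whose time-$s_0$ flow scales the radius by $e^{s_0/2}$. You are somewhat more explicit than the paper in spelling out the quadratic relationship $\norm{\cdot}_{(r_0\D^2,\lambda_{\mathrm{stab}})} = r_0^{-2}\norm{\cdot}_{\mathrm{stab}}^2$ and applying the \emph{statement} of Lemma~\ref{lemma:for_free_general} rather than its proof, which makes the halving of the exponent more transparent.
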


\begin{proof}
Observe that the Liouville vector field of the base $(r_0\D^2, \, \frac{1}{2}r^2\, d\theta)$ is $\frac{1}{2}r\, \partial_r$. An elementary calculation shows that the time-$s_0$ flow of $\frac{1}{2}r\, \partial_r$ induces an expansion by a factor of $e^{\frac{s_0}{2}}$. The desired inequality then follows from part~\ref{thm-part:new_smooth_holonomy} of Theorem~\ref{theorem:new_smooth_box_fold}.
\end{proof}

\begin{remark}[Installation for free]
The point of this lemma is that an ordinary box fold (or ordinary box hole) with flat base can be installed ``for free'' with respect to the radial holonomy. That is, when seeking to prove an estimate of the form $\|h(x)\|_{\mathrm{stab}}\leq K\,e^{\frac{s_0}{2}}\|x\|_{\mathrm{stab}}$ as in part~\ref{prop-part:reduced_mainprop_holonomy} of Proposition \ref{prop:reduced_mainprop}, one can install such a fold and remain compatible with the desired radial estimate. In contrast, the installation of the chimney fold $C\Pi_1$ by itself does not come for free, because, thanks to the tilted base, the holonomy through $C\Pi_1$ aggressively violates any estimate of the form $\|h(x)\|_{\mathrm{stab}} \leq K\,e^{\frac{s_0}{2}}\|x\|_{\mathrm{stab}}$. 
\end{remark}

\begin{proof}[Proof of Proposition \ref{prop:reduced_mainprop}.]

The Weinstein compatibility condition~\ref{reduced_mainprop:compat} together with the count of critical points follows from part~\ref{property:high_Weinstein_compat} of Theorem \ref{theorem:new_smooth_box_fold} and part~\ref{chimney:weinstein} of Proposition \ref{prop:new_chimney_smooth}. The box hole contributes $2$ critical points, since it is based over a Weinstein domain with a single critical point of index $0$. Neither of the $2$ critical points are of middle index. Likewise, the chimney fold contributes $2(1 + 2) = 6$ critical points, one of which is of middle index. 

\vspace{2mm}
    \textit{Trapping property~\ref{reduced_mainprop:trapping}}.
    \vspace{2mm}

Next we verify that there is a neighborhood $U_{\mathrm{trap}}$ of $[\epsilon,t_0-\epsilon]\times \{(0,0)\}$ such that every flowline entering $\{s=s_0\}\times U_{\mathrm{trap}}$ converges to a critical point in backward time. By part~\ref{chimney:trapping} of Proposition \ref{prop:new_chimney_smooth}, the trapping region of the chimney fold $C\Pi_1$ includes, for instance, $[2\delta_1, t_0 - \epsilon]\times \{r \leq \rho_1-\epsilon_1\}$. Here it is not necessary to give a precise lower $t$-bound. 

We now simply need to argue that all of the points in a neighborhood of $[\epsilon, 2\delta_1]\times\{(0,0)\}$ are trapped. Any such points outside of the support of $C\Pi_1$ are trapped by the box hole. By part~\ref{chimney:stove} of Proposition ~\ref{prop:new_chimney_smooth}, any such points with $W_1(x)\in C_1$ and $t(x)\leq 2\delta_1$ that are in the support of $C\Pi_1$ and not trapped by $C\Pi_1$ exit $C\Pi_1$ in the stove, and thus are trapped by $\rotreflPi_2$. 

These observations together show that, ultimately, the region $[\epsilon, t_0 - \epsilon] \times \{r\leq \rho_1 - \epsilon\}$ is trapped.

\vspace{2mm}
    \textit{Holonomy property~\ref{prop-part:reduced_mainprop_holonomy}.}
    \vspace{2mm}

    Next we verify the holonomy properties in~\ref{reduced_mainprop:holonomy}, beginning with~\ref{prop-part:reduced_mainprop_holonomy}. Let $h_1: \{s=s_0\} \dashrightarrow \{s=s_{2}\}$ and $h_2:\{s=s_2\} \dashrightarrow \{s=0\}$ be the individual holonomy maps through the regions of $U$ containing $C\Pi_1$ and $\rotreflPi_2$, respectively, so that $h:\partial_+ U \dashrightarrow \partial_- U$ is given by $h= h_2 \circ h_1$. By Lemma \ref{lemma:for_free}, $\|h_2(x)\|_{\mathrm{stab}} \leq e^{\frac{s_2}{2}}\|x\|_{\mathrm{stab}}$, and so $\|h(x)\|_{\mathrm{stab}} \leq e^{\frac{s_2}{2}}\|h_1(x)\|_{\mathrm{stab}}$ whenever $h_1(x)$ is in the domain of $h_2$. Thus it suffices to prove that if $h_1(x)$ is in the domain of $h_2$, then $\|h_1(x)\|_{\mathrm{stab}} \leq e^{\frac{s_1}{2}}\|x\|_{\mathrm{stab}}$. (Indeed, this gives $\|h(x)\|_{\mathrm{stab}} \leq e^{\frac{s_1 + s_2}{2}}\|x\|_{\mathrm{stab}}= Ke^{\frac{s_0}{2}}\|x\|_{\mathrm{stab}}$.) This is trivial for points outside the support of $C\Pi_1$, so it remains to consider points inside the support of $C\Pi_1$. Furthermore, by construction, any flowline exiting $C\Pi_1$ in the stove is trapped by $\rotreflPi_2$, so we only need to consider points exiting $C\Pi_1$ above the stove, necessarily in the chimney region. 

    Recall that $H_1^{C_1}$ was defined in a slightly modified way from that of Section \ref{section:chimney_folds}, so that the top of $H_1^{C_1}$ is contained in $\{t=t_0\}$. As a consequence, the holonomy through $C\Pi_1$ for points that never pass through the stove comes for free --- that is, it satisfies $\|h_1(x)\|_{\mathrm{stab}} \leq e^{\frac{s_1}{2}}\|x\|_{\mathrm{stab}}$ --- by Lemma \ref{lemma:for_free}. Therefore, we are further reduced to only considering points $x$ that pass through the stove and exit above the stove. Furthermore, since the projection of $C_1$ to $r_0\D^2$ is a disk centered around the origin, the only such points that could plausibly violate the desired radial estimate are those points $x$ entering with $W_1(x)\in C_1$.

    There are two cases to consider: 
    \begin{enumerate}
        \item The point $x$ enters the stove and exits above the stove. In this case~\ref{chimney:stove} of Proposition \ref{prop:new_chimney_smooth} implies that $W_1(x) \in C_1 \setminus C_1^{\epsilon_1}.$

        \item The point $x$ enters above the stove, passes through the stove, and exits above the stove. For this to happen it must be the case that $t(x) < t_0 - \epsilon$ by \ref{property:high_hol3b} of Theorem ~\ref{theorem:new_smooth_box_fold}; see also the reasoning below in the proof of holonomy property~\ref{prop-part:reduced_mainprop_t_holonomy}. But by part~\ref{chimney:trapping} of Proposition ~\ref{prop:new_chimney_smooth}, we must have $W_1(x) \in C_1 \setminus C_1^{\epsilon}$.
    \end{enumerate}

    Thus, in either case we have $W_1(x) \in C_1 \setminus C_1^{\epsilon_1}$ and $W_1(h(x)) \in C_1$. This gives 
        \[
        \norm{h_1(x)}_{\mathrm{stab}} \leq \frac{\rho_1}{\rho_1-\epsilon}\norm{x}_{\mathrm{stab}}.
        \]
        For $\epsilon$ sufficiently small, this will satisfy the desired estimate $\|h_1(x)\|_{\mathrm{stab}} \leq e^{\frac{s_1}{2}}\|x\|_{\mathrm{stab}}$. This completes the proof of~\ref{prop-part:reduced_mainprop_holonomy}.

    \vspace{2mm}
    \textit{Holonomy property~\ref{prop-part:reduced_mainprop_t_holonomy}.}
    \vspace{2mm}

    First we address the $t$-holonomy on the interval $[t_0 - \epsilon, \delta]$. Observe that, in a slight abuse of notation, $h[t_0 - \epsilon, t_0] = h_1[t_0 - \epsilon, t_0]$, since the supporting handlebody of $\rotreflPi_2$ is contained below $t=t_0 - 2\epsilon$. Moreover, by the design choice of $H_1^{C_1}$, namely the fact that the top of the chimney region is flat, the holonomy induced by the chimney fold $C\Pi_1$ for points entering in $[t_0 - \epsilon, t_0]$ is identical to the holonomy induced by a box fold based over $[0,t_0] \times \{r \leq \rho_1\}$ with smoothing parameter $\epsilon$. Thus, by part~\ref{property:high_hol3b} of Theorem ~\ref{theorem:new_smooth_box_fold}, we have 
    \[
    h_1[t_0 - \epsilon, t_0] \subset [t_0 - \epsilon, t_0].
    \]
    The argument is similar for $[0,\epsilon]$. Note that the support of the chimney fold is contained in $\{t> \epsilon\}$, so $h[0,\epsilon] = h_2[0,\epsilon]$. Since $\rotreflPi_2$ is an ordinary box hole with smoothing parameter $\epsilon$, again by the box hole version of part~\ref{property:high_hol3b} of Theorem \ref{theorem:new_smooth_box_fold} we have $h_2[0,\epsilon] \subset [0,\epsilon]$ as desired. This completes the proof of~\ref{prop-part:reduced_mainprop_t_holonomy} and the proof of the proposition. See also Figure \ref{fig:BAadj3}.
    
\end{proof}

\begin{figure}[ht]
	\begin{overpic}[scale=0.35]{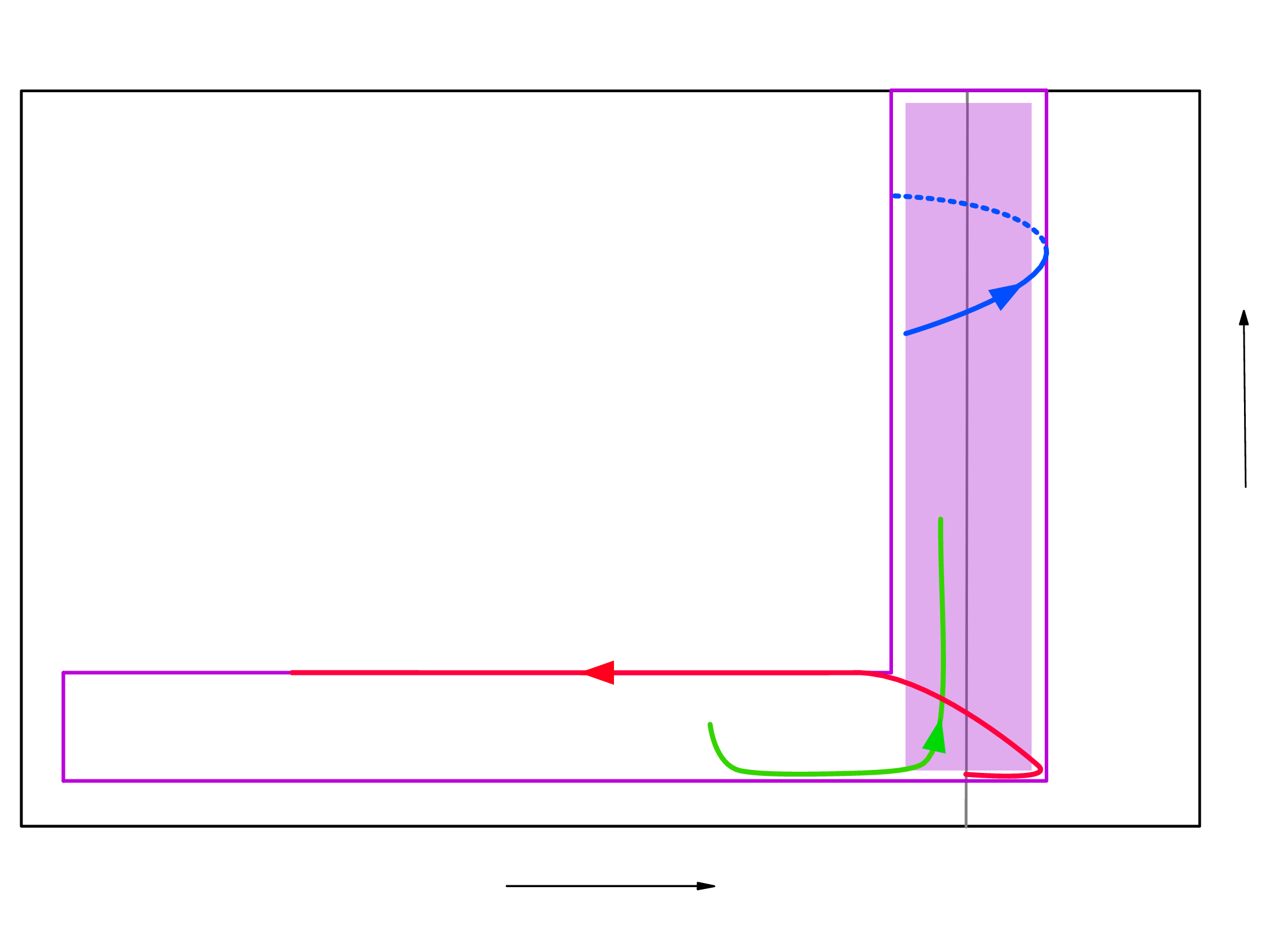}

     \put(97.5,51.5){\small $t$}
     \put(57,4.5){\small $q$}

	\end{overpic}
	\caption{Sample trajectories of points with holonomy in $C\Pi_1$. The shaded region indicates the trapping region of the chimney. The green point enters the fold in the stove and exits somewhere in the chimney. The red point enters along the $t$-axis near $t=\delta_1$, is heavily influenced by the characteristic foliation of $\partial H_1^{C_1}$, and exits far away in the stove. By itself, this violates the desired properties of Proposition \ref{prop:reduced_mainprop}, but $\rotreflPi_2$ will trap such a point. The blue point enters near $\partial C_1^{\epsilon_1}$, is influenced by the characteristic foliation of $\partial H_1^{C_1}$, and exits near $\partial C_1$. This is compatible with the statement of the proposition, provided $\epsilon$ is small enough.}
	\label{fig:BAadj3}
\end{figure}

\subsection{Proof of Theorem \ref{prop:mainprop}}\label{subsec:real_proof}

Finally we extend the definition of the blocking apparatus to higher dimensions and use it to prove Theorem~\ref{prop:mainprop}. Given Proposition~\ref{prop:reduced_mainprop}, this is completely straightforward as we are only concerned about trapping and holonomy behavior sufficiently far from the boundary of the additional $W_0$-direction. 

Consider a cobordism of the form 
\[
\left(U = [0,s_0] \times [0,t_0] \times W_0 \times r_0\D^2, \, e^s\, (dt + \lambda_0 +\lambda_{\mathrm{stab}})\right)
\]
where $(W_0, \lambda_0)$ is a Weinstein domain of arbitrary dimension. Let $H_1^{C_1}, H_2\subset [0,t_0]\times r_0 \D^2$ be constructed exactly as in Subsection~\ref{subsec:low_dim_BA_new}. Let $C\Pi_1$ be a high-dimensional chimney fold as defined in Section~\ref{section:chimney_folds} over $H_1^{C_1}\times W_0$ with symplectization length $s_1$ and smoothing parameter $\epsilon$, and let $\rotreflPi_2$ be a box hole installed over $H_2 \times W_0$ with symplectization length $s_2$ and smoothing parameter $\epsilon$. 

The definition of a blocking apparatus installed on the cobordism $U$ is the same as before, with the only difference being $C\Pi_1$ and $\rotreflPi_2$ are high-dimensional folds. 

\begin{definition}
A \textbf{blocking apparatus (with smoothing parameter $\epsilon$)} consists of a chimney fold $C\Pi_1$ with smoothing parameter $\epsilon$ installed over $[s_0-s_1, s_0] \times H_1^{C_1}$ and a box hole with smoothing parameter $\epsilon$ installed over $[0,s_2]\times H_2$. 
\end{definition}

\begin{proof}[Proof of Theorem \ref{prop:mainprop}.]
As with Proposition \ref{prop:reduced_mainprop}, part~\ref{mainprop:weinstein} of Theorem \ref{prop:mainprop} follows immediately from part~\ref{property:high_Weinstein_compat} of Theorem \ref{theorem:new_smooth_box_fold} and part~\ref{chimney:weinstein} of Proposition \ref{prop:new_chimney_smooth}.

Given the descriptions of the trapping regions for high-dimensional chimney folds (Proposition \ref{prop:new_chimney_smooth}) and box holes (the negative version of Theorem \ref{theorem:new_smooth_box_fold}), the argument for identifying the trapping region in Theorem \ref{prop:mainprop} is the same as the argument in Proposition \ref{prop:reduced_mainprop}.

Finally, the holonomy properties in~\ref{mainprop:holonomy} are immediate consequences of the holonomy properties in Proposition \ref{prop:reduced_mainprop} together with the $W_0$-norm estimates~\ref{thm-part:new_smooth_holonomy} of Theorem \ref{theorem:new_smooth_box_fold} and Proposition \ref{prop:new_chimney_smooth}. In particular, Proposition ~\ref{prop:new_chimney_smooth} bounds the expansion of the $W_0$-norm through the chimney fold by a factor of $e^{s_1}$ and Theorem ~\ref{theorem:new_smooth_box_fold} bounds the expansion of the $W_0$-norm through the box hole is bounded by a factor of $e^{s_2}$. These estimates then give~\ref{mainprop:holonomy:weinstein} of Theorem \ref{prop:mainprop} with $K= e^{s_1 + s_2 - s_0} < 1$. Moreover, this means that a flowline entering the apparatus outside of $N^{s_0}(\partial W_0)$ is prevented from ever interacting with the decay of the fold near $\partial W_0$ and hence this additional direction has no effect to the holonomy. The holonomy properties of Proposition ~\ref{prop:reduced_mainprop} then give the remaining desired properties in Theorem ~\ref{prop:mainprop}. 
\end{proof}

\section{Some technical properties of the blocking apparatus}\label{section:technical_properties}

In this section, we state a few more technical properties of the blocking apparatus. Their utility will become clear in Section \ref{section:mitsumatsu}, in particular for the purpose of ruling out the existence of broken loops. First, we introduce some terminology to identify certain critical points in a blocking apparatus. 

\begin{definition}
Let $C\Pi_1$ be the chimney fold of a blocking apparatus. A \textbf{stove critical point} of $C\Pi_1$ is a critical point in the stove region of $C\Pi_1$. 
\end{definition}

\begin{proposition}\label{prop:no-chimney-points-in-broken-loops}
A stove critical point in a blocking apparatus cannot be included in a broken loop.
\end{proposition}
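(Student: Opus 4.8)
The plan is to show that every flowline arriving at a stove critical point $p^*$ stays inside the blocking apparatus, so that the critical points lying "upstream" of $p^*$ form a finite chain inside the apparatus, and then to close that chain off using the source structure of the folds. Throughout, recall from Proposition \ref{prop:weinstein-criterion} that a broken loop is a cyclic concatenation of oriented flowlines joining critical points, so that $p^*$ lying in a broken loop is the same as there being a cyclic chain of critical points $\cdots \to q \to p^* \to q' \to \cdots$ with each arrow an oriented flowline. As a harmless preliminary Liouville homotopy supported in the apparatus, arrange the characteristic foliations of both $C\Pi_1$ and $\rotreflPi_2$ to be Morse--Smale. With the positive gradient-like convention of this paper ($d\phi(X_\lambda)>0$, so $\dim W^s(p)=\mathrm{ind}(p)$ and $\dim W^u(p)=2n-\mathrm{ind}(p)$), an oriented flowline from $a$ to $b$ forces $\mathrm{ind}(a)<\mathrm{ind}(b)$; in particular an index-$0$ critical point is a source, nothing flows into it, and so no index-$0$ critical point can occur in a broken loop. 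This disposes of the index-$0$ stove critical points, and it remains to treat a stove critical point $p^*$ with $\mathrm{ind}(p^*)\ge 1$.

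The key step is to describe the stable manifold $W^s(p^*)$ of points converging to $p^*$ in forward time. I claim every such flowline stays inside $U$ and, traced in backward time, converges onto a critical point of the apparatus of strictly smaller index than $p^*$. Indeed, a flowline converging to a stove critical point remains in a neighborhood of the stove of $C\Pi_1$ throughout its passage through $C\Pi_1$: by the chimney-fold analysis (Proposition \ref{prop:PL_chimney_trap} together with Lemma \ref{lemma:smooth_chimney_main}) it cannot climb into the central part of the chimney, so in backward time it leaves $C\Pi_1$ into the region of $\rotreflPi_2$ while still having $t$-coordinate below $3\delta_1$. Since the lower trapping region of the box hole $\rotreflPi_2$ was arranged to engulf the stove, the backward orbit is then captured by $\rotreflPi_2$ and converges to a critical point of $\rotreflPi_2$ (or it is captured earlier within $C\Pi_1$, by a stove critical point of lower index) --- in either case without ever reaching $\partial_-U$. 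Consequently the critical point $c(a_{j-1})$ immediately preceding $p^*=c(a_j)$ in any broken loop lies in the apparatus and satisfies $\mathrm{ind}(c(a_{j-1}))<\mathrm{ind}(p^*)$.

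Now iterate. The predecessor $c(a_{j-1})$ is again a small-index critical point of $C\Pi_1$ or $\rotreflPi_2$ whose stable manifold is confined to the apparatus by the same reasoning, and each step backward along the loop strictly lowers the index. Since the apparatus has only finitely many critical points, after finitely many steps the broken loop would arrive, going backward, at an index-$0$ critical point of the apparatus --- impossible, since such a point is a source and so cannot be a $c(a_i)$ in a broken loop. Hence $p^*$ lies in no broken loop.

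The main obstacle is the confinement claim of the second paragraph: one must genuinely rule out that a flowline converging to $p^*$ escapes the apparatus, either up the chimney or out through $\partial_-U$, and one must make sure the backward chain out of $p^*$ does not sneak out through a box-hole critical point of $\rotreflPi_2$ whose stable manifold, a priori, does touch $\partial_-U$. Escape up the chimney is exactly what Lemma \ref{lemma:smooth_chimney_main} forbids; escape through $\partial_-U$ is forbidden because such a flowline would first have to cross the lower trapping region of $\rotreflPi_2$, which by construction engulfs the stove; and the index bookkeeping (strict decrease of index going backward, with index $0$ being a source) forces the chain out of $p^*$ to terminate at a source before it can reach any such leaking critical point. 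In the four-dimensional setting relevant to Mitsumatsu's examples this last subtlety is automatic, since there a stove critical point lying in a broken loop would have index $1$, forcing its predecessor to have index $0$ and hence to be a source --- the desired contradiction.
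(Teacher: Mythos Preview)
Your overall strategy matches the paper's: show that going backward along any broken loop from a stove critical point forces the loop to pass through an index-$0$ critical point, which is impossible. The confinement claim for the \emph{first} step backward is correct and is essentially what the paper argues as well --- the stove of $C\Pi_1$ sits inside the lower trapping region of $\rotreflPi_2$, so the immediate predecessor of a stove point lies in the apparatus.

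The gap is in your iteration. You write that the predecessor ``is again a small-index critical point of $C\Pi_1$ or $\rotreflPi_2$ whose stable manifold is confined to the apparatus by the same reasoning,'' but this reasoning was specific to stove critical points and does not carry over to positive-index critical points of $\rotreflPi_2$: those really do have stable manifolds meeting $\partial_-U$ (this is precisely the content of Proposition~\ref{prop:points-trapped-in-forward-time}). Your proposed patch --- that index bookkeeping forces termination at a source before a leaking critical point is reached --- is circular, since the leaking points are exactly the positive-index ones, and nothing prevents the chain $p^*\leftarrow q$ from landing first at such a $q$. The paper avoids this by a sharper observation: the lower trapping region of $\rotreflPi_2$ is not just \emph{some} trapping region but is a cross-section of the unstable manifold of the index-$0$ critical point in the low-dimensional projection. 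Since the stove lies in the \emph{interior} of that region, the backward limit of any flowline out of the stove has index $0$ in the $[0,t_0]\times r_0\mathbb{D}^2$ projection, and one then finishes with the Morse--Smale structure of $W_0$. Finally, your closing remark that Mitsumatsu's case is ``four-dimensional'' is incorrect: the blocking apparatus there lives in the $6$-dimensional region $U_i=[0,s_0]\times[0,t_0]\times W_0^2\times r_0\mathbb{D}^2$, with a genuine $W_0$ factor, so the high-dimensional argument is needed.
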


\begin{proof}
We make the trivial observation that any critical point of index $0$ cannot be included in a broken loop. Thus, it suffices to prove the following. If $\gamma$ is a broken flowline involving a stove critical point, then every broken flowline containing $\gamma$ is further contained in a broken flowline that involves a critical point of index $0$. Indeed, if this is the case, then there are no broken loops containing stove critical points.

First, note that the dynamics of a chimney fold are Morse-Smale, and thus there are no broken loops contained entirely inside $C\Pi_1$. It therefore suffices to consider a broken flowline that contains a stove critical point and exits the stove region in backward time. 

In this case, consider a low-dimensional blocking apparatus installed over $[0,s_0] \times [0,t_0] \times r_0 \D^2$, with no $W_0$-component. In this setting, the main trapping region of the box hole $\rotreflPi_2$ is an $\{s=\frac{s_0}{2}\}$ cross section of the unstable manifold of a critical point of index $0$. By design of a blocking apparatus, the entire stove region of the chimney fold $C\Pi_1$ is a subset of this $\rotreflPi_2$ trapping region. In particular, any broken flowline emanating from a stove critical point in backward time and exiting the stove, if extended sufficiently far, will intersect this trapping region and ultimately converge to a critical point of index $0$.

In the high-dimensional setting --- when the blocking apparatus is installed with a $W_0$ component --- the proposition follows almost immediately from the low-dimensional case, because critical points of the blocking apparatus project to critical points of $X_{\lambda_0}$ in $W_0$. In particular, any sufficiently extended broken flowline emanating from the stove region via a stove critical point in backward time is ultimately engulfed by the main trapping region of $\rotreflPi_2$, hence by a critical point with $0$ index in the $[0,t_0] \times r_0 \D^2$ projection. In the $W_0$ projection, because $W_0$ is a Weinstein domain, $X_{\lambda_0}$ has Morse dynamics, and every broken flowline descends to a critical point with $0$ index in $W_0$. As such, every broken flowline then eventually reaches a critical point of total index $0$, as desired. 
\end{proof}

\begin{corollary}\label{cor:no-broken-loops-in-BA}
After applying Theorem~\ref{prop:mainprop} to a Weinstein cobordism 
\[
(U=[0,s_0]\times[0,t_0]\times W_0\times r_0\,\mathbb{D}^2,e^s\,(dt+\lambda_0+\lambda_{\mathrm{stab}}))
\]
there are no broken loops contained in $U$.
\end{corollary}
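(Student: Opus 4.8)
The plan is to list the critical points that the blocking apparatus creates inside $U$ and show that none of them can lie on a broken loop contained in $U$; since a broken loop must pass through critical points, this proves the corollary. Installing the apparatus perturbs $X_\lambda$ only inside the two fold supports, $\sigma_1\times H_1^{C_1}\times W_0$ and $\sigma_2\times H_2\times W_0$, which are disjoint and satisfy $\sup\sigma_2<\inf\sigma_1$, while $X_\lambda=\partial_s$ everywhere else in $U$. Hence every zero of $X_\lambda$ in $U$ is a critical point of the chimney fold $C\Pi_1$ — either a stove critical point or a top-of-chimney critical point — or a critical point of the box hole $\rotreflPi_2$.

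The crux is a directionality statement for flowlines. Away from the two supports $X_\lambda=\partial_s$, so $s$ is strictly increasing along any flowline arc lying outside both supports. Moreover $\partial_s$ points into each support through its bottom face $\{s=\inf\sigma_i\}$ and out through its top face $\{s=\sup\sigma_i\}$, and $\partial_s$ is tangent to all the lateral faces of each support (on which the perturbing function vanishes to infinite order, so $X_\lambda=\partial_s$ there). Consequently a forward flowline issuing from a critical point of $C\Pi_1$ either converges to another critical point of $C\Pi_1$, or it leaves the support of $C\Pi_1$ through the top or a lateral face, after which $s$ only increases and it runs up to $\partial_+U$ and exits $U$ — it can never reach a critical point of $\rotreflPi_2$, which sits at smaller $s$. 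Likewise a forward flowline issuing from a critical point of $\rotreflPi_2$ can only reach $\partial_+U$, a critical point of $C\Pi_1$, or another critical point of $\rotreflPi_2$. Reversing time, a flowline arriving at a critical point of $\rotreflPi_2$ comes from another critical point of $\rotreflPi_2$ or from $\partial_-U$, and never from a critical point of $C\Pi_1$.

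Now take any critical point $p$ of $U$ and suppose, toward a contradiction, that $p$ lies on a broken loop in $U$. Traversing the loop backwards from $p$ and cycling indefinitely produces broken flowlines ending at $p$ of arbitrary length, none of which leaves $U$. But by the directionality statement the critical points occurring in a backward broken flowline ending at $p$ lie first (possibly) in $C\Pi_1$ and then, after a single allowed transition, only in $\rotreflPi_2$; and no broken loop lies within $C\Pi_1$ or within $\rotreflPi_2$ since each fold has Morse--Smale dynamics, so no critical point is repeated. As $U$ contains only finitely many critical points, such a broken flowline has bounded length unless it exits $U$ through $\partial_-U$. (When $p$ is a stove critical point this is precisely the content of Proposition~\ref{prop:no-chimney-points-in-broken-loops}, whose proof exhibits an index-$0$ terminal point using that the stove of $H_1^{C_1}$ lies inside the main trapping region of $\rotreflPi_2$.) Either alternative contradicts the existence of arbitrarily long backward broken flowlines ending at $p$ that remain in $U$. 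Hence no critical point of $U$ lies on a broken loop, so $U$ contains no broken loops.

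The step I expect to be the main obstacle is the directionality statement of the second paragraph — precisely, checking that a forward flowline, once it has left a fold's support, cannot wander back into that support or into the other fold (so that it genuinely escapes through $\partial_+U$), and dealing carefully with flowlines that become tangent to and slide along the lateral faces of $H_1^{C_1}$. This is exactly where the hypotheses that $X_\lambda=\partial_s$ off the supports and that $\sigma_2$ lies entirely below $\sigma_1$ are used.
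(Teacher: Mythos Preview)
Your argument is correct and in fact cleaner than the paper's own proof. The paper first invokes Proposition~\ref{prop:no-chimney-points-in-broken-loops} to exclude stove critical points from any broken loop, then argues that a loop contained in $U$ either lies entirely in $\rotreflPi_2$ (ruled out by Morse--Smale dynamics there) or involves a top-of-chimney point, and finally asserts without much detail that top-of-chimney points ``do not interact'' with $\rotreflPi_2$ points. Your directionality argument --- that no forward flowline can pass from a critical point of $C\Pi_1$ to one of $\rotreflPi_2$, because exiting the support of $C\Pi_1$ forces $s\geq\inf\sigma_1>\sup\sigma_2$ and thereafter $s$ only increases --- subsumes all of this at once and makes the ``no interaction'' claim precise. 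In particular you do not actually need Proposition~\ref{prop:no-chimney-points-in-broken-loops} as an input: the $s$-ordering $\sup\sigma_2<\inf\sigma_1$ together with Morse--Smale dynamics inside each fold already forces any broken chain in $U$ to have the form $\rotreflPi_2^{\,*}\,C\Pi_1^{\,*}$ with no repeated vertices, hence bounded length.

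Regarding the lateral-face worry you flag at the end: since the folding function vanishes to infinite order on $\partial B$, the transverse component of the perturbed vector field vanishes to infinite order at the lateral boundary, so no interior flowline reaches a lateral face in finite time. Thus a forward flowline leaving the support of $C\Pi_1$ can only do so through the top face $\{s=\sup\sigma_1\}$, after which $X_\lambda=\partial_s$ carries it to $\partial_+U$ without re-entering either support. This resolves the issue you anticipated and completes your argument.
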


\begin{proof}
By the previous proposition, no stove critical point can be involved in any broken loop, let alone one contained in $U$. Thus, a broken loop contained in $U$ must be contained either entirely in $\rotreflPi_2$, or must involve one of the critical points corresponding to the top of the chimney. By definition of a box hole --- which requires Morse-Smale dynamics --- there are no broken loops entirely contained in $\rotreflPi_2$. Finally, none of the critical points near the top of the chimney interact with any critical point in $\rotreflPi_2$, so these critical points can not be involved in a broken loop contained in $U$. Thus, there are no broken loops contained in $U$. 
\end{proof}

The ultimate efficacy of Proposition \ref{prop:no-chimney-points-in-broken-loops} is due to the following observation about the main trapping mechanism of a blocking apparatus.

\begin{proposition}\label{prop:main-prop-goes-to-chimney-points}
After applying Theorem~\ref{prop:mainprop}, any flowline which passes through $\{s=s_0\}\times U_{\mathrm{trap}}\subseteq\partial_+U$ either converges in backward time to a stove critical point, or to the index $0$ critical point of the box hole $\rotreflPi_2$.
\end{proposition}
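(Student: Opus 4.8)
The plan is to upgrade the trapping statement already established in the proof of Proposition~\ref{prop:mainprop} by pinning down \emph{where} inside the chimney fold $C\Pi_1$ the trapped flowlines limit. Recall from that proof (via Proposition~\ref{prop:smooth_chimney_trap_high}) that every flowline through $\{s=s_0\}\times U_{\mathrm{trap}}$ is trapped in backward time by the chimney fold $C\Pi_1$ alone: it converges to some critical point of $C\Pi_1$ and never reaches the box hole $\rotreflPi_2$, which lies at smaller $s$. So it suffices to show that this critical point is a stove critical point, i.e.\ has $t$-coordinate $t^*<3\delta_1$.

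The first step is to record the location of the critical points of $C\Pi_1$. By the index discussion of \ref{subsection:smooth_high} applied to a chimney fold, they fall into two families. One family is produced by the index-$0$ zero of the Liouville vector field of the base $W_1\times W_0$, i.e.\ a neighborhood of $(p,q)=(0,-2c_1)$ times $\skel(W_0,\lambda_0)$; by the choices of \ref{subsection:BA} this point lies in the stove region $(W_1\setminus C_1)\times W_0$, so the $t$-coordinates of these critical points are governed by the Reeb length $t_{1,-}$ of the stove, and after the modification of $H_1^{C_1}$ and for the smoothing parameter $\tau$ small they are $<3\delta_1$. The other family is produced by the index-$1$ zero of the characteristic foliation of $\partial H_1^{C_1}$ associated with the chimney; these sit at the top of the chimney, with $t$-coordinate near $t_0-\delta_1$. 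Thus the claim is equivalent to: a flowline trapped through the chimney limits onto a critical point of the first family.

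I would then trace the trapped flowline using the piecewise-linear model of Section~\ref{section:chimney_folds} and pass to the smoothing. By the proof of Proposition~\ref{prop:PL_chimney_trap} (and Proposition~\ref{prop:PL_chimney_trap_high} in higher dimensions), a backward flowline entering the chimney is funneled down to $\underline{t=0}$ and then cycles repeatedly through the stove --- each pass travelling along $\underline{\partial W_1}$ via $h_{\partial W_1}$, crossing $\underline{t=t_{1,-}}$, and returning to $\underline{s=s_0}$ while remaining in $W_1\setminus C_1$ (this is exactly where the Main Assumption is used) --- until it reaches $\underline{z=z_0}$. At that point Lemma~\ref{lemma:chimney_z_0_trap} applies, and since the flowline is in $W_1\setminus C_1$ it falls into ``Case~2'' of the proof of that lemma: it spirals around $\underline{z=z_0}\cap\underline{t=t_{1,-}}$ in the $(z,s,t)$-projection while its $W_1$- and $W_0$-coordinates limit onto $\skel(W_1)\times\skel(W_0,\lambda_0)$. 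It never enters ``Case~1'' (which would carry it to $\underline{t=t_0}$ and then trap it at the top of the chimney via Lemma~\ref{lemma:pre_chimney_t_0_trap}), precisely because the funneling keeps it out of $C_1$. Passing to the smooth fold $C\Pi^{\tau}_1$, this limiting behavior is exactly convergence to the critical point of the first family sitting near $\underline{t=t_{1,-}}$, which by the previous step is a stove critical point; in the high-dimensional case the extra $W_0$-direction only contributes inconsequential oscillation before limiting onto $\skel(W_0,\lambda_0)$, as in the proof of Proposition~\ref{prop:PL_chimney_trap_high}, and does not affect the $t$-coordinate bound.

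The step I expect to be the main obstacle is this last one: turning the piecewise-linear ``spiraling'' description into honest convergence in the smooth fold, and in particular excluding the possibility that a trapped chimney flowline instead limits onto one of the top-of-chimney critical points. This amounts to verifying that the Main Assumption prevents a backward flowline launched in the chimney from re-entering $C_1$ before it is captured near the stove --- a fact already isolated in the proof of Proposition~\ref{prop:PL_chimney_trap} --- so the remaining work is essentially bookkeeping with the smoothing parameter $\tau$, much as in the proof of Lemma~\ref{lemma:smooth_chimney_main}.
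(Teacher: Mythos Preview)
Your proposal is correct and takes essentially the same approach as the paper, just carried out in more detail. The paper's proof is a terse appeal to the chimney fold construction (``immediate from the design and definition of a chimney fold in Section~\ref{section:chimney_folds}\ldots any flowline entering the chimney region is trapped in the stove region''), whereas you unpack the mechanism explicitly: you identify the two families of critical points, trace the backward flowline through the piecewise-linear model via Proposition~\ref{prop:PL_chimney_trap}, and observe that the Main Assumption keeps the flowline in $W_1\setminus C_1$ when it first reaches $\underline{z=z_0}$, so that Case~2 of Lemma~\ref{lemma:chimney_z_0_trap} applies and the limit is a stove critical point rather than a top-of-chimney one. Your concern about the smoothing step is well placed but, as you anticipate, resolves by the same bookkeeping that underlies Proposition~\ref{prop:smooth_chimney_trap} and Lemma~\ref{lemma:smooth_chimney_main}.
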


\begin{proof}
This is immediate from the design and definition of a chimney fold in Section \ref{section:chimney_folds}. In particular, in a low-dimensional piecewise-linear chimney fold, any flowline entering the chimney region is trapped in the stove region. The extension to higher dimensions does not impact this behavior, and $U_{\mathrm{trap}}$, by definition, is a set approximating the interior of the chimney region. 
\end{proof}

\begin{remark}
We remind the reader that the trapping neighborhood $\{s=s_0\}\times U_{\mathrm{trap}}\subseteq\partial_+U$ in the statements of Theorem~\ref{prop:mainprop} and Proposition \ref{prop:main-prop-goes-to-chimney-points} is not the \textit{entire} trapping region of the blocking apparatus. There are a number of other codimension $0$ regions that are trapped by other critical points of the apparatus. 
\end{remark}

Finally, we have the following proposition which concerns the behavior of flowlines entering a blocking apparatus in \textit{forward} time. Note that the perspective of this proposition is drastically different than the backward time analysis that permeates the majority of the previous sections.

\begin{proposition}\label{prop:points-trapped-in-forward-time}
After applying Theorem~\ref{prop:mainprop} to a Weinstein cobordism $(U=[0,s_0]\times[0,t_0]\times W_0\times r_0\,\mathbb{D}^2,e^s\,(dt+\lambda_0+\lambda_{\mathrm{stab}}))$ with parameter $\epsilon$, consider a point $(0,t^*,x,p,q)\in\partial_-U$.  If the flowline through this point converges to a critical point of the blocking apparatus in forward time, then $x\in\mathrm{Skel}(W_0)$, $(p,q)=(0,0)$, and $t^*\in[0,\epsilon) \cup (t_0 - \epsilon, t_0]$.
\end{proposition}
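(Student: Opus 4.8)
The plan is to reduce to the low-dimensional model of Proposition~\ref{prop:reduced_mainprop} and then, using the Morse--Smale dynamics of the blocking apparatus (so that any forward orbit not exiting through $\partial_+U$ converges to one of its finitely many critical points), to trace through the critical point set and identify exactly which points of $\partial_-U$ lie in the forward-flow stable manifolds $W^s(c)$. Recall that the blocking apparatus is the chimney fold $C\Pi_1$ (installed at $s\in\sigma_1$) together with the box hole $\rotreflPi_2$ (installed at the smaller $s$-values $\sigma_2$), and that all of its critical points project to critical points of $X_{\lambda_0}$ in the $W_0$-factor; in particular, for the low-dimensional reduction, every critical point sits at the stabilization origin $(0,0)$ except the \emph{stove} critical points of $C\Pi_1$, which sit over the point fixed by the tilted Liouville field $X_1$.

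First I would handle the $(p,q)$- and $W_0$-coordinates by a monotonicity argument. By Lemma~\ref{lemma:LVFcomputation}, away from $\partial W_0$ the $W_0$-component of the Liouville vector field in the fold region is the scalar multiple $-e^{-s}\tfrac{\partial F}{\partial t}\,X_{\lambda_0}$ of $X_{\lambda_0}$ (with $F$ the active folding function), and similarly the stabilization component is $-e^{-s}\tfrac{\partial F}{\partial t}$ times the relevant Liouville field of the stabilization base. Writing $w(\tau)$ for the $W_0$-projection of a flowline, this gives $\tfrac{d}{d\tau}\|w(\tau)\|_{W_0}=-e^{-s}\tfrac{\partial F}{\partial t}\,\|w(\tau)\|_{W_0}$, and likewise for $\|(p(\tau),q(\tau))\|_{\mathrm{stab}}$ when the stabilization base is the round disc $r_0\D^2$. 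Since the Liouville field vanishes at a critical point, its $\partial_s$-component $1+e^{-s}\tfrac{\partial F}{\partial t}$ must vanish there, forcing $\tfrac{\partial F}{\partial t}=-e^s<0$ near each critical point; hence along a flowline converging to a critical point the non-negative quantity $\|w(\tau)\|_{W_0}$ is eventually increasing while tending to $0$, so it is eventually identically $0$. As the skeleton of a Weinstein domain is a union of stable manifolds, hence invariant under the full (and therefore any reparametrized, sign-varying) Liouville flow on the base, tracing back along this $X_{\lambda_0}$-orbit gives $w(0)\in\mathrm{Skel}(W_0)$; the identical argument in the stabilization direction yields $(p(0),q(0))=(0,0)$ for any critical point whose stabilization base is $r_0\D^2$ --- modulo the technical point that a forward orbit converging to a critical point is bounded and so stays in the interior of the Weinstein factors, away from $\partial W_0$ where the $R_{\eta_0}$-terms of Lemma~\ref{lemma:LVFcomputation} spoil the ``motion along $X_{\lambda_0}$'' picture.

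Next I would rule out all but the two expected critical points. An index-$0$ critical point is a source, so $W^s(c)=\{c\}$, which lies in the interior of $U$ and so meets neither $\partial_\pm U$. By the design of the apparatus (Section~\ref{subsection:BA}) the entire stove of $C\Pi_1$ lies in the unstable manifold of an index-$0$ critical point of $\rotreflPi_2$, so every point of the stove converges in backward time to that source; consequently the stable manifold of any stove critical point, traced backward, immediately enters the stove and is absorbed by that source, never reaching $\partial_-U$. (This is exactly the mechanism behind Proposition~\ref{prop:no-chimney-points-in-broken-loops}, applied now to stable rather than unstable manifolds, and it passes to the high-dimensional setting by projecting to $W_0$.) The remaining critical points are those of the box hole, all of which sit near the bottom $t\approx 0$ of its Reeb chord, and the top-of-chimney critical point of $C\Pi_1$, engineered to sit on the $t$-axis at $t\approx t_0-\delta_1$. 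For the box-hole critical points, the (mirrored) box-fold analysis of Section~\ref{section:box_folds_smooth} together with the $\tau\to0$ convergence of Proposition~\ref{proposition:smooth_holonomy_high} shows that $W^s(c)\cap\partial_-U$ stays within $t\in[0,2\delta)$ once $\tau$ is small relative to $\delta$; combined with the monotonicity step (the stabilization base here being $r_0\D^2$) this gives points of the form $(0,t^*,x,0,0)$ with $t^*\in[0,2\delta)$ and $x\in\mathrm{Skel}(W_0)$. For the top-of-chimney critical point, since $\delta_1<\delta$ there is no fold installed above $t=t_0-\delta$ and $\tfrac{\partial F_1}{\partial t}\equiv 0$ throughout the chimney interior, so (again up to the smoothing fuzz, controlled by Proposition~\ref{prop:smooth_chimney_trap_high} and Lemma~\ref{lemma:smooth_chimney_main}) the part of its stable manifold reaching $\partial_-U$ stays in $t>t_0-2\delta$ and, since the chimney $C_1$ is an arbitrarily small disc about the stabilization origin, stays at $(p,q)=(0,0)$ over $\mathrm{Skel}(W_0)$.

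The step I expect to be the main obstacle is this last one: precisely locating the stable manifold of the top-of-chimney critical point near $\partial_-U$. Unlike the box-hole saddle, whose stable manifold is governed by the clean box-fold model, the top-of-chimney critical point has index $2$ only because of the pre-chimney-type interaction with $\partial C_1$ discussed in Section~\ref{section:chimney_folds}, so its stable manifold is not visible through the Lemma~\ref{lemma:LVFcomputation} ``base dynamics'' shortcut and must be controlled through the smooth holonomy analysis --- in particular the membrane behavior of \ref{subsection:complications} near $\underline{\partial C_1}$. The redeeming feature is that we need only a $C^0$ estimate on the $t$-coordinate of $W^s(c)\cap\partial_-U$ (the $(p,q)$- and $W_0$-coordinates being pinned down exactly by the monotonicity step), and the relevant edge of the chimney's support can be pushed arbitrarily close to $t=t_0-\delta_1$ by shrinking the smoothing parameter, so the inclusion $t^*\in(t_0-2\delta,t_0]$ follows from the $\tau\to0$ statements already established in Sections~\ref{section:box_folds_smooth} and~\ref{section:chimney_folds}.
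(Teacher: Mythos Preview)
Your proposal is correct and follows the same skeleton as the paper's proof: first rule out stove critical points via the observation that their stable manifolds are swallowed in backward time by the trapping region of $\rotreflPi_2$ (this is exactly the mechanism behind Proposition~\ref{prop:no-chimney-points-in-broken-loops}), then handle the remaining critical points --- those of $\rotreflPi_2$ and the top-of-chimney point of $C\Pi_1$ --- by a monotonicity argument in the Weinstein and stabilization factors. Your ODE for $\|w(\tau)\|_{W_0}$ is precisely the content of the paper's Lemma~\ref{lemma:forward_time_box_fold_lemma}, which the paper isolates as a standalone statement about ordinary box folds and then applies twice.

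The one place where you make your life harder than necessary is the top-of-chimney critical point. You flag it as ``the main obstacle'' and propose to control its stable manifold through the membrane analysis of \S\ref{subsection:complications} and Lemma~\ref{lemma:smooth_chimney_main}. The paper sidesteps this entirely: by the design choice in \S\ref{subsection:BA} the top of $H_1^{C_1}$ was flattened to $\{t=t_0-\delta_1\}$, so near that critical point the chimney fold is \emph{locally an ordinary box fold} over a small flat disc centered at the stabilization origin, crossed with $W_0$. Lemma~\ref{lemma:forward_time_box_fold_lemma} then applies verbatim, giving $(p,q)=(0,0)$, $x\in\mathrm{Skel}(W_0)$, and $t^*$ arbitrarily close to $t_0-\delta_1$ in one stroke --- no holonomy estimates needed. (The flowline does not interact with $\rotreflPi_2$ on the way, since $\rotreflPi_2$ has $t$-support in $[0,t_0/2]$.) Your route would also work, but the paper's local-model shortcut is considerably cleaner.
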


We first require a lemma concerning the forward time trapping behavior of a normal box fold.

\begin{lemma}\label{lemma:forward_time_box_fold_lemma}
Let $\Pi$ be a smooth box fold installed over $([0,s_0] \times [0,t_0] \times W_0, \, e^s\, (dt + \lambda_0))$. Let $X$ denote the perturbed Liouville vector field, and suppose that $(0,t^*,x)\in \{s=0\} \times [0,t_0] \times W_0$ is a point such that the flowline of $X$ through $(0,t^*,x)$ converges to a critical point in forward time. Then $x \in \mathrm{Skel}(W_0, \lambda_0)$ and $t^* >  t_0 - \epsilon$.
\end{lemma}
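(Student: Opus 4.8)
The plan is to combine a direct identification of the critical points of the box fold with two successive reductions: first to flowlines lying over $\mathrm{Skel}(W_0,\lambda_0)$, and then to the two-dimensional box fold analyzed in \ref{subsection:smooth_low}. First I would locate the critical points. The hypothesis $d_{W_0}F=0$ away from a collar of $\partial W_0$ holds by construction of $F^\tau$, so by Lemma \ref{lemma:LVFcomputation}(i) the perturbed Liouville vector field, on the region where $F$ is $W_0$-independent, is
\[
X=\left(1+e^{-s}\tfrac{\partial F}{\partial t}\right)\partial_s-e^{-s}\tfrac{\partial F}{\partial s}\,\partial_t-e^{-s}\tfrac{\partial F}{\partial t}\,X_{\lambda_0}.
\]
A zero of $X$ forces $\partial_s F=0$ and $\partial_t F=-e^{s}\neq0$; the latter both forces $(s,t)$ into the collar $\{t_0-\ve(\tau)<t\leq t_0\}$ on which $F$ is strictly $t$-decreasing, and forces $X_{\lambda_0}=0$, i.e.\ the $W_0$-coordinate to be a zero of $X_{\lambda_0}$ (hence a point of $\mathrm{Skel}(W_0,\lambda_0)$, in particular away from $\partial W_0$); the analogous computation with Lemma \ref{lemma:LVFcomputation}(ii) rules out zeros inside the collar of $\partial W_0$. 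So every critical point of the box fold has the form $c=(s_c,t_c,w^*)$ with $t_c>t_0-\ve(\tau)$ and $X_{\lambda_0}(w^*)=0$.

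Next I would show $x\in\mathrm{Skel}(W_0,\lambda_0)$. Let $\gamma(\sigma)=(s(\sigma),t(\sigma),w(\sigma))$ be the forward flowline with $\gamma(0)=(0,t^*,x)$ converging to such a $c$. The $W_0$-component of $X$ is the scalar multiple $-e^{-s}\partial_t F$ of $X_{\lambda_0}$, so $w(\sigma)$ is, for every $\sigma$, a time-reparametrized integral curve of $X_{\lambda_0}$; in particular $w(\sigma)$ always lies on the single $X_{\lambda_0}$-orbit through $x$. Once $\gamma$ enters a small neighborhood $V\subset\{t>t_0-\ve(\tau)\}$ of $c$ and stays there, say for $\sigma\geq T$, the reparametrizing factor $-e^{-s}\partial_t F$ is positive and bounded below on $V$ (it tends to $1$ at $c$), so $w(T)$ flows forward under $X_{\lambda_0}$ for infinite reparametrized time while remaining in the compact domain $W_0$; hence $w(T)\in\mathrm{Skel}(W_0,\lambda_0)$. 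Since $\mathrm{Skel}(W_0,\lambda_0)$ is $X_{\lambda_0}$-invariant, the entire $X_{\lambda_0}$-orbit through $w(T)$ — which contains $x$ — lies in $\mathrm{Skel}(W_0,\lambda_0)$, giving $x\in\mathrm{Skel}(W_0,\lambda_0)$.

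Finally I would deduce $t^*>t_0-\ve(\tau)$ by reducing to two dimensions. Because $\mathrm{Skel}(W_0,\lambda_0)$ is disjoint from the collar of $\partial W_0$ on which $F$ depends on the $W_0$-variable, $F(s,t,w)=F_0(s,t)$ for every $w\in\mathrm{Skel}(W_0,\lambda_0)$, where $F_0$ is the associated two-dimensional folding function; hence the $(s,t)$-coordinates of $\gamma$ evolve by the decoupled system $\dot s=1+e^{-s}\partial_t F_0$, $\dot t=-e^{-s}\partial_s F_0$, so the $(s,t)$-projection of $\gamma$ is a forward flowline of the two-dimensional box fold vector field of \ref{subsection:smooth_low} converging to the zero $(s_c,t_c)$ of that field. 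It then remains to check the purely two-dimensional statement that the only forward flowlines at $\{s=0\}$ converging to a critical point are those with $t^*>t_0-\ve(\tau)$: by Lemma \ref{lemma:smooth_holonomy_low} the image of the backward holonomy $h^\tau$ contains $(\ve(\tau),t_0-\ve(\tau))$, so forward flowlines from those $t$-values exit through $\{s=s_0\}$ and do not converge, while a forward flowline from $t^*\leq\ve(\tau)$ cannot reach the collar near $t=t_0$ where all critical points live. This last point is the step I expect to be the main obstacle: it is visually clear from Figure \ref{fig:smooth_vector_field} (everything near $t=0$ simply passes through in the $+s$-direction), but a rigorous argument requires exploiting the controlled behavior of $F^\tau_{\mathrm{aux}}$ near the boundary of its support — for instance, combining the explicit piecewise-linear holonomy $h^{PL}(t)=e^{s_0}t$ of Lemma \ref{lemma:PL_box_fold_holonomy_low}, defined for all small $t$, with its stability as $\tau\to0$.
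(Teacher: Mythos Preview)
Your overall strategy matches the paper's: locate the critical points via Lemma~\ref{lemma:LVFcomputation}, use the $X_{\lambda_0}$-structure of the $W_0$-component of $X$ to force $x\in\mathrm{Skel}(W_0,\lambda_0)$, and handle the $t^*$ claim by reduction to the two-dimensional fold.

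There is one genuine gap in your argument that $x\in\mathrm{Skel}(W_0,\lambda_0)$. You assert that the $W_0$-component of $X$ is everywhere the scalar multiple $-e^{-s}\partial_tF$ of $X_{\lambda_0}$, and hence that $w(\sigma)$ stays on a single $X_{\lambda_0}$-orbit for all $\sigma$. But by Lemma~\ref{lemma:LVFcomputation}(ii) this fails in the collar of $\partial W_0$: there the $W_0$-component picks up an $e^{-r}R_{\eta_0}$ term, so a flowline that enters the collar need not project to an $X_{\lambda_0}$-orbit, and your conclusion that $x$ lies on the $X_{\lambda_0}$-orbit through $w(T)$ is unjustified. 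The paper's fix is cleaner and avoids tracking the whole flowline: since the skeleton is disjoint from the collar, the $W_0$-component of $X$ \emph{restricted to points over $\mathrm{Skel}(W_0,\lambda_0)$} is a multiple of $X_{\lambda_0}$, hence tangent to the skeleton; thus $\mathrm{Skel}(W_0,\lambda_0)$ is invariant under the full flow of $X$. Any flowline starting off the skeleton therefore never reaches it, and in particular cannot converge to a critical point (all of which lie over $\mathrm{Skel}(W_0,\lambda_0)$).

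For the $t^*$ claim, the paper's argument is shorter than your holonomy analysis: in the piecewise-linear fold, \emph{every} forward flowline through $\{s=0\}\times(0,t_0)\times W_0$ reaches $\{s=s_0\}$, and this is $C^0$-approximated in the smooth fold, so only $t^*>t_0-\ve(\tau)$ can be trapped. This subsumes both the middle range and the $t^*\leq\ve(\tau)$ case that you flagged as the main obstacle.
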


\begin{proof}
Let $F_{\epsilon}$ be the smooth box fold function. Recall that the Liouville vector field after installing a box fold is, away from $\partial W_0$,
\[
   X=  \partial_s + e^{-s}\, X_{F_{\epsilon}}^{ds\, dt} - e^{-s}\frac{\partial F_{\epsilon}}{\partial t}\, X_{\lambda_0}.
    \]
Rewriting this gives 
\[
X = \left(1 + e^{-s} \frac{\partial F_{\epsilon}}{\partial t}\right)\, \partial_s - e^{-s}\frac{\partial F_{\epsilon}}{\partial s} \, \partial_t - e^{-s}\frac{\partial F_{\epsilon}}{\partial t}\, X_{\lambda_0}.
\]
Similarly, identify the collar neighborhood of $\partial W_0$ as $([-\epsilon,0]_r \times \partial W_0,\, \lambda_0 = e^r\, \eta_0)$ where $\eta_0 := \lambda_0 \mid_{\partial W_0}$. Since $d_{\partial W_0}F_{\epsilon} = 0$, the Liouville vector field here is
    \[
    X = \left(1 + e^{-s} \frac{\partial F_{\epsilon}}{\partial t}\right)\, \partial_s - e^{-s}\frac{\partial F_{\epsilon}}{\partial s} \, \partial_t - e^{-s}\frac{\partial F_{\epsilon}}{\partial t} \, X_{\lambda_0} - e^{-s} \frac{\partial F_{\epsilon}}{\partial r}\,\left(- \partial_t + e^{-r}\, R_{\eta_0} \right).
    \]
We may also assume without loss of generality that $\mathrm{Skel}(W_0, \lambda_0) \cap ([-\epsilon,0]_r \times \partial W_0) = \emptyset$. This assumption implies that 
\[
\left((\pi_{W_0})_*X\right)\mid_{\mathrm{Skel}(W_0, \lambda_0)} = - e^{-s}\frac{\partial F_{\epsilon}}{\partial t} \, X_{\lambda_0}.
\]
In words, the $W_0$-projection of $X$ to (a neighborhood of) the skeleton of $(W_0, \lambda_0)$ is parallel to $X_{\lambda_0}$ throughout the entire box fold. This implies that if $x \notin \mathrm{Skel}(W_0, \lambda_0)$, then the flowline of $X$ through $(0,t^*,x)$ never traverses $\mathrm{Skel}(W_0, \lambda_0)$.

Next, observe that if $p$ is a critical point of $X$, then $\frac{\partial F_{\epsilon}}{\partial t}(p) < 0$. Thus, in a neighborhood of $p$, $(\pi_{W_0})_*X$ is a positive multiple of $X_{\lambda_0}$. In particular, $\pi_{W_0}(p)$ is a critical point of $X_{\lambda_0}$. Moreover, this implies that if the flowline of $X$ through $(0,t^*,x)$ converges to a critical point in forward time, then flowline must eventually reach $\mathrm{Skel}(W_0, \lambda_0)$. By the above remark, it follows that $x \in \mathrm{Skel}(W_0, \lambda_0)$. Finally, the fact that $t^* > t_0 - \epsilon$ is immediate from the observation that in a piecewise linear box fold, every flowline passing through $\{s=0\}\times (0,t_0) \times W_0$ travels through the fold and reaches $\{s=s_0\}$ in forward time. This behavior is approximated by a smooth box fold, and hence $t^* > t_0 - \epsilon$. This proves the lemma. 
\end{proof}

\begin{proof}[Proof of Proposition \ref{prop:points-trapped-in-forward-time}.]

We claim that such a flowline $(0,t^*, x,p,q)$ is not trapped by a stove critical point. That is, the critical point must either be in $\rotreflPi_2$, the box hole, or the critical point at the top of the chimney of $C\Pi_1$. Assuming this, Proposition \ref{prop:points-trapped-in-forward-time} follows immediately from the previous lemma: indeed, $\rotreflPi_2$ is an ordinary fold, and near the top of the chimney of $C\Pi_1$ the chimney fold is locally an ordinary fold. In particular, the skeleton of the stabilization direction component of each of these folds is the origin and the $t$-coordinates are arbitrarily close to $t=0$ and $t=t_0$, respectively.

Thus, it suffices to show that the flowline through $(0,t^*, x,p,q)$ cannot converge to a stove critical point in forward time. But this is immediate from the design of a blocking apparatus: in \textit{backward} time, every flowline emanating from a stove critical point enters the trapping region of $\rotreflPi_2$; see the proof of Proposition \ref{prop:no-chimney-points-in-broken-loops}. This implies that all flowlines through $\{s=0\}$ avoid stove critical points of $C\Pi_1$ in forward time. 

\end{proof}

\section{Torus bundle domains are stably Weinstein}\label{section:mitsumatsu}
In this section we use Theorem~\ref{prop:mainprop} to prove Theorem~\ref{thm:torus-bundle-domains}, which says that the torus bundle Liouville domains of \cite{huang2019dynamical} are stably Weinstein.

We start by restating Theorem~\ref{prop:mainprop} for \emph{standard stabilized regions}.  These are regions $U\subset (W,\lambda)$ in a Liouville domain that have the form
\[
(U,\lambda|_U)\cong([0,s_0]\times[0,t_0]\times W_0\times r_0\,\mathbb{D}^2,e^s\,(dt+\lambda_0)+\lambda_{\mathrm{stab}}),
\]
for some Weinstein domain $(W_0,\lambda_0)$.  That is, $(U,\lambda|_U)$ appears to be a stabilization of a symplectization of a contact handlebody.  These regions appear naturally in the stabilization of a Liouville domain, and thus a version of Theorem~\ref{prop:mainprop} stated for such regions will be useful to us in this section.

After restating Theorem~\ref{prop:mainprop} in this manner, we will recall the definition of torus bundle Liouville domains, identify the standard stabilized regions on which we want to perturb the Liouville form, and finally use Proposition~\ref{prop:weinstein-criterion} to verify that this perturbed domain is in fact Weinstein.

\subsection{The local operation for standard stabilized regions}
Theorem~\ref{prop:mainprop} perturbs the Liouville form on the symplectization of a stabilized contact handlebody.  When modifying the Liouville dynamics of a stabilized Liouville domain, we will often find it more convenient for the stabilization and symplectization to be ``decoupled."  Precisely, we are interested in regions which appear to be stabilizations of symplectizations (rather than symplectizations of stabilizations). 

\begin{definition}
Given a Liouville domain $(W,\lambda)$, a \emph{standard stabilized region} is a subset $U\subset(W,\lambda)$ such that there exists a diffeomorphism
\[
\varphi\colon [0,s_0]_s\times[0,t_0]_t\times W_0\times r_0\,\mathbb{D}^2 \to U,
\]
with $\varphi^*(\lambda|_U) = e^s\,(dt+\lambda_0)+\lambda_{\mathrm{stab}}$, for some Weinstein domain $(W_0,\lambda_0)$ and some choice of constants $s_0,t_0,r_0>0$.  If $U\subset(W,\lambda)$ is a standard stabilized region, we will typically write
\[
(U,\lambda|_U) \cong ([0,s_0]_s\times[0,t_0]_t\times W_0\times r_0\,\mathbb{D}^2,e^s\,(dt+\lambda_0)+\lambda_{\mathrm{stab}}),
\]
making no mention of the diffeomorphism $\varphi$.
\end{definition}

Notice that the Liouville vector field in a standard stabilized region is given by $\partial_s+\tfrac{1}{2}(p\,\partial_p+q\,\partial_q)$; with a careful choice of coordinates, we may identify a subregion of a standard stabilized region to which Theorem~\ref{prop:mainprop} applies.  The upshot is that some version of Theorem~\ref{prop:mainprop} holds for standard stabilized regions, with minor modifications to the holonomy statements.

\begin{corollary}\label{cor:main-local-operation}
Consider the Weinstein cobordism $(U=[0,s_0]\times[0,t_0]\times W_0\times r_0\,\mathbb{D}^2,e^s\,(dt+\lambda_0)+\lambda_{\mathrm{stab}})$.  Fix $0<\epsilon\ll \min(1,s_0,\frac{1}{2}(1-e^{-s_0})t_0)$ sufficiently small. If $e^{-s_0/2}r_0>0$ is sufficiently large, a blocking apparatus can be installed in $U$, i.e., a Liouville homotopy $\lambda_U \rightsquigarrow \lambda_{\mathrm{BA}}$ can be applied to $U$, such that the following properties hold. 
\begin{enumerate}
    \item (Weinstein compatibility)

    \noindent The Liouville vector field $X_{\lambda_{\mathrm{BA}}}$ is Morse with 
    $8N_0$ critical points, where $N_0$ is the number of critical points of $(W_0, \lambda_0)$. Of these $8N_0$ critical points, $N_0^{\mathrm{crit}}$ of them are critical points of middle index, where $N_0^{\mathrm{crit}}$ is the number of middle index critical points of $(W_0, \lambda_0)$. 

    \item (Trapping properties)

\noindent There is a neighborhood $U_{\mathrm{trap}}$ of 
\[
[\epsilon,t_0-\epsilon]\times I_\epsilon(W_0,\lambda_0)\times \{(0,0)\} \subset [0,t_0] \times W_0 \times r_0\,\mathbb{D}^2
\]
such that any flowline passing through $\{s=s_0\}\times U_{\mathrm{trap}}\subseteq \partial_+U$ converges to a critical point of $X_{\lambda_{\mathrm{BA}}}$ in backward time.

    \item (Holonomy properties)

\noindent Let $x\in \partial_+U$ be in the domain of the partially-defined holonomy map $h\colon\partial_+U\dashrightarrow\partial_-U$ induced by $X_{\lambda_{\mathrm{BA}}}$. Then the following properties hold. 
\begin{enumerate}
    \item For some constant $0<K<1$, we have $\|W_0(h(x))\|_{W_0}\leq Ke^{s_0}\,\|W_0(x)\|_{W_0}$.\label{cor:local-operation:constant-part}

    \item For the same constant $0<K<1$, whenever $\|x\|_{W_0}<e^{-s_0}$ we have
    \[
    \|\pi_{r_0\,\mathbb{D}^2}(h(x))\|_{\mathrm{stab}}\leq K\,\|\pi_{r_0\,\mathbb{D}^2}(x)\|_{\mathrm{stab}},
    \]
    where $\|\cdot\|_{\mathrm{stab}}$ is the usual Euclidean norm on $r_0\D^2$.\label{cor:local-operation:stabilization-part}
    
    \item If $t(x) \in [0,\epsilon] \cup [t_0 - \epsilon,t_0]$ and $W_0(x)\in I_{1 - e^{-s_0}}(W_0, \lambda_0)$, then $t(h(x)) \in [0,\epsilon] \cup [t_0 - \epsilon,t_0]$.\label{cor:local-operation:reeb-part}
    
\end{enumerate}
\end{enumerate}
\end{corollary}

\begin{proof}[Proof of Corollary~\ref{cor:main-local-operation}]
The corollary follows quickly from Theorem~\ref{prop:mainprop} once we identify a \emph{standard subregion} in $(U,\lambda|_U)$.  In particular, consider the subset
\[
\overline{U} := \{(s,t,w,p,q)\in U\,|\, e^{(s_0-s)/2}\,\|(p,q)\|\leq r_0\} \subset U.
\]
We can give $\overline{U}$ coordinates via a map
\[
\psi\colon [0,s_0]_{\overline{s}}\times [0,t_0]_{\overline{t}} \times W_0 \times e^{-s_0/2}\,r_0\,\mathbb{D}^2 \to \overline{U}\subset U
\]
defined by
\[
\psi(\overline{s},\overline{t},\overline{w},\overline{p},\overline{q}) := (\overline{s},\overline{t},\overline{w},e^{\overline{s}/2}\,\overline{p},e^{\overline{s}/2}\,\overline{q}).
\]
Notice that $\psi^*(\lambda|_{\overline{U}}) = e^{\overline{s}}\,(d\overline{t}+\lambda_0+\lambda_{\mathrm{stab}})$, and thus that $(\overline{U},\lambda_{\overline{U}})$ is a Weinstein cobordism of the type hypothesized in Theorem~\ref{prop:mainprop}.  If the stabilization radius $e^{-s_0/2}\,r_0$ is sufficiently large, then Theorem~\ref{prop:mainprop} gives us the conclusions of Corollary~\ref{cor:main-local-operation}.

In particular, part~\ref{prop:mainprop:stabilization-part} of Theorem~\ref{prop:mainprop} tells us that if $\|\overline{x}\|_{W_0}<e^{-s_0}$, then $\|h(\overline{x})\|_{\mathrm{stab}}\leq \overline{K}\,\|\overline{x}\|_{\mathrm{stab}}$, for a constant $0<\overline{K}<e^{s_0/2}$.  This inequality holds in the coordinates on $\overline{U}$ given by $\psi$.  Because $\overline{x}$ is an element of $\partial_+U$ and $h(\overline{x})$ is an element of $\partial_-U$, we have
\[
\|h(\overline{x})\|_{\mathrm{stab}} = \|\psi(h(\overline{x}))\|_{\mathrm{stab}}
\quad\text{and}\quad
\|\overline{x}\|_{\mathrm{stab}} = e^{-s_0/2}\,\|\psi(\overline{x})\|_{\mathrm{stab}},
\]
so
\[
\|\psi(h(\overline{x}))\|_{\mathrm{stab}} = \|h(\overline{x})\|_{\mathrm{stab}} \leq \overline{K}\,\|\overline{x}\|_{\mathrm{stab}} = \overline{K}\,e^{-s_0/2}\,\|\psi(\overline{x})\|_{\mathrm{stab}},
\]
giving us part~\ref{cor:local-operation:stabilization-part} of Corollary~\ref{cor:main-local-operation}.
\end{proof}

\subsection{The domains}\label{subsection:huangs-domains}
In this subsection we recall Huang's construction of the torus bundle Liouville domains, establishing detailed notation we will need later.

The construction begins with a matrix $A\in\mathrm{SL}(n,\mathbb{Z})$ which has the property that its eigenvalues $\lambda_1,\ldots,\lambda_n$ are all real, and satisfy
\[
0 < \lambda_n < |\lambda_i|,
\quad\text{for all~} 1\leq i\leq n-1.
\]
Notice that $A$ represents a linear Anosov map $T^n\to T^n$, where $T^n:=\mathbb{R}^n/\mathbb{Z}^n$.  Matrices of this sort were shown to exist in \cite[Appendix A]{huang2019dynamical}.

Given such a matrix $A$, we may choose linearly independent 1-forms $\beta_1,\ldots,\beta_n\in\Omega^1(T^n)$ satisfying
\[
A^*\beta_i = \lambda_i\beta_i,
\quad\text{for all~} 1\leq i\leq n.
\]
From these we define a contact form
\[
\alpha := \beta_n + \sum_{i=1}^{n-1}y_i\,\beta_i
\]
on $M=D^{n-1}\times T^n$, where the coordinates on $D^{n-1}$ are given by $y_1,\ldots,y_{n-1}$.  Notice that the map $\phi_A\colon M\to M$ defined by
\[
\phi_A(\vec{y},\vec{x}) := (\Lambda\vec{y},A\vec{x}),
\]
where
\[
\Lambda=\left(\begin{matrix}
    \lambda_n/\lambda_1 & 0 & \cdots & 0\\
    0 & \lambda_n/\lambda_2 & \cdots & 0\\
    \vdots & \vdots & \ddots & \vdots\\
    0 & 0 & \cdots & \lambda_n/\lambda_{n-1}
\end{matrix}\right),
\]
is a diffeomorphism onto its image, and that $\phi_A^*\alpha=\lambda_n\alpha$.  In the language of \cite{huang2019dynamical}, this makes $\phi_A$ a \emph{contraction} of $(M,\alpha)$ and allows us to define a Liouville domain $W_A$ which is a partial mapping torus of $\phi_A$.

We begin with the symplectization $(\mathbb{R}_s\times M,d(e^s\alpha))$ of $M$ and consider the map $\Phi_A\colon\mathbb{R}\times M\to\mathbb{R}\times M$ defined by
\[
\Phi_A(s,\vec{y},\vec{x}) := (s-\ln(\lambda_n),\Lambda\vec{y},A\vec{x}).
\]
Then
\[
\Phi_A^*(e^s\alpha) = e^{s-\ln(\lambda_n)}\phi_A^*\alpha = \frac{1}{\lambda_n}e^s(\lambda_n\alpha) = e^s\alpha,
\]
so the Liouville form $e^s\alpha$ on $\mathbb{R}\times M$ descends to the partial mapping torus
\[
W_A := ([0,|\ln(\lambda_n)|]_s\times M)/(0,\vec{y},\vec{x}) \sim \Phi_A(0,\vec{y},\vec{x}).
\]
Of course, the vector field dual to $\lambda:=e^s\,\alpha$ via $d(e^s\,\alpha)$ is $\partial_{s}$, which does not point out of the \emph{vertical boundary} $[0,|\ln(\lambda_n)|]\times\partial M$ of $W_A$.  This is easily fixed with a small perturbation of the vertical boundary, and the precise choice of perturbation does not affect the Liouville homotopy class of the resulting Liouville domain $(W_A,\lambda)$.  See Figure~\ref{fig:torus-bundle-domain-schematic}.

It is clear from this construction that the skeleton of $(W_A,\lambda)$ is given by
\[
\mathrm{Skel}(W_A,\lambda) = ([0,|\ln(\lambda_n)|]\times\{0\}\times T^n)/(0,0,\vec{x})\sim(-\ln(\lambda_n),0,A\vec{x}),
\]
which is smoothly the mapping torus of $A\colon T^n\to T^n$.  Our $2n$-dimensional Liouville domain $(W_A,\lambda)$ thus has the homotopy type of an $n+1$-manifold, and therefore fails to admit a Weinstein structure.  Our goal is to show that $(W_A,\lambda)$ is made Weinstein by a single stabilization --- i.e., that $(W_A\times r_0\,\mathbb{D}^2,\lambda+\tfrac{1}{2}(p\,dq-q\,dp))$ is Weinstein, up to homotopy, for some $r_0>0$.

\begin{figure}
    \centering
    \begin{overpic}[width=0.4\linewidth]{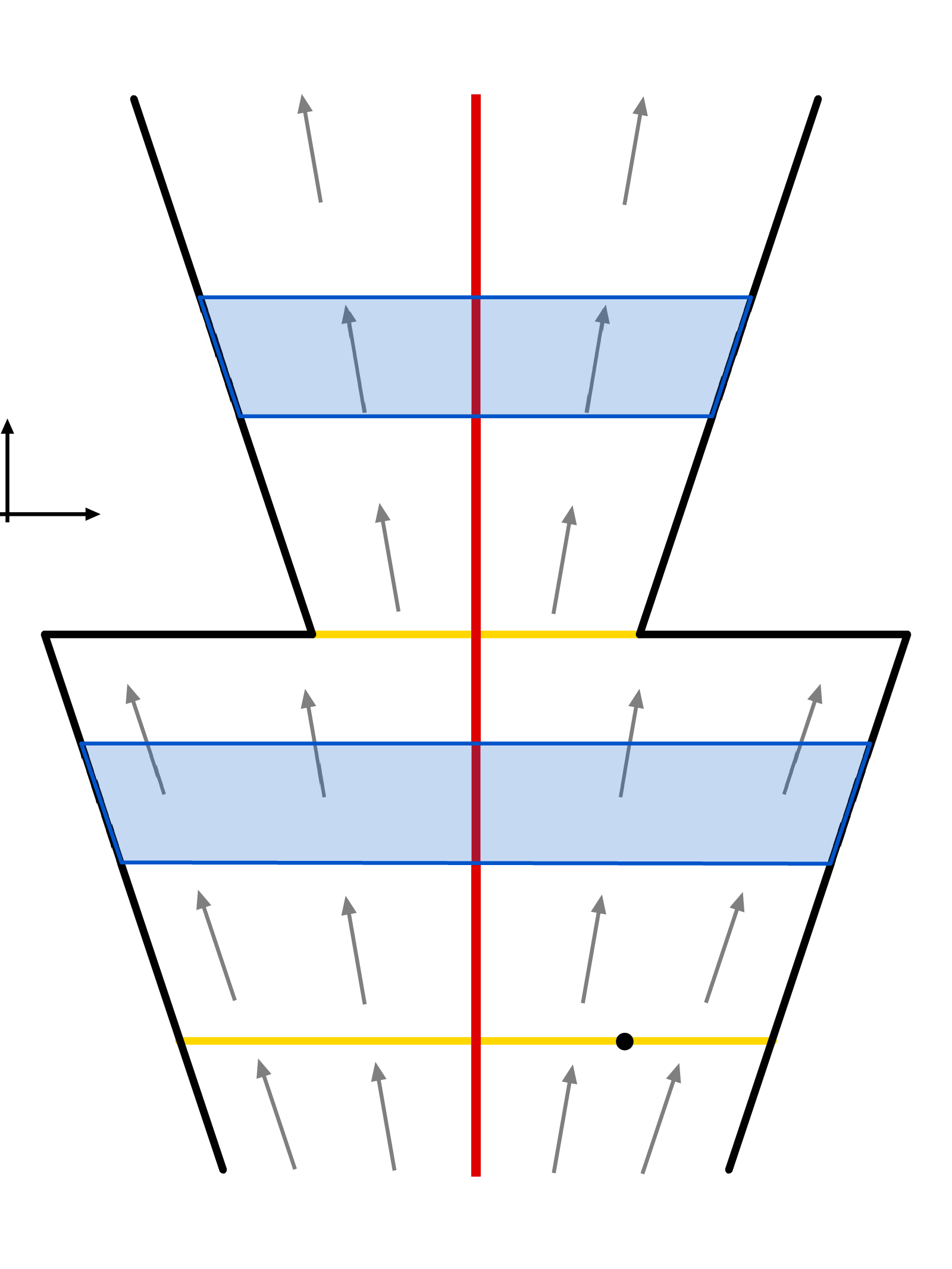}
        \put(28,2){\textcolor{Red}{$\mathrm{Skel}(W_A, \lambda)$}}
        \put(70,35){\textcolor{Blue}{$V_0$}}
        \put(60,70){\textcolor{Blue}{$V_1$}}
        \put(53,52){\textcolor{Goldenrod}{$M_0$}}
        \put(63,17){\textcolor{Goldenrod}{$M_s$}}
        \put(-0.25,67.5){\small $s$}
        \put(8.5,58.5){\small $D^{n-1}$}
        \put(46.5,20.5){\small $S_{s,\vec{y}}$}
    \end{overpic}
    \caption{A schematic for the torus bundle Liouville domain $(W_A,\lambda)$, along with its Liouville vector field.  Each point in this schematic represents a torus $T^{n}$, with the Reeb vector field of each contact slice $M_s$ parallel to this torus.  We caution that while, as depicted here, the regions $V_0$, $V_1$ of $W_A$ have thickness in the symplectization and $D^{n-1}$-directions, they include only strips $S_0,S_1\subsetneq T^{n}$ over each point.}
    \label{fig:torus-bundle-domain-schematic}
\end{figure}

\subsection{Identifying standard stabilized regions}\label{subsection:mitsumatsu-ssr}
As outlined in Section~\ref{section:introduction}, our strategy for simplifying Liouville dynamics focuses on using local perturbations to interrupt the Liouville flow along the skeleton of our domain.  When identifying the standard stabilized regions that will support our perturbations (i.e., where Corollary~\ref{cor:main-local-operation} will be applied), it is thus important to consider how these regions intersect our skeleton.

Because our skeleton has no interesting behavior in the stabilization direction, our search for standard stabilized regions will be guided only by the nature of the initial domain $(W_A,\lambda)$ and its skeleton.  This search is eased considerably by the fact that $(W_A,\lambda)$ agrees locally with the symplectization of $(M,\alpha)$.  This allows us to construct a standard stabilized region by identifying a contact handlebody $([0,t_0]\times W_0,dt+\lambda_0)$ in $(M,\alpha)$ and then considering the standard stabilized region
\[
(\sigma_0\times[0,t_0]\times W_0\times r_0\mathbb{D}^2,e^s(dt+\lambda_0)+\tfrac{1}{2}(p\,dq-q\,dp)) \subset W_A,
\]
for some closed interval $\sigma_0\subset(0,|\ln(\lambda_n)|)$ and some $r_0>0$.  The contact handlebody arises as a standard neighborhood of a closed Legendrian in $(M,\alpha)$, chosen so that this neighborhood meets $\mathrm{Skel}(W_A,\lambda)$ in a desirable manner.

\subsubsection{Identifying contact handlebodies}
Given any $s\in[0,|\ln(\lambda_n)|)$ and $\vec{y}\in D^{n-1}$, we may consider the submanifold
\[
S_{s,\vec{y}} := \{s\}\times\{\vec{y}\}\times T^n
\]
of the contact slice
\[
M_s:=\{s\}\times D^{n-1} \times T^n \subset (W_A,\lambda),
\quad
0\leq s < |\ln(\lambda_n)|.
\]
We denote by $\alpha_s:=e^s(\beta_n + \sum_i y_i\,\beta_i)$ the contact form on $M_s$, and by $\eta_{s,\vec{y}}$ the restriction of $\alpha_s$ to $S_{s,\vec{y}}$.  We thus have a distribution
\[
\ker(\eta_{s,\vec{y}}) = \mathbb{R}\langle y_1 V_n-V_1,\ldots, y_{n-1}V_n - V_{n-1} \rangle
\]
on $S_{s,\vec{y}}$, integrable by virtue of the fact that $d\eta_{s,\vec{y}}\equiv 0$.  Here $V_1,\ldots,V_n\in\mathfrak{X}(T^n)$ are eigenvectors of $A$ corresponding to the eigenvalues $\lambda_1,\ldots,\lambda_n$.

Observe that the intersection of $\mathrm{Skel}(W_A,\lambda)$ with $M_s$ is given by $S_{s,0}$.  While $\ker(\eta_{s,0})$ produces a foliation of $S_{s,0}$ by Legendrian submanifolds, the leaves of this foliation are not closed.  However, the collection of values $\vec{y}$ for which the foliation determined by $\ker(\eta_{s,\vec{y}})$ has closed leaves is dense in $D^{n-1}$, and thus we may choose $\vec{y}_0\neq 0$ arbitrarily close to 0 so that $S_{s,\vec{y}_0}$ is foliated by Legendrian tori.  With our choice of $\vec{y}_0\neq 0$ fixed\footnote{In practice, the particular value of $\vec{y}_0$ is unimportant.}, we will obtain our contact handlebodies as standard neighborhoods of Legendrian leaves.

Let us denote by $L_0$ the leaf which passes through $0$ in $T^n\subset S_{s,\vec{y}_0}$ and consider a strip $S_0\subset T^n$ centered on $L_0$.  That is, $S_0$ has coordinates $(t,\theta_1,\ldots,\theta_{n-1})$, with $\partial_t=V_n$ and $\partial_{\theta_i}=y_i\,V_n-V_i$, for $1\leq i\leq n-1$.  We write $S_0=[0,t_0]\times L_0$, with the original leaf $L_0$ given by $\{t_0/2\}\times L_0$.  The strip $S_0$ is chosen so that its volume is at least 2/3 in the flat metric on $T^n=\mathbb{R}^n/\mathbb{Z}^n$.  Now $N_0:=\{s\}\times D^{n-1}\times S_0\subset (M_s,\alpha_s)$ is a contact handlebody; in the natural coordinates $(\vec{y},t,\vec{\theta})$ on this neighborhood, we have
\[
\alpha|_{N_0} = e^s\left(dt+\sum_{i=1}^{n-1}(\vec{y}_0-\vec{y})_i\,d\theta_i\right).
\]
At last we may choose a closed interval $\sigma_0\subset(0,|\ln(\lambda_n)|)$ and define
\[
V_0:= \sigma_0 \times D^{n-1} \times [0,t_0]_t \times L_0 \subset W_A,
\]
so that $\lambda|_{V_0}=e^s\,(dt+\sum_i(\vec{y}_0-\vec{y})_i\,d\theta_i)$.  We emphasize that $(V_0,\lambda|_{V_0})$ is a region in the \emph{unstabilized} domain $(W_A,\lambda)$.  It remains to choose a radius $r_0>0$ and define a standard stabilized region $U_0:=V_0\times r_0\,\mathbb{D}^2$.  Before doing so, we point out that a second region, $(V_1,\lambda|_{V_1})$, may be constructed by letting $L_1\subset T^n$ be the unique leaf of $S_{s,\vec{y}_0}$ with the property that the two components of $T^n\setminus(L_0\cup L_1)$ are thickened tori of equal volume.  We then construct a strip $S_1=[0,t_0]\times L_1\subset T^n$ of the same Reeb thickness $t_0$ as $S_0$, with coordinates $(t,\vec{\theta})$.  At last we define
\[
V_1:= \sigma_1 \times D^{n-1} \times [0,t_0]_t \times L_1 \subset W_A,
\]
where $\sigma_1$ is an as-yet-undetermined closed subinterval of $(0,\min(\sigma_0))$.  In particular, the intervals $\sigma_0,\sigma_1$ are taken to be disjoint.

\subsubsection{Choosing parameters for Corollary~\ref{cor:main-local-operation}}
At this stage, we are treating the quantities $t_0>0$ and $\vec{y}_0\neq 0$ as fixed; it remains to choose the intervals $\sigma_0$ and $\sigma_1$, as well as the radius $r_0>0$ that will be used for our standard stabilized regions.  Moreover, applying Corollary~\ref{cor:main-local-operation} to $U_i:=V_i\times r_0\,\mathbb{D}^2$, $i=0,1$, will require the choice of a parameter $\epsilon>0$.  Our plan is to carefully choose $\epsilon$, then the intervals $\sigma_0$ and $\sigma_1$, and finally $r_0$.

For any $0<\epsilon\ll t_0$, let us denote by $S_i^\epsilon$ the strip
\[
S_i^\epsilon := [\epsilon,t_0-\epsilon]\times L_i \subset S_i \subset T^n,
\]
for $i=0,1$.  We will choose $\epsilon>0$ sufficiently small to ensure that $S_0^{2\epsilon}\cup S_1^{2\epsilon}=T^n$.  This accounts for the ``Reeb stretching" that is induced by Corollary~\ref{cor:main-local-operation}.  Next, we consider the codimension 2 domains underlying $V_0$ and $V_1$.  That is, we have
\[
(W_i,\lambda_i) = (D^{n-1}\times L_i,(\vec{y}_0-\vec{y})_i\,d\theta_i),
\]
the skeleton of which is given by $\{\vec{y}_0\}\times L_i$.  We require $\epsilon>0$ to be sufficiently small to ensure that the region $\{\vec{y}\in D^{n-1}\,|\,|\vec{y}|\leq|\vec{y}_0|\}\times L_i$ is contained in the interior of $I_\epsilon(W_i,\lambda_i)$.  This is because the intersection of $\mathrm{Skel}(W_A,\lambda)$ with $V_i$ is given by $\sigma_i\times\{0\}\times[0,t_0]\times L_i$; by choosing $\epsilon>0$ sufficiently small, we ensure that this intersection is trapped when Corollary~\ref{cor:main-local-operation} is applied.  See Figure~\ref{fig:mitsumatsu-band}.

\begin{figure}
    \centering
    \begin{tikzpicture}[scale=1.25]
\fill [green,even odd rule,opacity=0.25] (0,0) circle[radius=1.45cm] circle[radius=0.85cm];

\draw (0,0) circle (0.5 cm);
\draw[red] (0,0) circle (1 cm);
\draw[green] (0,0) circle (1.15 cm);
\draw (0,0) circle (1.5 cm);

\end{tikzpicture}
    \caption{The Weinstein domain $(W_i,\lambda_i)$ has skeleton $\{\vec{y}_0\}\times L_i$, represented by the green circle, while the intersection of $\mathrm{Skel}(W_A,\lambda)$ with $V_i$ projects to $W_i$ as $\{0\}\times L_i$, given by the red circle.  We choose $\epsilon>0$ sufficiently small to ensure that $I_\epsilon(W_i,\lambda_i)$ --- depicted here by the region shaded in green --- contains the region $\{|\vec{y}|\leq|\vec{y}_0|\}\times L_i$.}
    \label{fig:mitsumatsu-band}
\end{figure}

With $\epsilon>0$ chosen, we now set about defining the intervals $\sigma_0$ and $\sigma_1$.  We have already insisted that $\sigma_1$ come strictly ``before" $\sigma_0$, in the sense that $\max(\sigma_1)<\min(\sigma_0)$.  We now place some restrictions on the lengths of these intervals; beyond their lengths, order, and disjointness, the precise choice for these intervals is unimportant.  Our first restriction on the lengths is simple: we require that $|\sigma_i|<-\ln(1-\epsilon)$, for $i=0,1$.  This restriction allows us to conclude that $N^{|\sigma_i|}(\partial W_i)$ is a strict subset of $W_i\setminus I_\epsilon(W_i,\lambda_i)$.  So $I_\epsilon(W_i,\lambda_i)$ is contained in $W_i\setminus N^{|\sigma_i|}(\partial W_i)$, and thus part~\ref{cor:local-operation:reeb-part} of Corollary~\ref{cor:main-local-operation} (regarding the Reeb holonomy) applies to any point whose $W_i$-component is contained in $I_\epsilon(W_i,\lambda_i)$.

The second restriction on the lengths of $\sigma_0$ and $\sigma_1$ is quite a bit more complicated to state.  We begin with the observation that the Liouville flow $\psi^s\colon W_i\to W_i$ is given by
\[
\psi^s(\vec{y},\vec{\theta}) = (\vec{y}_0 + e^s\,(\vec{y}-\vec{y}_0),\vec{\theta}),
\]
for $i=0,1$.  We could use this to explicitly compute the norm $\|\cdot\|_{W_i}$, but the important point is that this norm depends only on $\vec{y}$.  Now Corollary~\ref{cor:main-local-operation} considers the partially-defined holonomy map $h\colon\partial_+U \dashrightarrow \partial_-U$ as consisting more-or-less of three distinct components: a component in the stabilization direction, another in the Weinstein direction, and a third in the Reeb direction.  Consider the following three maps $h_\bullet\colon W_i\to W_i$:
\[
h_i(\vec{y},\vec{\theta}) := (\vec{y}+e^{|\sigma_i|}\,(\vec{y}-\vec{y}_0),\vec{\theta}),~~i=0,1,
\quad
h_g(\vec{y},\vec{\theta}) := (\Lambda\,\vec{y},\vec{\theta}).
\]
According to Corollary~\ref{cor:main-local-operation}, $h_0$ and $h_1$ represent a sort of worst-case Weinstein $\vec{y}$-holonomy for points which are not trapped (ignoring any changes which might happen in the $\vec{\theta}$-component).  The final map, $h_g$, comes from the global holonomy of $(W_A,\lambda)$.  We observe that $h_0$ and $h_1$ have an unstable fixed point at $(\vec{y}_0,\vec{\theta})$, while $h_g$ brings the $\vec{y}$-component closer to 0, since all eigenvalues of $\Lambda$ are less than 1.  By our choice of $\epsilon>0$, the region in $W_i$ where we have $|\vec{y}|\leq|\vec{y}_0|$ is contained in the interior of $I_\epsilon(W_i,\lambda_i)$, and thus we can choose $|\sigma_0|$ and $|\sigma_1|$ sufficiently small to ensure that the image of $(h_g\circ h_1\circ h_0)^k$ is contained in $I_\epsilon(W_i,\lambda_i)$, for some $k\geq 1$.  That is, $\sigma_0$ and $\sigma_1$ are sufficiently short that the global holonomy will dominate in the $\vec{y}$-component.  This constitutes our second requirement on $|\sigma_0|$ and $|\sigma_1|$.

Our third and final requirement on $|\sigma_0|$ and $|\sigma_1|$ is that these quantities be chosen sufficiently small to ensure that $(h_0\circ h_g\circ h_1)(\vec{y},\vec{\theta})$ is contained in the interior of $I_\epsilon(W_i,\lambda_i)$, for any $\vec{y}$ satisfying $|\vec{y}|\leq|\vec{y}_0|$.  This condition will be used when verifying that our perturbed Liouville domain contains no broken loops.

Finally, we choose $r_0>0$ sufficiently large, given our choice of $\epsilon$, so that the conclusions of Corollary~\ref{cor:main-local-operation} hold for both $(U_0,(\lambda+\tfrac{1}{2}(p\,dq-q\,dp))|_{U_0})$ and $(U_1,(\lambda+\tfrac{1}{2}(p\,dq-q\,dp))|_{U_1})$.

\begin{remark}
It is worth pointing out one small wrinkle with our standard stabilized regions: the domains $(W_i,\lambda_i)$ are not technically Weinstein domains.  However, these particular domains can be made Weinstein via arbitrarily small perturbations supported in arbitrarily small neighborhoods of their skeleta.  The argument given below does not concern itself with the precise dynamics of $(W_i,\lambda_i)$ near the skeleton, and thus we may treat $(W_i,\lambda_i)$ has having been made Weinstein.
\end{remark}

\subsection{Verifying the Weinstein criteria}\label{subsection:mitsumatsu-verifying}
Let $(W_A\times r_0\,\mathbb{D}^2,\tilde{\lambda})$ denote the Liouville domain which results from applying Corollary~\ref{cor:main-local-operation} to the disjoint regions $U_0$ and $U_1$ identified above.  We now verify that every flowline of this domain limits to a critical point in backward time, and that this domain has no broken loops.  According to Proposition~\ref{prop:weinstein-criterion}, this will allow us to conclude that $(W_A\times r_0\,\mathbb{D}^2,\tilde{\lambda})$ is a Weinstein domain.

\subsubsection{The critical points criterion}
Notice that if a flowline of the perturbed Liouville domain fails to limit to a critical point in backward time, then this flowline must pass through the slice $M_{|\ln(\lambda_n)|}\times r_0\,\mathbb{D}^2$ of $(W_A\times r_0\,\mathbb{D}^2,\tilde{\lambda})$.  The same is true before perturbation (in which case our domain has no critical points), and thus we consider a point $(|\ln(\lambda_n)|,\vec{y},\vec{x},p,q)$ in $W_A\times r_0\,\mathbb{D}^2$.  We let $\ell$ denote the flowline through this point before perturbation, while $\tilde{\ell}$ is the flowline through the point after perturbation.  Our goal is to show that $\tilde{\ell}$ must converge to a critical point of our modified Liouville domain in backward time.

We first observe that, since the strips $S_0$ and $S_1$ in $T^n$ on which $U_0$ and $U_1$ are modeled cover $T^n$, the flowline $\ell$ must intersect at least one of $\partial_+U_0$ and $\partial_+U_1$ in backward time.  Ideally, the perturbations installed on $U_0$ and $U_1$ will cause the new flowline $\tilde{\ell}$ to limit to a critical point in one of these regions without returning to the slice $M_{|\ln(\lambda_n)|}\times r_0\,\mathbb{D}^2$, but there are several ways in which this might fail to occur.

\subsubsection*{The $\vec{y}$ holonomy}
First, if $\vec{y}$ is near the boundary $\partial D^{n-1}$, then our perturbations cannot be expected to trap $\tilde{\ell}$.  However, the global holonomy of $W_A$ ensures that $\vec{y}$ cannot stay near $\partial D^{n-1}$.  In particular, we have chosen the lengths of the intervals $\sigma_0$ and $\sigma_1$ so that the global holonomy, which tends to shrink the modulus of $\vec{y}$, dominates the holonomy due to our perturbations on $U_0$ and $U_1$.  Eventually, if $\tilde{\ell}$ does not first limit to a critical point, we will have $\vec{y}$ sufficiently near $\vec{y}_0$ so that $\tilde{\ell}$ intersects either $\partial_+U_0$ or $\partial_+U_1$ in $\{\max(\sigma_i)\}\times[0,t_i]\times I_\epsilon(W_i,\lambda_i)$.  For this reason we may assume that $\vec{y}$ is near $\vec{y}_0$ --- in particular, $\|\tilde{\ell}\cap\{s=|\ln(\lambda_n)|\}\|_{W_0} \leq e^{-|\sigma_0|}(1-\epsilon)<1-\epsilon$.

\subsubsection*{The stabilization holonomy}
According to the first condition placed on $\sigma_0$ and $\sigma_1$, we have $1-\epsilon<e^{-|\sigma_i|}$ for $i=0,1$, and thus
\[
\|\tilde{\ell}\cap\{s=|\ln(\lambda_n)|\}\|_{W_0} < e^{-|\sigma_i|}
\]
According to part~\ref{cor:local-operation:stabilization-part} of Corollary~\ref{cor:main-local-operation}, we have constants $0<K_0,K_1<1$ such that if $\tilde{\ell}$ is not trapped as it passes through $U_i$, then the stabilization norms of its intersections with the ends of $U_i$ are related by
\[
\|\tilde{\ell}\cap\partial_-U_i\|_{\mathrm{stab}} \leq K_i\|\tilde{\ell}\cap\partial_+U_i\|_{\mathrm{stab}}.
\]
That is, the holonomy associated to $U_i$ brings untrapped flowlines closer to $W_A\times\{(0,0)\}$.  We may therefore restrict our attention to the case where $(p,q)=(0,0)$; if all flowlines through such points limit to critical points in backward time, then in fact all flowlines of $(W\times r_0\,\mathbb{D}^2,\tilde{\lambda})$ limit to critical points in backward time.

\subsubsection*{The Reeb holonomy}
We are now considering a flowline $\tilde{\ell}$ which passes through a point $(|\ln(\lambda_n)|,\vec{y},\vec{x},0,0)$, with $\vec{y}$ sufficiently close to $\vec{y}_0$ to ensure that the unperturbed flowline $\ell$ intersects at least one of $\partial_+U_0$, $\partial_+U_1$ in its trapping region.  If $\vec{x}\in T^n$ lies in $S_0^\epsilon$, then $\ell$ meets the trapping region of $\partial_+U_0$ and thus $\tilde{\ell}$ converges to a critical point of $U_0$ in backward time.  If $\vec{x}$ lies outside of $S_0$, then $\tilde{\ell}$ will not intersect $U_0$ and $\vec{x}\in S_1^\epsilon$, so $\tilde{\ell}$ will converge to a critical point of $U_1$ in backward time.  Finally, if $\vec{x}\in S_0-S_0^\epsilon$, then $\tilde{\ell}$ will either converge to a critical point of $U_0$ in backward time or, according to part~\ref{cor:local-operation:reeb-part} of Corollary~\ref{cor:main-local-operation}, intersect $\partial_-U_0$ with $T^n$-coordinates lying in $S_0-S_0^{2\epsilon}\subset S_1^\epsilon$.  In the latter case, $\tilde{\ell}$ will intersect $\partial_+U_1$ in its trapping region, and in either case $\tilde{\ell}$ converges to a critical point in backward time.\\

We conclude that every flowline of $(W\times r_0\,\mathbb{D}^2,\tilde{\lambda})$ limits to a critical point in backward time.

\subsubsection{The broken loops criterion}
We now show that if $(W\times r_0\,\mathbb{D}^2,\tilde{\lambda})$ contains a broken loop $c$, then $c$ includes a stove critical point, in violation of Proposition~\ref{prop:no-chimney-points-in-broken-loops}.  Throughout this subsection, we suppose that $c$ is a broken loop of $(W\times r_0\,\mathbb{D}^2,\tilde{\lambda})$.  We begin with a simple observation.

\begin{lemma}\label{lemma:exit-ui}
The broken loop $c$ includes a flowline $\gamma$ which converges to a critical point of $U_i$ in forward time and exits $U_i$ in backward time, where $i=0$ or $i=1$.
\end{lemma}
\begin{proof}
Because every flowline of $(W\times r_0\,\mathbb{D}^2,\tilde{\lambda})$ converges to a critical point in backward time, $c$ must include a critical point --- that is, $c$ is genuinely broken.  The critical points of $(W\times r_0\,\mathbb{D}^2,\tilde{\lambda})$ are contained in $U_0$ and $U_1$, so $c$ must include a critical point of $U_i$, for $i=0$ or $i=1$.  But Corollary~\ref{cor:no-broken-loops-in-BA} says that neither $U_0$ nor $U_1$ contains a broken loop, so $c$ must exit $U_i$ in backward time.  In particular, $c$ includes a flowline $\gamma$ which converges in forward time to a critical point of $U_i$ (and thus this critical point does not have index 0) and exits $U_i$ in backward time.
\end{proof}

If $\gamma$ exits $U_0$ in backward time, we quickly obtain our stove critical point.

\begin{lemma}\label{lemma:exit-u0}
If $\gamma$ is a flowline which converges to a critical point of $U_0$ in forward time and exits $U_0$ in backward time, then $\gamma$ converges in backward time to a stove critical point of $U_1$.
\end{lemma}
\begin{proof}
According to Proposition~\ref{prop:points-trapped-in-forward-time}, $\gamma$ must intersect $\partial_-U_0$ with coordinates $(\min(\sigma_0),\vec{y}_0,\vec{x},0,0)$, for some point $\vec{x}\in T^n$ which lies in $S_0-S_0^{2\epsilon}$.  But $S_0-S_0^{2\epsilon}$ is a subset of $S_1^\epsilon\subset T^n$, so $\gamma$ meets $\partial_+U_1$ in its trapping region.  According to Proposition~\ref{prop:main-prop-goes-to-chimney-points}, $\gamma$ converges in backward time to a critical point of $U_1$ which either has index 0 or is a stove critical point.  But the backward-time limit of $\gamma$ is the forward-time limit of some other flowline of $c$, and thus does not have index 0.  So $\gamma$ converges in backward time to a stove critical point of $U_1$.
\end{proof}

An immediate consequence of Lemma~\ref{lemma:exit-u0} is that if $c$ includes any critical points of $U_0$, then $c$ includes a stove critical point --- simply move backwards along $c$ until reaching a flowline which exits $U_0$ in backward time, and this flowline will converge to a stove critical point of $U_1$.  For this reason, we may assume that all critical points of $c$ --- of which there is at least one --- are contained in $U_1$.\\

We now let $\gamma$ be a flowline of $c$ which converges in both forward and backward time to critical points of $U_1$.  According to Proposition~\ref{prop:points-trapped-in-forward-time}, $\gamma$ will intersect $\partial_-U_1$ in a point of the form $(\min(\sigma_1),\vec{y}_0,\vec{x},0,0)$, and thus intersect $M_0\times r_0\mathbb{D}^2$ at $(0,\vec{y}_0,\vec{x},0,0)$ under backward flow.  Applying the global holonomy, $\gamma_i$ reaches the point $(|\ln(\lambda_n)|,\Lambda\vec{y}_0,A\vec{x},0,0)$ in $M_{\ln(\lambda_s)}\times r_0\mathbb{D}^2$.  Now $A\vec{x}\in T^n$ must not lie in $S_0^\epsilon$, lest $\gamma$ converge in backward time to a critical point of $U_0$.  It is possible that $A\vec{x}$ lies in $S_0-S_0^\epsilon$, and thus that the $D^{n-1}$- and $T^n$-coordinates of $\gamma$ are affected by the holonomy of $U_0$; however, Corollary~\ref{cor:main-local-operation} ensures that $\gamma$ will intersect $\partial_-U_0$ with $D^{n-1}$-coordinate satisfying $|\vec{y}|\leq|\vec{y}_0|$, and with $T^n$-coordinate in $S_1^\epsilon$, and thus $\gamma$ intersects $\partial_+U_1$ in its trapping region.  In particular, Proposition~\ref{prop:main-prop-goes-to-chimney-points} tells us that $\gamma$ limits in backward time to a stove critical point.\\

In any case, we now see that if $(W\times r_0\,\mathbb{D}^2,\tilde{\lambda})$ contains a broken loop, then this broken loop includes a stove critical point.  Because this violates Proposition~\ref{prop:no-chimney-points-in-broken-loops}, we conclude that $(W\times r_0\,\mathbb{D}^2,\tilde{\lambda})$ contains no broken loops, and thus is a Weinstein domain.

\bibliographystyle{alpha}
\bibliography{references}

\end{document}